\documentclass{article}
\usepackage[english]{babel}
\usepackage[T1]{fontenc}
\usepackage[utf8]{inputenc}
\RequirePackage{amsthm,amsmath,amsfonts,amssymb}
\RequirePackage[numbers]{natbib}

\allowdisplaybreaks

\newcommand{\ac}[1]{\textcolor{red}{add citation}}

\usepackage{enumerate}
\usepackage{color}
\usepackage{multirow}\usepackage{graphicx}
\usepackage{amsfonts}
\usepackage{amsmath}
\usepackage{amssymb}

\usepackage{color}
\usepackage{dsfont}

\usepackage{pgfplots}
\usetikzlibrary{patterns}
\usepackage{amssymb}
\usepackage{graphicx}
\usepackage{amsmath}
\usepackage{bm}
\usepackage{ltablex}
\usepackage{algorithm} 
\usepackage{algorithmic}  
\usepackage[algo2e]{algorithm2e} 
\usepackage{caption}

\usetikzlibrary{decorations.pathreplacing}

\usetikzlibrary{calc,patterns}

\usepackage{url}
\usepackage{fancyhdr}
\usepackage{indentfirst}

\usepackage{enumerate}
\usepackage{dsfont}
\usepackage{tabulary}
\usepackage{booktabs}
\usepackage[colorlinks=true,citecolor=blue]{hyperref}
\usepackage{amsthm}
\usepackage{color}
\usepackage{comment}
\usepackage{tikz-cd}
\usepackage{graphicx}
\usepackage{amsfonts}
\usepackage{longtable}
\usepackage{amsmath}
\usepackage{amssymb}
\usepackage{url}
\usepackage{bbm}
\usepackage{fancyhdr}
\usepackage{indentfirst}
\usepackage{enumerate}
\usepackage{subcaption}
\usepackage{dsfont}
\pgfplotsset{compat=1.7}
\usepackage[colorlinks=true,citecolor=blue]{hyperref}
\usepackage{cleveref}
\usepackage{amsthm}
\usepackage{color}
\renewcommand{\cite}{\citet}
\usepackage{comment}
\usepackage[margin=0.8in]{geometry}

\usepackage{algorithm}

\renewcommand{\d}{\,\mathrm{d}}

\newcommand{\p}{\mathbb{P}}

\usepackage{xcolor}

\newcommand{\E}{\mathbb{E}}    
\newcommand{\R}{\mathbb{R}}    
\newcommand{\N}{\mathbb{N}}    

\theoremstyle{plain}
\newtheorem{theorem}{Theorem}
\newtheorem{corollary}[theorem]{Corollary}
\newtheorem{lemma}[theorem]{Lemma}
\newtheorem{proposition}[theorem]{Proposition}

\theoremstyle{definition}

\theoremstyle{remark}
\newtheorem{remark}{Remark}

\usepackage{tikz}

\usetikzlibrary{arrows.meta,positioning}

\newcommand{\ee}{\varepsilon}

\newcommand{\bx}{\mathbf{x}}
\newcommand{\by}{\mathbf{y}}

\def\d{\mathrm{d}}

\def\lawis{\buildrel \mathrm{law} \over \sim}

\newcommand{\bone}{ {\mathbbm{1}} }

\usepackage{mathrsfs}

\newcommand{\z}{\mathbf{0}}

\newcommand{\X}{\mathbb{H}}

\renewcommand{\ge}{\geqslant}
\renewcommand{\le}{\leqslant}
\renewcommand{\geq}{\geqslant}
\renewcommand{\leq}{\leqslant}
\renewcommand{\epsilon}{\varepsilon}

\newcommand{\bZ}{{\mathbb{Z}}}

\title{Viral Quasispecies Evolution as a Branching Random Walk on the Hypercube}

\author{Jose Blanchet\thanks{Department of Management Science and Engineering, Stanford University. Email: jose.blanchet@stanford.edu} \and Zhenyuan Zhang\thanks{Department of Mathematics, Stanford University. Email: zzy@stanford.edu}}
\begin{document}
\maketitle

\begin{abstract}
We study a continuous-time nearest-neighbor branching random walk on the $d$-dimensional $b$-ary hypercube $\{0,1,\dots,b-1\}^d$ as a model for viral quasispecies evolution under mutation and replication. Motivated by mutagenic antiviral treatments and evolutionary-safety questions, we analyze the first passage time to a fixed target genotype at Hamming distance $m$, corresponding to the first appearance of a prescribed collection of mutations. We derive sharp asymptotics for these first passage times, uniformly for $m\le d/L$ as $d\to\infty$ (where $L>0$ is a large constant), and identify a phase transition in first-passage scaling at $\rho=e$, where $\rho$ denotes the effective growth parameter. In the slow-branching regime $\rho\in(1,e)$ relevant to mutagenic treatment scenarios, the first passage time is asymptotically affine in the genome length $d$ and the target distance $m$. In particular, when replication is fixed and mutation exceeds branching, increasing the mutation rate can delay the first appearance of a prescribed genotype by order $d$, providing a quantitative perspective on evolutionary safety.
\end{abstract}

\section{Introduction}

In the theory of molecular sequence dynamics, the classic Jukes--Cantor model \citep{jukes1969evolution}
describes nucleotide substitutions by a continuous-time Markov process on sequence space, while
branching-process models provide a natural description of replication and population growth
\citep{athreya2004branching}. Motivated by within-host RNA viral evolution and mutagenic antiviral
interventions, we study a branching random walk on sequence space (BRWSS) that combines these two
mechanisms in one spatial model, with branching used as a proxy for viral replication dynamics
\citep{sanjuan2010viral}.

We work on the $d$-dimensional $b$-ary hypercube
$\X=\X_d^{(b)}:=\{0,1,\dots,b-1\}^d$, endowed with Hamming distance
$d_{\mathrm H}(\cdot,\cdot)$. Each vertex represents a genotype; in the RNA setting one has $b=4$,
corresponding to the four nucleotides A, C, G, and T.\footnote{For notation, we write $T$ in place
of $U$ when representing RNA sequences.}
A particle at a genotype branches
at rate $\lambda_1>0$ (producing one additional offspring at the same genotype) and mutates at rate
$\lambda_2>0$ (changing one uniformly chosen coordinate to one of the other $b-1$ symbols,
uniformly). We define the effective growth parameter as
\begin{align*}
\rho:=\exp(\lambda_1/\lambda_2).
\end{align*}
After time-rescaling by $\lambda_2$, we set $\lambda_2=1$ and write $\lambda_1=\log\rho$.
In these units, the expected population size at time $t$ is $\rho^t$. This formulation keeps the
finite sequence-space geometry explicit, including back mutations, while remaining simple enough for
sharp asymptotic analysis.

For many RNA viruses, the per-site mutation probability per replication is empirically of order $1/d$
(Drake's rule; see \citep{drake1999mutation}). In our continuous-time setting, one lineage
experiences on average $\lambda_2/\lambda_1$ mutation events between replications, so the expected
number of mutations per site per replication is $\lambda_2/(\lambda_1 d)$. This motivates asymptotic
regimes in which $\rho$ remains of order one as $d\to\infty$, including both constant $\rho$ and the
ultra-slow branching regime $\rho(d)\to 1$.

Under mutation and selection, viral populations are often described as quasispecies: mutant clouds
concentrated near a master sequence, a highest-fitness genotype \citep{lauring2010quasispecies,wilke2005quasispecies}. Our model does not include
explicit genotype-dependent fitness, but that language is still useful for framing the spatial
question we study: starting from a genotype near the master sequence, how long does it take a
growing mutant population to first produce a fixed genotype at a prescribed Hamming distance? This
bridge from biological motivation to mathematics leads naturally to a first-passage problem on the
hypercube.

That question is particularly relevant for mutagenic antiviral strategies. Increasing the mutation
rate can suppress viable growth, but it also changes the time scale on which specific genotype
configurations are first produced. If the mutation rate is pushed too high, there is the risk of an
\emph{error catastrophe} \citep{eigen1971selforganization,eigen1988molecular,nowak1989error}, in which copying errors accumulate over
generations and lead to nonviable genomes. In that case, the quasispecies may lose the superiority
of the master sequence (sublethal mutagenesis/error threshold), or the population may even undergo a
sharp collapse due to nonfunctional and nonviable viral genomes (lethal mutagenesis/extinction
threshold) \citep{bull2007theory,domingo2012viral,perales2011lethal,tejero2015theories}.

Classical treatments of error catastrophe usually proceed through deterministic quasispecies
equations or multi-type branching-process descriptions based on spectral properties of
mutation--selection dynamics
\citep{eigen1971selforganization,antoneli2013viral,demetrius1985polynucleotide,fabreti2019stochastic,hermisson2002mutation}.
Our BRWSS model instead keeps the underlying sequence-space geometry explicit. In particular, the
BRWSS model exhibits a localization--delocalization threshold at $\rho=e$: for $\rho>e$ the population remains
concentrated near the master sequence, whereas for $\rho<e$ it delocalizes, 
 and first passage to a
fixed target typically occurs on the $d$ scale. In the latter case, Theorem~\ref{thm:main} (applied with $m=1$) shows
that it often takes time on the order of $d$ for the population to revisit the original master
sequence.

These considerations are central to mutagenic antiviral strategies. By significantly increasing the
mutation rate and driving the quasispecies toward a lethal zone, mutagenic antiviral drugs suppress
the virus population. Meanwhile, the sublethal zone may instead increase genetic diversity without
collapsing the quasispecies \citep{domingo2012viral}. A prototypical example is Molnupiravir, which
induces transition-heavy mutational patterns in SARS-CoV-2, especially G-to-A and C-to-T, and has
been discussed in connection with possible downstream evolutionary consequences
\citep{sanderson2023molnupiravir,kosakovsky2023anti}. Closely related work on evolutionary safety
compares the cumulative production of viable mutants with and without treatment
\citep{lobinska2023evolutionary,lobinska2024evolutionary}. In particular,
\citep{lobinska2023evolutionary} concludes from a mean-field ODE model that Molnupiravir is narrowly
evolutionarily safe under current parameter estimates. That criterion depends not only on aggregate
mutant load, but also on when potentially consequential genotypes first appear, which is one reason
the first-passage viewpoint is informative.

\paragraph{Main contribution.}
Fix $m\in\{1,\dots,d\}$ and $\bx_m\in\X_{d,m}:=\{\bx\in\X:d_{\mathrm H}(\bx,\mathbf 0)=m\}$.
We define the first passage time
\begin{align*}
\tau_{d,m}:=\inf\{t\ge 0:\exists v\in V_t,\ \eta_v(t)=\mathbf 0\},
\end{align*}
for the BRW initiated at $\bx_m$, where $V_t$ is the set of particles alive at time $t$ and
$\eta_v(t)$ is the location of particle $v$ at time $t$. By hypercube symmetry, this is equivalent
to starting from $\mathbf 0$ and hitting a fixed target at Hamming distance $m$; we use the
convention above throughout. Thus, ``target at distance $m$'' always means a fixed genotype, not an
arbitrary vertex in the sphere.

Our main results give sharp high-probability asymptotics for $\tau_{d,m}$ as $d\to\infty$, uniformly
over explicit ranges of $m$, and identify a phase transition at $\rho=e$ in first-passage scaling.
In the Molnupiravir and evolutionary-safety scenarios described above, mutagenic treatment
effectively reduces the viable branching rate relative to mutation, so the relevant slow-branching
regime is $\rho\in(1,e)$; concern about specific nucleotide positions also points to larger
prescribed mutation levels $m$. In this regime, combining Theorem~\ref{thm:main} and
Proposition~\ref{thm:const rho expansion}(iii) yields, for $m=o(d)$,
\begin{align*}
\tau_{d,m}=x_0(b,\rho)\,d+\big(1+o(1)\big)\,r(b,\rho)\,m+O_{\mathbb P}(1).
\end{align*}
Thus the leading approximation is affine in the genome length and the target distance, with explicit
constants. In particular, Corollary~\ref{coro:lambda2mono} shows that, with replication fixed and
$\lambda_2'>\lambda_2>\lambda_1$, increasing the mutation rate can delay first passage by an amount
of order $d$. This gives a direct quantitative handle on evolutionary-safety questions phrased in
terms of arrival times of fixed target genotypes.

\paragraph{Results by regime.}
\begin{enumerate}
    \item \textbf{Slow branching: $\rho\in(1,e)$.}
    First passage occurs on the linear scale $t=\Theta(d)$. By Theorem~\ref{thm:main}, uniformly for
    $m\in[1,d/L_1]$,
    \begin{align*}
    \tau_{d,m}=t_{d,m}+O_{\mathbb P}(1),
    \end{align*}
    where $t_{d,m}$ is the unique positive solution of the first-moment equation~\eqref{eq:1}. By
    Proposition~\ref{thm:const rho expansion}(iii),
    \begin{align*}
    t_{d,m}=x_0(b,\rho)\,d+r(b,\rho)\,m+O\!\left(\frac{m^2}{d}\right).
    \end{align*}
    Therefore,
    \begin{align*}
    \tau_{d,m}
    =x_0(b,\rho)\,d+r(b,\rho)\,m
    +O\!\left(\frac{m^2}{d}\right)+O_{\mathbb P}(1),
    \end{align*}
    and, in particular, if $m=o(d)$ with $m\to\infty$, then
    $\tau_{d,m}=x_0d+rm+o_{\mathbb P}(m)$. In this regime the target is reached only after
    exploration on a macroscopic genomic scale, and increasing mutation slows first passage at
    leading order.

    \item \textbf{Fast branching: $\rho>e$.}
    First passage occurs on a shorter pre-mixing scale. Theorem~\ref{thm:main2} shows that, with
    \begin{align*}
    t_{d,m}:=\frac{m\,W\!\left((\log\rho-1)(b-1)d/m\right)}{\log\rho-1},
    \end{align*}
    uniformly for $m\in[1,L\sqrt d/\log d]$,
    \begin{align*}
    \tau_{d,m}=t_{d,m}+O_{\mathbb P}(\log d).
    \end{align*}
    This regime is governed by atypically direct trajectories to the target before the process has
    mixed over sequence space.

    \item \textbf{Ultra-slow branching: $\rho(d)\to 1$.}
    Theorem~\ref{thm:main rho->1} shows that the first-moment centering remains correct only after an
    explicit correction at the natural scale. Under regular variation assumptions on $\log\rho(d)$,
    uniformly for $m\in[1,d/L_3]$,
    \begin{align*}
    \tau_{d,m}
    =t_{d,m}-\frac{-\log\log\rho}{\log\rho}
    +\frac{O_{\mathbb P}(1)}{\log\rho}.
    \end{align*}
    Hence the ultra-slow branching regime interpolates between weak growth and long waiting times, with a
    correction that is invisible at the first-moment level alone.
\end{enumerate}

The three regimes above characterize a phase transition in \emph{first-passage-time} scaling at
$\rho=e$: for $\rho\in(1,e)$ the relevant scale is $t=\Theta(d)$, while for $\rho>e$ first passage
is governed by pre-mixing trajectories and occurs on a shorter scale.

\paragraph{Relation to prior work.}
Our paper lies at the intersection of mathematical biology and extremal statistics of spatial
branching processes. On the biological side, related branching or BRW models have been used for
quasispecies and mutation--selection dynamics, somatic hypermutation, and epidemics
\citep{lauring2010quasispecies,antoneli2013viral,demetrius1985polynucleotide,fabreti2019stochastic,balelli2016branching,balelli2018random,ermakova2019branching}.
Finite-population analogues include Moran and Wright--Fisher quasispecies models
\citep{Cerf2015MoranMemoirs,Cerf2015WrightFisher,CerfDalmau2022Quasispecies}.

Methodologically, our work differs from common large-population or mean-field approaches in three ways: the
finite-genome geometry is explicit, back mutations are retained, and first appearance times are
defined at the particle level in a finite-particle BRW. This contrasts with infinite-genome
approximations in which each mutation creates a new type \citep{durrett2008probability} and with
settings where one studies type frequencies or mass profiles rather than fixed-target arrival times
\citep{bansaye2011limit,meleard2015stochastic,avena2020parabolic,konig2020branching}. Our
first-passage formulation therefore complements mutant-load analyses by directly encoding the arrival
time of prescribed genotypes in a finite sequence space.

From the probability side, our results complement work on bulk mass profiles, intermittency, and
one-dimensional extrema in spatial branching systems
\citep{avena2020parabolic,konig2020branching,addario2009minima,bramson1978maximal,arguin2016extrema,zeitouni2016branching}.
They also connect to recent first-passage studies for branching processes on other state spaces such as $\R^d$ 
\citep{blanchet2024first,blanchet2024tight,zhang2024modeling}. For the hypercube specifically,
\citep{balelli2016branching} obtained upper bounds on partial cover times for coalescing BRW, whereas
\citep{konig2020branching} studied mass distribution in random environment from a mean-field
perspective. Our focus is instead on sharp high-probability asymptotics for first passage to a fixed
target genotype on the $b$-ary hypercube.

\paragraph{Organization of the paper.}
Section~\ref{sec:model} formalizes the BRWSS model and notation. Section~\ref{sec:const rho}
analyzes the constant $\rho$ regime, including explicit expansions and the phase transition at $\rho=e$.
Section~\ref{sec:ultraslow} treats the ultra-slow branching regime $\rho(d)\to 1$.
Section~\ref{sec:discussion} discusses biological interpretation, limitations, and open directions.
Technical proofs are collected in the appendices.

\section{Model setup and notation}\label{sec:model}

We encode the nucleotides as symbols in $\{0,1,\dots,b-1\}$, and study the more general case of $b$ symbols where $b\geq 2$ is an integer. For DNA or RNA, $b=4$, and for a haploid two-state reduction, one can take $b=2$ by grouping bases into purines (A, G) and pyrimidines (C, T). 
Let $\X=\X_d^{(b)}:=\{0,1,\dots,b-1\}^d$ denote the $d$-dimensional $b$-ary hypercube, $\z=(0,\dots,0)\in\X$, and $\X_{d,m}:=\{\bx\in\X:d_{\mathrm{H}}(\bx,\z)=m\}$, where $d_{\mathrm{H}}(\cdot,\cdot)$ is the Hamming distance.\footnote{For two binary vectors $\bx$ and $\by$, $d_{\mathrm{H}}(\bx,\by)$ is the number of entries that are different. If $b=2$, this coincides with the $L^1$ norm (Manhattan distance) between $\bx$ and $\by$.} 
The continuous-time branching random walk (BRW) on the hypercube $\X$ can be formally described as follows. Start with a single particle at the origin $\z\in\X$. Each alive particle independently carries two independent exponential clocks of rates $\lambda_1,\lambda_2>0$ respectively. The first exponential clock of rate $\lambda_1$ governs the occurrence of the branching event: when it rings, the particle branches into two particles at the same location; the second clock of rate $\lambda_2$ governs the mutation event. At a mutation event, choose a coordinate $i$ uniformly from $\{1,\dots,d\}$, then replace the symbol by a uniformly chosen element from the other $b-1$ symbols.

Consider $m\in\{1,\dots,d\}$ and $\bx_m\in \X_{d,m}$, a vertex of Hamming distance $m$ from the origin. In this paper, we investigate the first passage times (FPT) of the continuous-time BRW initiated at $\bx_m$ and hitting $\z$, as a function of both $m$ and $d$.\footnote{By symmetry, this is equivalent to our previous consideration of the FPT from $\z$ to $\bx_m$ in the introduction.} Let $\tau_{d,m}$ denote the FPT of the BRW to $\z$, i.e., the first time some particle reaches $\z$ when the process starts from $\bx_m$. Formally, we have
$$\tau_{d,m}:=\inf\{t\geq 0:\,\exists v\in V_t,\, \eta_v(t)=\z\},$$
where $V_t$ is the set of all particles at time $t$ of a branching random walk on $\X_d^{(b)}$ initiated from $\bx_m$, and $\eta_v(t)$ is the location of $v$ at time $t$. 
By symmetry, starting at the master sequence (e.g. the origin) and accumulating $m$ prescribed mutations is equivalent to starting at a sequence $m$ steps away and hitting the master sequence. For completeness, we define $\tau_{d,0}:=0$.

 We have chosen to work with the continuous-time model (over the discrete-time one) for both branching and mutation events for four reasons. First, the continuous-time model delivers overlapping generations of biological interest and aligns with the classic Jukes--Cantor model and its variations. Second, and importantly for mathematical convenience, time-scaling is easier to perform in continuous-time, due to the scale-invariance of the exponential distribution. Therefore, we may without loss of generality assume that $\lambda_2=1$. For our convenience, we further introduce the parameterization $\lambda_1=\log\rho$, where $\rho>1$. 
As a consequence, the expected population size is $\rho^t$ at time $t$. Third, transition probabilities are technically easier to work with in continuous time (see \eqref{eq:qij} below) than in discrete time (see \citep{kac1947random}). And, finally, there will be no periodicity issues in continuous time, while the discrete-time random walk on the hypercube is periodic. 

We have assumed that each branching event replaces one particle by \textit{two}, but our results can be extended with minor modifications to the setting where each branching event replaces one particle by $n$ particles, independently with probability $p_n$, where $n\geq 0$, $\sum_n p_n=1$, and $\mu:=\sum_n np_n>1$. In particular, particles die independently with rate $p_0\lambda_1$. In this case, the parameterization $\lambda_1=\log\rho$ is replaced by $\lambda_1(\mu-1)=\log\rho$. Our main results carry over, with the understanding that the BRWSS process is conditioned upon the survival event (i.e., at each time, there is a particle alive).

\paragraph{Notation.} We write $A=O(B)$ if there exists a constant $C>0$ such that $A\leq CB$, and $A\asymp B$ if $A=O(B)$ and $B=O(A)$. Denote by $\N_2=\{2,3,\dots\}$. Recall that a sequence of real-valued random variables $\{\xi_n\}$ is \textit{tight} if for any $\ee>0$, there is a compact set $K\subseteq \R$ such that $\p(\xi_n\not\in K)<\ee$ for all $n$. Given a positive sequence $\{x_n\}_{n\in\N}$, we write $O_\p(x_n)$ for a sequence of random variables $\{X_n\}_{n\in\N}$ such that $\{X_n/x_n\}_{n\in\N}$ is tight, and similarly $o_\p(x_n)$ if $X_n/x_n\to 0$ in probability. 

\section{The constant-\texorpdfstring{$\rho$}{} regime}
\label{sec:const rho}

To state our main results, we first introduce the key \textit{first-moment equation}, given by
\begin{align}
    \rho^tb^{-d}\big(1+(b-1)e^{-\frac{bt}{(b-1)d}}\big)^{d-m}\big(1-e^{-\frac{bt}{(b-1)d}}\big)^m=1.\label{eq:1}
\end{align}

To gain intuition from~\eqref{eq:1}, note that as shown in Lemma~\ref{thm:bingham} below, the factor
$b^{-d}\big(1+(b-1)e^{-\frac{bt}{(b-1)d}}\big)^{d-m}\big(1-e^{-\frac{bt}{(b-1)d}}\big)^m$
is the time-$t$ transition probability for the underlying continuous time random walk to move between two genotypes at Hamming distance $m$. Its behavior depends sharply on the time scale.

If $t$ is proportional to $d$, then $e^{-\frac{bt}{(b-1)d}}$ is of constant order, and taking logarithms in~\eqref{eq:1} yields a balance between two terms of order $d$. This suggests that the relevant solution $t_{d,m}$ is of order $d$, corresponding to a regime in which the walk has had time to explore a nontrivial fraction of coordinates.

In contrast, if $t\ll d$, then $e^{-\frac{bt}{(b-1)d}}=1-\frac{bt}{(b-1)d}+O(t^2/d^2)$, and the left-hand side of~\eqref{eq:1} admits the approximation
\begin{equation}\label{eq:premix_balance}
\rho^t b^{-d}\big(1+(b-1)e^{-\frac{bt}{(b-1)d}}\big)^{d-m}\big(1-e^{-\frac{bt}{(b-1)d}}\big)^m
\approx \Big(\frac{\rho}{e}\Big)^t\Big(\frac{t}{(b-1)d}\Big)^m.
\end{equation}
The extra factor $e^{-t}$ explains why the threshold $\rho=e$ appears. When $\rho>e$, the right-hand side of~\eqref{eq:premix_balance} can reach one already at times of order $m\log d$, while for $\rho<e$ it cannot, and the relevant solution occurs on the $t=\Theta(d)$ scale. This heuristic is made precise in the results below (Theorems \ref{thm:main} and \ref{thm:main2}).

\subsection{The case \texorpdfstring{$\rho\in(1,e)$}{}}
 The following proposition describes, in the case $\rho\in(1,e)$, the solution to the first-moment equation \eqref{eq:1}.

\begin{proposition}\label{thm:const rho expansion}
    Fix $b\in\N_2$ and $\rho\in(1,e)$. Then there exists a large constant $L_1>0$ (possibly depending on $b,\rho$) such that the following statements hold uniformly for $m\in[1, d/L_1]$.
    \begin{enumerate}[(i)]
        \item There exists a unique strictly positive solution to \eqref{eq:1}.
        \item Denote by $x_0=x_0(b,\rho)$ the unique positive solution to 
  \begin{align}
      x_0\log\rho-\log b+\log\big(1+(b-1)e^{-\frac{bx_0}{b-1}}\big)=0\label{eq:x0  def}
  \end{align}
  and $r=r(b,\rho)$ the unique positive root to
  \begin{align}
      r\log\rho=\log\Big(\frac{1+(b-1)e^{-\frac{bx_0}{b-1}}}{1-e^{-\frac{bx_0}{b-1}}}\Big)+\frac{br}{b-1+e^{\frac{bx_0}{b-1}}}.\label{eq:r   def}
  \end{align}
  Then for every fixed $b\in\N_2$, $\rho\mapsto x_0(b,\rho)$ is decreasing, 
  \begin{align}
      \lim_{\rho\to 1^+}x_0(b,\rho)=\lim_{\rho\to e^-} r(b,\rho)=\infty,\quad\text{ and }\quad \lim_{\rho\to 1^+}r(b,\rho)=\lim_{\rho\to e^-} x_0(b,\rho)=0.\label{eq:x0r limits}
  \end{align}
  \item Denote by $t=t_{d,m}$ the unique solution to \eqref{eq:1}. Then as $d\to\infty$,
 \begin{align}
     t=x_0d+rm+O\Big(\frac{m^2}{d}\Big).\label{eq:x0r}
 \end{align} In particular, uniformly for $m=O(\sqrt{d})$, 
   $$ t=x_0d+rm+O(1).$$
    \end{enumerate}
\end{proposition}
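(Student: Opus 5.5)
Throughout, set $c:=b/(b-1)$, $\mu:=m/d\in(0,1/L_1]$, and substitute $t=xd$. Taking logarithms in \eqref{eq:1} and dividing by $d$, equation \eqref{eq:1} becomes $G(x,\mu)=0$, where
\begin{align*}
G(x,\mu):=G_0(x)+\mu H(x),\qquad G_0(x):=x\log\rho-\log b+\log\big(1+(b-1)e^{-cx}\big),\qquad H(x):=\log\Big(\frac{1-e^{-cx}}{1+(b-1)e^{-cx}}\Big).
\end{align*}
The plan is to analyse the unperturbed function $G_0$ completely. The term $\mu H$ will then be treated as a small perturbation, and all three items will follow from this picture together with the implicit function theorem.

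\emph{Step 1 (the case $\mu=0$, and item (ii)).} The function $x\mapsto\log(1+(b-1)e^{-cx})$ is strictly convex, since it is a log-sum-exp of affine functions. Hence $G_0$ is strictly convex, with $G_0(0)=0$ and $G_0'(0)=\log\rho-c(b-1)/b=\log\rho-1<0$. Moreover $G_0(x)\to\infty$ as $x\to\infty$. It follows that $G_0$ has a unique positive root $x_0$, that $G_0<0$ on $(0,x_0)$, and that $G_0'(x_0)>0$.

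For monotonicity, note that $G_0$ is increasing in $\rho$ for every $x>0$. The implicit function theorem then gives $\partial_\rho x_0=-x_0/(\rho\,G_0'(x_0))<0$.

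For the limit $\rho\to1^+$: at $\rho=1$ we have $G_0<0$ for every finite $x>0$, so $x_0\to\infty$. For the limit $\rho\to e^-$: expanding $G_0(x)=(\log\rho-1)x+a x^2+O(x^3)$ with $a>0$ gives $x_0\asymp 1-\log\rho\to0$.

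Next, differentiating $\log(1+(b-1)e^{-cx})$ shows that \eqref{eq:r   def} is exactly $r\,G_0'(x_0)=-H(x_0)$. Since $G_0'(x_0)>0$ and $H(x_0)<0$, this defines a unique $r>0$.

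For the limits of $r$, first take $\rho\to e^-$. Then $x_0\to0$, so $-H(x_0)\approx-\log(cx_0)\to\infty$, while $G_0'(x_0)\to0$. Hence $r\to\infty$.

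Now take $\rho\to1^+$. From $G_0(x_0)=0$ we get $x_0\log\rho\to\log b$, so $e^{-cx_0}=b^{-(c+o(1))/\log\rho}$. This is much smaller than $\log\rho$. Consequently $-H(x_0)\approx b e^{-cx_0}$, whereas $G_0'(x_0)=\log\rho-O(e^{-cx_0})\sim\log\rho$. Therefore $r\to0$. I expect this last ratio to be the most delicate limit in \eqref{eq:x0r limits}, but the super-polynomial smallness of $e^{-cx_0}$ should make it routine.

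\emph{Step 2 (item (i), uniqueness and existence for small $\mu$).} Existence follows from the intermediate value theorem, since $G(0^+,\mu)=-\infty$ (because $H(0^+)=-\infty$) and $G(x,\mu)\to\infty$ as $x\to\infty$. For uniqueness, I split $(0,\infty)$ into three regions.
\begin{enumerate}[(a)]
\item On $(0,\delta]$, with $\delta<x_0$: here $G_0\le0$ and $H<0$, so $G<0$ regardless of the size of $\mu$. This disposes of the logarithmic singularity of $H$ at $0$, which is the only place where the perturbation is not uniformly small.
\item On $[\delta,x_0-\delta]$ and on $[x_0+\delta,\infty)$: here $|G_0|\ge\kappa>0$, with growth at infinity, while $\mu|H|\le\mu C_\delta$. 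Choosing $L_1$ large makes $G\neq0$ on these sets.
\item On $[x_0-\delta,x_0+\delta]$: here $\partial_xG=G_0'+\mu H'\ge G_0'(x_0)/2>0$ for small $\delta$ and large $L_1$, so there is at most one root.
\end{enumerate}
Taken together, this gives exactly one positive root, and it lies within $\delta$ of $x_0$.

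\emph{Step 3 (item (iii)).} The function $G$ is smooth (indeed real-analytic) near $(x_0,0)$, with $\partial_xG(x_0,0)=G_0'(x_0)>0$. The implicit function theorem therefore yields a smooth branch $x(\mu)$ on $[0,1/L_1]$ with $x(0)=x_0$ and $x'(0)=-H(x_0)/G_0'(x_0)=r$. A second-order Taylor bound, uniform on this compact interval, gives $x(\mu)=x_0+r\mu+O(\mu^2)$.

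By Step 2 this branch is the unique positive root, so $t_{d,m}=d\,x(m/d)$. Substituting gives
\begin{align*}
t_{d,m}=x_0d+rm+O\Big(\frac{m^2}{d}\Big),
\end{align*}
which is \eqref{eq:x0r}, uniformly for $m\le d/L_1$. When $m=O(\sqrt d)$ the error term is $O(1)$.
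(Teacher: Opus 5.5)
Your proof is correct. For (ii) it follows the paper: the same convex function $h=G_0$, the same implicit differentiation for monotonicity, and the same asymptotics for $r$. Those asymptotics come from $G_0'(x_0)\to 0^+$ as $\rho\to e^-$, and from $e^{-bx_0/(b-1)}$ being much smaller than $\log\rho$ as $\rho\to 1^+$.

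For (i) and (iii) your route differs. The paper proves uniqueness separately in Proposition~\ref{prop:uniquesoln}. It rules out roots in $(0,\delta_1 d)$ via Lemma~\ref{lemma:nozero}, then uses convexity of $\phi'$ in $u=e^{-bt/((b-1)d)}$ to show $\phi$ first decreases and then increases. In (iii) it localizes $t/d-x_0=O(m/d)$ by checking signs at $x_0$ and $x_0+Km/d$, then linearizes by hand to get $r'=O(m/d)$. You instead treat $\mu H$ as a perturbation of the convex $G_0$. This way the localization near $x_0$ comes out of the uniqueness step itself, and the implicit function theorem replaces the manual linearization. It also gives analytic dependence on $m/d$, hence a full expansion in powers of $m/d$ if one wanted it.

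The paper phrases its uniqueness argument so that it also covers the regime $\rho(d)\to 1^+$, where your $\delta$, $\kappa$ and $L_1$ would degenerate. Your own ingredients remove that issue. Since $H<0$ and $H'>0$ on $(0,\infty)$, you have $G(\cdot,\mu)<0$ on $(0,x_0]$ and $\partial_x G\ge G_0'(x_0)>0$ on $[x_0,\infty)$. So the positive root is unique for every $\rho\in(1,e)$ and every $\mu\in(0,1]$, with no largeness of $L_1$ and no uniformity in $\rho$ needed. Largeness of $L_1$ is then used only in (iii), to place $[0,1/L_1]$ inside the neighbourhood given by the implicit function theorem.

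One small gap to close concerns the limit $\rho\to e^-$: using the cubic Taylor remainder requires knowing in advance that $x_0$ is small. This holds because $g(x):=G_0(x)-(\log\rho-1)x$ does not depend on $\rho$, is strictly convex, and satisfies $g(0)=g'(0)=0$. Hence $g(x)/x$ is strictly increasing from $0$, and $g(x_0)/x_0=1-\log\rho\to 0$ forces $x_0\to 0$.
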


We refer to Figure \ref{f} for a plot of the constants $x_0$ and $r$ defined by \eqref{eq:x0  def} and \eqref{eq:r   def}, as well as a comparison between the solution $t_{d,m}$ and its approximation \eqref{eq:x0r}. Our next result shows that the first passage time is concentrated near $t_{d,m}$.

\begin{figure}[t]
    \centering
    \begin{subfigure}[b]{0.47\linewidth}
        \centering
        \includegraphics[width=0.91\linewidth]{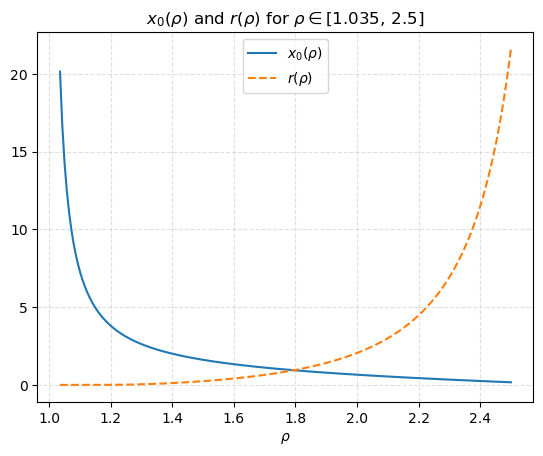}
        \caption{$x_0$ and $r$ as functions of $\rho$}
    \end{subfigure}
    \hfill
    \begin{subfigure}[b]{0.47\linewidth}
        \centering
        \includegraphics[width=1.03\linewidth]{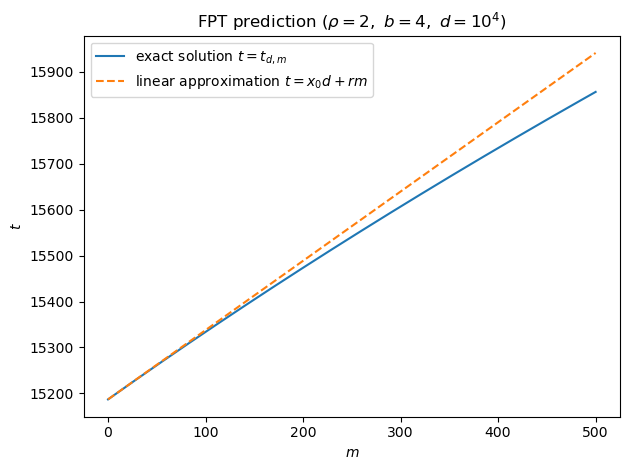}
        \caption{FPT predictions as a function of $m$}
    \end{subfigure}
    \caption{(a) Values of $x_0$ and $r$ as functions of $\rho\in[1.035,2.5]$ for $b=2$ (see definitions in \eqref{eq:x0  def} and \eqref{eq:r   def}). If $\rho$ is close to $1$, the term $x_0d$ dominates in \eqref{eq:x0r}; if $\rho$ is close to $e$, the term $rm$ dominates for $m$ large, where we recall that $m$ is the Hamming distance of the target from the origin. (b) FPT predictions from solving the first-moment equation \eqref{eq:1} and from the asymptotic expansion \eqref{eq:x0r} with $\rho=2,~b=4,$ $d=10^4$, and $m\in[0,500]$. }
    \label{f}
\end{figure}

\begin{theorem}\label{thm:main}
 Suppose that $b\in\N_2$ and $\rho\in(1,e)$. Then there exists a large constant $L_1>0$ (possibly depending on $b,\rho$) such that uniformly for $m\in[1, d/L_1]$ as $d\to\infty$,
        \begin{align*}
            \tau_{d,m}=t_{d,m}+O_\p(1),
        \end{align*}
        where the $O_\p(1)$ is tight and $t_{d,m}$ is the unique positive solution to \eqref{eq:1}.
\end{theorem}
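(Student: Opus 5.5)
Write $p_t(m)$ for the transition probability from Lemma~\ref{thm:bingham}, and $t=t_{d,m}$. Then $F(t):=\rho^t p_t(m)$ is the expected number of particles at $\z$ at time $t$, and \eqref{eq:1} reads $F(t_{d,m})=1$. The plan is a lower bound by a first-moment estimate on the expected number of \emph{arrivals} at $\z$, and an upper bound by a second-moment estimate on occupation time at $\z$ over a window of constant length. Both rest on one input from Proposition~\ref{thm:const rho expansion}: for $\rho\in(1,e)$ and $m\le d/L_1$, $\partial_s\log F(s)$ is bounded below by a constant $c(b,\rho)>0$ for $s$ in an $O(1)$-neighbourhood of $t_{d,m}$. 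Indeed,
\[
\partial_s\log F=\log\rho-\frac{b}{b-1}\cdot\frac{(d-m)(b-1)e^{-bs/((b-1)d)}}{d\,(1+(b-1)e^{-bs/((b-1)d)})}+O(m/d),
\]
and at $s\approx x_0d$ this is positive, since $x_0$ is the positive zero of a convex function that is negative just to its right. Consequently $F(t_{d,m}\pm K)$ lies between $e^{\pm cK}$ and $e^{\pm CK}$.

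\textbf{Lower bound.} Particles jump to $\z$ only from the $d(b-1)$ neighbours of $\z$, each at rate $1/(d(b-1))$ times the $1/d$ coordinate-selection factor; in total, a particle at distance $1$ enters $\z$ at rate $1/(d(b-1))$. The expected number of entries into $\z$ before time $t-K$ is therefore
\[
\int_0^{t-K}\rho^s\,p_s(m\to\text{dist }1)\,\frac{1}{d(b-1)}\,ds\asymp\int_0^{t-K}\rho^s p_s(m)\,ds,
\]
because $p_s(\text{dist }1)/p_s(\z)$ is of order $d(b-1)\cdot\frac{1-e^{-\cdot}}{1+(b-1)e^{-\cdot}}$ on the relevant range. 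For $s\ll d$ I would treat the integrand separately via \eqref{eq:premix_balance}, where $\rho<e$ makes it tiny. By the exponential growth of $F$, the integral is $O(e^{-cK})$, and Markov's inequality gives $\p(\tau_{d,m}<t-K)\le Ce^{-cK}$.

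\textbf{Upper bound.} Let $Z=\int_{t+K}^{t+K+1}\#\{v:\eta_v(s)=\z\}\,ds$. Then $\E Z\asymp F(t+K)\ge e^{cK}$. The second moment splits by the branching time $u$ of the most recent common ancestor:
\[
\E Z^2\lesssim \E Z+\int_0^{t+K}\rho^{u}\sum_{\by}p_u(\bx_m,\by)\,\big(\rho^{t+K-u}p_{t+K-u}(\by,\z)\big)^2\,du .
\]
This is the main obstacle: one must show that the sum is $O((\E Z)^2)$ uniformly in $m\le d/L_1$. I would split by the Hamming distance $k=d_{\mathrm H}(\by,\z)$ and use the explicit product formula. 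Branchings with $u$ close to $t$ (within $O(1)$) contribute $O(\E Z)$. Early branchings ($u=O(1)$) give $(\E Z)^2$ up to constants. For intermediate $u$, the contribution is a large-deviation sum in $k$ whose exponent is the first-moment exponent evaluated at $u$ and $t-u$. I expect to verify that it is maximized at an endpoint and decays exponentially in between. The slack is exactly the strict inequality $\rho<e$: ancestors sitting atypically close to $\z$ would need a $\rho^2$-type doubled growth, which fails because the pre-mixing cost $e^{-s}$ exceeds $\rho^{s}$. Shrinking $L_1$ controls the $O(m/d)$ perturbations. Paley--Zygmund then gives $\p(Z>0)\ge c'>0$ for every $K$.

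\textbf{Boosting to tightness.} To turn the constant success probability into probability $1-\varepsilon$, run the process until time $K'$, so that there are $\ge N$ particles with high probability. Each of these sits within distance $m+O(K')$ of $\z$, and by Proposition~\ref{thm:const rho expansion}(iii) its own target time is $t-K'\log\rho/c+O(K')$, which is still within $O(K')$ of $t$. Applying the Paley--Zygmund bound independently to each of these $N$ subtrees gives $\p(\tau_{d,m}>t+K)\le (1-c')^N+o(1)$. Combining this with the lower bound yields tightness of $\tau_{d,m}-t_{d,m}$, uniformly in $m\in[1,d/L_1]$.
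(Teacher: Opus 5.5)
Your architecture matches the paper's: a first-moment/Markov lower bound, a many-to-two second-moment bound at the first-moment time, Paley--Zygmund, and a bootstrap over an initial $O(1)$ time window. Counting entries into $\z$ instead of occupation time, and using a unit window at $t+K$ instead of $N_\z(t_{d,m})$, are harmless variants. Two smaller points need fixing. First, in the lower bound, growth of $\log F$ on an $O(1)$-neighbourhood of $t$ does not control $\int_0^{t-K}F$. You need $\partial_s\log F\ge c$ on $[(1-\delta)t,t]$, which follows by continuity in $s/d$, and exponential smallness of $F$ on $[\delta d,(1-\delta)t]$, which is Lemma~\ref{lemma:e0}(i) or the decrease-then-increase shape of $\log F$ from the uniqueness proof. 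Second, in the bootstrap, Proposition~\ref{thm:const rho expansion}(iii) cannot give $t_{d,j}-t_{d,m}=O(K')$ uniformly in $m\le d/L_1$, because its error $O(m^2/d)$ is unbounded once $m\gg\sqrt d$. What you need is the Lipschitz bound $t_{d,m+1}-t_{d,m}=O(1)$ of Proposition~\ref{lemma:asymp 0}. It follows from $q_{m+1}(s)=q_m(s)\frac{1-e^{-bs/((b-1)d)}}{1+(b-1)e^{-bs/((b-1)d)}}$ together with your growth bound.

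The genuine gap is the second moment, which is the heart of the theorem and which you leave as an expectation. A large-deviation comparison can only give exponential smallness for $u/t\in[\delta,1-\delta]$. As $u/t\to0$ or $1$, the exponential rate in $d$ tends to $0$, matching the rate of $(\E Z)^2$ resp.\ $\E Z$. So it gives no $O(1)$ control on $u\in[C,\delta t]$ or $t-u\in[C,\delta t]$, and your endpoint remarks only cover $O(1)$ windows. Even in the middle range, the relevant functional is not the first-moment exponent at $u$ and $t-u$, but the one obtained after summing over the ancestor's position (Lemma~\ref{lemma:e0}(ii)--(iii)). The two transitional ranges are exactly Steps I and III of Proposition~\ref{prop:second moment 1}, where one proves decay $e^{-\kappa(t-u)}$, resp.\ $e^{-\kappa u}$, with $\kappa$ independent of $d$ and $m$. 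Write $c=\frac{bt}{(b-1)d}$, $\beta=\frac{be^{-c}}{1+(b-1)e^{-c}}$ and $R=1-\beta=\frac{1-e^{-c}}{1+(b-1)e^{-c}}$.
\begin{itemize}
\item \emph{Late branching.} Your heuristic is the right mechanism: with the ancestor at $\z$, the contribution behaves like $(\rho e^{\beta-2})^{t-u}$, which decays since $\rho<e$. You must still sum over the ancestor's distance $\ell'$ from $\z$, with weights of order $(C\ell'/d)^{\ell'}$, treating $\ell'\ge d/L$ by a crude bound.
\item \emph{Early branching.} Here a different mechanism appears that your plan never addresses. The ancestor's displacement multiplies each subtree's mean by $R^{k-m}$. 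Squaring makes the integrand, relative to $(\E Z)^2$, of order $\exp(-u(\log\rho+1-2\beta-R^2))$. So you need $\log\rho+1-2\beta>R^2$; this is the paper's $A<B$ from Lemma~\ref{lemma:c}. It is equivalent to $\log\rho>\beta^2$, so it does follow from your input $\log\rho>\beta$. You also need a summation over the ancestor's distance from $\bx_m$ that is uniform in $m\le d/L_1$ (Lemma~\ref{lemma:md}).
\end{itemize}
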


We conjecture that, under the setting of Theorem \ref{thm:main}, if $m$ is a fixed constant that does not depend on $d$, then the $O_\p(1)$ fluctuation term converges in law as $d\to\infty$.

Using Theorem \ref{thm:main} we can obtain valuable insights about the behavior of $\tau_{d,m}$. Clearly, $\tau_{d,m}$ is decreasing in the branching rate $\lambda_1$ by a direct coupling argument, but the dependence on the mutation rate $\lambda_2$ is more subtle, because increasing $\lambda_2$ both accelerates exploration of the sequence space and, after time rescaling, decreases the effective branching strength per mutation time. In the slow branching regime $0<\lambda_1<\lambda_2$ and for targets at sublinear distance $m=o(d)$, the leading order of the first passage time is of order $d$, and we can quantify its dependence on $\lambda_2$ explicitly as the next corollary shows; the proof, which follows by time-scaling and comparison, is given after the proof of Theorem \ref{thm:main}.

\begin{corollary}[Monotonicity in the mutation rate]\label{coro:lambda2mono}
Fix $b\in\N_2$ and $\lambda_1>0$, and let $\lambda_2>\lambda_1$. Consider a BRWSS on $\X=\{0,1,\dots,b-1\}^d$ with branching rate $\lambda_1$ and mutation rate $\lambda_2$, and let $\tau_{d,m}^{(\lambda_1,\lambda_2)}$ denote the first passage time to a fixed target genotype at distance $m=m(d)$ from the origin, where $m=o(d)$ as $d\to\infty$. Then
\begin{align}
\frac{\tau_{d,m}^{(\lambda_1,\lambda_2)}}{d}
=\frac{1}{\lambda_2}\,x_0\Big(b,e^{\lambda_1/\lambda_2}\Big)+o_\p(1),\label{eq:lambda2:asymp}
\end{align}
where $x_0(b,\rho)$ is the positive solution of \eqref{eq:x0  def}. In particular, for any $\lambda_2'>\lambda_2>\lambda_1$,
\begin{align}
\tau_{d,m}^{(\lambda_1,\lambda_2')}-\tau_{d,m}^{(\lambda_1,\lambda_2)}
=\Bigg(\frac{x_0\big(b,e^{\lambda_1/\lambda_2'}\big)}{\lambda_2'}-\frac{x_0\big(b,e^{\lambda_1/\lambda_2}\big)}{\lambda_2}\Bigg)d+o_\p(d),\label{eq:lambda2:diff}
\end{align}
and the coefficient in parentheses is strictly positive. Hence, holding $\lambda_1$ fixed, increasing the mutation rate increases the first passage time by an amount of order $d$ when $m=o(d)$.
\end{corollary}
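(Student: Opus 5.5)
The plan is to reduce everything to Theorem~\ref{thm:main} and Proposition~\ref{thm:const rho expansion} by a time change, and then show the leading coefficient is monotone in $\lambda_2$.

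\textbf{Time change.} Scale time by $\lambda_2$. The BRWSS with rates $(\lambda_1,\lambda_2)$, observed at time $s/\lambda_2$, has the law of the normalized process with mutation rate $1$ and branching rate $\lambda_1/\lambda_2=\log\rho$, where $\rho=e^{\lambda_1/\lambda_2}$. Hence
\[
\tau^{(\lambda_1,\lambda_2)}_{d,m}\overset{d}{=}\tau_{d,m}/\lambda_2 .
\]
Since $\lambda_2>\lambda_1>0$, we have $\rho\in(1,e)$, so Theorem~\ref{thm:main} applies.

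\textbf{Asymptotics \eqref{eq:lambda2:asymp}.} The condition $m=o(d)$ gives $m\le d/L_1$ for large $d$. Theorem~\ref{thm:main} then yields $\tau_{d,m}=t_{d,m}+O_\p(1)$. Proposition~\ref{thm:const rho expansion}(iii) gives $t_{d,m}=x_0d+rm+O(m^2/d)$, and both $rm=o(d)$ and $m^2/d=o(d)$. Therefore $\tau_{d,m}/d=x_0+o_\p(1)$, and dividing by $\lambda_2$ gives \eqref{eq:lambda2:asymp}.

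\textbf{Difference \eqref{eq:lambda2:diff}.} Apply \eqref{eq:lambda2:asymp} to each of the two processes and subtract; no coupling is needed, since a difference of two $o_\p(d)$ terms is again $o_\p(d)$.

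\textbf{Positivity of the coefficient.} This is the main step. Put $\theta=\lambda_1/\lambda_2\in(0,1)$ and write $x_0(\theta)$ for $x_0(b,e^\theta)$. Then
\[
\frac{x_0(b,e^{\lambda_1/\lambda_2})}{\lambda_2}=\frac{\theta\,x_0(\theta)}{\lambda_1}.
\]
Since $\lambda_1$ is fixed and increasing $\lambda_2$ decreases $\theta$, it suffices to show that $y(\theta):=\theta x_0(\theta)$ is strictly decreasing on $(0,1)$.

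Rewrite \eqref{eq:x0  def} as $F(x_0)=\theta$, where
\[
F(x):=\frac{\log b-\log\big(1+(b-1)e^{-bx/(b-1)}\big)}{x}.
\]
Let $g(x)=\log b-\log(1+(b-1)e^{-bx/(b-1)})$. Then $g(0)=0$, $g'(0)=1$, and $g$ is concave, since $g'(x)=b/(b-1+e^{bx/(b-1)})$ is decreasing. Hence $F=g/x$ is strictly decreasing from $1$ toward $0$. This recovers that $x_0$ is decreasing in $\theta$, consistent with Proposition~\ref{thm:const rho expansion}(ii).

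Now $y=\theta x_0=g(x_0)$. Because $g$ is strictly increasing and $x_0$ is strictly decreasing in $\theta$, $y$ is strictly decreasing in $\theta$. Consequently $x_0(b,e^{\lambda_1/\lambda_2})/\lambda_2$ is strictly increasing in $\lambda_2$ on $(\lambda_1,\infty)$, which gives the strict positivity of the coefficient.

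The only point that needs care is the strict monotonicity of $x_0$ in $\theta$, including uniqueness of the root. Both follow from the strict concavity of $g$: $g''<0$ strictly.
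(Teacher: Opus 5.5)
Your proposal is correct and follows the paper's proof. You rescale time to $\rho=e^{\lambda_1/\lambda_2}\in(1,e)$ and apply Theorem~\ref{thm:main} with Proposition~\ref{thm:const rho expansion}(iii); you then get positivity from $\theta x_0(\theta)=g(x_0(\theta))$ with $g$ increasing and $x_0$ decreasing, which is the paper's chain $x_0\log\rho=g(x_0)<g(x_0')=(x_0'/\lambda_2')\log\rho$. The only cosmetic difference is that you re-derive uniqueness and monotonicity of $x_0$ from $g(x)/x$ being strictly decreasing, rather than citing Proposition~\ref{thm:const rho expansion}(ii).
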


Another corollary of Theorem \ref{thm:main} provides the asymptotic order of the cover times of branching random walks on the hypercube. Results on cover times are motivated by the somatic hypermutation of B-cells \citep{balelli2016branching}. Denote by $\tau_{\mathrm{cov}}(d)$ the cover time of BRW on $\X_d^{(b)}$.

\begin{corollary}\label{coro:cover}
 Let $b\in\N_2$, and fix $\rho\in(1,e)$. Then the sequence of random variables $\{\tau_{\mathrm{cov}}(d)/d\}_{d\in\N}$ is tight. 
\end{corollary}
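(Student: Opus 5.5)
The plan is to bound the cover time above by a union bound over all $b^d$ targets. The tail estimates come from the first-passage analysis behind Theorem~\ref{thm:main}, or, more robustly, from a direct second-moment/restart argument.

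By symmetry the cover time satisfies $\tau_{\mathrm{cov}}(d)=\max_{\bx\in\X}\tau(\bx)$, where $\tau(\bx)$ is the first passage time from $\z$ to $\bx$. Its law depends only on $m=d_{\mathrm H}(\bx,\z)$, so it is that of $\tau_{d,m}$. Trivially $\tau_{\mathrm{cov}}(d)\ge \tau_{d,1}$, and Theorem~\ref{thm:main} with $m=1$ gives $\tau_{d,1}=x_0d+O_\p(1)$. Hence $\tau_{\mathrm{cov}}(d)/d$ is bounded below away from $0$ in probability, although tightness only requires the upper bound. Since $\tau_{\mathrm{cov}}(d)\ge 0$, it suffices to show that for every $\varepsilon>0$ there is $C$ with $\p(\tau_{\mathrm{cov}}(d)>Cd)<\varepsilon$ for all large $d$.

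Theorem~\ref{thm:main} alone gives only tightness for fixed targets, which is too weak for a union bound over $b^d$ vertices. I therefore need an exponentially small tail bound of the form
\[
\p(\tau(\bx)>Cd)\le e^{-c(C)d},\qquad c(C)>\log b,
\]
uniformly in $\bx\in\X$, including $m$ of order $d$ (beyond the range $d/L_1$). I would get this by a two-stage argument.

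In the first stage, run the BRW for time $Ad$. The population is a Yule process with $|V_{Ad}|\ge \rho^{Ad/2}$ with probability $1-e^{-c'd}$; the geometric tail of the Yule process at time $t$ gives probability about $\rho^{-Ad/2}$ for the complement. Moreover, at time $Ad$ the walk has mixed: by Lemma~\ref{thm:bingham}, transition probabilities are within a factor $1+o(1)$ of $b^{-d}$ uniformly once $t\ge Ad$ with $A$ large. In the second stage, each of the first $\rho^{Ad/2}$ particles (or $K\asymp \rho^{Ad/2}$ of them, chosen at positions that are essentially independent) independently spawns a subtree. Within an additional time $1$, each such subtree hits a fixed $\bx$ with probability at least $p\,b^{-d}$ for some constant $p>0$. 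This lower bound is simply a single lineage making one final mutation step into $\bx$ from a neighbour, using the mixed position distribution.

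Conditional independence across subtrees then gives
\[
\p\big(\tau(\bx)>Ad+1\big)\le e^{-c'd}+\big(1-pb^{-d}\big)^{\rho^{Ad/2}}\le e^{-c'd}+\exp\big(-p\,\rho^{Ad/2}b^{-d}\big).
\]
Choosing $A$ with $\rho^{A/2}>b^2$ makes the second term doubly exponentially small. The term $e^{-c'd}$ must beat $b^d$, which requires $c'>\log b$; this is achieved by taking $A$ large, since the Yule tail decays like $\rho^{-Ad/2}$.

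The union bound over $b^d$ vertices then yields $\p(\tau_{\mathrm{cov}}>Ad+1)\to 0$, which proves the claim.

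The main obstacle is making the independence in the second stage rigorous. The positions of the particles at time $Ad$ are correlated through shared ancestry, so the hitting events of distinct particles are not independent. To deal with this, I would condition on $\mathcal F_{Ad}$ and use only the subtrees' own future randomness. The hitting probability for each subtree is a function of its starting position alone, so I need a lower bound on $\p(\text{hit } \bx \text{ within time } s\mid \text{start } y)$ that holds uniformly over all $y\in\X$, not just under the stationary distribution. For this I would instead run each subtree for an additional time $Ad$ and use a first-lineage bound: the probability that a single lineage is at $\bx$ at time $Ad$ is at least $\tfrac12 b^{-d}$ for every $y$, by Lemma~\ref{thm:bingham} with $t=Ad$ large, since each factor $1\pm(b-1)e^{-bA/(b-1)}$ is then $1+O(e^{-cA})$ and its $d$-th power is bounded below once $A$ is large enough. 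With this uniform bound, conditional independence of the subtrees given $\mathcal F_{Ad}$ is exact, and the estimate above holds with time $2Ad$.
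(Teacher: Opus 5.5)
\textbf{The false step.} The uniform lineage bound in your last paragraph is false as stated, and so is the earlier claim that the walk has mixed by time $Ad$. By Lemma~\ref{thm:bingham}, in the worst case $d_{\mathrm H}(y,\bx)=d$ we get
\[
q_d(Ad)=b^{-d}\bigl(1-e^{-bA/(b-1)}\bigr)^d\le b^{-d}\exp\bigl(-d\,e^{-bA/(b-1)}\bigr).
\]
For fixed $A$ this is $o(b^{-d})$, not at least $\tfrac12 b^{-d}$. The $d$-th power of $1-O(e^{-cA})$ is not bounded below as $d\to\infty$, and uniform closeness of $q_m(t)$ to $b^{-d}$ in ratio needs $t$ of order $d\log d$.

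\textbf{The fix.} The repair stays inside your framework. Since $q_m(t)$ is decreasing in $m$,
\[
\min_{0\le m\le d} q_m(Ad)=\Bigl(\frac{1-e^{-bA/(b-1)}}{b}\Bigr)^d\ge (2b)^{-d}\qquad\text{once } e^{-bA/(b-1)}\le \tfrac12 .
\]
With $K\ge\rho^{Ad/2}$ particles at time $Ad$, the conditional probability (given $\mathcal F_{Ad}$) that a fixed $\bx$ is missed during $[Ad,2Ad]$ is at most
\[
\exp\bigl(-K(2b)^{-d}\bigr)\le\exp\bigl(-(\rho^{A/2}/(2b))^d\bigr).
\]
Taking $\rho^{A/2}>2b^2$, the union bound gives at most $b^d\exp(-b^d)\to0$. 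The exponential loss in the per-particle hitting probability is harmless because the growth exponent $\tfrac A2\log\rho$ is linear in $A$. Also, $\{|V_{Ad}|<K\}$ is one event shared by all targets, so you only need $\p(|V_{Ad}|<K)\le\rho^{-Ad/2}\to0$. Your requirement $c'>\log b$ is unnecessary, though harmless.

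\textbf{Comparison with the paper.} With this correction your proof is complete, and it takes a genuinely different route. The first stage is the same: the paper grows the population to at least $db^d/\varepsilon$ particles by time $L_{10}d$. For the second stage the paper uses Lemma~\ref{lemma:uniform LB}: from any start, a single particle's subtree hits any prescribed vertex within time $L_9d$ with probability at least $1-1/b$. That lemma comes from Theorem~\ref{thm:main} by chaining $L_1$ segments of length at most $d/L_1$. The paper then gives each target its own disjoint group of $d/\varepsilon$ particles, so each target is missed with probability at most $b^{-d/\varepsilon}$.

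You replace that constant-probability hitting bound with the trivial one-lineage bound of order $(c/b)^d$. You compensate by reusing all of the exponentially many particles for every target. What this buys:
\begin{itemize}
\item Your argument is self-contained and avoids the second-moment machinery entirely.
\item It never uses $\rho<e$, so it gives $\tau_{\mathrm{cov}}(d)=O_\p(d)$ for every fixed $\rho>1$.
\item With $A\asymp 1/\log\rho$, it also gives the bound behind Corollary~\ref{coro:cover 2} without the regular-variation hypothesis.
\end{itemize}
The paper's route instead ties the cover time to the sharp first-passage result, though neither approach identifies the leading constant. Your lower bound via $\tau_{d,1}$ is correct but not needed, since $\tau_{\mathrm{cov}}(d)\ge 0$.
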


\subsection{The case \texorpdfstring{$\rho>e$}{}}
Recall from the discussion after~\eqref{eq:1} that in the pre-mixing regime the relevant transition probability carries an $e^{-t}$ factor, so the effective growth term becomes $(\rho/e)^t$, which is the origin of the threshold $\rho=e$.
Motivated by this, we define
\begin{align}
    t_{d,m}:=\frac{mW(\frac{(\log\rho-1)(b-1)d}{m})}{\log\rho-1},\label{eq:tdm:rho>e}
\end{align}
where $W(x)$ is the Lambert $W$ function and satisfies $W(x)e^{W(x)}=x$ for $x>0$. In particular, $t_{d,m}$ satisfies
\begin{align}
    \Big(\frac{\rho}{e}\Big)^{t_{d,m}}\Big(\frac{t_{d,m}}{(b-1)d}\Big)^m=1.\label{eq:tdm}
\end{align}
One can also prove via Taylor expansion arguments that $t_{d,m}$ is $O(1)$ away from being a solution to \eqref{eq:1}. Figure \ref{fig:plot sol} below shows the solution to \eqref{eq:1} for a range of $\rho$ around $\rho=e$. We refer to Lemma \ref{lemma:key} below for further properties and asymptotics of $t_{d,m}$.
\begin{figure}
    \centering
    \includegraphics[width=0.65\linewidth]{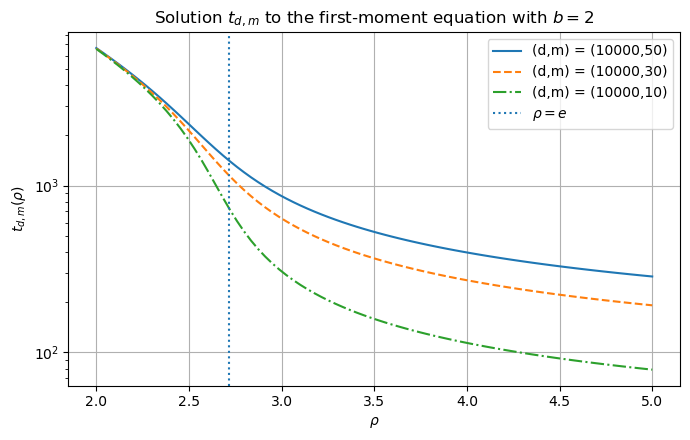}
    \caption{Plots of solutions to the first-moment equation \eqref{eq:1} for $\rho\in[2,5]$ with $b=2$. The $y$-axis is on a logarithmic scale.}
    \label{fig:plot sol}
\end{figure}

\begin{theorem}\label{thm:main2}
    Let $b\in\N_2$, $L>0$ be arbitrary, and $\rho>e$. Then 
    uniformly for $m\in[1,L\sqrt{d}/\log d]$ as $d\to\infty$,
     \begin{align}
            \tau_{d,m}=t_{d,m}+O_\p(\log d),\label{eq:t!}
        \end{align}
        where $t_{d,m}$ is defined in \eqref{eq:tdm:rho>e}. 
\end{theorem}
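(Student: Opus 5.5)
We prove the two bounds $\tau_{d,m}\ge t_{d,m}-C\log d$ and $\tau_{d,m}\le t_{d,m}+C\log d$ separately, each with probability close to one and uniformly in $m\le L\sqrt d/\log d$.

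Since $m\ll\sqrt d$, the defining relation \eqref{eq:tdm} forces $t_{d,m}\asymp m\log(d/m)$. In particular $t_{d,m}=O(\sqrt d)$, so every relevant time satisfies $t\ll d$. On this pre-mixing scale we use the explicit transition kernel of Lemma~\ref{thm:bingham} together with the expansion \eqref{eq:premix_balance}. Because $t^2/d=O(1)$ and $m t/d=o(1)$, the expansion becomes a genuine two-sided bound: the time-$t$ probability of moving between two genotypes at distance $m$ is within constant factors of $e^{-t}(t/((b-1)d))^m$. We will also need a path-level version. For $t\ll d$, the probability that a walk started at $\bx_m$ is at $\z$ at time $t$ is dominated by paths that make only a few mutations beyond the $m$ necessary ones. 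More precisely, a path with $m+2k$ jumps contributes a factor of roughly $(t^2/d)^k/k!$ relative to the direct paths.

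\textbf{Lower bound.} We bound the expected number of particles that ever reach $\z$ before time $t$. By the many-to-one formula and a last-exit decomposition, this expectation is at most $\int_0^{t+1}\rho^s p_s(\bx_m,\z)\,ds$. Using the bound above, this is of order $\int_0^{t}(\rho/e)^s(s/((b-1)d))^m\,ds$. The integrand is increasing in $s$, so the integral is $O\big(t\,(\rho/e)^t(t/((b-1)d))^m\big)$. Now take $t=t_{d,m}-C\log d$. By \eqref{eq:tdm}, the product $(\rho/e)^t(t/((b-1)d))^m$ then drops by a factor $(\rho/e)^{-C\log d}$, up to the ratio $(t/t_{d,m})^m\le 1$. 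Since $t_{d,m}=O(\sqrt d)$, this beats the prefactor $t$ once $C$ is large. Markov's inequality then shows that $\tau_{d,m}\ge t_{d,m}-C\log d$ with high probability.

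\textbf{Upper bound.} Here we need a second moment argument, which is the main obstacle. The plan has three steps.
\begin{enumerate}
\item First run the BRW for a time $C'\log d$. Since $\rho>e>1$, the population then has size at least $d^{c}$ with high probability. By the lower-bound computation, essentially all of these particles are still at distance $m+O(\log d)$ from $\z$. This step wastes only $O(\log d)$ time.
\item From each such particle, count the descendants that reach $\z$ at time $t_{d,m}$ along \emph{restricted} trajectories. These are trajectories that make exactly the required moves, namely the $m$ (or $m+O(\log d)$) flips toward $\z$ plus at most a bounded number of extra back-and-forth moves. Call this count $Z$. Its first moment is of order $(\rho/e)^{t}(t/((b-1)d))^{m}$, which is of order one at $t=t_{d,m}$ by \eqref{eq:tdm}.
\item Compute $\E Z^2$ via the many-to-two formula, summing over the branching time $s$ of the most recent common ancestor. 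The contribution from a split at time $s$ is roughly $\rho^{s}$ times the square of the probability of going from the split location to $\z$ in the remaining time $t-s$, weighted by reaching that location first. The restriction is what makes this sum controllable. Since the lineages move toward $\z$, a split at time $s$ has already spent about $ms/t$ of the useful moves, so the two remaining pieces cost $(\rho/e)^{t-s}$ and $(t/d)^{m(1-s/t)}$ each. Along the optimal straight path, the ratio $\E Z^2/(\E Z)^2$ then reduces to a geometric sum $\sum_s(\rho/e)^{-s}\cdot\text{(poly)}$, which is $O(1)$ precisely because $\rho>e$.
\end{enumerate}
Paley--Zygmund then gives a positive probability of success, uniform in the starting particle, for each of the $d^c$ independent subtrees. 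Hence the upper bound holds with probability $1-o(1)$, or after one additional $C\log d$ boost of the population.

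The delicate points are these. One must choose the restricted class so that its first moment still matches \eqref{eq:tdm} up to $\exp(O(\log d))$. One must control the overlap when the two lineages share early moves. And one must keep all error terms uniform up to $m\asymp\sqrt d/\log d$, where $t^2/d=O(1)$ and the approximation $e^{-bt/((b-1)d)}\approx 1-bt/((b-1)d)$ incurs constant-factor errors that accumulate over $m$ coordinates. This threshold is the reason for the restriction on $m$ in the statement.
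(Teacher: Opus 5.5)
Your lower bound is correct and is essentially the paper's argument. You bound the expected occupation time of $\z$ by $\int\rho^s q_m(s)\,\d s$, use $q_m(s)\ll e^{-s}(s/((b-1)d))^m$ on this time scale, and finish with Markov's inequality and the strong Markov property at $\tau_{d,m}$. Your crude prefactor $t=O(\sqrt d)$ is harmless against the gain $d^{-C\log(\rho/e)}$.

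The upper bound, however, has a genuine gap in step 3. Restricting to monotone trajectories (with boundedly many detours) does not stop the common ancestor from being \emph{ahead of schedule}. Your assertion that a split at time $s$ has used only about $ms/t$ of the moves is exactly what fails for the dominant contribution. To see this, keep in the many-to-two formula only the pairs whose ancestor makes all $m$ monotone moves by the split time $s$ and whose two descendants then never move. At $t=t_{d,m}$, where $\E Z\asymp 1$, this gives
\[
\E Z^2\ \ge\ 2\log\rho\int_0^{t}\Big(\frac{\rho}{e}\Big)^{2t-s}\Big(\frac{s}{(b-1)d}\Big)^m\d s
=2\log\rho\int_0^{t}\Big(\frac{\rho}{e}\Big)^{t-s}\Big(\frac{s}{t}\Big)^m\d s .
\]
Restricting to $s\in[m,m+1]$ yields $\E Z^2\gtrsim (c\,dm/t^2)^m\ge (c'\sqrt d/\log d)^m$ for $m\le L\sqrt d/\log d$. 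So $\E Z^2/(\E Z)^2$ is $\exp(c\,m\log d)$, not $O(1)$, already for $m=1$. Paley--Zygmund then only guarantees a per-subtree success probability of $d^{-cm}$. The $d^{c}$ subtrees available after time $C'\log d$ cannot compensate once $m\to\infty$; you would need bootstrap time of order $m\log d\asymp t_{d,m}$. Note also that $\rho>e$ causes the blow-up rather than curing it: a lineage that reaches $\z$ at time $s$ produces about $(\rho/e)^{t-s}$ descendants sitting at $\z$.

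The missing idea is a barrier. Count only monotone paths whose projected position satisfies $\eta_v(s)\ge m-sm/t-1$; this caps the number of moves made before the split at $1+ms/t$ in the many-to-two sum. Conditionally on exactly $m$ jumps, the jump times are i.i.d.\ uniform on $[0,t]$. Hence the barrier costs a ballot probability for the uniform empirical process, which is $\asymp 1/m$ by Smirnov's estimate (Lemma \ref{prop:empirical ballot}). This cost is absorbed by working at $\bar t$ defined by $(\rho/e)^{\bar t}(\bar t/((b-1)d))^m=m$, which equals $t_{d,m}+O(\log d)$. With the barrier the paper obtains $\E M_\z(\bar t)\asymp 1$ and $\E M_\z(\bar t)^2\ll d^4$, still not $O(1)$. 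This polynomial loss is then beaten by polynomially many independent trials.

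Your step 1 also needs repair. A particle at distance $m+j$ loses a factor of order $(t/d)^j$ in the first moment; equivalently it needs about $j\log d$ extra time. So particles at distance $m+O(\log d)$ would cost $O(\log^2 d)$, not $O(\log d)$. Instead, use that since $\rho>e$, the particles that never leave their site form a supercritical birth--death process (birth rate $\log\rho$, death rate $1$). This gives $\gtrsim(\rho/e)^{C\log d}$ particles within $O(1)$ of the start at time $C\log d$ (Lemma \ref{lemma:bootstrap rho>e}).
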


The restriction on $m$ is imposed to keep the pre-mixing approximation uniform and the proof sharp. Suppose that we are given a BRWSS with branching rate $\lambda_1$ and mutation rate $\lambda_2$, where $0<\lambda_2<\lambda_1$, and consider $d$ large enough and $m=O(\sqrt{d}/\log d)$. A similar analysis as before shows that if $\lambda_1$ increases, the FPT decreases at leading order; if $\lambda_2$ increases, the FPT increases at leading order. The latter claim follows directly from Theorem \ref{thm:main2}, Lemma \ref{lemma:key}(i) below, and the fact that if $1=\lambda_2<\lambda_2'<\log\rho$,
$$\frac{1}{\log\rho-1}<\frac{1}{\log\rho-\lambda_2'}=\frac{1}{\lambda_2'(\frac{\log\rho}{\lambda_2'}-1)}.$$

\begin{remark}
    The error term in \eqref{eq:t!} can be shown to be optimal. In fact, with positive probability, the first particle moves to the state $m+1$ before branching. Conditioning on this event, we see that the FPT cannot be concentrated in a window smaller than $t_{d,m+1}-t_{d,m}$, which is of order $t_{d,m}/m$ according to Lemma \ref{lemma:key}(iii) below, and further, is of order $\log d$ by Lemma \ref{lemma:key}(i) below in the range $m=O(\sqrt{d})$. 
    Formally, for a random variable $X$ and $\ell>0$, define the concentration function
$$Q(\ell,X):=\sup_{x\in\R}\p(x\leq X\leq x+\ell).$$ 
Then under the setting of Theorem \ref{thm:main2}, for some $\delta>0$, 
    $$\limsup_{d\to\infty}\sup_{m\in[1,L\sqrt{d}/\log d]} Q(\delta\log d,\tau_{d,m})\leq 1-\delta.$$
 \end{remark}

Theorems \ref{thm:main} and \ref{thm:main2} together indicate a phase transition regime of the FPT asymptotics in the branching rate $\log\rho$.  In the following, we sketch the arguments for the upper and lower bounds for the first passage time $\tau_{d,m}$. We start from a lemma on computing the transition probabilities for the simple random walk on $\X$.

\begin{lemma}\label{thm:bingham}
 Let $b\in\N_2$ and $\bx,\by\in \X$ with $m=d_{\mathrm{H}}(\bx,\by)$. The probability that the continuous-time simple random walk on $\X$ (with unit transition rate) transitions from the state $\bx$ to the state $\by$ at time $t$ depends only on $m$. Moreover, denoting the probability by $q_m(t)$, we have
    \begin{align}
        q_{m}(t)=b^{-d}\big(1+(b-1)e^{-\frac{bt}{(b-1)d}}\big)^{d-m}\big(1-e^{-\frac{bt}{(b-1)d}}\big)^m.\label{eq:qij}
    \end{align}
\end{lemma}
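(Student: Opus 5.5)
The plan is to exploit the product structure of the continuous-time walk on $\X$. At unit total rate, each mutation event picks a coordinate uniformly and resamples it to one of the other $b-1$ symbols. By Poisson thinning, the mutation events affecting coordinate $i$ form independent Poisson processes of rate $1/d$, one for each $i=1,\dots,d$. Between these events, the coordinates evolve independently. Hence the walk is a product of $d$ independent continuous-time Markov chains on $\{0,\dots,b-1\}$. In each chain, the state jumps at rate $1/d$ to a uniformly chosen different symbol.

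The transition probability of $\bx\to\by$ therefore factorizes as $\prod_{i=1}^d p_t(x_i,y_i)$, where $p_t$ is the one-coordinate kernel. That kernel is invariant under permutations of the symbols, so $p_t(a,a)$ does not depend on $a$, and $p_t(a,c)$ for $a\neq c$ does not depend on the pair. The product thus equals $p_t(0,0)^{d-m}p_t(0,1)^m$ with $m=d_{\mathrm H}(\bx,\by)$. This gives the first claim, that the probability depends only on $m$.

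To compute the one-coordinate kernel, write its generator as $\frac{1}{d}\cdot\frac{1}{b-1}(J-bI)$, where $J$ is the all-ones $b\times b$ matrix. Equivalently, the generator is $\frac{b}{(b-1)d}(\frac1b J - I)$. The matrix $P=\frac1b J$ is idempotent, so
\[
\exp\Big(s(P-I)\Big)=e^{-s}\big(I+(e^{s}-1)P\big)=P+e^{-s}(I-P),\qquad s=\tfrac{bt}{(b-1)d}.
\]
Reading off the entries gives $p_t(a,a)=\frac1b\big(1+(b-1)e^{-s}\big)$ and $p_t(a,c)=\frac1b(1-e^{-s})$ for $a\ne c$. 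As an alternative check, one can argue directly: with probability $1-e^{-s}$, where $s$ is the rate $\frac{b}{(b-1)d}$ times $t$, the coordinate has been uniformly refreshed, because the dynamics equal "refresh to a uniform symbol at rate $\frac{b}{(b-1)d}$." The refreshed symbol equals the original with probability $1/b$, which yields the same formulas. Multiplying the factors gives \eqref{eq:qij}.

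The only step needing care is the independence and decomposition claim. Either Poisson thinning or the observation that the full generator is a sum $\sum_i I\otimes\cdots\otimes G\otimes\cdots\otimes I$ of commuting single-coordinate generators justifies it, and then $e^{tQ}=\bigotimes_i e^{tG}$. No genuine obstacle is expected.
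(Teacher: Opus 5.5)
Your proof is correct, and it takes a different route from the paper. The paper gives no argument of its own: it identifies the $b=2$ walk, projected onto the number of ones, with the Ehrenfest urn, cites \citep{bingham1991fluctuation} for its transition probabilities, and says that general $b$ ``follows analogously.'' You instead work on $\X$ directly. You factorize the walk into $d$ independent one-coordinate chains (by Poisson thinning, or by the commuting sum of single-coordinate generators) and compute each factor in closed form through the idempotent $P=\frac1b J$, so that $e^{s(P-I)}=P+e^{-s}(I-P)$ with $s=\frac{bt}{(b-1)d}$. The generator $\frac{1}{(b-1)d}(J-bI)$, the matrix exponential, and the resulting diagonal and off-diagonal entries all check out, and they reproduce \eqref{eq:qij} exactly. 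Your route has three advantages. It is self-contained. It handles every $b\in\N_2$ uniformly; this matters because for $b>2$ the Hamming-distance projection is not the classical Ehrenfest chain (its down-rate from level $i$ is $\frac{i}{(b-1)d}$ rather than $\frac{i}{d}$), so ``analogously'' hides some work. And the dependence on $m$ alone follows immediately from the symbol-permutation invariance of the one-coordinate kernel, without passing between a lumped chain and individual vertices. In exchange, the paper's route connects the formula to the classical Ehrenfest literature and to the projected birth--death chain that the paper reuses later, e.g.\ in Lemma~\ref{lemma:probs} and in the barrier argument for $\rho>e$.
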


In the special case $b=2$, the simple random walk on $\{0,1\}^d$ can be identified as the Ehrenfest urn model (by projecting onto the number of ones), and the proof of Lemma \ref{thm:bingham} can be found in \citep{bingham1991fluctuation}. The case of general $b\in\N_2$ follows analogously. 
This result justifies the form of the left-hand side of \eqref{eq:1} in the following sense. Suppose that we initiate a BRW from $\bx_m\in \X_{d,m}$. By Lemma \ref{thm:bingham}, the expected number of particles at the origin $\z$ at time $t$ is given by
\begin{align}
\rho^tq_{m}(t)=\rho^tb^{-d}\big(1+(b-1)e^{-\frac{bt}{(b-1)d}}\big)^{d-m}\big(1-e^{-\frac{bt}{(b-1)d}}\big)^m.\label{eq:first moment}
\end{align}
In other words, if $t_{d,m}$ is a solution to \eqref{eq:1}, then at time $t_{d,m}$, on average there is one particle at the origin $\z$.

If $\rho\in(1,e)$, we apply a direct second moment computation. Suppose that we start from $\bx_m\in \X_{d,m}$ and set $t=t_{d,m}>0$ such that the first moment \eqref{eq:first moment} is equal to one. It is then not hard to derive the lower bound of $\tau_{d,m}$ using Markov's inequality, by considering the total amount of time spent in the target state $\z$. Next, we apply the many-to-two formula \citep{ikeda1969branching,sawyer1976branching} to compute the second moment and show that it is of order $O(1)$. However, this does not suffice since the second moment method (or the Paley--Zygmund inequality) only gives an upper bound with positive probability (but not probability close to one). We then apply a bootstrapping argument, where we first evolve the process from the state $\bx_m$ for a finite time, and show that the second moment is $O(1)$ uniformly in an $O(1)$ neighborhood of $\bx_m$. The key technicality here lies in the estimates of the second moments, as it often involves sums over multiple products of binomial coefficients and exponentials.

If $\rho\in(e,\infty)$, a direct second moment computation fails. Note that in this case, the majority of the particles will concentrate near the origin at short times, due to the large branching rate. Therefore, it is reasonable that for a short Hamming distance $m$, the trajectory leading to the first passage event will be ``straight'' without detours. To formulate this idea, suppose that the BRW is initiated at some $\bx_m\in\X_{d,m}$ and we consider the first-passage event to $\z$. We project the hypercube $\{0,1,\dots,b-1\}^d$ to the set $\{0,1,\dots,d\}$ according to the Hamming distance from $\z$. Then, we show that, with high probability, the optimal trajectory mutates $m$ times before reaching $\z$, and each mutation decreases the Hamming distance from $\z$. Given the event that there are $m$ mutations in the time interval $[0,t]$, the mutations occur at times i.i.d.~sampled from $[0,t]$, and hence can be described using an empirical process. We apply classic results on ballot theorems for empirical processes to show that a ``typical'' trajectory does not venture ``too much'' beyond $m-sm/t$ at time $s\in[0,t]$, and we apply a second moment count to such trajectories. Our approach is inspired by the modified second moment method applied in the study of extrema of one-dimensional spatial branching processes \citep{addario2009minima,roberts2013simple}.

\section{The ultra-slow branching regime}
\label{sec:ultraslow}

The goal of this section is to study the ultra-slow branching regime where $\rho(d)\to 1$ as $d\to\infty$. As we will show, the first moment does not predict the correct FPT asymptotics. Throughout this section, we assume $b=2$ for simplicity, i.e., we consider BRW on the hypercube $\{0,1\}^d$. 
  Recall the first-moment equation \eqref{eq:1}.    We first prove that a unique solution $t_{d,m}$ exists and provide an asymptotic expansion of $t_{d,m}$.

\begin{proposition}\label{thm:non-const rho expansion}
    Suppose that $\rho=\rho(d)\to 1^+$ satisfies that $\log\rho(d)$ is regularly varying of index $\gamma\in[-1,0)$ and $\log\rho(d)\geq L_2/d$ for some large enough constant $L_2>0$ and $1\leq m\leq d/L_1$ for $L_1$ large enough.     Then there exists a unique strictly positive solution to \eqref{eq:1}. Moreover, denote by $t=t_{d,m}$ the unique solution to \eqref{eq:1}. Then
$$t=\frac{d\log 2}{\log\rho(d)}+o(1).$$
\end{proposition}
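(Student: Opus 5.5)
Throughout, $b=2$. Write $\varepsilon=\varepsilon(d):=\log\rho(d)\to0$ and $u=u(t):=e^{-2t/d}\in(0,1)$. The plan is to work with the logarithm of the left-hand side of \eqref{eq:1}:
\begin{align*}
F(t):=\varepsilon t-d\log 2+(d-m)\log(1+u)+m\log(1-u).
\end{align*}
The strategy is to show that every zero of $F$ lies at a time where $u$ is super-polynomially small. At such times both uniqueness and the expansion become elementary.

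\textbf{Step 1 (existence).} Since $m\ge1$, we have $F(t)\to-\infty$ as $t\to0^+$, because $m\log(1-u)\to-\infty$. Also $F(t)\to+\infty$ as $t\to\infty$, because $F(t)=\varepsilon t-d\log2+o(1)$ there. So a positive zero exists by continuity.

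\textbf{Step 2 (localization of zeros).} I will use the crude bound $(d-m)\log(1+u)\le d\log(1+u)$ together with $m\log(1-u)\le 0$. These give
\begin{align*}
F(t)\le \varepsilon t-d\big(\log2-\log(1+e^{-s})\big),\qquad s:=2t/d .
\end{align*}
The function $g(s):=\log 2-\log(1+e^{-s})$ is increasing with $g(0)=0$ and $g'(0)=1/2$. Hence there is an absolute $c>0$ with $g(s)\ge c\min(s,1)$. At a zero of $F$ we therefore need $\varepsilon ds/2\ge c\,d\min(s,1)$. Once $\varepsilon<2c$ this forces $s\ge1$, and then $s\ge 2c/\varepsilon$. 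Consequently every zero satisfies $u\le e^{-2c/\varepsilon}$.

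\textbf{Step 3 (uniqueness).} Compute
\begin{align*}
F'(t)=\varepsilon-\frac{2(d-m)}{d}\,\frac{u}{1+u}+\frac{2m}{d}\,\frac{u}{1-u}\ \ge\ \varepsilon-2u .
\end{align*}
At any zero this is at least $\varepsilon-2e^{-2c/\varepsilon}$, which is positive for $d$ large. A $C^1$ function going from $-\infty$ to $+\infty$ whose derivative is positive at each of its zeros has exactly one zero, so $t_{d,m}$ is unique.

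\textbf{Step 4 (expansion).} At $t=t_{d,m}$ we have $u\le e^{-2c/\varepsilon}\le 1/2$, and
\begin{align*}
\Big|t-\frac{d\log2}{\varepsilon}\Big|=\frac{\big|(d-m)\log(1+u)+m\log(1-u)\big|}{\varepsilon}\le\frac{3d\,e^{-2c/\varepsilon}}{\varepsilon}.
\end{align*}
It remains to show that the right-hand side is $o(1)$. This is where the regular-variation hypothesis enters. Since $\varepsilon(d)$ is regularly varying with index $\gamma<0$, Potter's bounds give $\varepsilon(d)\le d^{\gamma/2}$ for large $d$, so $1/\varepsilon\ge d^{|\gamma|/2}$. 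Then $e^{-2c/\varepsilon}$ decays like $\exp(-2c\,d^{|\gamma|/2})$, i.e.\ faster than any polynomial. Meanwhile $d/\varepsilon\le d^2/L_2$ by the assumption $\varepsilon\ge L_2/d$. Hence the error is $o(1)$ uniformly in $1\le m\le d/L_1$, which is the claimed expansion.

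This step is the only delicate point. In the boundary case $\gamma=-1$ one has $\varepsilon\asymp\ell(d)/d$ with $\ell$ slowly varying, so $1/\varepsilon$ grows essentially linearly in $d$ and the argument goes through comfortably. The lower bound $\varepsilon\ge L_2/d$ guarantees that $d/\varepsilon$ stays polynomial in $d$, so the super-polynomial decay of $u$ wins. If a sharper error term is wanted, one could iterate once more: plugging $s\approx 2\log2/\varepsilon$ back in gives $u\approx 4^{-1/\varepsilon}$.
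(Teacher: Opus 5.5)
Your proof is correct, but it reaches the result by a different route from the paper's.

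\textbf{The paper's route.} Uniqueness comes from Proposition~\ref{prop:uniquesoln}, which is shared with the fixed-$\rho\in(1,e)$ regime. It rules out roots in $(0,\delta_1 d)$. It then shows that $\phi'$, viewed as a function of $u=e^{-2t/d}$, is convex when $m\le d/L_1$, so $\phi$ first decreases and then increases on $[\delta_1 d,\infty)$. For the expansion (Proposition~\ref{lemma:asymp 2}), the paper checks signs at the two test points $t=d\log 2/\log\rho$ and $t=(d\log 2-d\log\rho)/\log\rho$, and traps the root between them using uniqueness. It then bounds the remainder by $d\log(1+e^{-2t/d})\ll de^{-2\log 2/\log\rho}$ and finishes with Potter's bound in the weak form $\log\rho\le\epsilon/\log d$.

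\textbf{Your route.} You localize every zero at once via the envelope $F(t)\le\varepsilon t-d\,g(2t/d)$; concavity of $g$ gives $c=g(1)$ explicitly. You then obtain uniqueness by transversality, $F'\ge\varepsilon-2u>0$ at each zero. Finally, you read $|t-d\log 2/\varepsilon|$ directly off $F(t)=0$ rather than from a bracket.

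\textbf{What your version buys.} It is shorter and self-contained, and neither step uses $m\le d/L_1$: both hold for every $1\le m\le d$. By contrast, the paper's convexity step needs $m$ small relative to $d$. So does its upper test point, where the sign of $(d-m)\log(1+u)+m\log(1-u)$ requires roughly $m<d/2$.

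\textbf{What the paper's version buys.} It handles both regimes with one uniqueness statement. For fixed $\rho\in(1,e)$ the root sits at $u$ of order one, so there is no a priori smallness of $u$ to exploit there. Its bracket also pins $u$ at the root to about $4^{-1/\log\rho}$, giving the explicit error $O(de^{-2\log 2/\log\rho}/\log\rho)$. You only recover this after the extra iteration you mention.
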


\begin{theorem}\label{thm:main rho->1}
    Suppose that $\rho=\rho(d)$ satisfies the assumptions of Proposition \ref{thm:non-const rho expansion}. 
    Then there exists a large constant $L_3>0$, independent of $\rho$, such that uniformly for $m\in[1, d/L_3]$ as $d\to\infty$,
\begin{align}
    \tau_{d,m}=t_{d,m}-\frac{-\log\log\rho}{\log\rho}+\frac{O_\p(1)}{\log\rho},\label{eq:tau2!}
\end{align}
where $t_{d,m}$ is the unique positive solution to \eqref{eq:1}.
\end{theorem}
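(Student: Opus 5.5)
The plan is to count \emph{arrivals} at $\z$ rather than occupation. This is what produces the earlier hitting time. Write $\lambda:=\log\rho\to0$, and let $N(s)$ be the number of jump-times in $[0,s]$ at which some particle jumps into $\z$, so that $\{\tau_{d,m}\le s\}=\{N(s)\ge1\}$. The heuristic is as follows. Near time $t=t_{d,m}$, the expected number of particles sitting at $\z$ is $\rho^uq_m(u)\approx e^{\lambda(u-t)}$, and each such particle leaves at rate $1$. Hence arrivals accumulate at rate $\approx e^{\lambda(u-t)}$ over a window of length $1/\lambda$, which gives $\E N(s)\approx e^{\lambda(s-t)}/\lambda$. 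Setting this equal to $e^{-K}$ yields
\[s=s_K:=t_{d,m}-\frac{-\log\lambda}{\lambda}-\frac{K}{\lambda}.\]
This is exactly the centering in \eqref{eq:tau2!}: hitting occurs earlier than $t_{d,m}$. The remaining randomness of order $1/\lambda$ comes from the Yule-type martingale limit $W=\lim e^{-\lambda u}|V_u|$, which shifts the clock by $O_\p(1)/\lambda$.

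\textbf{Lower bound.} By the many-to-one formula and the forward equation for $q_m$, the arrival intensity is $\rho^u\big(q_m'(u)+q_m(u)\big)$, so
\[\E N(s)=\int_0^s\rho^u\big(q_m'(u)+q_m(u)\big)\,\d u.\]
I split the integral at $u=d/L$. On $[d/L,s]$ I use \eqref{eq:qij} together with Proposition~\ref{thm:non-const rho expansion}. Since $t\approx d\log2/\lambda$, one has $e^{-2u/d}$ tiny near $t$, and therefore $q_m'/q_m=O(m/d)e^{-2u/d}$ is negligible. Moreover $q_m(u)/q_m(t)=1+o(1)$ for $|u-t|\le C\log(1/\lambda)/\lambda$, while for smaller $u$ the factor $\rho^uq_m(u)$ decays geometrically as $u$ decreases. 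This gives $\E N(s_K)\le (1+o(1))e^{-K}$ from this range. On $[0,d/L]$ I bound $q_m(u)\lesssim 2^{-d}(1+e^{-2u/d})^{d-m}(1-e^{-2u/d})^{m}$ and use $\rho^u\le e^{\lambda d/L}$ with $\lambda\ge L_2/d$ and $m\le d/L_3$. The aim is to show the contribution is $o(1)$, choosing $L_3$ large; the hypercube's exponential cost $2^{-d}$ dominates. Markov's inequality then gives $\p(\tau_{d,m}\le s_K)\le e^{-K}+o(1)$.

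\textbf{Upper bound.} I condition on the genealogical tree $\mathcal T$, which is independent of the spatial motion. Then
\[\E[N(s)\mid\mathcal T]=\int_0^s|V_u|\big(q_m'+q_m\big)(u)\,\d u.\]
Because $|V_u|e^{-\lambda u}\to W$ with $W\sim\mathrm{Exp}(1)$, the concentration of $|V_u|e^{-\lambda u}$ around $W$ holds uniformly for $u\ge C/\lambda$ as $\lambda\to0$, after rescaling time by $\lambda$, which reduces to a standard rate-one Yule process. Using this, the conditional mean at $s=t-\frac{-\log\lambda}{\lambda}+\frac K\lambda$ is $\approx We^{K}$. For the conditional second moment I write $N(s)=\sum_v N_v$ over arrival events, grouped by the particle making the jump, and use a many-to-two decomposition. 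Pairs of arrivals by lineages whose most recent common ancestor is at time $w$ contribute a double integral of $q_{m}$-kernels started from the ancestor's position. In the rescaled Yule tree, typical pair split times are $w=O(1/\lambda)\ll t$. The remaining time $\approx t-w$ is long enough that, by \eqref{eq:qij}, the two-point kernel factorizes up to a factor $1+O(e^{-2(t-w)/d}\cdot\text{stuff})$. This shows $\mathrm{Var}(N\mid\mathcal T)\le \E[N\mid\mathcal T]+o(1)\,\E[N\mid\mathcal T]^2+O(1)$, with $O(1)$ contributions coming from late splits. I then apply Paley--Zygmund conditionally and use $\p(W<e^{-K/2})\le e^{-K/2}$, which gives $\p(\tau_{d,m}>s)\to0$ as $K\to\infty$.

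\textbf{Main obstacle.} I expect the main difficulty to be the second-moment bound, uniform in $m\le d/L_3$ and in $\lambda\to0$. Specifically, I need to control pairs whose split time $w$ is large, so that the two lineages remain correlated, and to sum the resulting binomial-exponential sums as in the second-moment estimates behind Theorem~\ref{thm:main}, now with $\rho$ depending on $d$. I would first try bounding the late-split contribution by comparison to a single lineage's expected number of returns to $\z$ within time $O(1/\lambda)$ of an arrival. I expect this to be $O(1)$ because the walk, once it leaves $\z$, escapes to distance $\gg1$ quickly in high dimension. Regular variation of $\lambda(d)$ would then be used only to make the Yule approximation and the window estimates uniform.
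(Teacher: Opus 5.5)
Your argument is sound in outline, and for the upper bound it takes a genuinely different route from the paper.

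\textbf{Lower bound.} This agrees with the paper in substance. The paper bounds the expected \emph{occupation time} of $\z$ up to $t_{d,m}-\frac{-\log\log\rho}{\log\rho}-\frac{C}{\log\rho}$, then uses that a particle reaching $\z$ stays there for an $\mathrm{Exp}(1)$ time. Your arrival count $N(s)$, with intensity $\rho^u(q_m'+q_m)(u)$, encodes $\{\tau_{d,m}\le s\}$ exactly, which is slightly cleaner.

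\textbf{Upper bound.} The paper works at the single time $t'=t_{d,m}-\frac{-\log\log\rho}{\log\rho}$. It counts particles within distance $1/\log\rho$ of $\z$, thinning a particle at distance $k$ by an independent mark of probability $\binom dk^{-1}/L_7$. By Lemma~\ref{lemma:probs}, this lower-bounds the chance of hitting $\z$ within the next $3/\log\rho$. The paper then shows first moment $\gg1$ and second moment $\ll1$ via many-to-two and Lemma~\ref{lemma:enumeration}, and upgrades constant probability to high probability by an (omitted) bootstrapping step. That keeps every moment unconditional and reduces the second moment to explicit binomial sums, at the cost of auxiliary marks and a bootstrap. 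Your route instead uses an exact statistic and conditions on the Yule genealogy. This identifies the $O_\p(1)/\log\rho$ term as the clock shift coming from $W$, and it replaces the bootstrap. The price is controlling the conditional covariance for typical trees: early splits factorize deterministically, while late-split joint terms can be bounded in $\mathcal T$-expectation and then handled by Markov's inequality.

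\textbf{What to fix in your late-split plan.} The time scale is wrong. By \eqref{eq:qij}, for lineages that split $r$ time units before arriving, the factorization error is of order $de^{-2r/d}$. So correlation persists for $r$ up to about $\frac d2\log d$, which is far larger than $1/\log\rho\le d/L_2$. Over that window a subtree grows by up to $\rho^{(d\log d)/2}\ge d^{L_2/2}$. Comparing with a single lineage's returns within time $O(1/\log\rho)$ therefore leaves splits between roughly $C/\log\rho$ and $\frac d2\log d$ before the arrivals uncovered.

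What works is to use two facts: the ancestor's position at such split times is already essentially uniform, and the walk is reversible. Together these bound the late-split pairs by $\rho^r$-weighted return probabilities $q_0(r)=((1+e^{-2r/d})/2)^d$, up to polynomial factors, over $r\le\frac d2\log d$. This is $O(1)$ because $q_0(r)\le e^{-(1-\varepsilon)r}$ for $r\le\delta d$ and $q_0(r)\le e^{-cd}$ beyond, while $\rho^{d\log d}=e^{o(d)}$.

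That last fact, together with $t_{d,m}\gg d\log d$ (needed for your early-split factorization), is where $\log\rho=o(1/\log d)$ is genuinely needed. It comes from Potter's bound under negative regular-variation index, and it matches Steps I--II of the paper's second-moment bound. The Yule approximation itself needs no such assumption.

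\textbf{Minor points in your lower bound.} $\rho^uq_m(u)$ is not monotone on $[d/L,t]$. On $[0,d/L]$ the smallness comes from $(u/d)^me^{-(1-\varepsilon)u}$, not from $2^{-d}$. Also, $q_m'/q_m$ is $O(e^{-2u/d})$ rather than $O(m/d)e^{-2u/d}$, which is harmless. The three-range split of Lemma~\ref{prop:first moment 2} handles the first two points.
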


The double negative sign on the term $\log\log\rho/\log\rho$ is to emphasize that, since $\log\log\rho<0<\log\rho$, the FPT $\tau_{d,m}$ admits a correction term and is smaller than what is predicted from a first moment computation. Similarly to the discussions following Theorem \ref{thm:main2}, one can prove that the error term in \eqref{eq:tau2!} is optimal, i.e., under the setting of Theorem \ref{thm:main rho->1}, we have
     for some $\delta>0$, 
      \begin{align}
          \limsup_{\substack{d\to\infty}}Q\bigg(\frac{\delta}{\log\rho},\tau_{d,m}\bigg)\leq 1-\delta.\label{eq:Q rho->1}
      \end{align}
      We also have the following corollary on cover times, which can be proved in an analogous way to Corollary \ref{coro:cover}.

      \begin{corollary}
          \label{coro:cover 2}
          Fix a function $\rho(d)$ satisfying that $\log\rho(d)$ is regularly varying of index $\gamma\in[-1,0)$ and $\log\rho(d)\geq L_2/d$ where $L_2$ is as in Proposition \ref{thm:non-const rho expansion}.  Then the sequence of random variables $\{(\log\rho(d))\tau_{\mathrm{cov}}(d)/d\}_{d\in\N}$ is tight. 
      \end{corollary}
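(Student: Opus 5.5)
The plan is to reduce the cover time to first passage times to single targets via a union bound over the $2^d$ vertices, using Theorem~\ref{thm:main rho->1} uniformly in $m$, and then to boost its tightness estimate to an exponentially small tail so that the union bound closes.

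\textbf{Lower bound.} Since $\tau_{\mathrm{cov}}(d)\ge \tau_{d,1}$ (the cover time is at least the hitting time of any single fixed vertex), Theorem~\ref{thm:main rho->1} and Proposition~\ref{thm:non-const rho expansion} give
\[
\tau_{d,1}=\frac{d\log 2}{\log\rho}+o(1)+\frac{\log\log\rho}{\log\rho}+\frac{O_\p(1)}{\log\rho}.
\]
Regular variation of $\log\rho$ with index $\gamma\in[-1,0)$ gives $|\log\log\rho|=O(\log d)=o(d)$, so
\[
\frac{(\log\rho)\,\tau_{\mathrm{cov}}}{d}\ge \log 2-o_\p(1).
\]
This already prevents mass escaping to $0$ (not strictly needed for tightness, but it shows the scale is right).

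\textbf{Upper bound.} I need $\p\bigl(\tau_{\mathrm{cov}}>Kd/\log\rho\bigr)\to$ small for large $K$. A union bound over the $2^d$ vertices requires, for each vertex $\by$, an estimate
\[
\p\Bigl(\tau(\by)>\tfrac{Kd}{2\log\rho}\Bigr)\le e^{-cKd}
\]
with $c$ uniform. Theorem~\ref{thm:main rho->1} covers only targets with $m\le d/L_3$ and gives only tightness. So I would not use it directly; instead I would use its proof ingredients plus a bootstrap.

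\emph{Step 1: growth to many particles.} Run the process for time $s_0=C d/\log\rho$. The population size is $\rho^{s_0}$ times a martingale limit that is positive on survival. With binary branching there is no extinction, and the probability that the population is smaller than $e^{c'd}$ at time $s_0$ is at most $e^{-c''d}$ for suitable $C$. This follows from the Yule process tail: $\p(N_s\le n)\le n\rho^{-s}$, so taking $n=e^{c'd}$ and $C$ large makes it exponentially small. Moreover, by time $s_0$ each particle's position is nearly uniform, and the randomness of mixing can be handled by taking the $e^{c'd}$ particles' independent subtrees.

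\emph{Step 2: independent attempts.} From each particle at time $s_0$, at any position $\bx$, consider the event that its subtree hits $\by$ within an additional time $s_1=C'd/\log\rho$. By the Paley--Zygmund second-moment bound underlying Theorem~\ref{thm:main rho->1}, applied with $m=d_{\mathrm H}(\bx,\by)$ arbitrary (the second-moment estimate at time $s_1\gg t_{d,m}$ should hold for all $m\le d$, since after mixing time $O(d\log d)\ll d/\log\rho$ the distance is irrelevant; I would need to check the regime $\log\rho\ge L_2/d$ makes $s_1$ exceed the mixing time $d\log d$, which requires $\log\rho\lesssim 1/\log d$, fine as $\rho\to1$ with the index $\gamma<0$ giving polynomial decay), each attempt succeeds with probability at least some $p>0$ uniform. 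Independence across the $e^{c'd}$ subtrees (by the branching property) gives failure probability at most $(1-p)^{e^{c'd}}$, which easily beats $2^d$. Alternatively, only $\asymp d$ subtrees are needed, giving $(1-p)^{Md}\le 2^{-2d}$ for $M$ large. Union bounding over $\by$ yields
\[
\p\bigl(\tau_{\mathrm{cov}}>(C+C')d/\log\rho\bigr)\to0.
\]

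\textbf{Main obstacle.} The main obstacle is the uniform positive-probability hitting bound for \emph{arbitrary} starting distance $m\le d$, including $m$ near $d$ where the proposition's range fails. I would handle this by first letting a single particle walk for time $d\log d$ (mixing, rate-1 walk) so its distance becomes $\approx d(b-1)/b$, and note the transition probability $q_m(t)$ for $t\gg d\log d$ is $(1+o(1))2^{-d}$ uniformly in $m$, so the first and second moments at time $\approx 2d\log2/\log\rho$ are uniform in the start. The second-moment computation from the theorem's proof then transfers verbatim.
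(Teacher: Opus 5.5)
Your proof is correct, and its architecture is the paper's; the paper says this corollary is proved like Corollary~\ref{coro:cover}. The lower bound comes from a single first passage time. The upper bound grows the population, gives each target many independent subtrees, and takes a union bound over the $2^d$ vertices. The difference is how you obtain a hitting probability bounded below uniformly in the starting point.

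\textbf{The paper's route: chaining.} Join $\bx$ to $\by$ by $\bx=\bx_0,\dots,\bx_{L_3}=\by$ with consecutive Hamming distances at most $d/L_3$. Theorem~\ref{thm:main rho->1} is uniform in $m\le d/L_3$, and $t_{d,m}=d\log 2/\log\rho+o(1)$. So each hop succeeds within time $Cd/\log\rho$ except with probability $<1/(2L_3)$. The strong Markov property and a union bound then give $\p(\tau(\by)>L_3Cd/\log\rho)<1/2$ for all $\bx,\by$; this is the analogue of Lemma~\ref{lemma:uniform LB}. Your worry that the theorem ``gives only tightness'' and ``only covers $m\le d/L_3$'' is therefore already answered by your own boosting step plus chaining.

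\textbf{Your route: mixing.} You go to a time $T\asymp d/\log\rho$ with $T\gg d\log d$, where $q_m(T)=2^{-d}(1+O(1/d))$ uniformly in $m$. This is also valid, and it makes the upper bound independent of Theorem~\ref{thm:main rho->1}. It is, however, specific to this regime. It needs $\log\rho\cdot\log d\to 0$, which holds by Potter's bound since $\gamma<0$. It would fail for constant $\rho\in(1,e)$, where $t_{d,m}\asymp d$ lies below the mixing time. The paper's chaining works in both regimes.

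\textbf{The second-moment estimate does not transfer verbatim.} The paper's weighted computation is built around $t'$, the weights $\delta_{v,k}$ and \eqref{eq:rhot'} for $m\le d/L_3$. What you need is a simpler unweighted estimate, and you should write it out. Write $p_s(\bx,\by):=q_{d_{\mathrm H}(\bx,\by)}(s)$ and take $\rho^T\ge 4^d$. Split the many-to-two integral for $\E[N_\z(T)^2]$ according to the time $s$ elapsed since the split.
\begin{itemize}
\item For $s\ge d\log d$, the bound $p_s(\cdot,\z)\le 2^{-d}e^{1/d}$ gives a contribution at most $(2+o(1))\E[N_\z(T)]^2$, which is just the Yule variance.
\item For $s<d\log d$, you have $T-s\ge d\log d$, so $p_{T-s}(\bx_0,\cdot)\le 2^{-d}e^{1/d}$. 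Also $\sum_{\bx}p_s(\bx,\z)^2=p_{2s}(\z,\z)\le 1$. The relative contribution is then $\ll(\log\rho)\,d\log d\,2^{d}\rho^{d\log d-T}$, which is $o(1)$ because $\rho^{d\log d}=e^{o(d)}$.
\end{itemize}
Hence $\E[N_\z(T)^2]\le(2+o(1))\E[N_\z(T)]^2$. Paley--Zygmund then gives success probability at least $1/2-o(1)$ uniformly in $\bx_0$, which is what your union bound needs.
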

      Obtaining finer asymptotics of $\tau_{\mathrm{cov}}(d)$ remains a difficult task.  In the next result, we show that the cover times $\{\tau_{\mathrm{cov}}(d)\}$ are tight around the median, after scaling by $\log\rho(d)$ (the regularly varying condition of Corollary \ref{coro:cover 2} is not needed). Denote by $\mathrm{med}(X)$ the median of a random variable $X$.

\begin{proposition}\label{prop:cover tight}
    The collection $\{(\log\rho(d))(\tau_{\mathrm{cov}}(d)-\mathrm{med}(\tau_{\mathrm{cov}}(d)))\}_{d\in\N}$ is tight.
\end{proposition}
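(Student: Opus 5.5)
We prove the tightness of $\{(\log\rho(d))(\tau_{\mathrm{cov}}(d)-\mathrm{med}(\tau_{\mathrm{cov}}(d)))\}_{d\in\N}$ by a self-similarity (bootstrapping) argument that compares the cover time of the whole process with independent copies started from the particles alive at a small time. Write $\lambda=\log\rho(d)$ and $F_d(t)=\p(\tau_{\mathrm{cov}}(d)\le t)$. The key structural fact is a \emph{subadditivity in the initial configuration}. Suppose at time $s$ there are $N_s$ particles at locations $\eta_1,\dots,\eta_{N_s}$. Then $\tau_{\mathrm{cov}}(d)\le s+\min_i\tau^{(i)}$, where $\tau^{(i)}$ is the cover time of the BRW descending from particle $i$. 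These are conditionally independent. Since the cover time is invariant under translations of the hypercube (by symmetry, the cover time started from any vertex has the same law), each $\tau^{(i)}$ is distributed as $\tau_{\mathrm{cov}}(d)$.

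\textbf{Upper tail.} Choose $s=K/\lambda$. The population $N_s$ is a Yule process of rate $\lambda$ (mutation does not affect it), so $N_s$ is geometric with mean $e^{K}$. For any $k$, $\p(N_s<k)\le k e^{-K}$. Conditioning on $N_s\ge k$, we get
\[
\p\big(\tau_{\mathrm{cov}}>s+\mathrm{med}\big)\le \p(N_s<k)+2^{-k}\le ke^{-K}+2^{-k}.
\]
Taking $k=K$ makes this small for $K$ large, uniformly in $d$. This gives $\tau_{\mathrm{cov}}-\mathrm{med}\le K/\lambda$ with high probability.

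\textbf{Lower tail.} We use the same decomposition in reverse. Let $q=\p(\tau_{\mathrm{cov}}\le \mathrm{med}-K/\lambda)$. Apply the inequality above with the roles shifted, at the level $t=\mathrm{med}-K/\lambda+s$. If $q\ge\epsilon$, then with probability at least $1-ke^{-K'}$ there are $\ge k$ independent subtrees by time $s=K'/\lambda$. Then
\[
\p\big(\tau_{\mathrm{cov}}\le \mathrm{med}-K/\lambda+K'/\lambda\big)\ge (1-ke^{-K'})\big(1-(1-\epsilon)^k\big).
\]
Choose $k$ with $(1-\epsilon)^k<1/8$, and $K'$ with $ke^{-K'}<1/8$. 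The right side then exceeds $1/2+$, in fact $\ge 3/4$. If also $K>K'$, the left side is at most $\p(\tau_{\mathrm{cov}}<\mathrm{med})\le 1/2$, which is a contradiction. Hence $q<\epsilon$ for $K>K'(\epsilon)$, uniformly in $d$. Note this uses $\p(\tau_{\mathrm{cov}}<\mathrm{med})\le 1/2$ (strict inequality in the event), which holds by the definition of the median.

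\textbf{Main obstacle.} The delicate point is justifying the comparison $\tau_{\mathrm{cov}}\le s+\min_i\tau^{(i)}$ together with the equality in law of the $\tau^{(i)}$, given that the subtrees start from different, random vertices. Covering is a property of the whole vertex set, so any single subtree covering $\X$ suffices. Transitivity of the hypercube gives the law independence of the starting point, and the strong Markov property at the deterministic time $s$ gives conditional independence given $\mathcal F_s$. With these in place the argument needs no regular variation assumption, only that $N_s$ is geometric with parameter $e^{-\lambda s}$. This is why the scale $1/\lambda$ appears: on that scale the population multiplies by a constant factor.
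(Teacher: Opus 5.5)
Your proof is correct. It uses the same mechanism as the paper's proof: by transitivity of $\X_d^{(b)}$ and the branching property, $\tau_{\mathrm{cov}}(d)$ is dominated by a waiting time plus the minimum of conditionally independent copies of itself, and on the time scale $1/\log\rho(d)$ the number of copies grows by a constant factor.

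The implementation differs.

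\textbf{The paper's route.} It conditions only on the first branching time $S\sim\mathrm{Exp}(\log\rho(d))$, which gives $\p(\tau_{\mathrm{cov}}>t)\le\E[\p(\tau_{\mathrm{cov}}>t-S)^2]$. It then iterates a quantile-lifting step. Starting from a point $t$ with $\p(\tau_{\mathrm{cov}}>t)=\ee$, moving back by $-\log(\ee-\ee^{4/3})/\log\rho(d)$ raises the upper-tail probability to at least $\ee^{2/3}$. After finitely many such steps the level exceeds $1-\ee$. This one recursion bounds the gap between the lower and upper $\ee$-quantiles, so it controls both tails at once.

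\textbf{Your route.} You condition at the deterministic time $K/\log\rho(d)$ and use the exact geometric law of the Yule population. This produces many independent copies in one step, with the following consequences.
\begin{itemize}
\item No iteration is needed.
\item Both tails are anchored directly at the median, via $\p(\tau_{\mathrm{cov}}>\mathrm{med})\le 1/2$ and $\p(\tau_{\mathrm{cov}}<\mathrm{med})\le 1/2$.
\item You get an explicit tail bound, $\p\big(\log\rho(d)\,(\tau_{\mathrm{cov}}(d)-\mathrm{med})>K\big)\le Ke^{-K}+2^{-K}$.
\item You avoid the paper's implicit choice of a $t$ with $\p(\tau_{\mathrm{cov}}>t)=\ee$ exactly, which tacitly uses continuity of the law.
\end{itemize}

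\textbf{Trade-off.} The paper needs only the first split into two particles, whereas you need the law of the whole population at time $s$. In the present binary-branching setting that law is explicit, so the extra input costs nothing, and your argument is somewhat shorter and more quantitative.
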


In the rest of this section, let us sketch the arguments behind Theorem \ref{thm:main rho->1}. 
In the case where $\rho(d)\to 1$, due to the randomness of the first branching time, the first passage time cannot be concentrated in a window of length $O(1/\log\rho)$ (see \eqref{eq:Q rho->1}). However, the same computation as in the proof of Theorem \ref{thm:main} would fail even in the order $O(1/\log\rho)$---the second moment would not be of finite order if we set the first-moment equal to one. The intuitive reason is that counting the number of particles at a certain time $t$ does not capture the first passage time, since, with a high probability, a particle entering the state $\z$ will leave without branching for a time of order $1/\log\rho$. 

To address this issue, we consider the FPT by looking at all positions in a time frame $[t,t+1/\log\rho]$ for a certain $t$ as the postulated FPT. On average, the chance that $\z$ is hit is approximately $\asymp 1/\log\rho$ times the chance that $\z$ is hit exactly at time $t$, because branching occurs $\asymp 1$ times in a time frame of length $\asymp 1/\log\rho$. Therefore, it is reasonable to consider a value $t'>0$ where the first moment \eqref{eq:first moment} is of order $\log\rho$ instead of being equal to one. The goal is then to count the number of particles at time $t'$ that produce a descendant reaching $\z$ in the next $1/\log\rho$ time (or equivalently, until time $t'+1/\log\rho$). This is equivalent to counting particles at time $t'$ with a weight given by the probability that it produces a descendant that reaches $\z$ in the next $1/\log\rho$ time. We then perform similar (but more technical) weighted first and second moment computations to the case $\rho\in(1,e)$ and show that they are both of order $\asymp 1$.

We expect that similar techniques employing the (weighted) second moment method can be adapted to the discrete-time version of the above model, where the discrete-time version of Lemma \ref{thm:bingham} is given in \citep{kac1947random}. Our approach might also generalize to BRW on the $b$-ary cube $\{0,1,\dots,b-1\}^d$. However, we suspect that, in all the possible extensions above, the argument will be much more technical.

\section{Discussions and open questions}\label{sec:discussion}

Our work focuses on computing the asymptotics of first passage times of the continuous-time branching random walk on the hypercube $\{0,1,\dots,b-1\}^d$ (BRWSS) as $d\to\infty$. Given that the random walk has a unit transition rate, we provide tight asymptotics in two regimes: when the branching rate is constant, and when it is a regularly varying function in $d$ that decays more slowly than $1/d$. We conclude by discussing several open directions.

\subsection{The biological perspective}

Our BRWSS model serves as a basic framework for several extensions, which could potentially lead to more realistic and calibratable models of biological interest. We propose three directions as follows. 

\begin{enumerate}[(i)]
    \item Empirically, mutagenic nucleoside analogues such as molnupiravir are known not only to increase the overall mutation rate but also to induce strongly biased mutational spectra. For example, molnupiravir promotes transition mutations (as opposed to transversion mutations), primarily G-to-A and C-to-T \citep{gordon2021molnupiravir,sanderson2023molnupiravir}. This motivates the study of asymmetric (biased) BRWSS. In other words, the Jukes--Cantor model can be replaced by its extensions such as the Kimura and Tamura models \citep{kimura1980simple,tamura1992estimation}. We expect that the same (weighted) second moment techniques will lead to analogous asymptotic analysis.
    \item The mutation--selection balance is fundamental in quasispecies theory. To provide a full picture of viral quasispecies evolution, a natural model is BRWSS in \textit{random environments} \citep{gartner2005parabolic,konig2020branching}. In this setting, one samples i.i.d.~branching rates for each vertex of the hypercube in advance, and the BRWSS evolves according to the inhomogeneous rates. The frequency distribution over genotypes in the large-population limit is described by the parabolic Anderson model \citep{konig2020branching}. The work \citep{avena2020parabolic} describes, in the large-population limit, how long it takes for the main mass of the particle system to move to the fittest site. The authors provide the leading scale for Gaussian-like branching rates using a spectral analysis. Note that these results do not directly correspond to the FPT analysis, since their setting is mean-field. We leave the analysis of FPTs of BRWSS in random environments to future work.
    \item The exponential growth of the population, even at the master sequence, is an idealization. In practice, one would also like to understand the clearance phase of infection, where the death rate of the virus increases as host immunocompetence develops \citep{lobinska2023evolutionary,nowak2000virus}.  This motivates studying a multi-phase variation of BRWSS with time-inhomogeneous (e.g., piecewise constant) branching rates, leading to more realistic scenarios of within-host viral evolution.
\end{enumerate}

\subsection{The mathematical perspective}

In addition to the above practically motivated variants, we also point out a few mathematical challenges in understanding the basic BRWSS model. We expect that these studies will inspire useful techniques for understanding branching processes on discrete structures.

\begin{enumerate}[(i)]
    \item Phase transition phenomena for branching particle systems have a rich literature. Notable examples include time-inhomogeneous BRW \citep{fang2012branching,mallein2015interfaces,ouimet2018maxima}, CLT for the empirical measure of branching Markov processes \citep{adamczak2015clt,ren2014central}, the Kesten--Stigum reconstruction bound \citep{kesten1966additional}, catalytic branching random walks \citep{mailler2024localisation}, and BRW on Galton--Watson trees \citep{pemantle2001branching}. We have shown a phase transition at $\rho=e$ related to sublethal mutagenesis: if $\rho>e$, the population preserves its self-identity; if $\rho<e$, the population delocalizes and drifts away from the origin. The transition coincides with a change in the asymptotic behavior of FPTs. The critical case $\rho=e$ remains to be studied, particularly in the context of FPTs.
    \item In the slow branching case $\rho\in(1,e)$, we have shown that the cover time $\tau_{\mathrm{cov}}$ is of order $\asymp d$, and the rescaled cover time is tight as $d\to\infty$. Finding precise asymptotics for the cover time remains a difficult problem---even deriving the leading order seems to require new techniques. In general, similarly to FPT, the cover time is a natural alternative to the maxima of one-dimensional spatial branching processes, for processes supported on spaces without an order structure \citep{roberts2022cover}. Results on cover times for BRW on discrete structures include regular trees \citep{roberts2022cover} and coalescing BRW \citep{balelli2016branching,dutta2015coalescing}.
    
    \item In our main results (Theorems \ref{thm:main}, \ref{thm:main2}, and \ref{thm:main rho->1}), we have assumed that $m$ is much smaller than $d$. Obtaining uniform asymptotics for larger ranges of $m$ remains a question of great interest. 
    We expect that the first moment does \textit{not} always predict the correct asymptotics, even if $\rho\in(1,e)$ and $m\in[0,d/2]$.
\end{enumerate}

\section*{Acknowledgement}
We thank Haotian Gu for valuable feedback. 
The material in this paper is partly supported by the Air Force Office of Scientific Research under award number FA9550-20-1-0397 and ONR N000142412655. Support from NSF 2229012, 2312204, 2403007 is also gratefully acknowledged. ZZ gratefully acknowledges support from a Jump Trading Fellowship.

\bibliographystyle{plain} 
\bibliography{Submission_v1/reference}  


\newpage 
\appendix

\begin{center}
    \Huge{\bf Appendix}
\end{center}
\medskip

In the appendices, we provide proofs of our main results. We start by collecting a few useful elementary facts in Appendix \ref{sec:appendix}. Next, we study the first-moment equation \eqref{eq:1} in Appendix \ref{sec:expansions}, and in particular, the asymptotic expansion of its solution. Appendices \ref{sec:case1} and \ref{sec:case2} contain proofs for the constant $\rho$ and the ultra-slow branching cases, respectively. We conclude with the derivation of cover times in Appendix \ref{sec:cover times}.

\paragraph{Notation.} For a non-negative real number $n\geq 0$, we denote by $[n]$ the set of non-negative integers that are less than or equal to $n$.  We let $\#S$ denote the cardinality of a finite set $S$. We use the notation $\lawis$ to denote the law of a random variable (e.g., $\xi\lawis\Gamma(n,1)$ means that $\xi$ is Gamma-distributed with shape parameter $n$ and rate $1$).  We use $V_t$ to denote the collection of all particles at time $t$. For $v\in V_t$ for some $t\geq 0$, we use $\eta_v=\eta_{v}(t)\in\X$ to denote the location of the particle $v$ at time $t$. We write $A\ll B$ or $B\gg A$ if $A=O(B)$. We adopt the convention that a generic binomial coefficient $\binom{n}{k}$ is nonzero only if $k\in[n]$.

\section{Preliminary computations and elementary facts}\label{sec:appendix}

\begin{lemma}\label{lemma:bin bound}
    For each $d,m,\ell'\in\N$, it holds that $$\sum_\ell\sum_{i}\binom{m}{i}\binom{m-i}{i+\ell-\ell'}\binom{d-m}{i+\ell-m}\leq \binom{d}{\ell'}2^{\ell'}$$
\end{lemma}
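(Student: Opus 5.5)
The plan is a direct injection argument: I will read each summand as the number of triples of disjoint coordinate sets, and map these triples injectively into pairs (an $\ell'$-subset of $\{1,\dots,d\}$, a subset of it). The right-hand side $\binom{d}{\ell'}2^{\ell'}$ counts exactly such pairs.

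\textbf{Step 1 (reindexing).} Fix $\ell'$. For each $i$, substitute $k:=i+\ell-\ell'$. Then $i+\ell-m=k+\ell'-m$, and $\ell\mapsto k$ is a bijection of the integers. By the convention on binomial coefficients, only terms with all lower indices in range survive, so the left-hand side equals
$$\sum_{i}\sum_{k}\binom{m}{i}\binom{m-i}{k}\binom{d-m}{k+\ell'-m}.$$

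\textbf{Step 2 (combinatorial model).} Identify the coordinates with $[d]:=\{1,\dots,d\}$ and write $M:=\{1,\dots,m\}$. The summand for $(i,k)$ counts triples $(I,K,N)$ with:
\begin{itemize}
\item $I\subseteq M$ and $|I|=i$;
\item $K\subseteq M\setminus I$ and $|K|=k$;
\item $N\subseteq [d]\setminus M$ and $|N|=k+\ell'-m$.
\end{itemize}
So the whole double sum is the number of triples $(I,K,N)$ with $I,K$ disjoint subsets of $M$, $N\subseteq[d]\setminus M$, and $|N|=|K|+\ell'-m$.

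\textbf{Step 3 (the injection).} Send $(I,K,N)$ to the pair $(S,I)$, where $S:=N\cup(M\setminus K)$.
\begin{itemize}
\item Since $N$ and $M\setminus K$ are disjoint, $|S|=(k+\ell'-m)+(m-k)=\ell'$.
\item Since $I\cap K=\emptyset$, we have $I\subseteq M\setminus K\subseteq S$, so $I$ is a subset of $S$.
\item The map is injective, because the triple is recovered from $(S,I)$ via $N=S\setminus M$ and $K=M\setminus S$, with $I$ read off directly.
\end{itemize}
There are $\binom{d}{\ell'}$ choices of $S$, and for each of them at most $2^{\ell'}$ subsets $I\subseteq S$. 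This gives the bound $\binom{d}{\ell'}2^{\ell'}$.

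\textbf{Main obstacle.} The only delicate point is choosing the reparametrization so that the three binomial factors become disjoint set choices whose union has the fixed size $\ell'$; this is where Step 1 is needed. Beyond that, one only has to check that the zero-convention for binomials matches the empty range of set choices. This holds because each binomial coefficient vanishes exactly when the corresponding set cannot exist.
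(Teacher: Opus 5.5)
Your argument is correct and is essentially the paper's proof in bijective form. Your map $(I,K,N)\mapsto(S,I)$ is in fact a bijection onto the pairs with $|S|=\ell'$ and $I\subseteq S\cap M$, and these are counted by $\sum_k\binom{m}{k}\binom{d-m}{\ell'-k}2^k$, which is exactly the intermediate expression the paper reaches via the Vandermonde identity; your bound $2^{|S\cap M|}\le 2^{\ell'}$ is the paper's final inequality. (Only the vacuous case $m>d$ needs a separate word: there $M\not\subseteq\{1,\dots,d\}$, so your model does not apply, but the left side is $0$ by the binomial convention.)
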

\begin{proof}
    We have by the Vandermonde identity,
    \begin{align*}
        \sum_\ell\sum_{i}\binom{m}{i}\binom{m-i}{i+\ell-\ell'}\binom{d-m}{i+\ell-m}&=\sum_i \binom{m}{i}\binom{d-i}{\ell'-i}\\
        &=\sum_{k=0}^{\ell'}\binom{d-m}{\ell'-k}\binom{m}{k}2^{k}\leq \binom{d}{\ell'}2^{\ell'}.
    \end{align*}
    This completes the proof.
\end{proof}

\begin{lemma}\label{lemma:e0}
   For each fixed $c>0$ and $b\in\N_2$, the following statements hold.
   \begin{enumerate}[(i)]
       \item The function
$$s\mapsto b^s(1+(b-1)e^{-c})^{-s}(1+(b-1)e^{-cs}),\quad s\in[0,1]$$
attains its maximum equal to $b$ uniquely at the boundary points $s=0,1$.
\item The function
$$s\mapsto b^s(1+(b-1)e^{-c})^{-s}(1+(b-1)e^{-c(1-s)})(1+(b-1)e^{-cs})^2,\quad s\in[0,1]$$
attains its maximum $b^2(1+(b-1)e^{-c})$ uniquely at the boundary points $s=0,1$.
\item The function
\begin{align*}
    s\mapsto b^{s}(1+(b-1)e^{-c})^{-s}&((1+(b-1)e^{-c(1-s)})(1+(b-1)e^{-cs})^2\\
    &+(b-1)(1-e^{-c(1-s)})(1-e^{-cs})^2),\quad\quad\quad s\in[0,1]
\end{align*}
attains its maximum $b^2(1+(b-1)e^{-c})$ uniquely at the boundary points $s=0,1$.
   \end{enumerate}
   
\end{lemma}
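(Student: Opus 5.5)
All three parts have the same structure: the prefactor $b^s(1+(b-1)e^{-c})^{-s}$ is the exponential of a linear function of $s$. So it suffices to show that the remaining factor is strictly log-convex on $[0,1]$. The logarithm of the whole function is then strictly convex, so its maximum over $[0,1]$ is attained only at $s=0$ and/or $s=1$. It remains to check that the two endpoint values coincide and equal the claimed maximum.

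\emph{Endpoint values.} Write $a=e^{-c}$.
\begin{enumerate}[(i)]
\item At $s=0$ the function equals $1+(b-1)=b$. At $s=1$ it equals $b(1+(b-1)a)^{-1}(1+(b-1)a)=b$.
\item At $s=0$ it equals $(1+(b-1)a)\,b^2$. At $s=1$ it equals $b(1+(b-1)a)^{-1}\cdot b\cdot(1+(b-1)a)^2=b^2(1+(b-1)a)$.
\item The extra summand vanishes at both endpoints, since it contains the factor $(1-e^{-cs})^2$, which is $0$ at $s=0$, and the factor $1-e^{-c(1-s)}$, which is $0$ at $s=1$. So the endpoint values are those of (ii).
\end{enumerate}

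\emph{Log-convexity for (i) and (ii).} Let $h(x)=\log(1+(b-1)e^{-cx})$. A direct computation gives
\[
h''(x)=\frac{c^2(b-1)e^{-cx}}{(1+(b-1)e^{-cx})^2}>0 .
\]
Hence $h(s)$ is strictly convex in $s$, and so is $h(1-s)+2h(s)$. This settles (i) and (ii).

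\emph{Log-convexity for (iii).} This is the step needing care: the factor $\log(1-e^{-cx})$ is concave, so the second summand is not log-convex on its own. Instead, set $x=e^{-c(1-s)}$ and $y=e^{-cs}$, and expand
\[
F(s)=(1+(b-1)x)(1+(b-1)y)^2+(b-1)(1-x)(1-y)^2
\]
as a polynomial in $x,y$. The constant term is $b$, and the $x$ and $y$ coefficients cancel. The remaining coefficients are $b(b-1)$ for $y^2$, $2b(b-1)$ for $xy$, and $b(b-1)(b-2)$ for $xy^2$, all nonnegative. (This also follows from writing $F$ as $b^3\sum_z p_{1-s}(0,z)p_s(z,0)^2$ and using the spectral form of the one-coordinate kernel.)

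Thus $F$ is a nonnegative combination of the exponentials $1,\ e^{-2cs},\ e^{-c},\ e^{-c(1+s)}$. Each of these is the exponential of an affine function of $s$, hence log-convex, and a sum of log-convex functions is log-convex (by Hölder/Cauchy--Schwarz). For strictness, equality in Cauchy--Schwarz at an interior point would force all terms with positive coefficient to have the same exponential rate. This fails, because both the constant $b$ and the term $b(b-1)e^{-2cs}$ are present with positive coefficients. Hence $\log F$ is strictly convex, and (iii) follows from the endpoint values above.
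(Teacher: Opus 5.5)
Your proof is correct and follows essentially the paper's argument: equal endpoint values plus strict convexity, with (iii) handled through the same expansion $F=b+b(b-1)y^2+2b(b-1)xy+b(b-1)(b-2)xy^2$ into a nonnegative combination of exponentials. The only difference is that you prove strict log-convexity, which needs Cauchy--Schwarz for strictness; the paper instead absorbs the prefactor $w(s)=\bigl(b/(1+(b-1)e^{-c})\bigr)^s$ into each monomial, so the function itself is a positive combination of exponentials of affine functions and hence directly strictly convex, with (i)--(ii) handled the same way.
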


\begin{proof}
We prove (iii), and (i) and (ii) will follow similarly. Denote this function by $h_c(s)$. It is easy to check that
$$h_c(0)=h_c(1)=b^2(1+(b-1)e^{-c}),$$
and hence it remains to show that $h_c$ is strictly convex on $[0,1]$ for each fixed $c>0$. Let $u(s)=e^{-c(1-s)}$, $v(s)=e^{-cs}$, and $w(s)=(\frac{b}{1+(b-1)e^{-c}})^s$. Note that every monomial in $u,v,w$ is convex in $s$. A simple calculation shows that
\begin{align*}
    h_c(s)&=w(s)((1+(b-1)u(s))(1+(b-1)v(s))^2+(b-1)(1-u(s))(1-v(s))^2)\\
    &=w(s)(b+2(b-1)bu(s)v(s)+(b-1)bv(s)^2+((b-1)^3-(b-1))u(s)v(s)^2).
\end{align*}
Observe that this is a positive linear combination of monomials in $u,v,w$, which must be convex in $s$. Therefore, $h_c(s)$ is convex and also strictly convex (as $w(s)v(s)^2$ is strictly convex), which completes the proof.
\end{proof}




\begin{lemma}\label{lemma:c}
Let $b\in\N_2$ and $c>0$. Define
$$
D_b(c):=
1+\frac{b}{(b-1)c}\,\log\Big(\frac{b}{1+(b-1)e^{-c}}\Big)
-\frac{2be^{-c}}{1+(b-1)e^{-c}}.
$$
Then
$$
0<
\frac{1}{D_b(c)}
<
\Big(\frac{1+(b-1)e^{-c}}{1-e^{-c}}\Big)^2.
$$
\end{lemma}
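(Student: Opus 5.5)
The first inequality $0<1/D_b(c)$ is equivalent to $D_b(c)>0$. Since $(1+(b-1)e^{-c})/(1-e^{-c})>0$, this follows once we prove the sharper claim
$$D_b(c)>\Big(\frac{1-e^{-c}}{1+(b-1)e^{-c}}\Big)^2=:A(c)^{-2}.$$
Indeed, $A^{-2}>0$, so $D_b>A^{-2}$ gives $D_b>0$, and taking reciprocals gives $1/D_b<A^2$. So the whole lemma reduces to the single inequality $D_b>A^{-2}$ for all $c>0$.

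\textbf{Step 1: change of variables.} Put $q=e^{-c}\in(0,1)$ and $y=1+(b-1)q\in(1,b)$. Multiply the target inequality by $c>0$ and define
$$F(c):=c\Big(1-\frac{2bq}{y}-\frac{(1-q)^2}{y^2}\Big)+\frac{b}{b-1}\log\frac{b}{y}.$$
The goal becomes $F(c)>0$ for all $c>0$. The bracket equals $(y^2-2bqy-(1-q)^2)/y^2$, an explicit rational function of $q$. The logarithmic term is nonnegative because $y\le b$. For large $c$ the bracket is nonnegative, so $F>0$ is immediate there.

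\textbf{Step 2: boundary behaviour.} I will check the endpoints to confirm the reduction is sharp in the right direction.
\begin{itemize}
\item As $c\to0$: expand $q=1-c+c^2/2-\cdots$. The log term gives $\frac{b}{b-1}\log\frac{b}{y}=c+\frac{(b-1)c^2}{2b}+O(c^3)$. The bracket is $-1+O(c)$. Hence $F(0)=0$, and $F$ grows like a positive multiple of $c^2$; indeed $D_b(c)\sim\kappa c$ with $\kappa>0$, while $A^{-2}\sim c^2/b^2$.
\item As $c\to\infty$: $D_b\approx 1+\frac{b\log b}{(b-1)c}-2be^{-c}$, while $A^{-2}\approx 1-2be^{-c}$. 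So $D_b-A^{-2}\approx \frac{b\log b}{(b-1)c}>0$.
\end{itemize}

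\textbf{Step 3: monotonicity.} Since $F(0)=0$, it suffices to show $F'(c)>0$ for $c>0$. Differentiate using $dq/dc=-q$ and $dy/dc=-(b-1)q$. The derivative of the log term is $bq/y$. So $F'(c)$ equals the bracket, plus $c$ times the derivative of the bracket, plus $bq/y$. The bracket and $bq/y$ are rational in $q$, while $c$ remains as $-\log q$. I will handle the factor $c$ using the elementary bounds $1-q\le c\le (1-q)/q$, choosing whichever bound is favourable according to the sign of the derivative of the bracket (possibly on a split range of $q$). This reduces the problem to positivity of a polynomial in $q$ on $(0,1)$ with coefficients depending on $b$, to be verified by grouping terms.

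\textbf{Fallback.} If the derivative route is messy, I would instead bound the log term from below by $-\log(1-u)\ge u+u^2/2$ with $u=(b-1)(1-q)/b$. I would also use the log-mean inequality $c\le (1-q)/\sqrt q$ where the bracket is negative. Together these again reduce $F>0$ to a polynomial inequality in $q$ and $\sqrt q$.

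\textbf{Main obstacle.} The hardest part is the small-to-moderate $c$ range. There the bracket is negative and $F$ vanishes to second order at $c=0$, so the leading cancellation must be tracked exactly. The bound used for $c$ must be sharp enough to keep the quadratic coefficient positive, uniformly in $b\ge2$.
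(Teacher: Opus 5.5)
Your reduction to $D_b(c)>A(c)^{-2}$, i.e.\ to $F(c)>0$, is sound, and so is the plan ``$F(0)=0$ and $F'>0$''. As written, though, the proposal leaves the only substantive step, positivity of $F'$, as a promissory note, and you expect that step to be delicate when it is not. It becomes immediate once you simplify the bracket. Since $y=1+(b-1)q$, one has $y-(1-q)=bq$ and $y-bq=1-q$, hence
\[
1-\frac{2bq}{y}-\frac{(1-q)^2}{y^2}=-\frac{b^2q^2}{y^2},\qquad F(c)=\frac{b}{b-1}\log\frac{b}{y}-\frac{b^2cq^2}{y^2}.
\]
Using $\frac{\mathrm{d}}{\mathrm{d}c}\frac{q}{y}=-\frac{q}{y^2}$, you get
\[
F'(c)=\frac{bq}{y}-\frac{b^2q^2}{y^2}+\frac{2b^2cq^2}{y^3}=\frac{bq(1-q)}{y^2}+\frac{2b^2cq^2}{y^3}>0 .
\]
So no bounds on $c=-\log q$, no case split and no fallback are needed.

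Two side claims in the proposal are wrong:
\begin{enumerate}
\item The bracket is never nonnegative. It equals $-b^2q^2/y^2<0$ for every $c>0$ and tends to $0$ only as $c\to\infty$, so the ``large $c$ is immediate'' shortcut is false.
\item The expansion is $\frac{b}{b-1}\log\frac{b}{y}=c-\frac{c^2}{2b}+O(c^3)$, not $c+\frac{(b-1)c^2}{2b}+\cdots$. This gives $F\sim\frac{3c^2}{2b}$ and $D_b\sim\frac{3c}{2b}$, so your qualitative conclusion survives.
\end{enumerate}
Neither error is load-bearing for the $F'>0$ route.

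The paper argues differently. It writes $\log\frac{b}{y}=\int_0^c f(u)\,\mathrm{d}u$ with $f(u)=\frac{(b-1)e^{-u}}{1+(b-1)e^{-u}}$ strictly decreasing, so the average of $f$ over $[0,c]$ exceeds $f(c)$. This yields the stronger intermediate bound $D_b(c)>\frac{1-e^{-c}}{1+(b-1)e^{-c}}=A^{-1}$, and then $A^{-1}>A^{-2}$ because $A^{-1}\in(0,1)$. The two arguments are close relatives:
\[
F(c)-\frac{b}{b-1}\Bigl(\int_0^c f-cf(c)\Bigr)=\frac{bcq(1-q)}{y^2}\ge 0,
\]
so your $F$ dominates the quantity the paper shows is positive. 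The paper's route avoids differentiating terms with an explicit factor $c$ and gives the sharper bound $D_b>A^{-1}$ for free. Yours targets the stated inequality directly and, once the bracket identity is in hand, is equally short.
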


\begin{proof}
We first rewrite the logarithmic term as an integral:
$$
\log\Big(\frac{b}{1+(b-1)e^{-c}}\Big)
=
\int_0^c \frac{(b-1)e^{-u}}{1+(b-1)e^{-u}}\,\d u.
$$
Set
$$
f(u):=\frac{(b-1)e^{-u}}{1+(b-1)e^{-u}},\qquad u\ge 0.
$$
Then $f$ is strictly decreasing on $[0,\infty)$. Hence
$$
\frac1c\log\Big(\frac{b}{1+(b-1)e^{-c}}\Big)
=
\frac1c\int_0^c f(u)\,\d u
>
f(c)
=
\frac{(b-1)e^{-c}}{1+(b-1)e^{-c}}.
$$
Multiplying by $b/(b-1)$, we obtain
$$
\frac{b}{(b-1)c}\,\log\Big(\frac{b}{1+(b-1)e^{-c}}\Big)
>
\frac{be^{-c}}{1+(b-1)e^{-c}}.
$$
Therefore,
$$
D_b(c)
>
1+\frac{be^{-c}}{1+(b-1)e^{-c}}
-\frac{2be^{-c}}{1+(b-1)e^{-c}}
=
1-\frac{be^{-c}}{1+(b-1)e^{-c}}
=
\frac{1-e^{-c}}{1+(b-1)e^{-c}},
$$
and in particular, $D_b(c)>0$. 
Since $c>0$, we have
$$
0<\frac{1-e^{-c}}{1+(b-1)e^{-c}}<1,
$$
so
$$
D_b(c)
>
\frac{1-e^{-c}}{1+(b-1)e^{-c}}
>
\Big(\frac{1-e^{-c}}{1+(b-1)e^{-c}}\Big)^2.
$$
Taking reciprocals proves the claim.
\end{proof}

\begin{lemma}\label{lemma:md}
Let $b\in\N_2$ and $0<A<B<B'<\infty$ be constants. Then there exists $L_3>0$ large enough depending on $A,B,B',b$, such that uniformly in $m\leq d/L_3$,
\begin{align}
    \sum_{\ell\leq d/L_3}\Big(\frac{A}{B}\Big)^\ell \binom{d}{\ell}^{-1}\sum_i \binom{m}{i}\binom{d-m}{i+\ell-m}\Big(\frac{B(b-1+B')}{b-1}\Big)^{m-i}=O(1)\label{eq:AB}
\end{align}
as $d\to\infty$.
\end{lemma}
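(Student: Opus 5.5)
The plan is to show that, after normalization by $\binom{d}{\ell}^{-1}$, the inner sum over $i$ is at most exponential in $\ell$ with a base close to $1$. The outer sum in \eqref{eq:AB} then becomes a convergent geometric series, because $A<B$. The constants $B'$ and $b$ enter only through $C:=B(b-1+B')/(b-1)$, which is a fixed constant.

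\textbf{Step 1 (reindexing).} Substitute $j=m-i$, which counts the $m$ nonzero coordinates of the start that are not counted by $i$. Then $\binom{m}{i}=\binom{m}{j}$ and $i+\ell-m=\ell-j$, so the inner sum becomes
\begin{align*}
\binom{d}{\ell}^{-1}\sum_j \binom{m}{j}\binom{d-m}{\ell-j}C^{j}.
\end{align*}
The weights $\binom{m}{j}\binom{d-m}{\ell-j}/\binom{d}{\ell}$ are exactly the hypergeometric probabilities $\p(X=j)$. Here $X$ is the number of marked elements obtained when drawing $\ell$ elements without replacement from a population of $d$ elements, $m$ of which are marked. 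Hence the inner quantity equals $\E[C^X]$.

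\textbf{Step 2 (bounding the hypergeometric weights).} Write $\p(X=j)$ as $\binom{\ell}{j}$ times a product of $\ell$ ratios. The $j$ ratios coming from marked draws are of the form $(m-k)/(d-k')$ with $k'\le \ell$, so each is at most $m/(d-\ell)$. The remaining $\ell-j$ ratios are at most $1$. Therefore
\begin{align*}
\p(X=j)\le \binom{\ell}{j}\Big(\frac{m}{d-\ell}\Big)^j.
\end{align*}
We now use $\ell\le d/L_3$ and $m\le d/L_3$ with $L_3\ge 2$. This gives $d-\ell\ge d/2$ and hence $m/(d-\ell)\le 2/L_3$. By the binomial theorem,
\begin{align*}
\E[C^X]\le \sum_j\binom{\ell}{j}\Big(\frac{2C}{L_3}\Big)^j=\Big(1+\frac{2C}{L_3}\Big)^{\ell}.
\end{align*}

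\textbf{Step 3 (summing over $\ell$).} The left-hand side of \eqref{eq:AB} is now bounded by
\begin{align*}
\sum_{\ell\ge 0}\Big(\frac{A}{B}\Big(1+\frac{2C}{L_3}\Big)\Big)^{\ell}.
\end{align*}
Since $A/B<1$, we can choose $L_3$ large, depending only on $A,B,B',b$, so that $\frac{A}{B}(1+2C/L_3)<1$. The series is then a geometric series with ratio bounded away from $1$, and the bound is uniform in $d$ and in $m\le d/L_3$.

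\textbf{Main obstacle.} The only delicate point is the per-term bound in Step 2. The factor $C^{j}$ may be large, and it must be offset by the smallness $(m/d)^j$ of drawing $j$ of the $m$ marked coordinates. The product representation of the hypergeometric probability shows this directly, so no Stirling-type estimates are needed. The restriction $\ell\le d/L_3$ is used only to keep the denominators $d-k'$ comparable to $d$.
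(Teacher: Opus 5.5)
Your proof is correct, but it takes a different and more economical route than the paper's.

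\textbf{The paper's route.} Writing $C=B(b-1+B')/(b-1)$, the paper fixes $\varepsilon>0$ with $\frac{A}{B}C^{\varepsilon}<1$. It then splits the inner sum according to whether $m-i\le\varepsilon\ell$ or $m-i>\varepsilon\ell$.
\begin{itemize}
\item On the first part it uses $C^{m-i}\le C^{\varepsilon\ell}$ together with Vandermonde's identity. The normalized inner sum is then at most $1$, and the outer terms decay like $e^{-\delta\ell}$.
\item On the second part it checks that consecutive terms increase in $i$; this is where $m\le d/L_3$, with $L_3$ depending on $\varepsilon$, enters. It bounds the sum by roughly $\ell$ times the boundary term $i=\lfloor m-\varepsilon\ell\rfloor$. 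It then invokes a Stirling-type estimate $\binom{m}{\lceil\varepsilon\ell\rceil}\binom{d-m}{\lceil(1-\varepsilon)\ell\rceil}\binom{d}{\ell}^{-1}\le C(\varepsilon)^{\ell}(m/d)^{\varepsilon\ell}$ to obtain a geometric series.
\end{itemize}

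\textbf{Your route.} You recognize the normalized inner sum as the generating function $\E[C^X]$ of a hypergeometric variable. You then dominate its point masses by binomial ones via the falling-factorial product, which gives $(1+2C/L_3)^{\ell}$ directly. This removes the auxiliary $\varepsilon$, the monotonicity check, and the unspecified Stirling constant. It also gives an explicit admissible $L_3$: any $L_3\ge 2$ with $\frac{A}{B}(1+2C/L_3)<1$.

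\textbf{A small sharpening.} Pair the $j$ marked factors with the first $j$ denominators $d,d-1,\dots,d-j+1$. Then each marked ratio is at most $m/d\le 1/L_3$. Each unmarked ratio $(d-m-k)/(d-j-k)$ is at most $1$ because $j\le m$. Hence $\E[C^X]\le(1+C/L_3)^{\ell}$ for every $\ell\le d$, and the restriction $\ell\le d/L_3$ becomes unnecessary.
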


\begin{proof}
    Let $\ee=\ee(A,B,b)>0$ be such that 
    $$\frac{A}{B}\Big(\frac{B(b-1+B')}{b-1}\Big)^{\ee}<1.$$
We split the second sum over $i$ in \eqref{eq:AB} into $i\in[0,m-\ee\ell)$ and $i\in[m-\ee\ell,m]$. 

In the former case, we show that the term $i=\lfloor m-\ee\ell\rfloor$ dominates the sum. To see this, let 
$$U_i:=\binom{m}{i}\binom{d-m}{i+\ell-m}\Big(\frac{B(b-1+B')}{b-1}\Big)^{m-i}.$$
Then
$$\frac{U_{i+1}}{U_i}=\frac{(m-i)(d-i-\ell)(b-1)}{(i+1)(i+1+\ell-m)B(b-1+B')}.$$
Since $i\leq m\leq d/L_3$, we have for $L_3$ large enough (possibly depending on $b$ and $B$) that $U_{i+1}/U_i\geq 1$ for $i\leq m-\ee\ell$. 
Therefore, with $L_3$ chosen large enough (depending on $\ee$, which only depends on $A,B$), \eqref{eq:AB} becomes (with some $C(\ee)>0$)
\begin{align*}
        &\hspace{0.5cm}\sum_{\ell\leq d/L_3}\Big(\frac{A}{B}\Big)^\ell \binom{d}{\ell}^{-1}\sum_i \binom{m}{i}\binom{d-m}{i+\ell-m}\Big(\frac{B(b-1+B')}{b-1}\Big)^{m-i}\\
        &\ll \sum_{\ell\leq d/L_3}\Big(\frac{A}{B}\Big)^\ell \binom{d}{\ell}^{-1}\ell \binom{m}{\lceil\ee\ell\rceil}\binom{d-m}{\lceil(1-\ee)\ell\rceil}\Big(\frac{B(b-1+B')}{b-1}\Big)^{\ee\ell}\\
        &\leq \sum_{\ell\leq d/L_3}\Big(\frac{A}{B}\Big)^\ell \Big(\frac{B(b-1+B')}{b-1}\Big)^{\ee\ell}C(\ee)^\ell\Big(\frac{d}{m}\Big)^{-\ee\ell}\ll 1,
    \end{align*}
    as desired.

    In the latter case, we apply the trivial bound
\begin{align*}
    &\hspace{0.5cm}\sum_{\ell\leq d/L_3}\Big(\frac{A}{B}\Big)^\ell \binom{d}{\ell}^{-1}\sum_{m-\ee\ell\leq i\leq m} \binom{m}{i}\binom{d-m}{i+\ell-m}\Big(\frac{B(b-1+B')}{b-1}\Big)^{m-i}\\
    &\leq \sum_{\ell\leq d/L_3} e^{-\delta_{13}\ell}\binom{d}{\ell}^{-1}\sum_{m-\ee\ell\leq i\leq m} \binom{m}{i}\binom{d-m}{i+\ell-m}\\
    &\leq \sum_{\ell\leq d/L_3} e^{-\delta_{13}\ell}\\
    &\ll 1,
\end{align*}for some $\delta_{13}>0$. This
completes the proof.
\end{proof}

\begin{lemma}
    \label{lemma:e3}
    It holds for all $x>0$ that
    $$\Big(\frac{1+e^{-2x}}{2}\Big)^2\Big(1+\Big(\frac{1-e^{-2x}}{1+e^{-2x}}\Big)^2\Big)< 1.$$
\end{lemma}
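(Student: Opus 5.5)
Write $y=e^{-2x}\in(0,1)$, which is legitimate since $x>0$. The point is that this substitution makes the inequality rational in $y$, and it then collapses to an elementary statement. Expanding, the left-hand side becomes
\begin{align*}
\frac{(1+y)^2}{4}\cdot\frac{(1+y)^2+(1-y)^2}{(1+y)^2}=\frac{(1+y)^2+(1-y)^2}{4}=\frac{2+2y^2}{4}=\frac{1+y^2}{2}.
\end{align*}
Because $0<y<1$, we have $y^2<1$, and therefore $(1+y^2)/2<1$. This gives the claimed strict inequality for every $x>0$.

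There is no real obstacle here. The only thing to check is that the factor $(1+y)^2$ cancels, which it does. This also shows where the argument stops working: the inequality becomes an equality only in the degenerate limit $x\to0^+$ (that is, $y\to1$), which the hypothesis $x>0$ excludes.
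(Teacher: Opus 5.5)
Your proof is correct, and it differs only slightly from the paper's. With $y=e^{-2x}$, the $(1+y)^2$ factors cancel exactly, so the left-hand side equals $(1+y^2)/2=(1+e^{-4x})/2$. This is strictly less than $1$ because $0<y<1$.

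The paper instead makes a lossy step. It sets $z=\frac{1-e^{-2x}}{1+e^{-2x}}\in(0,1)$ and uses $z^2<z$. The bracket then becomes $\frac{2}{1+e^{-2x}}$, and the product collapses to the weaker bound $(1+e^{-2x})/2<1$.

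Both arguments are one-line and elementary. Yours gives the exact value, so the decay factor where the lemma is used (Step II of the upper bound for $\log\rho=o(1)$) is explicitly $\big((1+e^{-4\delta_{11}})/2\big)^d$. The paper only needs some strict inequality for each fixed $x>0$, so its cruder bound is enough there.
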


\begin{proof}
Since $1+e^{-2x}\in(1,2)$ for all $x>0$, we have
$$\Big(\frac{1+e^{-2x}}{2}\Big)^2\Big(1+\Big(\frac{1-e^{-2x}}{1+e^{-2x}}\Big)^2\Big)<\Big(\frac{1+e^{-2x}}{2}\Big)^2\Big(1+\Big(\frac{1-e^{-2x}}{1+e^{-2x}}\Big)\Big)=\frac{1+e^{-2x}}{2}<1,$$
as desired.    
\end{proof}

The following lemma is elementary, and its proof, which uses a direct Taylor expansion argument, will be omitted.
\begin{lemma}\label{lemma:taylor}
Fix $b\in\N_2$.    The following statements hold.
    \begin{enumerate}[(i)]
        \item For any $\ee>0$, there exists $\delta>0$ such that uniformly for $u\in[0,\delta d],$
        $$\frac{1+(b-1)e^{-\frac{bu}{(b-1)d}}}{b}\leq e^{-(1-\ee)u/d}.$$
        Moreover, for all $u\geq 0$, $$\frac{1-e^{-\frac{bu}{(b-1)d}}}{b}\leq \frac{u}{(b-1)d}.$$
        \item For all $t,d>0$ and $u\in[0,t]$, we have
$$\frac{1+(b-1)e^{-\frac{bu}{(b-1)d}}}{1+(b-1)e^{-\frac{bt}{(b-1)d}}}\leq \exp\Big(\frac{(b-1)e^{-\frac{bt}{(b-1)d}}(e^{\frac{b(t-u)}{(b-1)d}}-1)}{1+(b-1)e^{-\frac{bt}{(b-1)d}}}\Big)$$
and
$$\frac{1-e^{-\frac{bu}{(b-1)d}}}{1-e^{-\frac{bt}{(b-1)d}}}\leq \exp\Big(-\frac{e^{-\frac{bt}{(b-1)d}}(e^{\frac{b(t-u)}{(b-1)d}}-1)}{1-e^{-\frac{bt}{(b-1)d}}}\Big).$$        \item For any $L>0$, there exists $\delta>0$ such that uniformly for $u\in[0,L d],$
        $$\frac{1+(b-1)e^{-\frac{bu}{(b-1)d}}}{b}\leq e^{-\delta u/d}.$$
    \end{enumerate}
\end{lemma}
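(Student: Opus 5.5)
This lemma consists of three elementary inequalities, and I would prove each by monotonicity or convexity in the scaled variable $x:=\frac{bu}{(b-1)d}$ (respectively $y:=\frac{bt}{(b-1)d}$).

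\emph{Part (i).} Write
$$g(x):=\log\frac{1+(b-1)e^{-x}}{b}.$$
Then $g(0)=0$ and
$$g'(x)=-\frac{(b-1)e^{-x}}{1+(b-1)e^{-x}},$$
so $g'(0)=-(b-1)/b$ and $g'$ is continuous. The target bound reads $g(x)\le -(1-\ee)\frac{b-1}{b}x$. Since $g(x)/x\to -(b-1)/b$ as $x\to 0^+$, there is $\delta'>0$ with $g(x)\le -(1-\ee)\frac{b-1}{b}x$ on $[0,\delta']$. Set $\delta=\delta'(b-1)/b$; this covers $u\in[0,\delta d]$.

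The second claim of (i) is $1-e^{-x}\le x$, which after multiplying by $(b-1)/b$ gives $\frac{1-e^{-x}}{b}\le \frac{(b-1)x}{b}\cdot\frac{1}{b-1}\cdot\ldots$; more precisely, $\frac{1-e^{-x}}{b}\le \frac{x}{b}=\frac{u}{(b-1)d}$.

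\emph{Part (iii).} The function $g(x)/x$ is strictly negative and continuous on $(0,M]$ with $M:=bL/(b-1)$, and it tends to $-(b-1)/b$ at $0$. By compactness it is therefore bounded above by some $-\delta''<0$ on $(0,M]$. Rescaling, take $\delta=\delta''b/(b-1)$.

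\emph{Part (ii).} Set $a=e^{-y}$ and $z=e^{y-x}-1\ge 0$, so that $e^{-x}=a(1+z)$. Then
$$\frac{1+(b-1)e^{-x}}{1+(b-1)e^{-y}}=1+\frac{(b-1)az}{1+(b-1)a}\le \exp\Big(\frac{(b-1)az}{1+(b-1)a}\Big),$$
using $1+w\le e^w$. This is exactly the first bound. Similarly,
$$\frac{1-e^{-x}}{1-e^{-y}}=1-\frac{az}{1-a}\le \exp\Big(-\frac{az}{1-a}\Big),$$
again by $1+w\le e^w$, valid for any real $w$. Note that the left-hand side is nonnegative because $x\ge 0$.

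None of these steps is a real obstacle. The only mild care needed is the compactness step in (iii), which requires checking that $g(x)<0$ for every $x>0$; this holds since $1+(b-1)e^{-x}<b$ for $x>0$.
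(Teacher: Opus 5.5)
Your proof is correct and takes essentially the route the paper intends. The paper omits the proof as ``a direct Taylor expansion argument,'' and your ingredients are exactly that kind of elementary argument: the first-order behaviour of $g$ at $0$ for (i), the bound $1+w\le e^w$ after writing $e^{-x}=a(1+z)$ for (ii), and compactness (equivalently, convexity of $g$) for (iii). The only blemish is the garbled phrase in the second claim of (i): that bound is simply $1-e^{-x}\le x$ divided by $b$, with no multiplication by $(b-1)/b$ involved.
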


Recall from \eqref{eq:tdm:rho>e} that 
$$t_{d,m}:=\frac{mW(\frac{(\log\rho-1)(b-1)d}{m})}{\log\rho-1}.$$

\begin{lemma}\label{lemma:key}
 Let $b\in\N_2$ and $\rho>e$ where $\rho$ does not depend on $d$.   There exists $L_1>0$, depending on $b,\rho$, such that the following statements hold regarding the quantity $t_{d,m}$ defined in \eqref{eq:tdm:rho>e}.
    \begin{enumerate}[(i)]
        \item Uniformly in $m\in[1,\sqrt{d}]$,
    \begin{align*}
    t_{d,m}=\frac{m(\log d-\log m-\log\log (\max\{d/m,e\})+O(1))}{\log\rho-1}.
\end{align*}
\item Uniformly in $m\in[1,d/L_1]$, $t_{d,m}\leq d/100$.
\item  Uniformly in $m\in[1,d/L_1]$, $t_{d,m+1}-t_{d,m}\asymp t_{d,m}/m.$ 
    \item  Uniformly in $m\in[1,d/L_1]$, $t_{d,m+C}-t_{d,m}\to\infty$ as $C\to\infty$.
    \end{enumerate}
\end{lemma}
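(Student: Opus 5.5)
The plan is to work directly with the defining relation. Set $a:=\log\rho-1>0$ and $x:=a(b-1)d/m$, so that $t_{d,m}=mW(x)/a$. Then \eqref{eq:tdm} in logarithmic form reads
\[
at=m\log\frac{(b-1)d}{t},
\]
and everything will follow from the standard asymptotics and monotonicity of the Lambert $W$ function.

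\textbf{Part (i).} For $x\ge e$, write $W(x)=\log x-\log\log x+O(1)$. In the range $m\le\sqrt d$ we have $x\ge a(b-1)\sqrt d\to\infty$, so the expansion applies. Substituting gives
\[
\log x=\log d-\log m+O(1),\qquad \log\log x=\log\log(d/m)+O(1).
\]
The $O(1)$ in the second identity is uniform because $d/m\ge\sqrt d$ is large, so there the factor $\max\{d/m,e\}$ plays no role. The $\max$ is only needed to keep the expression well defined in general. This yields (i).

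\textbf{Part (ii).} Since $W$ is increasing and $W(x)\le\log x$ for $x\ge e$, we get
\[
\frac{t}{d}=\frac{(b-1)W(x)}{x}.
\]
The function $W(x)/x$ is decreasing, and $x\ge a(b-1)L_1$ is large. Choosing $L_1$ large therefore forces $(b-1)W(x)/x\le 1/100$.

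\textbf{Part (iii).} Treat $m$ as a continuous variable and differentiate the log relation:
\[
a\,t'=\log\frac{(b-1)d}{t}-\frac{m}{t}\,t' ,
\qquad\text{so}\qquad
t'=\frac{\log((b-1)d/t)}{a+m/t}=\frac{at/m}{a+m/t}.
\]
Now $t/m=W(x)/a$, and $W(x)$ is bounded below by a positive constant since $x$ is large. Hence $m/t=a/W(x)=O(1)$, and so $t'\asymp t/m$.

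To pass from the derivative to the increment $t_{d,m+1}-t_{d,m}=\int_m^{m+1}t'(s)\,ds$, I need $t(s)/s$ to be comparable at $s=m$ and $s=m+1$. Since $t(s)/s=W(a(b-1)d/s)/a$ and $W$ changes by a bounded factor when its argument changes by the factor $m/(m+1)\in[1/2,1]$, this holds. Here we must allow $m+1\le d/L_1$ with a slightly adjusted $L_1$.

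\textbf{Part (iv).} This follows by summing (iii):
\[
t_{d,m+C}-t_{d,m}\gtrsim\sum_{j<C}\frac{t_{d,m+j}}{m+j}=\sum_{j<C}\frac{W(x_{m+j})}{a}.
\]
Each summand is bounded below by $c>0$ uniformly, because $x\ge a(b-1)L_1/2$. This holds provided $m+C\le 2d/L_1$. In the complementary case we can instead use that $t_{d,m}$ is increasing in $m$ and argue directly with the continuous derivative. So the sum is at least $cC\to\infty$.

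The main obstacle is only the uniformity bookkeeping at the endpoints $m\approx d/L_1$. Everything else is routine calculus on the Lambert $W$ function.
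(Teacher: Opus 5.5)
Your proposal is correct. Parts (i), (ii) and (iv) follow the paper's argument: the expansion $W(x)=\log x-\log\log x+O(1)$, the identity $t_{d,m}/d=(b-1)W(x)/x=(b-1)e^{-W(x)}$ with $x\ge a(b-1)L_1$, and summing the increments from (iii).

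The genuine difference is in (iii). The paper writes
\[
t_{d,m+1}-t_{d,m}=\frac{W(x_{m+1})}{a}-\frac{m}{a}\bigl(W(x_m)-W(x_{m+1})\bigr)
\]
and uses $W'(x)=W(x)/(x(1+W(x)))\asymp 1/x$ to show the subtracted term is $O(1)$. Its lower bound therefore needs $t_{d,m}/m=W(x_m)/a$ to be large, which is one place where $L_1$ large is used.

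You instead differentiate in a continuous $m$ and obtain
\[
\frac{\mathrm{d}t}{\mathrm{d}m}=\frac{t}{m}\cdot\frac{W(x)}{1+W(x)}=\frac{W(x)^2}{a(1+W(x))},
\]
which is positive with no cancellation. You then integrate over $[m,m+1]$, using $W(x_m)/2\le W(x_s)\le W(x_m)$, which follows from concavity of $W$ and $W(0)=0$. This only needs $W(x)$ bounded below, i.e.\ $x$ bounded away from $0$. So your version of (iii) holds for all $m\le d$. Your remark about adjusting $L_1$ is unnecessary, since $x_{m+1}\ge x_m/2$. Your route also gives (iv) at once, via $\frac{\mathrm{d}t}{\mathrm{d}s}\ge c>0$ for all $s\le 2d$, so $t_{d,m+C}-t_{d,m}\ge cC$ whenever $m+C\le 2d$.

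Your ``complementary case'' in (iv) never arises under the intended reading ($C$ fixed, $d\to\infty$, so $m+C\le 2d/L_1$). For fixed $d$ the statement could not hold as $C\to\infty$ anyway, because $t_{d,m}=(b-1)d\,e^{-W(x)}<(b-1)d$ for every $m$. So the vague appeal to ``the continuous derivative'' there is not needed and could not help.
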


\begin{proof}
    (i)   This follows from the standard asymptotic expansion $W(x)=\log x-\log\log x+o(1)$ of the Lambert W function as $x\to\infty$ (see \citep{corless1996lambert}).

    (ii)   The inequality $t_{d,m}\leq d/100$ is equivalent to $W(x)\leq x/(100(b-1))$ for $x=(\log\rho-1)(b-1)d/m$. If $L_1$ is picked large enough, $x$ is also large enough, so that $e^{W(x)}\geq 100(b-1)$ and $W(x)\leq x/(100(b-1))$ holds.

    (iii) Using the defining relationship of the Lambert W function, we have uniformly for $x$ large enough,
\begin{align}
    W'(x)=\frac{W(x)}{x(1+W(x))}\asymp \frac{1}{x}.\label{eq:W'}
\end{align}
It follows from \eqref{eq:W'} and the intermediate value theorem that for $m\in[1,d/L_1]$ and $d$ large enough,
\begin{align*}
    t_{d,m+1}-t_{d,m}&= \frac{W(\frac{(\log\rho-1)(b-1)d}{m+1})}{\log\rho-1}-\frac{m}{\log\rho-1}\Big(W(\frac{(\log\rho-1)(b-1)d}{m})-W(\frac{(\log\rho-1)(b-1)d}{m+1})\Big)\\
    &\asymp \frac{t_{d,m}}{m}-\frac{m}{\log\rho-1}\log(1+\frac{1}{m})\\
    &\asymp \frac{t_{d,m}}{m}-O(1),
\end{align*}
where in the second step we used that by \eqref{eq:tdm:rho>e}, if $L_1$ is chosen large enough, $t_{d,m}/m$ can be made large enough uniformly for $m\in[1,d/L_1]$.

(iv) Recall from the proof of Lemma \ref{lemma:key}(iii) that $t_{d,m}/m\to\infty$ as $L_1\to\infty$.
    It follows from the proof of Lemma \ref{lemma:key}(iii) that with $L_1$ chosen large enough, $t_{d,m+1}-t_{d,m}\gg t_{d,m}/m-O(1)\geq 2$ uniformly for $m\in[1,d/L_1]$. 
\end{proof}

\section{Solving the first-moment equation \texorpdfstring{\eqref{eq:1}}{}}\label{sec:expansions}
In this section, we justify that the first-moment equation \eqref{eq:1} admits a unique positive solution in the regimes $\rho\in(1,e)$ (where $\rho$ does not depend on $d$) and $\rho\to 1^+$ (where $\rho$ may depend on $d$), and develop asymptotic expansions of the solution to \eqref{eq:1}.

\subsection{Uniqueness of the solution}
\begin{lemma}\label{lemma:nozero}
 Suppose that $b\in\N_2$.   In both regimes $\rho\in(1,e)$ and $\rho\to 1^+$, there exists $\delta_1>0$, depending on $b,\rho$ but independent of $d$ and $m$, such that there is no solution $t$ to \eqref{eq:1} satisfying $t\in(0,\delta_1 d)$.
\end{lemma}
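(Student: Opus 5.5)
The plan is to show that, for $t\in(0,\delta_1 d)$, the left-hand side of \eqref{eq:1}, namely $\rho^t q_m(t)$ with $q_m$ as in Lemma~\ref{thm:bingham}, is strictly less than $1$. We work with its logarithm
$$F(t):=t\log\rho+(d-m)\log\Big(\frac{1+(b-1)e^{-\frac{bt}{(b-1)d}}}{b}\Big)+m\log\Big(\frac{1-e^{-\frac{bt}{(b-1)d}}}{b}\Big),$$
and aim to prove $F(t)<0$ on $(0,\delta_1 d)$. Each factor in $q_m(t)$ is at most $1$, so it suffices to bound the first two terms of $F$; the last term is a bonus. We treat both regimes together, using only that $\log\rho<1$, and in the regime $\rho\to1^+$ that $\log\rho$ is eventually at most any fixed constant below $1$.

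\textbf{Step 1: the second term.} Fix $\ee>0$ with $\log\rho<(1-\ee)^2$; in the regime $\rho\to1^+$, take $\ee=1/4$. Apply Lemma~\ref{lemma:taylor}(i) with this $\ee$ to obtain $\delta>0$ such that, for $t\in[0,\delta d]$,
$$(d-m)\log\Big(\frac{1+(b-1)e^{-\frac{bt}{(b-1)d}}}{b}\Big)\le-(1-\ee)\frac{(d-m)t}{d}.$$
Since $m\le d/L_1$, we have $(d-m)/d\ge 1-1/L_1\ge 1-\ee$ once $L_1\ge 1/\ee$, so the second term is at most $-(1-\ee)^2 t$.

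\textbf{Step 2: the third term.} By the second part of Lemma~\ref{lemma:taylor}(i), $(1-e^{-\frac{bt}{(b-1)d}})/b\le t/((b-1)d)$. For $t\le\delta d$ with $\delta\le b-1$ this ratio is at most $1$, so its logarithm is $\le 0$ and $m\ge1$ copies of it only decrease $F$. We could even extract a factor $(t/((b-1)d))^m$, but this is not needed.

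\textbf{Step 3: conclusion.} Combining the two steps gives $F(t)\le t\big(\log\rho-(1-\ee)^2\big)<0$ for every $t\in(0,\delta_1 d)$, where $\delta_1:=\min(\delta,b-1)$. Hence \eqref{eq:1} has no solution in that interval. Since $\delta$ depends only on $\ee$ and $b$, and $\ee$ only on $\rho$, the constant $\delta_1$ is independent of $d$ and $m$, as required.

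The only delicate point is that the bound has to hold uniformly in $m\le d/L_1$ and, in the regime $\rho\to1^+$, uniformly as $\rho$ varies. Both are handled by choosing $\ee$ from the gap $1-\log\rho$, which is bounded away from $0$ in both regimes, and then taking $L_1$ large depending on $\ee$. No second-moment input is needed.
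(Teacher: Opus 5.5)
Your argument uses the same mechanism as the paper: near $t=0$ the walk factor decays at rate close to $1$ (Lemma~\ref{lemma:taylor}(i)), while $\rho^t$ grows at rate $\log\rho<1$. However, as written it proves the lemma only under an extra hypothesis, $m\le d/L_1$ with $L_1\ge 1/\ee$, i.e.\ $m\le \ee d$. The statement has no such hypothesis: it asserts a single $\delta_1$ that works for every $m$.

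The gap is in Steps 1--2. You use the third factor only through $\log\big((1-e^{-\frac{bt}{(b-1)d}})/b\big)\le 0$. All the decay must then come from the exponent $d-m$ in the second term, which forces $(d-m)/d\ge 1-\ee$. For $m$ of order $d$ this fails. For example, when $m=d$ the second term vanishes and your bound reduces to $F(t)\le t\log\rho$, which is positive and proves nothing.

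The repair is one line, and it is exactly what the paper does. Since $1-e^{-u}\le 1+(b-1)e^{-u}$, you may bound $\big(1-e^{-\frac{bt}{(b-1)d}}\big)^m\le \big(1+(b-1)e^{-\frac{bt}{(b-1)d}}\big)^m$, which raises the exponent $d-m$ to $d$. Lemma~\ref{lemma:taylor}(i) then gives $F(t)\le t\big(\log\rho-(1-\ee)\big)<0$ on $(0,\delta d)$ for any $\ee<1-\log\rho$. This needs no condition on $m$ and no $L_1$. The cap $\delta\le b-1$ is also unnecessary, since $(1-e^{-u})/b<1$ always.

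Alternatively, you could keep your structure and use the factor $\big(t/((b-1)d)\big)^m$ that you mention. For $m\ge \ee d$ and $t<\delta d$ it gives $F(t)\le d\big(\delta\log\rho+\ee\log(\delta/(b-1))\big)<0$ once $\delta$ is small enough.

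In the paper the lemma is only invoked with $m\le d/L_1$ (in Proposition~\ref{prop:uniquesoln}), so your restricted version would suffice for that application. It does not, however, establish the statement as given.
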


\begin{proof}
    We choose $\delta_1$ small enough such that for all $\delta<\delta_1$, $\rho^\delta (1+(b-1)e^{-\frac{b\delta}{b-1}})<b$, whose existence is guaranteed by Lemma \ref{lemma:taylor}(i). Then, for $t\in(0,\delta_1 d)$, we have
    $$\rho^tb^{-d}\big(1+(b-1)e^{-\frac{bt}{(b-1)d}}\big)^{d-m}\big(1-e^{-\frac{bt}{(b-1)d}}\big)^m\leq \rho^tb^{-d}(1+(b-1)e^{-\frac{bt}{(b-1)d}})^{d}<1,$$
    and hence $t$ cannot be a solution to \eqref{eq:1}.
\end{proof}

\begin{proposition}\label{prop:uniquesoln}
Suppose that $b\in\N_2$. There exists a constant $L_1>0$ large enough such that for $1\leq m\leq d/L_1$ in both regimes $\rho\in(1,e)$ and $\rho\to 1^+$, there exists a unique strictly positive solution to \eqref{eq:1}.
\end{proposition}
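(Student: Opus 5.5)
Taking logarithms, a positive solution of \eqref{eq:1} is a zero of
\[
F(t):=t\log\rho-d\log b+(d-m)\log\bigl(1+(b-1)e^{-\frac{bt}{(b-1)d}}\bigr)+m\log\bigl(1-e^{-\frac{bt}{(b-1)d}}\bigr),\qquad t>0 .
\]
We will treat existence and uniqueness separately. By Lemma~\ref{lemma:nozero} there is no zero on $(0,\delta_1 d)$, so it suffices to work on $[\delta_1 d,\infty)$.

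\textbf{Existence.} At $t=\delta_1 d$ the argument of Lemma~\ref{lemma:nozero} already gives $F<0$. As $t\to\infty$, the last two terms of $F$ converge to $(d-m)\log 1+m\log 1=0$, so $F(t)=t\log\rho-d\log b+o(1)\to\infty$. Continuity of $F$ then yields a zero in $(\delta_1 d,\infty)$. Moreover, any zero satisfies $t\le d\log b/\log\rho+O(1)$, since the last two terms are bounded above by $(d-m)\log b\le d\log b$ and by $0$ respectively. In the regime $\rho\in(1,e)$ every zero therefore lies in $[\delta_1 d, Cd]$. In the regime $\rho\to1$ we only get $t\lesssim d/\log\rho$, which is still enough for what follows.

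\textbf{Uniqueness via monotonicity.} We show that $F'>0$ on $[\delta_1 d,\infty)$. Writing $c=bt/((b-1)d)$, we have
\[
F'(t)=\log\rho-\frac{(d-m)}{d}\cdot\frac{b\,e^{-c}}{1+(b-1)e^{-c}}+\frac{m}{d}\cdot\frac{b}{(b-1)}\cdot\frac{e^{-c}}{1-e^{-c}} .
\]
The last term is positive, so it suffices to show that
\[
g(c):=\frac{b e^{-c}}{1+(b-1)e^{-c}}<\log\rho
\]
at every zero, or at least at every point where the function could cross zero. Now $g$ is decreasing in $c$. Hence if $F'(t_0)\le 0$ at some $t_0\ge\delta_1 d$, then $F'(t)\le \log\rho-g(c)+O(m/d)$ for all $t\le t_0$. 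This does not immediately close the argument, so we instead use a cleaner route: show that $F$ is \emph{convex} on the relevant range. The function $t\mapsto(d-m)\log(1+(b-1)e^{-c})$ is convex in $c$, because its second derivative is $(b-1)e^{-c}/(1+(b-1)e^{-c})^2>0$. The function $\log(1-e^{-c})$, however, is concave, with second derivative $-e^{-c}/(1-e^{-c})^2$. On $t\ge\delta_1 d$, i.e.\ $c\ge c_1:=b\delta_1/(b-1)$, this concave contribution, weighted by $m$, is dominated by the convex one weighted by $d-m$, provided
\[
\frac{m}{d-m}\le \frac{(b-1)(1-e^{-c})^2}{(1+(b-1)e^{-c})^2}.
\]
This is the main obstacle, since the right-hand side decays as $c\to\infty$. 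For large $c$, however, both second derivatives are about $e^{-c}$ in size: the convex one is $\approx(b-1)e^{-c}$ and the concave one is $\approx e^{-c}$. So the ratio condition reduces to $m/(d-m)\le (b-1)\min_{c\ge c_1}(1-e^{-c})^2/(1+(b-1)e^{-c})^2$. That minimum is a positive constant depending only on $b$ and $\delta_1$, and the condition holds once $L_1$ is large. Thus $F$ is strictly convex on $[\delta_1 d,\infty)$ and negative at its left endpoint.

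\textbf{Conclusion.} A strictly convex function that is negative at the left endpoint of a half-line has at most one zero there. Together with existence and Lemma~\ref{lemma:nozero}, this gives a unique positive solution. The step requiring care is making the constants uniform in the regime $\rho\to1$. There $\delta_1$ from Lemma~\ref{lemma:nozero} can be taken independent of $\rho$ for $\rho$ near $1$, since the defining inequality $\rho^\delta(1+(b-1)e^{-b\delta/(b-1)})<b$ only becomes easier as $\rho$ decreases. Hence $c_1$ and $L_1$ depend only on $b$, as required.
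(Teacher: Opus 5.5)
Your proof is correct. It differs from the paper's in the shape property it uses on $[\delta_1 d,\infty)$.

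Both arguments first discard $(0,\delta_1 d)$ via Lemma~\ref{lemma:nozero}. Both then use $m/d\le 1/L_1$ to control the term $m\log(1-e^{-c})$, whose curvature blows up as $c\to0$.

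\textbf{The paper's route.} It writes $\phi'$ (your $F'$) as a function of $u=e^{-bt/((b-1)d)}$ and shows three things: $\phi'$ is convex in $u$ on $[0,e^{-b\delta_1/(b-1)}]$, positive as $u\to0^+$, and negative at $t=\delta_1 d$. From this it concludes that $\phi$ first decreases and then increases.

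\textbf{Your route.} You show $F''>0$ on the whole half-line, i.e.\ $F'$ is monotone. You then use the fact that a convex function negative at the left endpoint of a half-line has at most one zero, and get existence separately from the intermediate value theorem.

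Your route is somewhat more economical:
\begin{itemize}
\item The verification is a single comparison of second derivatives. The binding case $c=c_1$ gives the explicit sufficient condition $L_1>1+b^2/((b-1)(1-e^{-c_1})^2)$.
\item Since $F''$ does not involve $\rho$, both regimes are handled at once.
\item You never need the sign of $F'(\delta_1 d)$. In the paper that step requires $\rho<e$ and $\delta_1$ small.
\end{itemize}
The paper's route additionally shows that $\phi$ is initially decreasing on $[\delta_1 d,\infty)$, but this is not used later.

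A few side remarks in your write-up are wrong and should be deleted, although the final argument does not rely on them.
\begin{itemize}
\item The announced goal ``$F'>0$ on $[\delta_1 d,\infty)$'' is false for small $\delta_1$: there $F'(\delta_1 d)=\log\rho-1+O(\delta_1)+O\big(1/(L_1\delta_1)\big)<0$.
\item Your upper bound on zeros has the inequalities reversed. Bounding the last two terms of $F$ from above gives $F(t)\le t\log\rho-m\log b$, hence only the lower bound $t\ge m\log b/\log\rho$ at a zero. An upper bound would need $m\log(1-e^{-c})\ge m\log(1-e^{-c_1})$ for $t\ge\delta_1 d$, and it is unnecessary anyway.
\item The ratio $(b-1)(1-e^{-c})^2/(1+(b-1)e^{-c})^2$ is increasing in $c$ with limit $b-1$. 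It therefore degenerates as $c\to0$, not as $c\to\infty$. Its minimum over $c\ge c_1$ sits at $c=c_1$ and is of order $\delta_1^2$. This is exactly why excluding $(0,\delta_1 d)$ first is indispensable. Your displayed condition with that minimum is nonetheless the right one.
\end{itemize}
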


\begin{proof}
First, observe that \eqref{eq:1} is equivalent to
\begin{align*}
    \phi(t):=t\log\rho-d\log b+(d-m)\log(1+(b-1)e^{-\frac{bt}{(b-1)d}})+m\log(1-e^{-\frac{bt}{(b-1)d}})=0.
\end{align*}
Differentiation yields that with $u=e^{-\frac{bt}{(b-1)d}}\in(0,1)$,
\begin{align}
    \phi'(t)=\log\rho-\frac{bu}{1+(b-1)u}+\frac{b^2u}{(b-1)(1+(b-1)u)(1-u)}\frac{m}{d}.\label{eq:phi'}
\end{align}
Let $\delta_1$ be given by Lemma \ref{lemma:nozero}. Note that on the interval $u\in[0,e^{-b\delta_1/(b-1)}]$, the second derivative of $\frac{b^2u}{(b-1)(1+(b-1)u)(1-u)}$ is uniformly bounded, and the second derivative of $\frac{bu}{1+(b-1)u}$ is negative and bounded away from zero. Therefore, if $L_1$ is chosen large enough, the right-hand side of \eqref{eq:phi'} is a $C^1$ and convex function of $u\in[0,e^{-b\delta_1/(b-1)}]$. Moreover, it takes positive value as $u\to 0^+$. 

By Lemma \ref{lemma:nozero}, we know that for all $t\in(0,\delta_1d)$, $\phi(t)<0$. Next, we claim that $\phi'(\delta_1 d)<0$. Indeed, this follows directly from \eqref{eq:phi'}, the fact that $bu/(1+(b-1)u)\to 1$ as $u\to 1$, and our assumptions that $\rho<e$ and $m\leq d/L_1$, with $L_1$ chosen large enough.   By the convexity of $\phi'$ as a function of $u$ above and since $u=e^{-\frac{bt}{(b-1)d}}$ is decreasing in $t$, we must have that there exists some $s\geq \delta_1d$ such that $\phi'(t)<0$ on $(\delta_1d, s)$ and $\phi'(t)>0$ on $(s,\infty)$. Altogether, we conclude that there is a unique positive root for $\phi(t)=0$. 
\end{proof}



\subsection{Asymptotic expansions}
In what follows, we develop asymptotic expansions for the solution $t$ to \eqref{eq:1} and prove Propositions \ref{thm:const rho expansion} and \ref{thm:non-const rho expansion}. The asymptotic results are always interpreted as $d\to\infty$. Some of the results here will be useful in proving Theorems \ref{thm:main} and \ref{thm:main rho->1}.

\begin{proposition}\label{lemma:asymp 0} Suppose that $b\in\N_2$, $\rho\in(1,e)$, $1\leq m\leq d/L_1$, and $t=t_{d,m}$ is the unique solution to \eqref{eq:1}. Then $t\asymp d$. Furthermore, $t_{d,m+1}-t_{d,m}=O(1)$ uniformly in $m\in[1,d/L_1]$.
\end{proposition}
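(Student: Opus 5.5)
We work throughout with the function $\phi(t)=\phi_{d,m}(t)$ from the proof of Proposition~\ref{prop:uniquesoln}. It is the logarithm of the left-hand side of \eqref{eq:1}, so $t_{d,m}$ is its unique positive zero.

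\emph{Lower bound $t\gg d$.} This is immediate from Lemma~\ref{lemma:nozero}: no solution lies in $(0,\delta_1 d)$, so $t_{d,m}\ge \delta_1 d$.

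\emph{Upper bound $t\ll d$.} Evaluate $\phi$ at $t=Kd$ for a large constant $K$. With $u=e^{-bK/(b-1)}$ we get
\begin{align*}
\phi(Kd)=d\Big(K\log\rho-\log b+\big(1-\tfrac md\big)\log\big(1+(b-1)u\big)+\tfrac md\log(1-u)\Big).
\end{align*}
Since $m/d\le 1/L_1$ and $\log(1-u)\ge \log(1-e^{-b/(b-1)})$ is bounded below for $K\ge1$, choosing $K$ with $K\log\rho>\log b+1$ makes $\phi(Kd)>0$ for $L_1$ large. The proof of Proposition~\ref{prop:uniquesoln} shows that $\phi<0$ on $(0,s)$ and that $\phi$ has a unique positive root. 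Hence $t_{d,m}\in[\delta_1 d,Kd]$, so $t\asymp d$.

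\emph{Increment bound.} We compare $\phi_{d,m+1}$ with $\phi_{d,m}$:
\begin{align*}
\phi_{d,m+1}(t)=\phi_{d,m}(t)-\log\frac{1+(b-1)u}{1-u}.
\end{align*}
On the window $t\in[\delta_1 d,Kd]$ we have $u\in[e^{-bK/(b-1)},e^{-b\delta_1/(b-1)}]$, so this correction term lies in a compact subinterval of $(0,\infty)$; call its upper bound $C_0$. Hence $\phi_{d,m+1}(t_{d,m})\in[-C_0,0)$.

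It then suffices to show that $\phi_{d,m+1}'$ is bounded below by a positive constant $c$ on the interval from $t_{d,m}$ to $t_{d,m+1}$. This follows from the mean value theorem:
\begin{align*}
t_{d,m+1}-t_{d,m}\le C_0/c.
\end{align*}

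\emph{Main obstacle: a uniform positive derivative at the root.} Formula \eqref{eq:phi'} gives $\phi'$ as a function of $u$ and $m/d$. Set $\varepsilon=m/d$ and define $F_\varepsilon(x)=\phi(xd)/d$, so that $\phi'(xd)=F_\varepsilon'(x)$. As $\varepsilon\to0$, $F_\varepsilon\to F_0$ in $C^1$ on $[\delta_1,K]$, uniformly in $\varepsilon\le 1/L_1$. Here $F_0$ is exactly the function in \eqref{eq:x0  def}, with unique positive zero $x_0$.

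We claim $F_0'(x_0)>0$. Indeed, $F_0(0)=0$, $F_0<0$ on $(0,\delta_1]$, and $F_0'$ is increasing in $x$, because $\log\rho-bu/(1+(b-1)u)$ is increasing in $x$ as $u$ decreases. So $F_0$ is convex and crosses zero transversally at $x_0$.

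By $C^1$ convergence, for $L_1$ large, $F_\varepsilon'\ge c:=F_0'(x_0)/2$ on a neighborhood $[x_0-\eta,x_0+\eta]$. The roots $t_{d,m}/d$ and $t_{d,m+1}/d$ both lie in this neighborhood, since the zeros of $F_\varepsilon$ converge to $x_0$. Transversality gives this, and $(m+1)/d\le 2/L_1$ as well.

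Since $\phi'_{d,m+1}$ differs from $\phi'_{d,m}$ by $O(1/d)$, the derivative bound $c$ holds on the whole segment between the two roots, uniformly in $m\in[1,d/L_1]$. This completes the argument, with $t_{d,m+1}-t_{d,m}\le C_0/c$.
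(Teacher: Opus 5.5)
Your proof is correct and follows essentially the paper's route. The paper subtracts the first-moment equations at $t_{d,m}$ and $t_{d,m+1}$, bounds the resulting log-ratio terms via Lemma~\ref{lemma:taylor}(ii) by what is, to first order, $(t_{d,m+1}-t_{d,m})\,b/(b-1+e^{bt_{d,m+1}/((b-1)d)})$, and then uses $h'(x_0)>0$ from \eqref{eq:h'} to get a uniformly positive slope; this is a one-sided version of your mean value theorem step. Your rescaled function $F_\varepsilon(x)=\phi(xd)/d$ depends on $(d,m)$ only through $\varepsilon=m/d$ and places both roots in a fixed neighbourhood of $x_0$ where $F_\varepsilon'\ge c$, so it is somewhat cleaner: uniformity in $d$ and $m$ is automatic, and you need no separate a priori control of $t_{d,m+1}-t_{d,m}$ beyond the window where the slope bound holds.
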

\begin{proof}
Since the solution is unique (Proposition \ref{prop:uniquesoln}), to prove $t\asymp d$ it remains to show that the left-hand side of \eqref{eq:1} is $<1$ uniformly as $t/d\to 0$ and is $>1$ uniformly as $t/d\to\infty$. The former claim follows from the same argument as in Lemma \ref{lemma:nozero}; the latter claim follows since for $m\leq d/L_1$, uniformly as $t/d\to\infty$,
\begin{align*}
    &\hspace{0.5cm}\rho^tb^{-d}\big(1+(b-1)e^{-\frac{bt}{(b-1)d}}\big)^{d-m}\big(1-e^{-\frac{bt}{(b-1)d}}\big)^m\\
    &\geq\rho^tb^{-d}(1+(b-1)e^{-\frac{bt}{(b-1)d}})^{(1-1/L_1)d}(1-e^{-\frac{bt}{(b-1)d}})^{d/L_1}\geq  \rho^tb^{-d}>1.
\end{align*}
This proves that $t\asymp d$.

To see that $t_{d,m+1}-t_{d,m}=O(1)$, we use the shorthand notation $t'=t_{d,m+1}$ and $t=t_{d,m}$. Clearly, $t'>t$. Applying \eqref{eq:1} twice yields
\begin{align}
    (t'-t)\log\rho=\log\Big(\frac{1+(b-1)e^{-\frac{bt'}{(b-1)d}}}{1-e^{-\frac{bt'}{(b-1)d}}}\Big)+(d-m)\log\Big(\frac{1+(b-1)e^{-\frac{bt}{(b-1)d}}}{1+(b-1)e^{-\frac{bt'}{(b-1)d}}}\Big)+m\log \Big(\frac{1-e^{-\frac{bt}{(b-1)d}}}{1-e^{-\frac{bt'}{(b-1)d}}}\Big).\label{eq:tunique}
\end{align}
Using Lemma \ref{lemma:taylor}(ii), we have  
\begin{align*}
    (d-m)\log\Big(\frac{1+(b-1)e^{-\frac{bt}{(b-1)d}}}{1+(b-1)e^{-\frac{bt'}{(b-1)d}}}\Big)+m\log \Big(\frac{1-e^{-\frac{bt}{(b-1)d}}}{1-e^{-\frac{bt'}{(b-1)d}}}\Big)&\leq d\log\Big(\frac{1+(b-1)e^{-\frac{bt}{(b-1)d}}}{1+(b-1)e^{-\frac{bt'}{(b-1)d}}}\Big)\\
    &\leq \frac{(b-1)d(e^{\frac{b(t'-t)}{(b-1)d}}-1)}{b-1+e^{\frac{bt'}{(b-1)d}}}.
\end{align*}
This leads to
\begin{align}
    (t'-t)\log\rho\leq \log\Big(\frac{1+(b-1)e^{-\frac{bt'}{(b-1)d}}}{1-e^{-\frac{bt'}{(b-1)d}}}\Big)+\frac{(b-1)d(e^{\frac{b(t'-t)}{(b-1)d}}-1)}{b-1+e^{\frac{bt'}{(b-1)d}}}.\label{eq:b2}
\end{align}
By continuity and \eqref{eq:h'} below (whose proof is independent of the current proof), we see that if $L_1$ is large enough, there is $\delta'>0$ such that
 \begin{align}
     \log\rho-\frac{b}{b-1+e^{\frac{bt'}{(b-1)d}}}>\delta'.\label{eq:b1}
 \end{align}
Define the function
$$q(s):=s\log\rho-\frac{(b-1)d(e^{\frac{bs}{(b-1)d}}-1)}{b-1+e^{\frac{bt'}{(b-1)d}}}.$$
Then for $s\in[0,\sqrt{d}]$ and $d$ large enough, we have by \eqref{eq:b1} that $q'(s)>\delta'/2$. On the other hand, \eqref{eq:b2} and the fact that $t'\asymp d$ implies $q(t'-t)\ll 1$. Since $q(0)=0$, this implies  $t'-t=O(1)$ as $t$ is the unique solution to \eqref{eq:tunique} with $t'$ fixed (Proposition \ref{prop:uniquesoln}).
\end{proof}

\begin{proof}[Proof of Proposition \ref{thm:const rho expansion}] (i) follows from Proposition \ref{prop:uniquesoln}. 

(ii) Let us define the function
$$h(x)=x\log \rho-\log b+\log(1+(b-1)e^{-\frac{bx}{b-1}}).$$
It is easy to compute that $h(0)=0$, $h'(x)=\log\rho-\frac{b}{b-1}(1+\frac{1}{b-1}e^{\frac{bx}{b-1}})^{-1}$, and $h''(x)>0$, so $h$ is strictly convex, and we denote its two roots by $0$ and $x_0>0$ (so that $x_0$ in \eqref{eq:x0  def} is well-defined). It is also clear from the expression of $h'$ that $\lim_{\rho\to 1^+}x_0(b,\rho)=\infty$ and $\lim_{\rho\to e^-}x_0(b,\rho)=0$. 
In addition, 
\begin{align}
   0< h'(x_0)=\log\rho-\frac{b}{b-1+e^{\frac{bx_0}{b-1}}}.\label{eq:h'}
\end{align}
In particular, $r$ in \eqref{eq:r   def} is well-defined. 

Next, we show that $x_0$ is decreasing in $\rho$. Differentiating \eqref{eq:x0  def} with respect to $\rho$ yields
$$\Big(\log\rho-\frac{b}{b-1+e^{\frac{bx_0}{b-1}}}\Big)\frac{\d x_0}{\d\rho}=-\frac{x_0}{\rho}.$$
By \eqref{eq:h'}, we have $\d x_0/\d\rho<0$. 


To prove \eqref{eq:x0r limits}, it remains to show that $\lim_{\rho\to 1^+}r(b,\rho)=0$ and $\lim_{\rho\to e^-}r(b,\rho)=\infty$. Denote by $\alpha=\alpha(\rho)=e^{-\frac{bx_0}{b-1}}$, so that \eqref{eq:r   def} implies
\begin{align}
    r=\frac{\log\Big(\frac{1+(b-1)\alpha}{1-\alpha}\Big)}{\log\rho-\frac{b}{b-1+1/\alpha}}.\label{eq:rexp}
\end{align}
 To derive the limits of $r$, we determine the asymptotic orders of the numerator and the denominator of \eqref{eq:rexp} in $\alpha$. If $\rho\to 1^+$, we have $\alpha\to 0$. First, $\log\big(\frac{1+(b-1)\alpha}{1-\alpha}\big)=\log(1+O(\alpha))=O(\alpha)$. Second, inserting the relation $x_0=-\frac{(b-1)\log\alpha}{b}$ into \eqref{eq:x0  def} yields $(-\log\alpha)\log\rho=C(b)+O(\alpha)$ for some constant $C(b)>0$, so that $\log\rho=(1+o(1))C(b)/(-\log\alpha)$. 
 Third, $\frac{b}{b-1+1/\alpha}=O(\alpha)$. Altogether,
 $$\lim_{\rho\to 1^+}r(b,\rho)=\lim_{\alpha\to 0}\frac{O(\alpha)}{\frac{(1+o(1))C(b)}{-\log\alpha}+O(\alpha)}=0.$$
 On the other hand, if $\rho\to e^-$, we have $\alpha\to 1$. First, $\log\big(\frac{1+(b-1)\alpha}{1-\alpha}\big)=\log b+o(1)-\log(1-\alpha)$. Second, $\log\rho\to 1$ and $\frac{b}{b-1+1/\alpha}\to 1$, so $\log\rho-\frac{b}{b-1+1/\alpha}\to 0^+$ by \eqref{eq:h'}.  
 Altogether,
 $$\lim_{\rho\to e^-}r(b,\rho)=\lim_{\alpha\to 1}\frac{\log b+o(1)-\log(1-\alpha)}{o(1)}=\infty.$$

(iii) Recall \eqref{eq:x0  def}. 
We first show that $t/d-x_0=O(m/d)$. To see this, note first that \eqref{eq:1} implies 
\begin{align}
    \frac{t}{d}\log\rho-\log b+\frac{d-m}{d}\log(1+(b-1)e^{-\frac{bt}{(b-1)d}})+\frac{m}{d}\log(1-e^{-\frac{bt}{(b-1)d}})=0.\label{eq:2 implies}
\end{align}
Subtracting \eqref{eq:x0  def} from \eqref{eq:2 implies} gives
\begin{align}
    \Big(\frac{t}{d}-x_0\Big)\log\rho+\log\Big(\frac{1+(b-1)e^{-\frac{bt}{(b-1)d}}}{1+(b-1)e^{-\frac{bx_0}{b-1}}}\Big)+\frac{m}{d}\,\log\Big(\frac{1-e^{-\frac{bt}{(b-1)d}}}{1+(b-1)e^{-\frac{bt}{(b-1)d}}}\Big)=0.\label{eq:lhs}
\end{align}
If $t/d=x_0$, the left-hand side of \eqref{eq:lhs} is negative; using \eqref{eq:h'} and a Taylor expansion, if $t/d=x_0+Km/d$ with $K$ large enough, the left-hand side of \eqref{eq:lhs} is positive. By Proposition \ref{prop:uniquesoln} and continuity, the unique solution $t$ must satisfy $t/d-x_0=O(m/d)$.

Recall now \eqref{eq:r   def}. 
Let us write
$$\frac{t}{d}=x_0+(r+r')\frac{m}{d},$$
where $r'$ is some error term which we want to prove is small. We have by using \eqref{eq:x0  def}, \eqref{eq:2 implies}, the fact that $t/d-x_0=O(m/d)$, and Taylor's expansion that
\begin{align*}
    (r+r')\frac{m}{d}\log\rho&=\frac{m}{d}\log\Big(\frac{1+(b-1)e^{-\frac{bt}{(b-1)d}}}{1-e^{-\frac{bt}{(b-1)d}}}\Big)+\log\Big(\frac{1+(b-1)e^{-\frac{bx_0}{(b-1)}}}{1+(b-1)e^{-\frac{bt}{(b-1)d}}}\Big)\\
    &=\frac{m}{d}\bigg(\log\Big(\frac{1+(b-1)e^{-\frac{bx_0}{(b-1)}}}{1-e^{-\frac{bx_0}{(b-1)}}}\Big)+O\Big(\frac{m}{d}\Big)\bigg)+\bigg((r+r')\frac{m}{d}\frac{b}{b-1+e^{\frac{bx_0}{b-1}}}+O\Big(\frac{m^2}{d^2}\Big)\bigg)\\
    &=\frac{m}{d}\bigg(\log\Big(\frac{1+(b-1)e^{-\frac{bx_0}{(b-1)}}}{1-e^{-\frac{bx_0}{(b-1)}}}\Big)+\frac{b(r+r')}{b-1+e^{\frac{bx_0}{b-1}}}\bigg)+O\Big(\frac{m^2}{d^2}\Big).
\end{align*}
Along with \eqref{eq:r   def}, this leads to
$$r'\Big(\log\rho-\frac{b}{b-1+e^{\frac{bx_0}{b-1}}}\Big)=O\Big(\frac{m}{d}\Big),$$
as desired.
\end{proof}

\begin{proposition}\label{lemma:asymp 2}
 Assume that $b=2$.    Suppose that $1\leq m\leq d/L_3$ for $L_3$ large enough,  $\log\rho(d)=o(1)$, and $t=t_{d,m}$ is the unique solution to \eqref{eq:1}. Then
    \begin{align*}
    t=\frac{d\log 2}{\log\rho(d)}+O\Big(\frac{de^{-2\log 2/\log\rho(d)}}{\log\rho(d)}\Big).
\end{align*}
In particular, under the assumptions of Theorem \ref{thm:main rho->1}, we have
\begin{align*}
    t=\frac{d\log 2}{\log\rho(d)}+o(1).
\end{align*}
\end{proposition}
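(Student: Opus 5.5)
With $b=2$, the first-moment equation \eqref{eq:1} in logarithmic form reads
\begin{align*}
\phi(t)=t\log\rho-d\log 2+(d-m)\log\big(1+e^{-2t/d}\big)+m\log\big(1-e^{-2t/d}\big)=0 .
\end{align*}
The plan is to show that at $t=t_{d,m}$ the two correction terms are exponentially small in $1/\log\rho$. Then $t\log\rho=d\log 2+O(\text{small})$, and dividing by $\log\rho$ gives the claim.

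\textbf{Step 1: $t$ is of order $d/\log\rho$.} Since $\log(1+e^{-2t/d})\ge 0$ and $m\le d/L_3$, we get
\begin{align*}
\phi(t)\ge t\log\rho-d\log2+m\log\big(1-e^{-2t/d}\big).
\end{align*}
The right-hand side is positive once $t\ge (1+\epsilon)d\log2/\log\rho$, because $1-e^{-2t/d}$ is then extremely close to $1$. Conversely,
\begin{align*}
\phi(t)\le t\log\rho-d\log2+d\,e^{-2t/d}.
\end{align*}
At $t=(1-\epsilon)d\log 2/\log\rho$ this is negative for $d$ large, since $e^{-2t/d}$ is tiny. By uniqueness of the positive root (Proposition~\ref{prop:uniquesoln}), the root satisfies $t=(1+o(1))d\log2/\log\rho$. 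In particular $e^{-2t/d}\le e^{-(2-o(1))\log2/\log\rho}$, which tends to $0$.

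\textbf{Step 2: bound the correction terms.} Put $u=e^{-2t/d}$. From $\phi(t)=0$,
\begin{align*}
t\log\rho-d\log2=-(d-m)\log(1+u)-m\log(1-u)=O(du)+O(mu),
\end{align*}
so
\begin{align*}
\Big|t-\frac{d\log2}{\log\rho}\Big|=O\Big(\frac{d\,u}{\log\rho}\Big).
\end{align*}
To reach the stated error, $u$ must be replaced by $e^{-2\log2/\log\rho}$ up to a constant, not merely an exponent $2-o(1)$. Plugging the estimate of Step 1 back in gives
\begin{align*}
\frac{2t}{d}=\frac{2\log2}{\log\rho}+O\Big(\frac{u}{\log\rho}\Big).
\end{align*}
Because $u/\log\rho\to0$ (an exponentially small quantity divided by a polynomially small one), we get $u=e^{-2\log2/\log\rho}(1+o(1))$. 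This bootstrap yields the first display.

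\textbf{Step 3: the ``in particular'' statement.} It remains to check that
\begin{align*}
\frac{d}{\log\rho}\,e^{-2\log2/\log\rho}\to0 .
\end{align*}
Under the assumptions of Theorem~\ref{thm:main rho->1}, $\log\rho$ is regularly varying of index $\gamma\in[-1,0)$. Hence $1/\log\rho\ge d^{|\gamma|-o(1)}$, and $e^{-2\log 2\, d^{|\gamma|-o(1)}}$ beats any polynomial factor in $d$.

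The main obstacle is the borderline case $\gamma=-1$, where $\log\rho$ may be as small as $L_2/d$. There $1/\log\rho$ can be of order $d$, and one must make sure the bootstrap remains valid. In Step 1 the requirement that $m\log(1-u)$ be negligible then becomes $m u\ll \epsilon d$, which holds since $u$ is exponentially small in $d$. The subexponential factor in the regular variation also needs care. My first attempt would be to use Potter bounds, giving $\log\rho\le d^{\gamma+\delta}$ for any $\delta>0$ and $d$ large, which suffices.
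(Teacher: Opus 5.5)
Your proof is correct and is essentially the paper's argument: you bracket the root by sign checks on $\phi$ together with uniqueness, read $t\log\rho-d\log 2=O(d\,e^{-2t/d})$ off the equation, and use Potter bounds for the ``in particular'' part. The only difference is that the paper brackets additively, $t\in[d\log2/\log\rho-d,\ d\log2/\log\rho]$, which gives $e^{-2t/d}\le e^{2}e^{-2\log 2/\log\rho}$ at once and makes your bootstrap unnecessary. Two small remarks: $u/\log\rho\to0$ needs only $x e^{-cx}\to0$ as $x=1/\log\rho\to\infty$, not a polynomial rate for $\log\rho$; and the $\gamma=-1$ ``obstacle'' is vacuous, since $m|\log(1-u)|=O(mu)=o(d)$ whenever $u\to0$.
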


\begin{proof}
  Recall \eqref{eq:1}, which is equivalent to
    \begin{align}
        t\log\rho(d)-d\log 2+(d-m)\log(1+e^{-2t/d})+m\log(1-e^{-2t/d})=0.\label{eq:log 1}
    \end{align}
It is elementary to verify that if $t=d\log 2/\log\rho(d)$, then the left-hand side of \eqref{eq:log 1} is positive for $d$ large; if $t=(d\log 2-d\log\rho(d))/\log\rho(d)$, the left-hand side of \eqref{eq:log 1} is negative for large $d$. Since the solution is unique by Proposition \ref{prop:uniquesoln}, we must have $t=(d\log 2-h)/\log\rho(d)$, where $h\in[0,d\log\rho(d)]$ for large $d$. It follows from \eqref{eq:log 1} and our assumption $m\leq d/L_3$ that
\begin{align*}
    h&=(d-m)\log(1+e^{-2t/d})+m\log(1-e^{-2t/d})\\
    &\leq d\log(1+e^{-2t/d})\ll de^{-2t/d}\ll de^{-2\log 2/\log\rho(d)}.
\end{align*}
This proves the claim.

Next, suppose that the assumptions of Theorem \ref{thm:main rho->1} hold, i.e., $\log\rho(d)$ is regularly varying of index $\gamma\in[-1,0)$ and $\log\rho(d)\geq L_2/d$ for some large enough constant $L_2>0$. Then by Potter's bound (Theorem 1.5.6 of \citep{bingham1989regular}), for any $\ee>0$, $\rho(d)\leq e^{\ee/\log d}$ for $d$ large enough. It follows that for $d$ large enough,
$$\frac{de^{-2\log 2/\log\rho(d)}}{\log\rho(d)}\ll d^2e^{-10\log d}=o(1),$$
as desired.
\end{proof}

Combining Propositions \ref{prop:uniquesoln} and \ref{lemma:asymp 2} yields Proposition \ref{thm:non-const rho expansion}.

\section{Proof for the constant \texorpdfstring{$\rho$}{} case}\label{sec:case1}

\subsection{Proof for the slow branching case \texorpdfstring{$\rho\in(1,e)$}{}}
\subsubsection{First moment computation}\label{eq:lb1}

We let $N_\z(t)=N_\z^{(d,m)}(t)$ denote the number of particles at location $\z$ at time $t$ in the branching random walk on $\X_d^{(b)}$ starting from the state $\bx_m$. Let also $\tilde{N}_\z(t)=\tilde{N}_\z^{(d,m)}(t)$ be the total amount of time spent at the origin $\z$ of all particles, i.e., $\tilde{N}_\z(t)=\int_0^tN_\z(s)\,\d s$. It follows from \eqref{eq:qij} that 
\begin{align}
    \E[\tilde{N}_\z(t)]=\int_0^t\E[N_\z(s)]\,\d s=\int_0^t\rho^s b^{-d}(1+(b-1)e^{-\frac{bs}{(b-1)d}})^{d-m}(1-e^{-\frac{bs}{(b-1)d}})^m\d s.\label{eq:tilde N}
\end{align}

\begin{lemma}\label{prop:first moment 1}
Suppose that $\rho\in(1,e)$. Let $t=t_{d,m}$ be the unique positive solution to \eqref{eq:1}.    For any $\ee>0$, there exists $C>0$ such that
\begin{align}\E[\tilde{N}_\z(t-C)]=\int_0^{t-C} \rho^sb^{-d}(1+(b-1)e^{-\frac{bs}{(b-1)d}})^{d-m}(1-e^{-\frac{bs}{(b-1)d}})^m\d s\leq\ee.
    \label{eq:int}
\end{align}
\end{lemma}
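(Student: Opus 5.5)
Write $f(s):=\rho^s q_m(s)$ for the integrand, so $f(t)=1$ at $t=t_{d,m}$, and set $\phi(s)=\log f(s)$ as in the proof of Proposition \ref{prop:uniquesoln}. The plan is to show that $\phi$ grows at a uniformly positive rate on a window of length order $d$ to the left of $t$, and that $f$ is exponentially small in $d$ on the rest of $[0,t]$. Then $\int_0^{t-C}f(s)\,\d s$ is bounded by a geometric-type tail $\int_C^\infty e^{-\delta' u}\,\d u$ plus a negligible term, and this is at most $\ee$ once $C$ is large.

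\textbf{Step 1 (slope near $t$).} From \eqref{eq:phi'}, with $u=e^{-\frac{bs}{(b-1)d}}$, we have
$$\phi'(s)\geq \log\rho-\frac{bu}{1+(b-1)u}=\log\rho-\frac{b}{b-1+e^{\frac{bs}{(b-1)d}}},$$
since the $m/d$ term is nonnegative. By Proposition \ref{thm:const rho expansion}(iii), $t/d=x_0+O(1/L_1)$. By \eqref{eq:h'}, $\log\rho-\frac{b}{b-1+e^{bx_0/(b-1)}}=:2\delta'>0$. The map $x\mapsto \frac{b}{b-1+e^{bx/(b-1)}}$ is continuous and decreasing, so there is $\eta>0$ (depending on $b,\rho$) with $\phi'(s)\geq\delta'$ for all $s\in[(x_0-\eta)d,\,t]$, provided $L_1$ is large. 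Hence for such $s$,
$$f(s)\leq e^{-\delta'(t-s)},$$
and therefore
$$\int_{(x_0-\eta)d}^{t-C} f(s)\,\d s\leq \frac{e^{-\delta' C}}{\delta'}.$$

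\textbf{Step 2 (bulk $[0,(x_0-\eta)d]$).} Drop the factor $(1-e^{-\cdot})^m\leq1$ and use $(1+(b-1)e^{-\cdot})^{d-m}\leq (1+(b-1)e^{-\cdot})^{d}\,b^{m}$ crudely, or better, bound it by the $m=0$ quantity divided by $(1+(b-1)e^{-bs/((b-1)d)})^{m}\geq1$. This gives $f(s)\leq e^{d\,h(s/d)}$ with $h$ the strictly convex function from the proof of Proposition \ref{thm:const rho expansion}(ii), whose roots are $0$ and $x_0$. By convexity, $h(x)\leq -c\min\{x,\eta\}$ on $[0,x_0-\eta]$ for some $c>0$. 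So
$$\int_0^{(x_0-\eta)d}f(s)\,\d s\leq \int_0^\infty e^{-cs}\,\d s\cdot O(1)$$
near $0$, which is a problem, and $O(d)e^{-c\eta d}$ away from $0$.

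\textbf{Main obstacle (small $s$).} For small $s$, $e^{dh(s/d)}$ is not small, but there the discarded factor $(1-e^{-bs/((b-1)d)})^m\leq (bs/((b-1)d))^m$ (Lemma \ref{lemma:taylor}(i)) rescues us, since $m\geq1$. On $s\in[0,\delta_1 d]$, Lemma \ref{lemma:taylor}(i) gives $f(s)\leq (\rho e^{-(1-\ee')})^s\,(s/d)^m$ up to constants. Because $\rho<e$, this is at most $e^{-c's}\,s/d$, and integrating gives $O(1/d)\to0$. On $[\delta_1 d,(x_0-\eta)d]$, Lemma \ref{lemma:e0}(i)/convexity of $h$ gives $h\leq -c''<0$, so that part contributes $O(d e^{-c''d})$. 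Summing the three pieces yields $\E[\tilde N_\z(t-C)]\leq e^{-\delta'C}/\delta'+o(1)\leq\ee$ for $C$ large, uniformly in $m\in[1,d/L_1]$. The only delicate point is making all constants uniform in $m$; this is handled by having every bound either discard $m$-dependent factors that are at most $1$ or use $m\geq1$ only.
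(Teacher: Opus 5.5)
Your proposal is correct and uses essentially the same three-region decomposition as the paper. Near $0$ you combine Lemma \ref{lemma:taylor}(i) with the factor $(s/((b-1)d))^m\le s/((b-1)d)$ to get $O(1/d)$; the middle range is exponentially small in $d$; and near $t$ the logarithmic derivative is bounded below via \eqref{eq:h'}, which gives the tail $e^{-\delta' C}/\delta'$. Your small variations are valid and slightly cleaner: you bound the bulk directly by $e^{dh(s/d)}$ and use the convexity of $h$, where the paper rescales by $t$ and invokes Lemma \ref{lemma:e0}(i), and you discard the nonnegative $m/d$ term in \eqref{eq:phi'} rather than absorbing it into an $o(1)$.
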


\begin{proof}
The first equality is \eqref{eq:tilde N}. 
We first separate the contribution from $s\in[0,\delta_2 d]$ for some $\delta_2$ depending only on $\delta_3:=(1-\log\rho)/b>0$. We have since $m\in[1, d/L_1]$ and $\rho\in(1,e)$, by choosing $\delta_2$ small enough and applying Lemma \ref{lemma:taylor}(i),
\begin{align*}
    \int_0^{\delta_2 d} \rho^sb^{-d}(1+(b-1)e^{-\frac{bs}{(b-1)d}})^{d-m}(1-e^{-\frac{bs}{(b-1)d}})^m\d s
    &=\int_0^{\delta_2 d} \rho^s\Big(\frac{1+(b-1)e^{-\frac{bs}{(b-1)d}}}{b}\Big)^{d-m}\Big(\frac{1-e^{-\frac{bs}{(b-1)d}}}{b}\Big)^m\d s\\
    &\leq \int_0^{\delta_2 d} \rho^se^{-(1-\delta_3)\frac{s(d-m)}{d}}\Big(\frac{s}{(b-1)d}\Big)^m\d s\\
    &\leq d^{-1}\int_0^{\delta_2 d} s\rho^se^{-(1-\delta_3)\frac{s(d-m)}{d}}\d s\ll d^{-1}.
\end{align*}

Next, we show that for any $\delta_2>0$, there exists some $\ee_1>0$ such that
\begin{align}
    \int_{\delta_2 d}^{(1-\delta_2)t} \rho^sb^{-d}(1+(b-1)e^{-\frac{bs}{(b-1)d}})^{d-m}(1-e^{-\frac{bs}{(b-1)d}})^m\d s\ll e^{-\ee_1d}.\label{eq:3}
\end{align}
By a change of variable and \eqref{eq:1}, we write
\begin{align}
    &\hspace{0.5cm}\int_{\delta_2 d}^{(1-\delta_2)t} \rho^s(1+(b-1)e^{-\frac{bs}{(b-1)d}})^{d-m}(1-e^{-\frac{bs}{(b-1)d}})^m\d s\nonumber\\
    &=t\int_{\delta_2 d/t}^{1-\delta_2}\Big(b^s(1+(b-1)e^{-\frac{bt}{(b-1)d}})^{-\frac{s(d-m)}{d}}(1-e^{-\frac{bt}{(b-1)d}})^{-\frac{sm}{d}}(1+(b-1)e^{-\frac{bst}{(b-1)d}})^{(d-m)/d}(1-e^{-\frac{bst}{(b-1)d}})^{m/d}\Big)^d\d s\nonumber\\
    &\ll \int_{\delta_2 d/t}^{1-\delta_2}\Big(b^se^\ee(1+(b-1)e^{-\frac{bt}{(b-1)d}})^{-s}(1+(b-1)e^{-\frac{bst}{(b-1)d}})\Big)^d\d s,\label{eq:4}
\end{align}
where $\ee\to 0$ as $L_1\to \infty$. By Lemma \ref{lemma:e0}(i) and Proposition \ref{lemma:asymp 0}, we see that in the range of integration, the integrand of \eqref{eq:4} is $<(b-\ee_1)^d$ for some $\ee_1>0$.  This establishes \eqref{eq:3}.

Finally, we show that the integrand grows at least exponentially for $s\in[(1-\delta_2)t,t]$. It remains to show that the derivative of the logarithm of the integrand in \eqref{eq:int} is bounded away from 0, i.e.,
\begin{align}
    \inf_{s\in[(1-\delta_2)t,t]}\Big(\log\rho-\frac{b}{e^{\frac{bs}{(b-1)d}}+(b-1)}+o(1)\Big)>0,\label{eq:inf>0}
\end{align}
 for some $\delta_2>0$ (where the $o(1)$ incorporates the terms involving $m$ and can be arbitrarily small if $L_1$ is large enough). But \eqref{eq:inf>0} follows immediately from \eqref{eq:h'} and Proposition \ref{thm:const rho expansion}(iii) if $\delta_2$ is chosen sufficiently small and $L_1$ chosen large by continuity. Therefore, the integrand grows exponentially in the region $s\in[(1-\delta_2)t,t]$, proving \eqref{eq:int}. 
\end{proof}

\begin{proof}[Proof of Theorem \ref{thm:main}, lower bound]
Let $\ee>0$ be fixed and our goal is to show that for some $C>0$, $\p(\tau_{d,m}\leq t_{d,m}-C)<\ee$. 
Pick $\delta_4>0$ such that $\int_0^{\delta_4} e^{-x}\d x<\ee/2$. 
Using Lemma \ref{prop:first moment 1}, we pick $C_1>0$ such that $\E[\tilde{N}_\z(t_{d,m}-C_1)]<\delta_4\ee/2$. By Markov's inequality, $\p(\tilde{N}_\z(t_{d,m}-C_1)\geq \delta_4)<\ee/2$. By definition, $\tilde{N}_\z(\tau_{d,m})=0$. By the strong Markov property of the branching random walk starting from $\tau_{d,m}$,
$$\p(\tilde{N}_\z(t_{d,m}-C_1)<\delta_4;\tau_{d,m}<t_{d,m}-C_1-\delta_4)\leq \p(\tilde{N}_\z(\tau_{d,m}+\delta_4)-\tilde{N}_\z(\tau_{d,m})<\delta_4)\leq 1-e^{-\delta_4}<\frac{\ee}{2}.$$
Therefore, by the union bound,
$$\p(\tau_{d,m}<t_{d,m}-C_1-\delta_4)\leq \p(\tilde{N}_\z(t_{d,m}-C_1)<\delta_4;\tau_{d,m}<t_{d,m}-C_1-\delta_4)+\p(\tilde{N}_\z(t_{d,m}-C_1)\geq \delta_4)<\ee.$$
This proves the lower bound of $\tau_{d,m}$.
\end{proof}

\subsubsection{Second moment computation}\label{sec:2nd}

We start with a combinatorial lemma on the count of vertices of a subset in the sequence space $\X$. 

\begin{lemma}\label{lemma:count2}
   Fix $b\in\N_2$,  $\ell,\ell',m\in[d]$, and $\bx\in \X_{d,m}$. The number of distinct $\by\in\X_{d,\ell'}$ satisfying $d_{\mathrm{H}}(\bx,\by)=\ell$ is given by\footnote{Here we use the convention $0^0=1$.}
   $$\sum_{i}\binom{m}{i}\binom{m-i}{i+\ell-\ell'}\binom{d-m}{i+\ell-m}(b-2)^{\ell'-\ell+m-2i}(b-1)^{i+\ell-m}.$$
   Moreover, we have the upper bound
    \begin{align*}
        &\sum_{i}\binom{m}{i}\binom{m-i}{i+\ell-\ell'}\binom{d-m}{i+\ell-m}(b-2)^{\ell'-\ell+m-2i}(b-1)^{i+\ell-m}\\
        &\leq \sum_{i}\binom{m}{i}\binom{m-i}{i+\ell-\ell'}\binom{d-m}{i+\ell-m}(b-1)^{\ell'-i}\leq \sum_{i}\binom{m}{i}\binom{m-i}{i+\ell-\ell'}\binom{d-m}{i+\ell-m}(b-1)^{\ell'}.
    \end{align*}
\end{lemma}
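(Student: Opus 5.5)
The count is a coordinate-wise classification followed by a direct enumeration. By symmetry, assume $\bx$ has its nonzero entries exactly in the first $m$ coordinates; call this set $S$, with $|S|=m$, and let $S^c$ be the remaining $d-m$ coordinates. For $\by\in\X$, each coordinate $j$ falls into one of the following types.

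On $S$, where $x_j\neq 0$:
\begin{enumerate}[(a)]
\item $y_j=0$, contributing $1$ to $d_{\mathrm H}(\bx,\by)$ and $0$ to $\ell'$;
\item $y_j=x_j$, contributing $0$ to $d_{\mathrm H}(\bx,\by)$ and $1$ to $\ell'$;
\item $y_j\notin\{0,x_j\}$, contributing $1$ to both; there are $b-2$ choices.
\end{enumerate}

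On $S^c$, where $x_j=0$:
\begin{enumerate}[(a)]
\setcounter{enumi}{3}
\item $y_j=0$, contributing to neither;
\item $y_j\neq0$, contributing $1$ to both; there are $b-1$ choices.
\end{enumerate}

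Let $i$ be the number of coordinates of type (a), $k$ of type (c), and $e$ of type (e). Then the type (b) coordinates number $m-i-k$, and the two constraints read
\[
\ell' = (m-i-k)+k+e = m-i+e,\qquad \ell = i+k+e.
\]
Solving gives $e=i+\ell'-m$ and $k=\ell-i-e=\ell-\ell'+m-2i$.

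For fixed $i$, the number of $\by$ is obtained in four steps:
\begin{itemize}
\item choose the type (a) coordinates in $S$: $\binom{m}{i}$ ways;
\item choose, among the remaining $m-i$ coordinates of $S$, which are type (c) versus type (b): $\binom{m-i}{k}$ ways;
\item choose the $e$ coordinates of $S^c$: $\binom{d-m}{e}$ ways;
\item assign symbols: $(b-2)^k(b-1)^e$ ways.
\end{itemize}
Summing over $i$ gives the count.

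The displayed formula should match this after relabelling. Write $\binom{m-i}{k}=\binom{m-i}{m-i-k}$ with $m-i-k = i+\ell'-\ell$, the number of type (b) coordinates. The statement's indices appear to have $\ell$ and $\ell'$ swapped relative to my labelling. Since $\ell$ and $\ell'$ play symmetric-looking roles, I will fix the labelling so that the stated form $\binom{m-i}{i+\ell-\ell'}\binom{d-m}{i+\ell-m}(b-2)^{\ell'-\ell+m-2i}(b-1)^{i+\ell-m}$ comes out. That means reading $\ell$ as the distance to $\z$ and $\ell'$ as the distance to $\bx$, or equivalently checking the conventions against later use. With that, $e=i+\ell-m$ and the $(b-2)$ exponent is $\ell'-\ell+m-2i$. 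Tracking which of $\ell,\ell'$ is which is the only real obstacle here; everything else is bookkeeping.

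For the upper bounds, use $b-2\le b-1$ (with the convention $0^0=1$ when $b=2$) to bound $(b-2)^{k}(b-1)^{e}\le (b-1)^{k+e}$. The exponent is
\[
k+e = (\ell'-\ell+m-2i)+(i+\ell-m) = \ell'-i.
\]
This gives the middle bound. The last bound follows from $(b-1)^{\ell'-i}\le(b-1)^{\ell'}$, since $b-1\ge1$ and $i\ge0$.
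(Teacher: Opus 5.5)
Your classification of coordinates into types (a)--(e) is the paper's approach, and your count is correct. The final matching step, however, is wrong. Indexing by $i=\#\{\text{type (a)}\}$, you obtain
\[
\sum_i\binom{m}{i}\binom{m-i}{i+\ell'-\ell}\binom{d-m}{i+\ell'-m}(b-2)^{\ell-\ell'+m-2i}(b-1)^{i+\ell'-m},
\]
which is the displayed expression with $\ell$ and $\ell'$ interchanged. From this you conclude that the statement has the roles of $\ell,\ell'$ swapped, and you propose to ``fix the labelling'' by reading $\ell$ as the distance to $\z$. That diagnosis is wrong, and re-reading the statement does not prove it. The lemma is correct as written, with $\ell'=d_{\mathrm H}(\by,\z)$ and $\ell=d_{\mathrm H}(\bx,\by)$. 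This is also how it is used in the second-moment bound: there $\ell$ is the distance from the starting point to the branch point, and $\ell'$ is the distance from the branch point to $\z$. Saying the two indices play ``symmetric-looking roles'' does not justify interchanging them without an argument.

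The repair is immediate. Index instead by $i=\#\{\text{type (b)}\}$, the coordinates of $S$ where $y_j=x_j$, as the paper does. Then $\ell'=i+k+e$ and $\ell=(m-i-k)+k+e$ give
\[
\#\{\text{type (a)}\}=i+\ell-\ell',\qquad k=\ell'-\ell+m-2i,\qquad e=i+\ell-m.
\]
Now choose the type (b) positions, then the type (a) positions among the remaining $m-i$, then the type (e) positions in $S^c$, and finally the symbols. This yields $\binom{m}{i}\binom{m-i}{i+\ell-\ell'}\binom{d-m}{i+\ell-m}(b-2)^{\ell'-\ell+m-2i}(b-1)^{i+\ell-m}$, exactly the stated summand.

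Alternatively, you can legitimately justify your relabelling with the involution $\by\mapsto\bx-\by$ (coordinatewise mod $b$). It swaps types (a) and (b), preserves types (c), (d), (e), and exchanges $d_{\mathrm H}(\z,\cdot)$ with $d_{\mathrm H}(\bx,\cdot)$. Hence the count is symmetric in $(\ell,\ell')$, and your formula equals the stated one.

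Once $k,e$ are genuine coordinate counts, so that $k,e\ge 0$ on every nonzero term, your argument for the two upper bounds goes through unchanged.
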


\begin{proof}
    Without loss of generality, we assume $\bx=(1,\dots,1,0,\dots,0)$ with $m$ ones and $d-m$ zeros. Let $i$ be the number of ones in the first $m$ entries of $\by$. Using our assumptions, we know that within the first $m$ entries of $\by$, $\ell-\ell'+i$ of them are zero and $\ell'-\ell+m-2i$ of them are neither one nor zero (i.e., with $b-2$ choices). Among the last $d-m$ entries of $\by$, $\ell-m+i$ of them are nonzero and $d-\ell-i$ of them are zero. Combining the above yields the desired formula.  
    To see the upper bound, note that for the binomial coefficient $\binom{m-i}{i+\ell-\ell'}$ to be nonzero, we must have $m-i\geq i+\ell-\ell'$, or equivalently $\ell'-\ell+m-2i\geq 0$. 
\end{proof}

\begin{proposition}\label{prop:second moment 1}
Suppose that  $b\in\N_2$ and $\rho\in(1,e)$.     There exist constants $L_1,L_4>0$ depending only on $b,\rho$ such that
$$\sup_{d\geq 1}\max_{1\leq m\leq  d/L_1}\E[N_\z(t)^2]\leq L_4,$$
where $t=t_{d,m}$ is the unique positive solution to \eqref{eq:1}.
\end{proposition}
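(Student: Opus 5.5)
The plan is to apply the many-to-two formula for the continuous-time binary BRW. This writes $\E[N_\z(t)^2]$ as the first moment $\E[N_\z(t)]=1$ (by the choice $t=t_{d,m}$) plus a pair term. The pair term comes from the last common ancestor of the two particles. That ancestor branches at some time $s\in[0,t]$, at rate $\lambda_1=\log\rho$ times the expected population $\rho^s$, at a location $\by$ with law $q(\bx_m\to\by;s)$. After branching, two independent lineages each reach $\z$ in time $t-s$, each with weight $\rho^{t-s}q_{d_{\mathrm H}(\by,\z)}(t-s)$. This gives
\begin{align*}
\E[N_\z(t)^2]=1+2\log\rho\int_0^t \rho^{s}\rho^{2(t-s)}\sum_{\by\in\X} q_{d_{\mathrm H}(\bx_m,\by)}(s)\,q_{d_{\mathrm H}(\by,\z)}(t-s)^2\,\d s.
\end{align*}
I will group the sum over $\by$ by $\ell'=d_{\mathrm H}(\by,\z)$ and $\ell=d_{\mathrm H}(\bx_m,\by)$. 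Lemma~\ref{lemma:count2} counts the vertices in each group, and Lemma~\ref{thm:bingham} gives explicit products for the $q$'s. Dividing by the square of the first-moment identity \eqref{eq:1}, that is, multiplying by $1=\rho^{-t}q_m(t)^{-1}$, puts everything relative to $q_m(t)$.

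Next I will split the $s$-integral into three regions. The first is $s\in[t-C,t]$, where the branching happened late. There the integrand is essentially $\rho^{2t-s}q_m(s)\cdot\rho^{\text{small}}$: the dominant $\by$ is $\z$ itself or near it, and the sum over $\by$ is at most $\rho^{t-s}\max_\by q(\by\to\z;t-s)\le\rho^{t-s}$ times $\E N_\z(s)$-type terms. Integrating this against the exponential decay shown in \eqref{eq:inf>0} (the log-derivative of $\rho^sq_m(s)$ is bounded below near $t$) gives $O(1)$. More generally, on $[(1-\delta_2)t,t]$ I will bound $\sum_\by q(\bx_m\to\by;s)q(\by\to\z;t-s)^2\le q_m(t)\max_\by q(\by\to\z;t-s)$. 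Then I use $\max_{\by}q(\by\to\z;u)=q_0(u)$, together with $\rho^{u}q_0(u)\cdot\rho^{-u}\dots$, to reduce this to $\int_0^{\delta_2 t}\rho^{u}q_0(u)\,\d u$ with $u=t-s$. Near $u$ small this is fine. For $u$ of order $d$ it is exponentially small by Lemma~\ref{lemma:taylor}(iii), provided $\rho^u q_0(u)$ stays bounded away from blowing up; I must double-check this against Lemma~\ref{lemma:e0}(i).

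The second region is early and intermediate branching, $s\in[0,(1-\delta_2)t]$. Here I will write $s=\sigma t$ and use the exact product formulas. Summing over $\by$ via Lemma~\ref{lemma:count2} and the bound in Lemma~\ref{lemma:bin bound} should turn the sum into a generating-function identity per coordinate. Coordinate-wise, the sum over the symbol $y_i$ of $q^{(1)}_{x_i\to y_i}(s)\,q^{(1)}_{y_i\to 0}(t-s)^2$ is exactly the bracket appearing in Lemma~\ref{lemma:e0}(ii),(iii): one factor for coordinates where $x_i=0$ (the $d-m$ coordinates) and one for coordinates where $x_i\neq0$ (the $m$ coordinates). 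Hence the integrand is $\big(h_c(\sigma)/(b^2(1+(b-1)e^{-c}))\big)^{d}$ up to factors $e^{O(m)}$, where $c=bt/((b-1)d)\asymp 1$ by Proposition~\ref{lemma:asymp 0}. Lemma~\ref{lemma:e0}(iii) makes the boundary points $\sigma=0,1$ the unique maximizers of $h_c$. Since $c$ ranges in a compact set, a uniform gap gives $e^{-\epsilon d}$ decay on $[\delta_2,1-\delta_2]$, which beats the $e^{O(m)}=e^{O(d/L_1)}$ loss when $L_1$ is large.

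The third region is $s\in[0,\delta_2 t]$, the neighbourhood of $\sigma=0$, and I expect it to be the main obstacle. There the per-coordinate bound is tight, and the $m$ special coordinates plus the combinatorics in $\ell$ must be handled exactly rather than by crude $e^{O(m)}$ factors. I plan to expand in $\ell=d_{\mathrm H}(\bx_m,\by)$, which is typically of order $s$. I will compare $q_{\ell'}(t-s)^2$ with $q_m(t)$ using the ratio estimates of Lemma~\ref{lemma:taylor}(ii), and bound the resulting sum by a series of the form in Lemma~\ref{lemma:md}, with $A/B<1$ encoding $\rho<e$ (the early-time walk decays like $e^{-s}$ while branching gives $\rho^s$). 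Lemma~\ref{lemma:c} should supply exactly the inequality needed to verify $A<B$ for the relevant constants. This makes the region contribute $O(1)$, uniformly in $m\le d/L_1$.
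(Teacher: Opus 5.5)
Your plan is correct. Its skeleton is the paper's: many-to-two, normalization by $\rho^{t}q_m(t)=1$, a three-way split of the branching time, and the Lemma~\ref{lemma:md}/Lemma~\ref{lemma:c} analysis for early branching. You treat the other two regions by genuinely different and lighter arguments.

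\textbf{Late branching.} The paper's Step~I (whose $s$ is time since branching) expands in $(\ell,\ell')$, uses Lemmas~\ref{lemma:bin bound} and \ref{lemma:taylor}(i),(ii), sums Gamma-function bounds over $\ell'$, and needs a separate exponential bound for $\ell'\ge d/L_5$. Your bound
\[
\sum_{\by}q(\bx_m\to\by;s)\,q(\by\to\z;t-s)^2\le q_m(t)\,q_0(t-s),
\]
which follows from Chapman--Kolmogorov plus monotonicity of $q_\ell$ in $\ell$, collapses all of this to $2\log\rho\int\rho^{u}q_0(u)\,\d u$.

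To close it, Lemma~\ref{lemma:taylor}(iii) is not enough, since its rate $\delta$ need not exceed $\log\rho$; the appeal to \eqref{eq:inf>0} is also unnecessary. Write instead $\rho^{u}q_0(u)=e^{d\,h(u/d)}$, with $h$ the strictly convex function from the proof of Proposition~\ref{thm:const rho expansion}(ii). Since $h(0)=h(x_0)=0$ and $h'(0)=\log\rho-1<0$, convexity gives $h(x)\le-\kappa x$ on $[0,(1-\delta/2)x_0]$. As $t/d=x_0+O(1/L_1)$, you get $\int_0^{(1-\delta)t}\rho^{u}q_0(u)\,\d u\le 1/\kappa$.

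This is where $\rho<e$ enters. Your remark that $A<B$ encodes $\rho<e$ is misplaced: Lemma~\ref{lemma:c} holds for every $c>0$, and for early branching the branching factor relative to the first moment is $\rho^{-s}$, not $\rho^{s}$. Note also that this single bound already covers every branching time $s\ge\delta t$, so your region~2 is actually dispensable.

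\textbf{Intermediate branching.} Your region~2 argument is nonetheless right and simpler than the paper's Step~II, which splits $\ell'\le m$ versus $\ell'>m$ and reaches Lemma~\ref{lemma:e0}(ii),(iii) through binomial sums. Your coordinatewise factorization is exact and yields the bracket of Lemma~\ref{lemma:e0}(iii) directly. It should be evaluated at $1-\sigma$ rather than $\sigma$, because that lemma's variable is time since branching.

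\textbf{Early branching.} The factorization would even let you skip Lemmas~\ref{lemma:md} and \ref{lemma:c} here; tightness at $\sigma=0$ is not an obstruction. At $s=0$ the integrand equals exactly $\rho^{2t}q_m(t)^2=1$. Each one-coordinate factor $\sum_y p_s(x,y)p_{t-s}(y,0)^2$ is bounded below with log-derivative $O(1/d)$. So the $m$ special coordinates, together with the $m$-dependent part of $\rho^{-s}$ (which is at most $1$), change the integrand by at most $e^{Cms/d}$, not $e^{O(m)}$. Hence the integrand is at most $\big(h_c(1-s/t)/h_c(1)\big)^{d}e^{Cms/d}$.

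Strict convexity of $h_c$ with $h_c(0)=h_c(1)$ gives $h_c(1-\sigma)\le h_c(1)(1-\kappa_\delta\sigma)$ on $[0,1-\delta]$. So the integrand decays like $e^{-\kappa' s}$ once $L_1$ is large. Indeed, the inequality $D_b(c)>1/B$ of Lemma~\ref{lemma:c} is, up to $O(m/d)$, exactly the statement that the log-derivative of the integrand at $s=0$ is negative.

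\textbf{Trade-off.} The paper's distance-counting route is heavier, but it stays within the combinatorial machinery reused for the weighted second moment when $\rho\to1$. Your route gives a markedly shorter and more transparent proof.
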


\begin{proof}
Using Lemma \ref{lemma:count2} and the many-to-two formula for continuous-time spatial branching processes (see page 146 of \citep{ikeda1969branching} or Appendix II of \citep{sawyer1976branching} for the case of branching diffusions and our case follows similarly; we omit the details), the second moment $\E[N_\z(t)^2]$ is given by
\begin{align*}
    &\hspace{0.5cm}\E[N_\z(t)^2]\\
    &\leq \E[N_\z(t)]+2(\log\rho)\int_0^t \rho^{t+s}\sum_{\ell,\ell'}b^{-d}(1+(b-1)e^{-\frac{b(t-s)}{(b-1)d}})^{d-\ell}(1-e^{-\frac{b(t-s)}{(b-1)d}})^\ell\\
    &\hspace{1cm}\times(b^{-d}(1+(b-1)e^{-\frac{bs}{(b-1)d}})^{d-\ell'}(1-e^{-\frac{bs}{(b-1)d}})^{\ell'})^2\bigg(\sum_{i}\binom{m}{i}\binom{m-i}{i+\ell-\ell'}\binom{d-m}{i+\ell-m}(b-1)^{\ell'}\bigg)\d s\\
    &\ll 1+b^{-3d}\rho^t\sum_{\ell,\ell'}(b-1)^{\ell'}\sum_{i}\binom{m}{i}\binom{m-i}{i+\ell-\ell'}\binom{d-m}{i+\ell-m}\\
    &\hspace{1cm}\times\int_0^t \rho^s(1+(b-1)e^{-\frac{b(t-s)}{(b-1)d}})^{d-\ell}(1-e^{-\frac{b(t-s)}{(b-1)d}})^\ell\times((1+(b-1)e^{-\frac{bs}{(b-1)d}})^{d-\ell'}(1-e^{-\frac{bs}{(b-1)d}})^{\ell'})^2\d s.
\end{align*}
 In particular, the sum is over $(\ell,\ell')$ such that $(m,\ell,\ell')$ satisfies the triangle inequality (i.e., the sum of any two is at least the third). 
Our goal is to show that $\E[N_\z(t)^2]\ll 1$ uniformly for $m\leq  d/L_1$ as $d\to\infty$. In the following, we split the integral over $[0,t]$ into three parts: $[0,\delta_5 t]\cup[\delta_5 t,(1-\delta_5)t]\cup[(1-\delta_5)t,t]$ for some small $\delta_5>0$ to be determined, and bound them respectively in three steps.

\textbf{Step I: bounding the integral over $s\in[0,\delta_5 t]$}. 
First, we consider the contribution for a fixed $\ell'$. Using Lemma \ref{lemma:taylor}(i), we have for any $\ee_2>0$, we can find $\delta_5$ small enough such that
\begin{align}
    \begin{split}
        &\hspace{0.5cm}b^{-3d}\rho^t\sum_{m-\ell'\leq \ell\leq m+\ell'}(b-1)^{\ell'}\sum_{i}\binom{m}{i}\binom{m-i}{i+\ell-\ell'}\binom{d-m}{i+\ell-m}\\
    &\hspace{1cm}\times\int_0^{\delta_5 t} \rho^s(1+(b-1)e^{-\frac{b(t-s)}{(b-1)d}})^{d-\ell}(1-e^{-\frac{b(t-s)}{(b-1)d}})^\ell((1+(b-1)e^{-\frac{bs}{(b-1)d}})^{d-\ell'}(1-e^{-\frac{bs}{(b-1)d}})^{\ell'})^2\d s\\
    &\leq b^{-d}\rho^t\sum_{m-\ell'\leq \ell\leq m+\ell'}(b-1)^{\ell'}\sum_{i}\binom{m}{i}\binom{m-i}{i+\ell-\ell'}\binom{d-m}{i+\ell-m}\\
    &\hspace{1cm}\times\int_0^{\delta_5 t} \rho^s(1+(b-1)e^{-\frac{b(t-s)}{(b-1)d}})^{d-\ell}(1-e^{-\frac{b(t-s)}{(b-1)d}})^\ell\Big(\frac{s}{(b-1)d}\Big)^{2\ell'}e^{(-2(1-\ee_2)s/d)(d-\ell')}\d s.
    \end{split}\label{eq:ell'0}
\end{align}
Substituting the relation \eqref{eq:1} gives 
\begin{align*}
    &b^{-d}\rho^t\rho^s(1+(b-1)e^{-\frac{b(t-s)}{(b-1)d}})^{d-\ell}(1-e^{-\frac{b(t-s)}{(b-1)d}})^{\ell}\\
    &=\rho^s\Big(\frac{1+(b-1)e^{-\frac{b(t-s)}{(b-1)d}}}{1+(b-1)e^{-\frac{bt}{(b-1)d}}}\Big)^{d-m}\Big(\frac{1-e^{-\frac{b(t-s)}{(b-1)d}}}{1-e^{-\frac{bt}{(b-1)d}}}\Big)^{m} (1+(b-1)e^{-\frac{b(t-s)}{(b-1)d}})^{m-\ell}(1-e^{-\frac{b(t-s)}{(b-1)d}})^{\ell-m}\\
    &\leq \rho^s\Big(\frac{1+(b-1)e^{-\frac{b(t-s)}{(b-1)d}}}{1+(b-1)e^{-\frac{bt}{(b-1)d}}}\Big)^{d-m}\Big(\frac{1-e^{-\frac{b(t-s)}{(b-1)d}}}{1-e^{-\frac{bt}{(b-1)d}}}\Big)^{m}C_2^{|m-\ell|}
\end{align*}
for some constant $C_2>0$. By Lemmas \ref{lemma:bin bound} and \ref{lemma:taylor}(ii), we find that \eqref{eq:ell'0} is bounded by
\begin{align}
   & \sum_{m-\ell'\leq \ell\leq m+\ell'}(b-1)^{\ell'}\sum_{i}\binom{m}{i}\binom{m-i}{i+\ell-\ell'}\binom{d-m}{i+\ell-m}\int_0^{\delta_5 t} \rho^s\Big(\frac{1+(b-1)e^{-\frac{b(t-s)}{(b-1)d}}}{1+(b-1)e^{-\frac{bt}{(b-1)d}}}\Big)^{d-m}\nonumber\\
    &\hspace{4cm}\times\Big(\frac{1-e^{-\frac{b(t-s)}{(b-1)d}}}{1-e^{-\frac{bt}{(b-1)d}}}\Big)^{m}C_2^{|m-\ell|}\Big(\frac{s}{(b-1)d}\Big)^{2\ell'}e^{(-2(1-\ee_2)s/d)(d-\ell')}\d s\nonumber\\
    &\leq \sum_{m-\ell'\leq \ell\leq m+\ell'}(b-1)^{-\ell'}\sum_{i}\binom{m}{i}\binom{m-i}{i+\ell-\ell'}\binom{d-m}{i+\ell-m}\int_0^{\delta_5 t}  \rho^s\exp\Big(\frac{(d-m)(b-1)e^{-\frac{bt}{(b-1)d}}(e^{\frac{bs}{(b-1)d}}-1)}{1+(b-1)e^{-\frac{bt}{(b-1)d}}}\Big)\nonumber\\
    &\hspace{4cm}\times\exp\Big(-\frac{me^{-\frac{bt}{(b-1)d}}(e^{\frac{bs}{(b-1)d}}-1)}{1-e^{-\frac{bt}{(b-1)d}}}\Big) C_2^{|m-\ell|}\Big(\frac{s}{d}\Big)^{2\ell'}e^{(-2(1-\ee_2)s/d)(d-\ell')}\d s\nonumber\\
    &\leq C_3^{\ell'}d^{-2\ell'}\binom{d}{\ell'}\int_0^{\delta_5 t}  \rho^s\exp\Big(\frac{(d-m)(b-1)e^{-\frac{bt}{(b-1)d}}(e^{\frac{bs}{(b-1)d}}-1)}{1+(b-1)e^{-\frac{bt}{(b-1)d}}}\Big)\nonumber\\
    &\hspace{4cm}\times\exp\Big(-\frac{me^{-\frac{bt}{(b-1)d}}(e^{\frac{bs}{(b-1)d}}-1)}{1-e^{-\frac{bt}{(b-1)d}}}\Big)s^{2\ell'}e^{(-2(1-\ee_2)s/d)(d-\ell')}\d s\nonumber\\
    \begin{split}
    &\ll \Big(\frac{C_3}{d\ell'}\Big)^{\ell'}\int_0^{\delta_5 t}  \rho^s\exp\Big(\frac{(d-m)(b-1)e^{-\frac{bt}{(b-1)d}}(e^{\frac{bs}{(b-1)d}}-1)}{1+(b-1)e^{-\frac{bt}{(b-1)d}}}\Big)\\
    &\hspace{4cm}\times\exp\Big(-\frac{me^{-\frac{bt}{(b-1)d}}(e^{\frac{bs}{(b-1)d}}-1)}{1-e^{-\frac{bt}{(b-1)d}}}\Big)s^{2\ell'}e^{(-2(1-\ee_2)s/d)(d-\ell')}\d s,
   \end{split}\label{eq:ell'1}
\end{align}for some $C_3>0$, 
where in the last step we used Stirling's formula and by convention, we use $(C_3/(d\ell'))^{\ell'}=1$ if $\ell'=0$. 
Since 
$$\rho<e<
\exp\Big(2-\frac{be^{-\frac{bt}{(b-1)d}}}{1+(b-1)e^{-\frac{bt}{(b-1)d}}}\Big),$$
there exists $\delta_6\in(0,1)$ (depending only on $e-\rho$) such that the integrand in \eqref{eq:ell'1} is
\begin{align*}
    &\rho^s\exp\Big(\frac{(d-m)(b-1)e^{-\frac{bt}{(b-1)d}}(e^{\frac{bs}{(b-1)d}}-1)}{1+(b-1)e^{-\frac{bt}{(b-1)d}}}\Big)\exp\Big(-\frac{me^{-\frac{bt}{(b-1)d}}(e^{\frac{bs}{(b-1)d}}-1)}{1-e^{-\frac{bt}{(b-1)d}}}\Big)s^{2\ell'}e^{(-2(1-\ee_2)s/d)(d-\ell')}\\
&\leq \exp\Big(\frac{(d-m)(b-1)e^{-\frac{bt}{(b-1)d}}(e^{\frac{bs}{(b-1)d}}-\frac{(1+\delta_6)bs}{(b-1)d}-1)}{1+(b-1)e^{-\frac{bt}{(b-1)d}}}\Big)\exp\Big(-\frac{me^{-\frac{bt}{(b-1)d}}(e^{\frac{bs}{(b-1)d}}-\frac{bs}{(b-1)d}-1)}{1-e^{-\frac{bt}{(b-1)d}}}\Big)\\
&\hspace{6cm}\times s^{2\ell'}e^{2s-2((1-\ee_2)s/d)(d-\ell')}(1-\delta_6)^s\\
&\leq s^{2\ell'}e^{2s-2((1-\ee_2)s/d)(d-\ell')}(1-\delta_6)^s,
\end{align*}
and hence \eqref{eq:ell'1} is bounded by
$$\Big(\frac{C_3}{d\ell'}\Big)^{\ell'}\int_0^{\delta_5 t}s^{2\ell'}e^{2s-\frac{2(1-\ee_2)s(d-\ell')}{d}}(1-\delta_6)^s\d s.$$
If $\delta_5$ is small enough depending on $\delta_6$ ($e^{4\ee_2}(1-\delta_6)<1$ would be enough; recall that $\ee_2\to 0$ as $\delta_5\to 0$), 
this part of the second moment is bounded by
\begin{align}
    \begin{split}
        \Big(\frac{C_3}{d\ell'}\Big)^{\ell'}\int_0^{\delta_5 t}s^{2\ell'}e^{2s-\frac{2(1-\ee_2)s(d-\ell')}{d}}(1-\delta_6)^s\d s&\leq \Big(\frac{C_3}{d\ell'}\Big)^{\ell'}\int_0^{\delta_5 t}s^{2\ell'}e^{2s(\ee_2+\ell'/d)}(1-\delta_6)^s\d s\\
    &\leq \Big(\frac{C_3}{d\ell'}\Big)^{\ell'}\int_0^{\delta_5 t}s^{2\ell'}e^{-\delta_7s}\d s\ll \Big(\frac{C_3}{\delta_7^2d\ell'}\Big)^{\ell'}\Gamma(2\ell'+1)
    \end{split}\label{eq:gamma}
\end{align}
for some $\delta_7>0$ that depends on $\delta_6$ but not on $\delta_5$. Let $C_4=100C_3/\delta_7^2$ and $L_5>2C_4$. 
Summing \eqref{eq:gamma} over $\ell'\in[d/L_5]$, the contribution is
\begin{align*}
    \sum_{\ell'=0}^{d/L_5}\Big(\frac{C_3}{\delta_7^2d\ell'}\Big)^{\ell'}\Gamma(2\ell'+1)\ll \sum_{\ell'=0}^{d/L_5}\Big(\frac{C_4\ell'}{d}\Big)^{\ell'}\leq \sum_{\ell'=0}^{d/L_5}2^{-\ell'}=O(1).
\end{align*}
 Otherwise if $\ell'\geq d/L_5$, for a given $L_5>0$, we may bound the contribution by an explicit exponential decay if $\delta_5$ is chosen small enough depending on $L_5$. Indeed, for $\delta_5$ small enough, $(s/d)^{2\ell'}\leq (\delta_5 t/d)^{2d/L_5}\leq (10K(b,\rho))^{-d}$ for some large constant $K(b,\rho)>0$ to be determined, which depends only on $b,\rho$. Applying Lemma \ref{lemma:bin bound}, we have that the right-hand side of \eqref{eq:ell'0} is bounded from above by
 \begin{align}
    \begin{split}
    & \hspace{0.5cm}b^{-d}\rho^t\sum_{m-\ell'\leq \ell\leq m+\ell'}(b-1)^{\ell'}\sum_{i}\binom{m}{i}\binom{m-i}{i+\ell-\ell'}\binom{d-m}{i+\ell-m}\\
    &\hspace{1cm}\times\int_0^{\delta_5 t} \rho^s(1+(b-1)e^{-\frac{b(t-s)}{(b-1)d}})^{d-\ell}(1-e^{-\frac{b(t-s)}{(b-1)d}})^\ell\Big(\frac{s}{(b-1)d}\Big)^{2\ell'}e^{(-2(1-\ee_2)s/d)(d-\ell')}\d s\\
    &\ll b^{-d}\rho^t\sum_{m-\ell'\leq \ell\leq m+\ell'}10^d\int_0^{\delta_5 t} \rho^s(1+(b-1)e^{-\frac{b(t-s)}{(b-1)d}})^{d-\ell}(1-e^{-\frac{b(t-s)}{(b-1)d}})^{\ell} (10K(b,\rho))^{-d}\d s\\
    &\ll \rho^{2t} K(b,\rho)^{-d}\ll 2^{-d},
    \end{split}\label{eq:100^{-d}}
 \end{align}
 where the last step is due to Proposition \ref{thm:const rho expansion}(iii), with $K(b,\rho)$ chosen large enough.

\textbf{Step II: bounding the integral over $s\in[\delta_5 t,(1-\delta_5)t]$}. We first consider $\ell'\in[0,m]$. Using \eqref{eq:1} and Lemma \ref{lemma:bin bound},
\begin{align}
&b^{-3d}\rho^t \sum_{m-\ell'\leq \ell\leq m+\ell'}(b-1)^{\ell'}\sum_{i}\binom{m}{i}\binom{m-i}{i+\ell-\ell'}\binom{d-m}{i+\ell-m}\nonumber\\
        &\hspace{1cm}\times\int_{\delta_5 t}^{(1-\delta_5)t}\rho^s (1+(b-1)e^{-\frac{b(t-s)}{(b-1)d}})^{d-\ell}(1-e^{-\frac{b(t-s)}{(b-1)d}})^{\ell}\times(1+(b-1)e^{-\frac{bs}{(b-1)d}})^{2(d-\ell')}(1-e^{-\frac{bs}{(b-1)d}})^{2\ell'}\d s\nonumber\\
    &\ll b^{-3d}\rho^t\sum_{m-\ell'\leq \ell\leq m+\ell'}(b-1)^{\ell'}\sum_{i}\binom{m}{i}\binom{m-i}{i+\ell-\ell'}\binom{d-m}{i+\ell-m}\nonumber\\
    &\hspace{1cm}\times\int_{\delta_5 t}^{(1-\delta_5)t}\rho^s (1+(b-1)e^{-\frac{b(t-s)}{(b-1)d}})^{d-m+\ell'}(1-e^{-\frac{b(t-s)}{(b-1)d}})^{m-\ell'}(1+(b-1)e^{-\frac{bs}{(b-1)d}})^{2(d-\ell')}(1-e^{-\frac{bs}{(b-1)d}})^{2\ell'}\d s\nonumber\\
    &=  tb^{-3d}\rho^t\sum_{m-\ell'\leq \ell\leq m+\ell'}(b-1)^{\ell'}\sum_{i}\binom{m}{i}\binom{m-i}{i+\ell-\ell'}\binom{d-m}{i+\ell-m}\nonumber\\
    &\hspace{1.5cm}\times\int_{\delta_5}^{1-\delta_5}\Big(b^s(1+(b-1)e^{-\frac{bt}{(b-1)d}})^{-\frac{s(d-m)}{d}}(1-e^{-\frac{bt}{(b-1)d}})^{-\frac{sm}{d}}(1+(b-1)e^{-\frac{bt(1-s)}{(b-1)d}})^{1-m/d+\ell'/d}\nonumber\\
    &\hspace{3cm}\times(1-e^{-\frac{bt(1-s)}{(b-1)d}})^{(m-\ell')/d}(1+(b-1)e^{-\frac{bst}{(b-1)d}})^{2(d-\ell')/d}(1-e^{-\frac{bst}{(b-1)d}})^{2\ell'/d}\Big)^d\d s\nonumber\\
\begin{split}
    &\ll tb^{-3d}\rho^t\binom{d}{\ell'}(2(b-1))^{\ell'}\\
&\hspace{1.5cm}\times\int_{\delta_5}^{1-\delta_5}\Big(b^s(1+(b-1)e^{-\frac{bt}{(b-1)d}})^{-\frac{s(d-m)}{d}}(1-e^{-\frac{bt}{(b-1)d}})^{-\frac{sm}{d}}(1+(b-1)e^{-\frac{bt(1-s)}{(b-1)d}})^{1-m/d+\ell'/d}\\
    &\hspace{3cm}\times(1-e^{-\frac{bt(1-s)}{(b-1)d}})^{(m-\ell')/d}(1+(b-1)e^{-\frac{bst}{(b-1)d}})^{2(d-\ell')/d}(1-e^{-\frac{bst}{(b-1)d}})^{2\ell'/d}\Big)^d\d s.
\end{split}\label{eq:sum k 2'}
\end{align}
The next intuition is to show that what is inside the bracket in the integrand of \eqref{eq:sum k 2'}, as a function of $s\in[\delta_5,1-\delta_5]$, attains its maximum at the boundaries $s=\delta_5$ or $s=1-\delta_5$ with values strictly less than $b^2(1+(b-1)e^{-c})$, where we denote by $c=\frac{bt}{(b-1)d}>0$ (recall that $t$ is the solution to \eqref{eq:1} and $t\asymp d$ by Proposition \ref{lemma:asymp 0}).\footnote{There may be lower order terms but the proof below allows for small room of $\ee$ (given from the positive $\delta_5$) possibly subtracted from $c$.}  We first remove terms whose exponentials involve $m,\ell'$, which is possible since $\ell'\in[0,m]$ and the tuple $(m,\ell,\ell')$ satisfies the triangle inequality. Denote by $\ee_3$ an arbitrarily small positive number that goes to $0$ as $L_1\to\infty$ (recall that $m\leq d/L_1$). It follows that what is inside the bracket in \eqref{eq:sum k 2'} is at most
\begin{align*}
    &e^{\ee_3}b^s(1+(b-1)e^{-\frac{bt}{(b-1)d}})^{-s}(1+(b-1)e^{-\frac{bt(1-s)}{(b-1)d}})(1+(b-1)e^{-\frac{bst}{(b-1)d}})^2\\&=e^{\ee_3}b^s(1+(b-1)e^{-c})^{-s}(1+(b-1)e^{-c(1-s)})(1+(b-1)e^{-cs})^2.
\end{align*}
By Lemma \ref{lemma:e0}(ii), we see that given $\delta_5>0$, this part of the second moment is controlled by
\begin{align*}
    \begin{split}
        &tb^{-3d}\rho^t\binom{d}{\ell'}\int_{\delta_5}^{1-\delta_5} (e^{\ee_3}b^s(1+(b-1)e^{-c})^{-s}(1+(b-1)e^{-c(1-s)})(1+(b-1)e^{-cs})^2)^d\d s\\
    &\ll tb^{-3d}\rho^t\binom{d}{\ell'}(2(b-1))^{\ell'}\Big(\sup_{\delta_5\leq s\leq 1-\delta_5}e^{\ee_3}b^s(1+(b-1)e^{-c})^{-s}(1+(b-1)e^{-c(1-s)})(1+(b-1)e^{-cs})^2\Big)^d\\
    &\leq tb^{-3d}\rho^t\binom{d}{\ell'}(2(b-1))^{\ell'}(b^2(1+(b-1)e^{-c})e^{-\ee_3})^d\\
    &=t\binom{d}{\ell'}(2(b-1))^{\ell'}(1+(b-1)e^{-c})^d(1+(b-1)e^{-\frac{bt}{(b-1)d}})^{-(d-m)}(1-e^{-\frac{bt}{(b-1)d}})^{-m}e^{-\ee_3d}\\
    &\ll e^{-\ee_3d/2},
    \end{split}
\end{align*}
for $\ee_3$ chosen small enough, 
where we have used \eqref{eq:1}, $c=\frac{bt}{(b-1)d}$, and that $\ell'\leq m\leq d/L_1$ (note that given $\ee_3$, there exists $L_1>0$ such that $\binom{d}{d/L_1}\ll e^{\ee_3d/3}$, a consequence of Stirling's formula).
Summing over $\ell'\in[m]$ still shows that this part of the contribution to the second moment is exponentially small: $e^{-\ee_3d/4}$ for some $\ee_3>0$.

Let now $\ell'\in(m,d]$. We need the following more precise bound: using \eqref{eq:1}, we have by a similar analysis to the case $\ell'\leq m$ and using $m\leq d/L_1$, and denoting by
$$R(s):=\frac{1-e^{-\frac{bt(1-s)}{(b-1)d}}}{1+(b-1)e^{-\frac{bt(1-s)}{(b-1)d}}}\Big(\frac{1-e^{-\frac{bst}{(b-1)d}}}{1+(b-1)e^{-\frac{bst}{(b-1)d}}}\Big)^2,$$
we have
\begin{align*}
    &\hspace{0.5cm}\int_{\delta_5 t}^{(1-\delta_5 )t} \rho^s(1+(b-1)e^{-\frac{b(t-s)}{(b-1)d}})^{d-\ell}(1-e^{-\frac{b(t-s)}{(b-1)d}})^\ell((1+(b-1)e^{-\frac{bs}{(b-1)d}})^{d-\ell'}(1-e^{-\frac{bs}{(b-1)d}})^{\ell'})^2\d s\\
    &\ll e^{\ee_3 d}\int_{\delta_5 }^{1-\delta_5} \rho^{ts}(1+(b-1)e^{-\frac{bt(1-s)}{(b-1)d}})^{d-\ell'}(1-e^{-\frac{bt(1-s)}{(b-1)d}})^{\ell'}((1+(b-1)e^{-\frac{bst}{(b-1)d}})^{d-\ell'}(1-e^{-\frac{bst}{(b-1)d}})^{\ell'})^2\d s\\
    &\ll \int_{\delta_5}^{1-\delta_5}\Big(e^{\ee_3}b^s(1+(b-1)e^{-\frac{bt}{(b-1)d}})^{-s}(1+(b-1)e^{-\frac{bt(1-s)}{(b-1)d}})(1+(b-1)e^{-\frac{bst}{(b-1)d}})^{2}\Big)^d R(s)^{\ell'}\d s.
\end{align*}
By repeatedly using the binomial sum formula, we arrive at
\begin{align*}
        &\hspace{0.5cm}b^{-3d}\rho^t\sum_{\ell'=m}^d\sum_{m-\ell'\leq \ell\leq m+\ell'}(b-1)^{\ell'}\sum_{i}\binom{m}{i}\binom{m-i}{i+\ell-\ell'}\binom{d-m}{i+\ell-m}\\
    &\hspace{1cm}\times\int_{\delta_5 t}^{(1-\delta_5 )t} \rho^s(1+(b-1)e^{-\frac{b(t-s)}{(b-1)d}})^{d-\ell}(1-e^{-\frac{b(t-s)}{(b-1)d}})^\ell((1+(b-1)e^{-\frac{bs}{(b-1)d}})^{d-\ell'}(1-e^{-\frac{bs}{(b-1)d}})^{\ell'})^2\d s\\
    &\ll b^{-3d}\rho^t\sum_{\ell'=m}^d\sum_{m-\ell'\leq \ell\leq m+\ell'}(b-1)^{\ell'}\sum_{i}\binom{m}{i}\binom{m-i}{i+\ell-\ell'}\binom{d-m}{i+\ell-m}\\
    &\hspace{2cm}\times\int_{\delta_5}^{1-\delta_5}\Big(e^{\ee_3}b^s(1+(b-1)e^{-\frac{bt}{(b-1)d}})^{-s}(1+(b-1)e^{-\frac{bt(1-s)}{(b-1)d}})(1+(b-1)e^{-\frac{bst}{(b-1)d}})^{2}\Big)^d R(s)^{\ell'}\d s\\
    &= b^{-3d}\rho^t\sum_{\ell}\sum_{i}\binom{m}{i}\binom{d-m}{i+\ell-m}\int_{\delta_5}^{1-\delta_5}\Big(e^{\ee_3}b^s(1+(b-1)e^{-\frac{bt}{(b-1)d}})^{-s}(1+(b-1)e^{-\frac{bt(1-s)}{(b-1)d}})(1+(b-1)e^{-\frac{bst}{(b-1)d}})^{2}\Big)^d\\
    &\hspace{2cm}\times (R(s)(b-1))^{2i+\ell-m}(1+R(s)(b-1))^{m-i}\d s\\
    &= b^{-3d}\rho^t\sum_{i}\binom{m}{i}\int_{\delta_5}^{1-\delta_5}\Big(e^{\ee_3}b^s(1+(b-1)e^{-\frac{bt}{(b-1)d}})^{-s}(1+(b-1)e^{-\frac{bt(1-s)}{(b-1)d}})(1+(b-1)e^{-\frac{bst}{(b-1)d}})^{2}\Big)^d\\
    &\hspace{2cm}\times (R(s)(b-1))^{i}(1+R(s)(b-1))^{d-i}\d s\\
    &= b^{-3d}\rho^t\int_{\delta_5}^{1-\delta_5}\Big(e^{\ee_3}b^s(1+(b-1)e^{-\frac{bt}{(b-1)d}})^{-s}(1+(b-1)e^{-\frac{bt(1-s)}{(b-1)d}})(1+(b-1)e^{-\frac{bst}{(b-1)d}})^{2}\Big)^d (1+R(s)(b-1))^{d}\\
    &\hspace{2cm}\times\Big(\frac{1+2R(s)(b-1)}{1+R(s)(b-1)}\Big)^m\d s\\
    &\ll b^{-3d}\rho^t\int_{\delta_5}^{1-\delta_5}\Big(e^{2\ee_3}b^s(1+(b-1)e^{-\frac{bt}{(b-1)d}})^{-s}(1+(b-1)e^{-\frac{bt(1-s)}{(b-1)d}})(1+(b-1)e^{-\frac{bst}{(b-1)d}})^{2}(1+R(s)(b-1))\Big)^d \d s.
\end{align*}
 Then, by Lemma \ref{lemma:e0}(iii) applied with $c=\frac{bt}{(b-1)d}$ and similar considerations as above, the contribution to the second moment is bounded by
 $$b^{-3d}\rho^t (b^2(1+(b-1)e^{-\frac{bt}{(b-1)d}}))^d e^{-\ee_3d}=\rho^tb^{-d}(1+(b-1)e^{-\frac{bt}{(b-1)d}})^de^{-\ee_3d}\ll e^{-\ee_3d/2}.$$

\textbf{Step III: bounding the integral over $s\in[(1-\delta_5)t,t]$}. 
Define
\begin{align}
    T(s):=(b-1)\Big(\frac{1-e^{-\frac{bs}{(b-1)d}}}{1+(b-1)e^{-\frac{bs}{(b-1)d}}}\Big)^2=:\frac{b-1}{B(s)}.\label{eq:T(s)B}
\end{align}
Note that $T(s)$ is increasing in $s$, $B(s)$ is decreasing in $s$, and
\begin{align}
    \lim_{s\to t}B(s)=B:=\Big(\frac{1+(b-1)e^{-c}}{1-e^{-c}}\Big)^2.\label{eq:Bs}
\end{align}
We apply Lemma \ref{lemma:taylor}(i) in the first step, \eqref{eq:1} in the second step, and the binomial sum formula and \eqref{eq:T(s)B} in the third step  to obtain that for some $\ee_2\to 0$ as $\delta_5\to 0$,
\begin{align}
    \begin{split}
        &\hspace{0.5cm}b^{-3d}\rho^t\sum_{\ell,\ell'}\sum_{i}(b-1)^{\ell'-i}\binom{m}{i}\binom{m-i}{i+\ell-\ell'}\binom{d-m}{i+\ell-m}\int_{(1-\delta_5)t}^t \rho^s(1+(b-1)e^{-\frac{b(t-s)}{(b-1)d}})^{d-\ell}(1-e^{-\frac{b(t-s)}{(b-1)d}})^\ell\\
    &\hspace{3cm}\times((1+(b-1)e^{-\frac{bs}{(b-1)d}})^{d-\ell'}(1-e^{-\frac{bs}{(b-1)d}})^{\ell'})^2\d s\\
    &\leq b^{-2d}\rho^t\sum_{\ell,\ell'}\sum_{i}(b-1)^{\ell'-i}\binom{m}{i}\binom{m-i}{i+\ell-\ell'}\binom{d-m}{i+\ell-m}\int_{(1-\delta_5)t}^{t} \rho^se^{-(1-\ee_2)(t-s)(d-\ell)/d}\Big(\frac{t-s}{(b-1)d}\Big)^\ell\\
    &\hspace{4cm}\times((1+(b-1)e^{-\frac{bs}{(b-1)d}})^{d-\ell'}(1-e^{-\frac{bs}{(b-1)d}})^{\ell'})^2\d s\\
    &=(1+(b-1)e^{-\frac{bt}{(b-1)d}})^{2(m-d)}(1-e^{-\frac{bt}{(b-1)d}})^{-2m}\sum_{\ell,\ell'}\sum_{i}(b-1)^{\ell'-\ell-i}\binom{m}{i}\binom{m-i}{i+\ell-\ell'}\binom{d-m}{i+\ell-m}\\
    &\hspace{1cm}\times\int_{(1-\delta_5)t}^{t} \rho^{s-t}e^{-(1-\ee_2)(t-s)(d-\ell)/d}\Big(\frac{t-s}{d}\Big)^\ell((1+(b-1)e^{-\frac{bs}{(b-1)d}})^{d-\ell'}(1-e^{-\frac{bs}{(b-1)d}})^{\ell'})^2\d s\\
    &=(1+(b-1)e^{-\frac{bt}{(b-1)d}})^{2(m-d)}(1-e^{-\frac{bt}{(b-1)d}})^{-2m}\sum_{\ell}\sum_{i}(b-1)^{-\ell-i}\binom{m}{i}\binom{d-m}{i+\ell-m}\\
    &\hspace{1cm}\times\int_{(1-\delta_5)t}^{t} \rho^{s-t}e^{-(1-\ee_2)(t-s)(d-\ell)/d}\Big(\frac{t-s}{d}\Big)^\ell(1+(b-1)e^{-\frac{bs}{(b-1)d}})^{2d}T(s)^{2i+\ell-m}(1+T(s))^{m-i}\d s\\
    &= \sum_{\ell}\sum_{i}(b-1)^{-\ell-i}\binom{m}{i}\binom{d-m}{i+\ell-m}\Big(\frac{1+(b-1)e^{-\frac{bt}{(b-1)d}}}{1-e^{-\frac{bt}{(b-1)d}}}\Big)^{2m}d^{-\ell}\\
    &\hspace{1cm}\times\int_{(1-\delta_5)t}^{t} \rho^{s-t}e^{-(1-\ee_2)(t-s)(d-\ell)/d}(t-s)^\ell\Big(\frac{1+(b-1)e^{-\frac{bs}{(b-1)d}}}{1+(b-1)e^{-\frac{bt}{(b-1)d}}}\Big)^{2d}T(s)^{2i+\ell-m}(1+T(s))^{m-i}\d s.
    \end{split}\label{eq:sum k}
\end{align}
\sloppy Next, using Lemma \ref{lemma:taylor}(ii), a change of variable, and the fact that for any $\ee$, there exists $\delta$ such that $x\in(0,\delta)\implies e^x\leq 1+(1+\ee)x$, we have for some $\ee_4\to 0$ as $\delta_5\to 0$,
\begin{align}
    \begin{split}
        &\hspace{0.5cm}\int_{(1-\delta_5)t}^{t} \rho^{s-t}e^{-(1-\ee_2)(t-s)(d-\ell)/d}(t-s)^\ell\Big(\frac{1+(b-1)e^{-\frac{bs}{(b-1)d}}}{1+(b-1)e^{-\frac{bt}{(b-1)d}}}\Big)^{2d}T(s)^{2i+\ell-m}(1+T(s))^{m-i}\d s\\
    &\leq \int_{(1-\delta_5)t}^{t} \rho^{s-t}e^{-(1-\ee_2)(t-s)(d-\ell)/d}(t-s)^\ell\exp\Big(\frac{2d(b-1)e^{-\frac{bt}{(b-1)d}}(e^{\frac{b(t-s)}{(b-1)d}}-1)}{1+(b-1)e^{-\frac{bt}{(b-1)d}}}\Big)T(s)^{2i+\ell-m}(1+T(s))^{m-i}\d s\\
    &\leq \int_0^{\delta_5 t}s^\ell\rho^{-s}e^{-(1-\ee_2)s(d-\ell)/d}\exp\Big(\frac{2b(1+\ee_4)se^{-\frac{bt}{(b-1)d}}}{1+(b-1)e^{-\frac{bt}{(b-1)d}}}\Big)\Big(\frac{b-1}{B(t-s)}\Big)^{2i+\ell-m}\Big(1+\frac{b-1}{B(t-s)}\Big)^{m-i}\d s\\
    &\leq B^{-i-\ell}(b-1)^{2i+\ell-m}(b-1+B((1-\delta_5)t))^{m-i}\int_0^{\delta_5 t}s^\ell e^{-s/A}\d s \\
    &\ll B^{-i-\ell}(b-1)^{2i+\ell-m}(b-1+B((1-\delta_5)t))^{m-i}A^{\ell+1}\ell!,
    \end{split}\label{eq:int 3}
\end{align}
where, by definition, we have
\begin{align*}
    A=A(d,\ell)&:=\bigg(\log\rho+(1-\ee_2)(1-\frac{\ell}{d})-\frac{2b(1+\ee_4)e^{-\frac{bt}{(b-1)d}}}{1+(b-1)e^{-\frac{bt}{(b-1)d}}}\bigg)^{-1}\\
    &\to \bigg(\log\rho+1-\frac{2be^{-\frac{bt}{(b-1)d}}}{1+(b-1)e^{-\frac{bt}{(b-1)d}}}\bigg)^{-1},
\end{align*}
where the last limit holds as $\delta_5\to 0$ (so that $\ee_2,\ee_4\to 0$) in the integration range $s\in[(1-\delta_5)t,t]$ and we have removed the case with $\ell>d/L$ where $L$ is arbitrarily large as $\delta_5\to 0$ using a trivial bound analogous to \eqref{eq:100^{-d}} (strictly speaking, we use $A+\ee$ where $\ee\to 0$ as $\delta_5\to 0$ here) for the range $s\in[(1-\delta_5)t,t]$. By the formula \eqref{eq:1}, we may further write
$$A\to \bigg(\frac{d}{t}\log\Big(\frac{b}{1+(b-1)e^{-\frac{bt}{(b-1)d}}}\Big)+1-\frac{2be^{-\frac{bt}{(b-1)d}}}{1+(b-1)e^{-\frac{bt}{(b-1)d}}}\bigg)^{-1},$$
where we also send $L_1\to\infty$. 
Next, we apply Lemma \ref{lemma:c} with $c={\frac{bt}{(b-1)d}}$ (which is bounded away from $0$ by Proposition \ref{lemma:asymp 0}) and $B=\Big(\frac{1+(b-1)e^{-c}}{1-e^{-c}}\Big)^2$ (recalling \eqref{eq:Bs}, this is the limit of $B(s)$ as $\delta_5\to 0$ for $s\in[(1-\delta_5)t,t]$), so that $A<B$. Combining \eqref{eq:sum k} and \eqref{eq:int 3} and using Lemma \ref{lemma:md} with $B':=B((1-\delta_5)t)$, we see that the contribution to the second moment is at most 
\begin{align*}
    &\hspace{0.5cm}\sum_{\ell\leq d/L}\sum_{i}(b-1)^{-\ell-i}\binom{m}{i}\binom{d-m}{i+\ell-m}\Big(\frac{1+(b-1)e^{-\frac{bt}{(b-1)d}}}{1-e^{-\frac{bt}{(b-1)d}}}\Big)^{2m}d^{-\ell}\\
    &\hspace{2cm}\times A^{\ell+1}\ell!B^{-i-\ell}(b-1)^{2i+\ell-m}(b-1+B((1-\delta_5)t))^{m-i}\\
    &\leq \sum_{\ell\leq d/L}\sum_{i}\binom{m}{i}\binom{d-m}{i+\ell-m}\Big(\frac{A}{B}\Big)^\ell \binom{d}{\ell}^{-1}\Big(\frac{B(b-1+B((1-\delta_5)t))}{b-1}\Big)^{m-i}\\
    &\ll 1,
\end{align*}
as desired.

\textbf{Step IV: conclusion}. We conclude from the above three steps that under the assumption $m\leq d/L_1$ where $L_1$ depends only on $b,\rho$, $\E[N_\z(t)^2]\ll 1$, where the asymptotic constant does not depend on $m$ as long as $m\leq d/L_1$ is satisfied, as can be checked from the proof. 
\end{proof}

The plan for establishing the upper bound of Theorem \ref{thm:main} is to apply the second moment method (or the Paley--Zygmund inequality) to show that with probability close to one, $N_\z(t)\geq 1$. However, a direct use of the Paley--Zygmund inequality does not suffice due to the factor $L_1$ from Lemma \ref{prop:first moment 1}. To deal with this issue, we apply a bootstrapping argument by first running the BRW for a fixed time before applying Paley--Zygmund to the individual particles. 
In the following, for a set $S\subseteq[d]$, let $N_S(t)$ denote the number of particles in $S$ at time $t$ (with the BRW starting at $m$).

\begin{lemma}\label{lemma:MT}
    Let $C,\ee>0$. Then there exist $M\in\N,\,T>0$ such that for all $d$ large enough and $m\leq  d/L_1$, 
    $$\p(N_{\{m,m+1,\dots,m+M\}}(T)\leq C)<\frac{\ee}{2}.$$
    Note that here we do not assume that $\rho\in(1,e)$ and $\rho$ can be any constant in $(1,\infty)$ that does not depend on $d$.
\end{lemma}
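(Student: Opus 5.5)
Here $N_{\{m,\dots,m+M\}}(T)$ counts particles at time $T$ whose Hamming distance from $\z$ lies in $\{m,\dots,m+M\}$. The plan is to compare the process, for a fixed time $T$, with a Yule process, and to show that on the scale $T=O(1)$ the Hamming distance hardly moves. Let $Z(T)=|V_T|$ be the total population. This is a Yule process of rate $\log\rho$, so $Z(T)$ is geometric with mean $\rho^T$ and $\p(Z(T)\le 2C)\to 0$ as $T\to\infty$. First I fix $T$ large enough that
\[
\p(Z(T)\le 2C)<\ee/4 .
\]

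Next I control particles that leave the window. Each particle's ancestral path is a rate-one simple random walk on $\X$, so along any lineage the number of mutations in $[0,T]$ is $\mathrm{Poisson}(T)$. A mutation changes the Hamming distance by at most one. Hence a particle can be at distance outside $\{m,\dots,m+M\}$ at time $T$ only if either
\begin{enumerate}[(a)]
\item its lineage made more than $M$ mutations, or
\item at least one of its mutations decreased the distance to $\z$.
\end{enumerate}
For (b), a mutation from distance $k$ decreases the distance with probability $k/d$. Since distances along the lineage stay at most $m+M+\text{(\#mutations)}$, this is at most $(m+M+O(1))/d\le 1/L_1+o(1)$.

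By the many-to-one formula, the expected number $E_{\mathrm{bad}}$ of time-$T$ particles in case (a) or (b) satisfies
\[
E_{\mathrm{bad}}\le \rho^T\Big(\p(\mathrm{Poi}(T)>M)+\E[\mathrm{Poi}(T)\wedge M]\cdot \tfrac{m+2M}{d}\Big).
\]
With $T$ fixed, I choose $M$ so large that $\rho^T\p(\mathrm{Poi}(T)>M)<\ee C/8$.

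The second term is the delicate one, because $m/d$ need not tend to zero. I see two ways to handle it:
\begin{itemize}
\item take $L_1$ large enough that $\rho^T T/L_1$ is small; or
\item cleaner, avoid needing it small. Conditional on survival of the lineage count, a constant fraction of particles have lineages with no downward mutation, since each mutation is downward with probability at most $1/L_1\le 1/2$ say.
\end{itemize}
I will take the second route and show a fraction is good. For this I compare with $E_{\mathrm{good}}$, the expected number of particles whose lineage has at most $M$ mutations, all of them upward. It satisfies $E_{\mathrm{good}}\ge \rho^T\,\E\big[(1-(m+M)/d)^{\mathrm{Poi}(T)};\mathrm{Poi}(T)\le M\big]\ge \rho^T e^{-T/L_1'}/2$. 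So the good count is comparable to $\rho^T$, which is large.

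Expectation alone does not give high probability; that needs a second moment. The step I expect to be the main obstacle is therefore a concentration bound on the good count $G$, since $Z(T)$ itself is not concentrated (it is geometric). I would handle this by conditioning on the genealogy. Given the tree up to time $T$, let $\mathcal F$ be the branching structure, with $Z(T)$ leaves. I compute $\E[G\mid\mathcal F]$ and $\mathrm{Var}(G\mid\mathcal F)$, using that mutations along the tree are Poisson marks and distinct leaves share only ancestral segments. Simpler still, I use Markov's inequality on the bad count given $\mathcal F$:
\[
\E[\text{bad}\mid\mathcal F]\le Z(T)\,q,
\]
where $q$ is the per-lineage bad probability. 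Here $q\le \p(\mathrm{Poi}(T)>M)+T(m+2M)/d$, and this is small if $L_1$ is taken large relative to $T$.

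Then $\p(\text{bad}>Z(T)/2\mid\mathcal F)\le 2q$. On $\{Z(T)>2C\}\cap\{\text{bad}\le Z(T)/2\}$ we get $N_{\{m,\dots,m+M\}}(T)\ge Z(T)/2> C$. The total failure probability is therefore below $\ee/4+2q<\ee/2$, after choosing $T$, then $M$, then (if needed) $d$ large and $L_1$ large.

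If the lemma's $L_1$ is the fixed one from earlier, I expect the argument to require only that $m/d\le 1/L_1$ be small enough, uniformly in $\rho$. Otherwise I restrict to downward-move counting and use $\p(\text{some downward move})\le 1-e^{-Tm'/d}$. Note that no restriction $\rho<e$ was used anywhere, which matches the remark in the statement.
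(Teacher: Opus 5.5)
There is a genuine gap, and it sits in your bad event (b). Your framework is sound: $|V_T|$ is a Yule process, and conditionally on the genealogy each leaf's position is a single rate-one walk at time $T$, so Markov's inequality bounds the bad leaves. The problem is that a lineage started at distance $m$ makes downward moves at rate of order $m/d$. So the probability that it makes at least one downward mutation by time $T$ is about $1-e^{-Tm/((b-1)d)}$. For $m$ comparable to $d/L_1$ this is of order $T/L_1$, and it tends to $1$ as $T\to\infty$.

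Since $T$ must be of order $\log(C/\ee)/\log\rho$ to make $\p(|V_T|\le 2C)$ small, your bound $q\lesssim \p(\mathrm{Poi}(T)>M)+T(m+2M)/d$ is small only if $L_1\gtrsim T/\ee$. That makes $L_1$ depend on $C$ and $\ee$. In the lemma, $L_1$ is the fixed constant depending only on $b,\rho$, and $M,T$ must work for every $C,\ee$ with that $L_1$. This matters: in the upper bounds of Theorems~\ref{thm:main} and~\ref{thm:main2} the lemma is invoked with $C\to\infty$ as $\ee\to 0$. Letting $L_1$ grow with $\ee$ would give tightness only on ranges $m\le d/L_1(\ee)$ that shrink with $\ee$, not uniformly over $m\le d/L_1$.

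Your ``second route'' does not rescue this. The fraction of lineages with no downward move is about $e^{-T/L_1'}$ (your own bound on $E_{\mathrm{good}}$ shows this), which is not a constant. Moreover, Markov's inequality needs the per-lineage bad probability to be $O(\ee)$, not merely at most $1/2$.

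The paper sidesteps this as follows:
\begin{enumerate}[(1)]
\item It waits until some particle reaches distance $m+M/2$; by the upward drift when $m\le d/L_1$, this takes time at most $t_0$ with high probability.
\item That particle is then run for time $K\log C/\log\rho$ so that it has at least $C$ descendants.
\item $M$ is chosen so that each of these $C$ lineages makes fewer than $M/2$ mutations in that period.
\end{enumerate}
Starting from the middle of the window, downward moves are harmless; only the total number of mutations matters.

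Alternatively, you can keep your conditional-Markov framework and replace (b) by the event you actually need: the distance at time $T$ is below $m$. Given $n\le M$ jumps, this forces more than $n/2$ of them to be non-upward. Each jump is non-upward with conditional probability at most $(m+n)/d\le 2/L_1$ once $d\ge ML_1$. Hence the per-lineage probability of being below $m$ at time $T$ is at most $\p(\mathrm{Poi}(T)<T/2)+\sup_{n\ge T/2}(8/L_1)^{n/2}$. This tends to $0$ as $T\to\infty$, uniformly in $m\le d/L_1$, as soon as $L_1>8$. Choose $T$ first, making both this and $\p(|V_T|\le 2C)$ small, then choose $M$; no enlargement of $L_1$ is needed.
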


\begin{proof}
We may assume that $C\geq 2$. 
Let $K>0$ be large enough such that 
\begin{align}
    \p(\#V_{K\log C/\log \rho}\leq C)<\frac{\ee}{6},\label{eq:use1}
\end{align}whose existence can be verified by \citep[Theorem 2, Chapter III, Section 7]{athreya2004branching}.
Choose an even number $M>0$ large enough such that for $\xi\lawis \Gamma(M/2,1)$, we have  
\begin{align}
    C\,\p\Big(\xi\leq \frac{K\log C}{\log\rho}\Big)<\frac{\ee}{6}.\label{eq:use2}
\end{align}
Let $\tilde{\tau}$ be the first passage time from the state $m$ to the state $m+M/2$. Using our assumption $m\leq d/L_1$ and a standard coupling argument (e.g.~of the Ehrenfest chain with a biased random walk with transition probability $1-(m+M/2)/d$ if $b=2$), there exists $t_0>0$ independent of $d,m$ such that 
\begin{align}
    \p(\tilde{\tau}\leq t_0)> 1-\frac{\ee}{6}.\label{eq:use3}
\end{align}
We let $T=t_0+K\log C/\log \rho$. 

To construct $C$ particles at time $T$ in $\{m,\dots,m+M\}$, our strategy is to wait until a particle reaches $m+M/2$ and run it for time $K\log C/\log \rho$, while keeping in mind that $M$ is large enough so that it is rare for the reproduced particles to leave the set $\{m,\dots,m+M\}$. More precisely, by the strong Markov property and \eqref{eq:use1}, with probability $>1-\ee/6$, there exist at least $C$ particles at time $\tilde{\tau}+K\log C/\log\rho$ with a common ancestor at location $m+M/2$ at time $\tilde{\tau}$. By \eqref{eq:use2}, with probability $>1-\ee/3$, all such $C$ particles (regardless of their branches) remain in $\{m,\dots,m+M\}$ at time $\tilde{\tau}+K\log C/\log\rho$. Therefore, by \eqref{eq:use3}, we conclude that
$$\p(N_{\{m,m+1,\dots,m+M\}}(T)> C)\geq 1-\frac{\ee}{3}-\p(\tilde{\tau}>t_0)>1-\frac{\ee}{2},$$
    as desired.
\end{proof}

\begin{proof}[Proof of Theorem \ref{thm:main}, upper bound]
    Fix $\ee>0$. Let $L_4$ denote the constant in Proposition \ref{prop:second moment 1}. Let $C$ be such that $(1-1/L_4)^C<\ee/2$ and $M,T$ be as given in Lemma \ref{lemma:MT}.  By Paley--Zygmund's inequality and Proposition \ref{prop:second moment 1}, uniformly for each $j\in\{m,m+1,\dots,m+M\}$, 
    \begin{align*}
        \p(\tau_{d,j}\geq t_{d,j})\leq \p(N_\z^{(d,j)}(t_{d,j})=0)\leq 1-\frac{1}{\E[N_\z^{(d,j)}(t_{d,j})^2]}\leq 1-\frac{1}{L_4}, 
    \end{align*}
where we recall that $\E[N_\z^{(d,j)}(t_{d,j})]=1$.
Therefore, by conditioning on time $T$, we have
$$\p(\tau_{d,m}\geq t_{d,m+M}+T)\leq \p(N_{\{m,m+1,\dots,m+M\}}(T)\leq C)+\Big(1-\frac{1}{L_4}\Big)^C< \ee.$$
    Since $M$ is a large constant that does not depend on $d$ or $m$, we have $t_{d,m+M}-t_{d,m}=O(1)$ by Proposition \ref{lemma:asymp 0}. This completes the proof of the upper bound of $\tau_{d,m}$.
\end{proof}

We conclude this subsection with the proof of Corollary \ref{coro:lambda2mono}.
\begin{proof}
Rescale time by $t\mapsto \lambda_2 t$ so that the mutation rate becomes $1$. Under this rescaling, the branching rate becomes $\lambda_1/\lambda_2$, hence $\rho=\exp(\lambda_1/\lambda_2)\in(1,e)$, and Theorem \ref{thm:main} together with Proposition \ref{thm:const rho expansion}(iii) implies $\tau_{d,m}=x_0(b,\rho)d+o_\p(d)$ for $m=o(d)$ in the rescaled units. Converting back to the original time scale yields \eqref{eq:lambda2:asymp}. For \eqref{eq:lambda2:diff}, write $\alpha=\lambda_2'/\lambda_2>1$ and $\rho'=\exp(\lambda_1/\lambda_2')=\rho^{1/\alpha}$. To see the inequality
$x_0(b,\rho)<x_0(b,\rho')/\alpha$, we assume that $\lambda_2=1$.  Let $x_0=x_0(b,\rho)$ and $x_0'=x_0(b,\rho^{1/\lambda_2'})$. By Proposition \ref{thm:const rho expansion}(iii), Theorem \ref{thm:main}, and a time-scaling argument, it remains to show that $x_0<x_0'/\lambda_2'$. To see this, recall from Proposition \ref{thm:const rho expansion}(ii) that $x_0<x_0'$. We then apply \eqref{eq:x0  def} twice to get
\begin{align*}
    x_0\log\rho=\log\Big(\frac{b}{1+(b-1)e^{-\frac{bx_0}{b-1}}}\Big)<\log\Big(\frac{b}{1+(b-1)e^{-\frac{bx_0'}{b-1}}}\Big)=x_0'\log\rho^{1/\lambda_2'}=\frac{x_0'}{\lambda_2'}\log\rho,
\end{align*}
as desired. This shows that the coefficient in \eqref{eq:lambda2:diff} is strictly positive.
\end{proof}

\subsection{Proof for the fast branching case \texorpdfstring{$\rho>e$}{}}

\subsubsection{First moment computation---lower bound of FPT}

Recall from Lemma \ref{thm:bingham} that the expected number of particles at the origin $\z$ at time $t\in[0,t_{d,m}]$
is given by
\begin{align*}
    \rho^tb^{-d}\big(1+(b-1)e^{-\frac{bt}{(b-1)d}}\big)^{d-m}\big(1-e^{-\frac{bt}{(b-1)d}}\big)^m.
\end{align*}
Similar to Lemma \ref{prop:first moment 1}, the following proposition bounds the expected total time spent at $\z$ by all particles, at time $t_{d,m-C}$. Here, we use the convention that $t_{d,k}=0$ for $k\leq 0$. 

\begin{lemma}\label{prop:first moment 3}
    Let $L>0$ be arbitrary and $t_{d,m}$ be given by \eqref{eq:tdm:rho>e}. Then for any $\ee>0$, there exists $C>0$ such that
    $$\int_0^{t_{d,m-C}} \rho^sb^{-d}(1+(b-1)e^{-\frac{bs}{(b-1)d}})^{d-m}(1-e^{-\frac{bs}{(b-1)d}})^m\d s<\ee$$
    uniformly for $m\in[1,L\sqrt{d}/\log d]$ and $d$ large enough. 
\end{lemma}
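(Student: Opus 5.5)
The plan is to bound the integrand $f(s):=\rho^s q_m(s)$ pointwise by its pre-mixing approximation and compare it with the defining relation \eqref{eq:tdm}. Write $T:=t_{d,m-C}$. By Lemma \ref{lemma:key}(ii), $T\le t_{d,m}\le d/100$, so only $s\ll d$ matters. By Lemma \ref{lemma:taylor}(i),
\begin{align*}
f(s)\le \rho^s e^{-(1-\ee')s(d-m)/d}\Big(\frac{s}{(b-1)d}\Big)^m ,
\end{align*}
with $\ee'$ small because $s\le d/100$. The loss $e^{\ee' s}$ from $\ee'$ is harmful, so I will refine it using the second-order Taylor expansion: for $s\le d/100$,
\begin{align*}
\frac{1+(b-1)e^{-\frac{bs}{(b-1)d}}}{b}\le e^{-s/d+O(s^2/d^2)} .
\end{align*}
Since $T=O(m\log d)=O(\sqrt d)$ for $m\le L\sqrt d/\log d$ (Lemma \ref{lemma:key}(i)), we have $T^2/d=O(1)$ and $mT/d=o(1)$. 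This gives the clean bound
\begin{align*}
f(s)\ll g(s):=\Big(\frac{\rho}{e}\Big)^s\Big(\frac{s}{(b-1)d}\Big)^m, \qquad s\in[0,T].
\end{align*}
It is here that the restriction $m\le L\sqrt d/\log d$ enters.

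Next I will bound $\int_0^T g(s)\,\d s$. The function $\log g$ is increasing on $s>0$, with derivative $\log(\rho/e)+m/s\ge \log(\rho/e)>0$. Hence $\int_0^T g\le g(T)/\log(\rho/e)$, and it suffices to show that $g(T)$ is small. Write $k=m-C$. By \eqref{eq:tdm},
\begin{align*}
g(T)=\Big(\frac{\rho}{e}\Big)^{t_{d,k}}\Big(\frac{t_{d,k}}{(b-1)d}\Big)^{k}\Big(\frac{t_{d,k}}{(b-1)d}\Big)^{C}=\Big(\frac{t_{d,k}}{(b-1)d}\Big)^{C}.
\end{align*}
Since $t_{d,k}=O(\sqrt d)$, this is at most $(c/\sqrt d)^C$, which tends to $0$ for any $C\ge1$. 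In fact $C=1$ suffices for large $d$.

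It remains to handle the edge case $m-C\le 0$, where $T=0$ by convention and the integral vanishes. For small $m$ the argument above gives $g(T)\ll d^{-C/2}$ uniformly.

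I expect the main obstacle to be the uniform error control in the first step, namely showing that the exponent $(d-m)\log\frac{1+(b-1)e^{-bs/((b-1)d)}}{b}+s$ is bounded above by $O(s^2/d+ms/d)=O(1)$ uniformly over the range. The crude first-order Lemma \ref{lemma:taylor}(i) alone would leave a factor $e^{\ee' T}$ with $T\asymp m\log d$, which could swamp the gain $d^{-C/2}$. If the constant in this error term is an issue, I would instead absorb it by choosing $C$ large: the gain $d^{-C/2}$ beats any fixed constant factor.
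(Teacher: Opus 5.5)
Your proof is correct and follows the paper's route. You dominate the integrand by $(\rho/e)^s\big(\frac{s}{(b-1)d}\big)^m$ using a second-order Taylor bound together with $m\le L\sqrt d/\log d$ and $t_{d,m}\ll m\log d$, and then bound the integral by a constant multiple of its endpoint value. The only difference is the last step: the paper compares with index $m$, bounding the endpoint value by $(\rho/e)^{t_{d,m-C}-t_{d,m}}$ and invoking Lemma~\ref{lemma:key}(iv), whereas you apply \eqref{eq:tdm} at index $m-C$ to get the exact value $\big(\frac{t_{d,m-C}}{(b-1)d}\big)^C\ll d^{-C/2}$. Your version is sharper, avoids Lemma~\ref{lemma:key}(iii)--(iv), and shows that $C=1$ already suffices for large $d$.
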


\begin{proof}
  Using the elementary inequality $(1+(b-1)e^{-\frac{bx}{b-1}})/b\leq e^{-x+x^2/b}$, our assumption that $m=O(\sqrt{d}/\log d)$, and that $t_{d,m}\ll m\log d$ by Lemma \ref{lemma:key}(i), we have
    \begin{align*}
    \rho^tb^{-d}\big(1+(b-1)e^{-\frac{bt}{(b-1)d}}\big)^{d-m}\big(1-e^{-\frac{bt}{(b-1)d}}\big)^m&\leq \rho^t e^{-(\frac{t}{d}-\frac{t^2}{bd^2})(d-m)}\Big(\frac{t}{(b-1)d}\Big)^m\ll \Big(\frac{\rho}{e}\Big)^t \Big(\frac{t}{(b-1)d}\Big)^m.
\end{align*}
  It follows from \eqref{eq:tdm} and Lemma \ref{lemma:key}(i) that
$$\int_0^{t_{d,m-C}} \rho^sb^{-d}(1+(b-1)e^{-\frac{bs}{(b-1)d}})^{d-m}(1-e^{-\frac{bs}{(b-1)d}})^m\d s\ll \Big(\frac{\rho}{e}\Big)^{t_{d,m-C}}\Big(\frac{t_{d,m-C}}{(b-1)d}\Big)^m\ll \Big(\frac{\rho}{e}\Big)^{t_{d,m-C}-t_{d,m}}.$$
The claim then follows from Lemma \ref{lemma:key}(iv) and \eqref{eq:tdm}. 
\end{proof}

\begin{proof}[Proof of Theorem \ref{thm:main2}, lower bound] 
  The proof follows in the same way as the proof of the lower bound of Theorem \ref{thm:main}, using the first moment method and applying Lemma \ref{prop:first moment 3} instead of Lemma \ref{prop:first moment 1}. This yields that for any $\ee>0$, there exists $C>0$ such that $\p(\tau_{d,m}\leq t_{d,m-C})<\ee$. We then conclude the proof using Lemma \ref{lemma:key}(i) and (iii), which together imply that $t_{d,m}-t_{d,m-C}\ll Ct_{d,m}/m\ll C\log d$.
\end{proof}

\subsubsection{Setting up the barrier and proof of the upper bound of FPT}

Suppose that the BRW is initiated at some $\bx_m\in\X_{d,m}$ and we consider the first-passage event to $\z$. We project the hypercube $\{0,1,\dots,b-1\}^d$ to the set $\{0,1,\dots,d\}$ according to the Hamming distance from $\z$. The intuition is that the projected position of the first-passage particle strictly decreases from $m$ to $0$ at each mutation event. Moreover, given that a trajectory moves $m$ times in time $[0,t]$, the mutation times can be described using an empirical process with $m$ samples on $[0,t]$.

Recall the definition of $N_\z(t)$ from Section \ref{eq:lb1}. We now define $M_\z(t)=M_\z^{(d,m)}(t)$ as the number of particles at location $\z$ at time $t$ in the BRW starting from a fixed state $\bx_m\in\X_{d,m}$ that satisfy the following.
\begin{itemize}
    \item The projected position of the particle moves strictly monotonically from $m$ to $0$ at each mutation event; in particular, the mutation events occur exactly $m$ times in time $t$. 
    \item The location $\eta_v(s),~s\in[0,t]$ satisfies $\eta_v(s)\geq w(s)$, where
    \begin{align}
        w(s)=w_t(s):=m-\frac{sm}{t}-1.\label{eq:wdef}
    \end{align}
    Note that here we use $\eta_v(s)$ to represent the location in $[d]$, i.e., after the Hamming distance projection.
\end{itemize}
It follows that $M_\z(t)\leq N_\z(t)$ and
\begin{align}
    \p(\tau_{d,m}\leq t)\geq \p(N_\z(t)\geq 1)\geq \p(M_\z(t)\geq 1).\label{eq:M}
\end{align}

We need an estimate on the probability that an empirical process does not cross a constant barrier.   Let $U_1,\dots,U_n$ be i.i.d.~random variables sampled from $\mathrm{U}[0,1]$ and define 
$N^{(n)}_t=\sum_{i=1}^n\bone_{\{U_i\leq t\}}.$
Also define the empirical process (with $n$ samples)
    \begin{align*}
        M^{(n)}_t:=\frac{1}{\sqrt{n}}(N^{(n)}_t-nt):=\frac{1}{\sqrt{n}}\Big(\sum_{i=1}^n\bone_{\{U_i\leq t\}}-nt\Big),\quad t\in[0,1].
    \end{align*}
    For $a,b>0$ and $n\in\N$, we define 
    \begin{align}
        q_n(a,b):=\p\Big(\forall t\in[0,1], ~M^{(n)}_t\leq \frac{1}{\sqrt{n}}(a+(b-a)t)\Big)=\p\big(\forall t\in[0,1], ~N^{(n)}_t-nt\leq a+(b-a)t\big)\label{eq:qn}
    \end{align}
    and $q_0(a,b):=1$. 
    The following result, established in \citep{smirnov1944approximate}, is one of the most fundamental results regarding the Kolmogorov--Smirnov statistic; see also \citep{lauwerier1963asymptotic}.
\begin{lemma}\label{prop:empirical ballot}
Uniformly in $1\leq\lambda\leq \sqrt{n}$, 
    \begin{align*}
        \p\Big(\sup_{t\in[0,1]}(N^{(n)}_t-nt)<\lambda\Big)\asymp \frac{\lambda^2}{n}.
    \end{align*}
\end{lemma}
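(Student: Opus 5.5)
The plan is a Poissonization argument that reduces the statement to a ballot estimate for a random walk bridge whose two endpoints both lie at distance of order $\lambda$ from the barrier; that configuration is what produces $\lambda^2/n$. I will use the classical representation of uniform order statistics. Let $E_1,E_2,\dots$ be i.i.d.\ $\mathrm{Exp}(1)$ and $S_k=E_1+\dots+E_k$. Then $(U_{(1)},\dots,U_{(n)})$ has the same law as $(S_1/S_{n+1},\dots,S_n/S_{n+1})$. Because $N^{(n)}_t-nt$ increases only at jumps, the event $\{\sup_t(N^{(n)}_t-nt)<\lambda\}$ equals $\{k-nU_{(k)}<\lambda \text{ for all } k\le n\}$. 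In terms of the walk this reads
\begin{align*}
S_k>\frac{(k-\lambda)\,S_{n+1}}{n}\quad\text{for all }1\le k\le n .
\end{align*}
This event is scale invariant: the vector $(S_k/S_{n+1})_{k\le n}$ is independent of $S_{n+1}$. I can therefore condition on $S_{n+1}=n+1$ exactly. Writing $X_k:=S_k-k$, the conditioned process is a random walk bridge from $0$ to $0$ over $n+1$ steps, with centered increments $E_i-1$. The barrier becomes $X_k>-\lambda\frac{n+1}{n}+\frac{k}{n}$, a line that is essentially the constant $-\lambda$ up to an $O(1)$ tilt. Hence the bridge starts at height $\asymp\lambda$ above the barrier and also ends at height $\asymp\lambda$ above it.

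For the upper bound I will split the bridge at $k=\lfloor n/2\rfloor$. By the Markov property of the bridge, together with a local limit theorem for the Gamma densities (these are explicit, so this is routine), the bridge law on the first half is absolutely continuous with respect to the free walk, with Radon--Nikodym density bounded by a constant times the ratio of Gamma densities. It therefore suffices to bound
\begin{align*}
\sum_y \p\big(\text{free walk from }0\text{ stays above }-\lambda\text{ on }[0,n/2],\ X_{n/2}\in \d y\big)\cdot \p\big(\text{reversed walk from }0\text{ stays above }-\lambda\text{ on }[0,n/2],\ X_{n/2}\in \d y\big)\Big/ p_{n+1}(0).
\end{align*}
The time reversal of the bridge is again a bridge with exchangeable increments, so the second factor has the same form as the first. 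The standard ballot estimate with endpoint (Kozlov / Caravenna--Chaumont type) gives $\p(\text{stay above }-\lambda,\ X_{n/2}\in y+[0,1])\ll \lambda(y+\lambda+1)/n^{3/2}$ for $\lambda\le\sqrt n$. Integrating the product of the two such factors against the Gaussian decay in $y$, and dividing by $p_{n+1}(0)\asymp n^{-1/2}$, yields $\ll\lambda^2/n$. The integral is dominated by $y\asymp\sqrt n$, where each factor contributes $\lambda/n$ times Gaussian mass.

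For the lower bound I will restrict to a good event: on the first half, the walk stays above $-\lambda$ and $X_{n/2}\in[\sqrt n,2\sqrt n]$; the time-reversed second half satisfies the same condition; and the two halves match at time $n/2$. The matching lower bounds in the same ballot estimates, valid uniformly for $1\le\lambda\le\sqrt n$ (for $\lambda$ near $\sqrt n$ the estimate is trivially of order $1$), each give probability $\gg\lambda/\sqrt n$ per half. The local CLT handles the gluing at the midpoint, and the $O(1)$ tilt of the barrier costs only a constant factor, since I can replace $\lambda$ by $\lambda/2$ when $\lambda\ge2$ and use a direct positive-probability argument for the first few steps when $\lambda\in[1,2)$.

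The main obstacle is uniformity in $\lambda$ up to $\sqrt n$ in the ballot-with-endpoint estimates for a non-lattice walk with exponential increments. I would first try to use the exponential (Lindley/ladder) structure to obtain near-explicit formulas. Failing that, I would invoke known uniform local ballot theorems. A shortcut is also available: Daniels' and Takács' cyclic lemma gives an exact formula for crossing a line by an exchangeable-increment process. Applied with the two endpoint distances, it reduces both bounds to the bridge computation above.
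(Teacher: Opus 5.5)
Your proposal is correct in outline, but it follows a different route from the paper. The paper does not prove this lemma at all. It quotes Smirnov's classical result on the one-sided Kolmogorov--Smirnov statistic $D_n^+=\sup_t(F_n(t)-t)$, noting that $\sup_t(N^{(n)}_t-nt)<\lambda$ iff $D_n^+<\lambda/n$, and relies on the exact finite-$n$ distribution of $D_n^+$ and its asymptotics (see also Lauwerier). Your cyclic-lemma ``shortcut'' is essentially that classical route. Your main argument instead uses the R\'enyi representation and scale invariance to reduce the event to a ballot problem for the exponential random-walk bridge. The cost is that you must import local ballot estimates that are uniform in the starting height up to $\sqrt n$; these are standard, but that is where the work lies, whereas the citation gives exact formulas for free. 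In exchange, your route explains $\lambda^2$ as the product of the two endpoint heights. It also adapts directly to unequal endpoint heights, i.e.\ to the quantities $q_n(a,b)$ of \eqref{eq:qn}, which the paper only bounds by $1$.

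A few details to tighten:
\begin{enumerate}
\item[(i)] For $\lambda\in[1,2)$ the delicate end is the right one, not the left. The endpoint $X_{n+1}=0$ sits only $(\lambda-1)(n+1)/n$ above the extrapolated barrier, so your positive-probability argument must be run on the reversed walk. It does work there, since $X_n$ exceeds the barrier at $k=n$ by $\lambda(1+1/n)-E_{n+1}$.
\item[(ii)] The reversed walk has increments $1-E_i$, not $E_i-1$. Its ballot estimates are therefore for a different walk (still centered with finite variance), whose overshoots below the barrier are unbounded.
\item[(iii)] The endpoint-local bound $\lambda(y+\lambda+1)/n^{3/2}$ carries no Gaussian factor, so ``integrating against Gaussian decay'' is not justified as stated. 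Instead, use that bound only for the reversed factor, and use optional stopping for the forward one. Since $E_i-1\ge -1$, the overshoot is at most $1$, so $\E[X_k+\lambda(1+1/n)+1;\ \text{no crossing by time }k]\le\lambda+3$. This gives $\ll\lambda^2/n^{3/2}$ before dividing by $p_{n+1}(0)\asymp n^{-1/2}$.
\item[(iv)] In the lower bound, knowing that each half has mass $\gg\lambda/\sqrt n$ in $[\sqrt n,2\sqrt n]$ does not let you glue the halves. You need a density lower bound $\gg\lambda/n$ on that window. The easiest fix is to insert a middle third, handled by the local CLT together with the invariance principle for bridges between heights in $[\sqrt n,2\sqrt n]$.
\end{enumerate}
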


Let us define
\begin{align}
    \bar{t}=\bar{t}_{d,m}:=\frac{m}{\log\rho-1}W\Big(\frac{(\log\rho-1)(b-1)dm^{\frac{1}{m}}}{m}\Big).\label{eq:bar t def}
\end{align} 
It is elementary to verify that $\bar{t}$ is the unique positive number that satisfies
\begin{align}
    \Big(\frac{\rho}{e}\Big)^{\bar{t}}\Big(\frac{\bar{t}}{(b-1)d}\Big)^m=m.\label{eq:bar t relation}
\end{align}
Note that the right-hand side of \eqref{eq:bar t relation} is different from the first-moment equation \eqref{eq:tdm}. Their difference can be attributed to Lemma \ref{prop:empirical ballot}.  
Moreover, following the same argument as Lemma \ref{lemma:key}(iii), it holds \begin{align}
    \bar{t}=t_{d,m}+O(\log d)\label{eq:to use}
\end{align} uniformly in $m\in[1,d/L_1]$ as $d\to\infty$. Indeed, by the fundamental theorem of calculus and \eqref{eq:W'},
\begin{align*}
    \bar{t}-t_{d,m}\asymp m\int_{\frac{(\log\rho-1)(b-1)d}{m}}^{\frac{(\log\rho-1)(b-1)dm^{1/m}}{m}}W'(x)\,\d x\asymp  m\int_{\frac{(\log\rho-1)(b-1)d}{m}}^{\frac{(\log\rho-1)(b-1)dm^{1/m}}{m}}\frac{1}{x}\,\d x= m\log(m^{1/m})\leq\log d.
\end{align*}
Our next goal is to establish bounds on the first and second moments of $M_\z(\bar{t})$, from which we will deduce the proof of the upper bound of Theorem \ref{thm:main2}.

\begin{proposition}\label{prop:Mt first moment}
    Uniformly in $m\in[1,d]$ as $d\to\infty$, we have $\E[M_\z(\bar{t})]\asymp 1$.
\end{proposition}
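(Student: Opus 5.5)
We compute $\E[M_\z(\bar t)]$ by the many-to-one formula: it equals $\rho^{\bar t}$ times the probability that a single continuous-time walk started at $\bx_m$ follows a trajectory of the required type up to time $\bar t$. The plan is to factor this path probability into three pieces: the Poisson probability of exactly $m$ mutations, the probability that each mutation reduces the distance, and the barrier probability for the jump times.

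\textbf{Step 1 (path probability).} The walk mutates at rate $1$, so exactly $m$ mutations occur in $[0,\bar t]$ with probability $e^{-\bar t}\bar t^m/m!$. Given this, the mutation times are the order statistics of $m$ i.i.d. uniforms on $[0,\bar t]$.

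Consider a step made from distance $k$. The chance that it moves to distance $k-1$ is $k/((b-1)d)$: one of the $k$ nonzero coordinates must be chosen, and it must be reset to $0$. These choices are independent of the jump times. Moreover, once the projected path decreases strictly from $m$ to $0$, it necessarily ends at $\z$. Hence the probability of a strictly decreasing path is
\[
\prod_{k=1}^m \frac{k}{(b-1)d}=\frac{m!}{((b-1)d)^m}.
\]

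\textbf{Step 2 (barrier).} On a monotone path, the position at time $s$ equals $m-N_s$, where $N_s$ counts the jumps up to time $s$. The constraint $\eta_v(s)\ge w(s)=m-sm/\bar t-1$ therefore becomes $N_s-ms/\bar t\le 1$ for all $s$. After rescaling time by $\bar t$, this is exactly the event $\{\sup_{u\in[0,1]}(N^{(m)}_u-mu)\le 1\}$, which is $q_m(1,1)$ in the notation of \eqref{eq:qn}.

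We invoke Lemma~\ref{prop:empirical ballot} with $\lambda$ of order $1$ to obtain $q_m(1,1)\asymp 1/m$. The lemma is stated for a strict inequality with $\lambda\ge 1$. We would apply it with $\lambda=2$ for the upper bound and with $\lambda=1$ for the lower bound, noting that $\{\sup<1\}\subseteq\{\sup\le 1\}\subseteq\{\sup<2\}$. This gives $q_m(1,1)\asymp 1/m$ uniformly in $m\ge1$; for small $m$ both sides are simply bounded constants.

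\textbf{Step 3 (combine).} Multiplying the three pieces,
\[
\E[M_\z(\bar t)]=\rho^{\bar t}e^{-\bar t}\frac{\bar t^m}{m!}\cdot\frac{m!}{((b-1)d)^m}\cdot q_m(1,1)
\asymp \Big(\frac{\rho}{e}\Big)^{\bar t}\Big(\frac{\bar t}{(b-1)d}\Big)^m\frac1m=1,
\]
where the last equality is \eqref{eq:bar t relation}. Everything here is exact apart from the $\asymp$ coming from the ballot estimate, so the bound is uniform in $m\in[1,d]$.

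\textbf{Main obstacle.} The main point needing care is checking that the barrier event matches the ballot event with a constant shift, including the boundary conventions: the $-1$ in $w$, the fact that $\eta_v(s)$ counts positions after jumps, and the integer effects. A second point is confirming that Lemma~\ref{prop:empirical ballot} applies uniformly down to $m=1$ with $\lambda$ constant. Both are routine once the sets $\{\sup<1\}\subseteq\{\sup\le1\}\subseteq\{\sup<2\}$ are used.
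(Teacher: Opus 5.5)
Your proposal is correct and follows the paper's proof essentially step for step: many-to-one, the factorization $\p(E_{d,m,\bar t})=e^{-\bar t}(\bar t/((b-1)d))^m$ with i.i.d.\ uniform jump times, reduction of the barrier to $\{\sup_{u}(N^{(m)}_u-mu)\le 1\}$, Lemma~\ref{prop:empirical ballot}, and \eqref{eq:bar t relation}. Your sandwich between $\lambda=1$ and $\lambda=2$ is harmless but unnecessary. The supremum equals $\max_k(k-mU_{(k)})$, so the event $\{\sup=1\}$ has probability zero, and the paper simply applies the lemma with $\lambda=1$, which is valid for every $m\ge 1$.
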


\begin{proof}
    Applying the many-to-one formula, we have
    \begin{align}
        \E[M_\z(\bar{t})]&=\rho^{\bar{t}} \p(\eta_v(s)\text{ is decreasing and }\eta_v(s)\geq w(s),~s\in[0,\bar{t}]).\label{eq:p2}
    \end{align}
    Let $E_{d,m,\bar{t}}$ be the event that $m$ transition events occur in the time interval $[0,\bar{t}]$ and that $\eta_v(s),~s\in[0,\bar{t}]$ is decreasing. 
    Using the independence of the occurrence of transition events and the direction of the transition, we have \begin{align}
        \p(E_{d,m,\bar{t}})=\Big(\frac{m!}{((b-1)d)^m}\Big)\Big(\frac{\bar{t}^me^{-\bar{t}}}{m!}\Big)=e^{-\bar{t}}\Big(\frac{\bar{t}}{(b-1)d}\Big)^m.\label{eq:edm}
    \end{align} Next, conditioned on $E_{d,m,\bar{t}}$, the $m$ transition events occur at times that are i.i.d.~uniformly distributed on $[0,\bar{t}]$ (see Theorem 2.3.1 of \citep{ross1995stochastic}). Altogether, we have
    \begin{align}
        \begin{split}
            &\p(\eta_v(s)\text{ is decreasing and }\eta_v(s)\geq w(s),~s\in[0,\bar{t}])\\
        &=\p(E_{d,m,\bar{t}})\,\p(\eta_v(s)\geq w_{\bar{t}}(s),~s\in[0,\bar{t}] \mid E_{d,m,\bar{t}})\\
        &=e^{-\bar{t}}\Big(\frac{\bar{t}}{(b-1)d}\Big)^m\p\Big(\forall s\in[0,\bar{t}],~\sum_{i=1}^m\bone_{\{\bar{t}U_i\leq s\}}\leq m-w_{\bar{t}}(s)\Big),
        \end{split}\label{eq:p3}
    \end{align}
    where $\{U_i\}_{1\leq i\leq m}$ is a collection of $m$ i.i.d.~random variables uniformly distributed on $[0,1]$. Using \eqref{eq:wdef} and a change of variable $s=\bar{t}\,t$, we have
    \begin{align*}
        \forall s\in[0,\bar{t}],~\sum_{i=1}^m\bone_{\{\bar{t}U_i\leq s\}}\leq m-w_{\bar{t}}(s)
        \quad\Longleftrightarrow\quad \forall t\in[0,1],~N^{(m)}_t-mt\leq 1.
    \end{align*}
    Applying Lemma \ref{prop:empirical ballot} with $\lambda=1$ yields
\begin{align}
    \p\Big(\forall s\in[0,\bar{t}],~\sum_{i=1}^m\bone_{\{\bar{t}U_i\leq s\}}\leq m-w_{\bar{t}}(s)\Big)\asymp \frac{1}{m}.\label{eq:p4}
\end{align}
Combining \eqref{eq:bar t relation}, \eqref{eq:p2}, \eqref{eq:p3}, and \eqref{eq:p4} gives the desired result.  
\end{proof}

In the remainder of this section, we prove the following proposition, which is then used to conclude the proof of Theorem \ref{thm:main2}.

\begin{proposition}\label{prop:Mt second moment}
    Fix $L>0$. Uniformly in $m\in [0,L\sqrt{d}/\log d]$ as $d\to\infty$, we have $\E[M_\z(\bar{t})^2]\ll d^4$.
\end{proposition}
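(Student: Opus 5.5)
We bound $\E[M_\z(\bar t)^2]$ with the many-to-two formula, restricted to the trajectories that $M_\z$ counts. Write $\E[M_\z(\bar t)^2]=\E[M_\z(\bar t)]+2\log\rho\int_0^{\bar t}\rho^{\bar t+s}\,\Phi(s)\,\d s$. Here $\Phi(s)$ is the following probability. A spine moves monotonically from $m$, staying above the barrier $w$, up to time $s$. At time $s$ it splits into two independent continuations. Each continuation must complete a strictly decreasing path to $\z$ by time $\bar t$, staying above $w$. Since every counted path is strictly monotone in the projected coordinate, the split happens at some projected level $k\in\{0,\dots,m\}$ with $k\ge w(s)$. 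The projected walk is a pure-death chain on the admissible paths, and each downward step from level $j$ has rate $j/((b-1)d)$ for hitting the correct symbol at one of the $j$ wrong coordinates. Hence the probability of any fixed monotone path on $[0,s]$ with $m-k$ jumps is $e^{-s}\,(m-k)!$-type combinatorics times $((b-1)d)^{-(m-k)}$, exactly as in \eqref{eq:edm}. Conditionally on the number of jumps, the jump times are i.i.d.\ uniform.

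This gives the bound, term by term in $k$,
\[
\rho^{\bar t+s}\,e^{-s}\Big(\tfrac{s}{(b-1)d}\Big)^{m-k}\tfrac{m!}{k!(m-k)!}\cdot\Big(e^{-(\bar t-s)}\Big(\tfrac{\bar t-s}{(b-1)d}\Big)^{k}\Big)^{2}\,\cdot\,P_{\text{ballot}}(s,k).
\]
Here $P_{\text{ballot}}$ is the probability that the uniform-order-statistics constraints hold. Rewriting with the relation \eqref{eq:bar t relation}, the $k$-th term becomes
\[
m\,\Big(\tfrac{\rho}{e}\Big)^{\bar t-s}\Big(\tfrac{\bar t-s}{(b-1)d}\Big)^{k}\cdot\binom{m}{k}\Big(\tfrac{s}{\bar t}\Big)^{m-k}\Big(\tfrac{\bar t-s}{\bar t}\Big)^{k}\cdot P_{\text{ballot}}.
\]
The middle factor is a binomial probability, at most $1$.

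The key step is to control $(\rho/e)^{\bar t-s}\big((\bar t-s)/((b-1)d)\big)^k$ using the barrier. Because $k\ge w(s)=m-sm/\bar t-1$, we have $k+1\ge m(\bar t-s)/\bar t$, i.e.\ $\bar t-s\le \bar t(k+1)/m$. Consider $g(u)=(\rho/e)^u(u/((b-1)d))^{k}$. By the Lambert-$W$ analysis of Lemma \ref{lemma:key} applied at level $k$, $g(u)\le 1$ for $u\le t_{d,k}$. I then want $\bar t(k+1)/m\le t_{d,k+1}+O(\log d)$-type comparisons, using $t_{d,j}/j$ decreasing in $j$, which follows from $W$ being increasing. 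With $\bar t/m\approx t_{d,m}/m\le t_{d,k+1}/(k+1)$, we get $\bar t-s\le t_{d,k+1}+O(\log d)$. Then
\[
g(\bar t-s)\le \big(\tfrac{\rho}{e}\big)^{O(\log d)}\Big(\tfrac{t_{d,k+1}}{(b-1)d}\Big)^{-1}\ll d^{O(1)}.
\]
The exponent must be tracked; the factor $(t/d)^{-1}\le d$ costs one power of $d$, and the $\bar t$ versus $t_{d,m}$ shift from \eqref{eq:to use} costs a bounded power. I would simply bound $P_{\text{ballot}}\le 1$, bound the integral over $s\in[0,\bar t]$ by $\bar t\ll m\log d\ll\sqrt d$, and sum over $k\le m$, which is another factor $\le\sqrt d$. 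Together with the leading factor $m$ and $g\ll d^{2}$, this should produce the total $d^4$, using $m\le L\sqrt d/\log d$ throughout.

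The main obstacle is making the comparison $(\rho/e)^{\bar t-s}((\bar t-s)/((b-1)d))^k\ll d^{2}$ uniform in $k$ and $s$. The delicate cases are small $k$ (for $k=0$ the bound is just $(\rho/e)^{\bar t-s}$ with $\bar t-s\le\bar t/m\asymp\log d$, which is polynomial in $d$) and the $+1$ slack in the barrier. I would handle this by maximizing $u\mapsto g(u)$ over $u\le \bar t(k+1)/m$ directly: $g$ is log-convex-then-increasing on this range, so the maximum sits at the endpoint. At the endpoint I would use $\log g=(k+1)\big[(\bar t/m)\log(\rho/e)+\log(\bar t/((b-1)dm))\big]\cdot\tfrac{k}{k+1}+O(\log d)$ together with \eqref{eq:bar t relation}, which states exactly that $(\bar t/m)\log(\rho/e)+\log(\bar t/((b-1)d))=\tfrac{\log m}{m}$. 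This yields $\log g\le O(\log d)$ uniformly, with the constant below $2$ after careful bookkeeping.
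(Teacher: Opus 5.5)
Your argument is correct and takes a genuinely different, shorter route than the paper. Both proofs start the same way: the many-to-two formula, all ballot factors bounded by $1$, and only the barrier constraint at the split time kept. From there the paper rewrites the integrand with \eqref{eq:bar t relation} and Stirling. It then shows, via the ratio/derivative analysis of $\psi_s$ (which uses Lemma~\ref{lemma:key}(i),(ii)), that the sum over the number of spine jumps is dominated by its largest admissible value $\min\{m,1+ms/\bar t\}$. Finally it splits the $s$-integral at $\bar t-2\bar t/m$, getting $\ll md^2/\bar t$ and $\ll d^4$ for the two pieces.

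You instead write each term as $m$ times a $\mathrm{Bin}(m,(\bar t-s)/\bar t)$ point mass times $g_k(\bar t-s)$, and bound it pointwise. Since $\rho>e$, $g_k$ is increasing, and the barrier forces $\bar t-s\le \bar t(k+1)/m$. So the paper's ``largest admissible $k$ dominates'' becomes simply ``evaluate $g_k$ at the barrier endpoint''. This avoids Stirling, the dominant-term analysis and the case split, and it is sharper. Because the binomial weights sum to at most $1$, you get $\E[M_\z(\bar t)^2]\ll 1+m^2d$, and the restriction $m\le L\sqrt d/\log d$ is not needed for this step. In the upper-bound proof of Theorem~\ref{thm:main2} this would only change the constant $C_8$. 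The paper's computation, in exchange, makes explicit which split configurations carry the mass; your monotonicity argument encodes this only implicitly.

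When you write it up, fix the following.

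\textbf{Many-to-two weight.} With $s$ the split time, the weight is $\rho^{2\bar t-s}$, not $\rho^{\bar t+s}$. Your rewritten $k$-th term is the one $\rho^{2\bar t-s}$ produces, so only the first display is off. As written it would undercount for $s<\bar t/2$.

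\textbf{Endpoint formula.} This is the more important slip. Your formula is not an identity: it drops a $k\log(k+1)$ term, and dropping it errs in the wrong direction for an upper bound. The exact computation, for $0\le k\le m-1$ and $u_k:=\bar t(k+1)/m\le \bar t$, is
\[
\log g_k(u_k)=\frac{(k+1)\log m}{m}+\log\frac{(b-1)d}{\bar t}+k\log\frac{k+1}{m}\le \log m+\log\frac{(b-1)d}{\bar t}.
\]
It is the sign $k\log\frac{k+1}{m}\le 0$ that makes the argument work. For $k=m$ the cap is $u\le\bar t$, and $g_m(\bar t)=m$ by \eqref{eq:bar t relation}. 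Using $m\ll\bar t\ll d$, this gives $g_k(\bar t-s)\le m\max\{1,(b-1)d/\bar t\}\ll d$ uniformly over admissible $(s,k)$. The integral is then $\ll m^2\max\{\bar t,d\}\ll m^2 d$.

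\textbf{Minor points.} $g_k$ is log-concave and increasing, not log-convex. Your first attempt through $t_{d,k+1}$ and \eqref{eq:to use} only yields $d^{O(1)}$ with a $\rho$-dependent exponent, so keep only the direct endpoint evaluation.
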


\begin{proof}The case $m=0$ is trivial, so we assume $m\geq 1$. 
Recall \eqref{eq:qn}. We apply the many-to-two formula, \eqref{eq:edm}, and the trivial bound $q_n(a,b)\leq 1$ to obtain
\begin{align*}
    \E[M_\z(\bar{t})^2]&=\E[M_\z(\bar{t})]+2(\log\rho)\int_0^{\bar{t}}\rho^{2\bar{t}-s}\sum_{k=0}^{\min\{m,1+ms/\bar{t}\}} \frac{s^ke^{-s}}{((b-1)d)^k}\Big(\frac{(\bar{t}-s)^{m-k}e^{-(\bar{t}-s)}}{((b-1)d)^{m-k}}\Big)^2\binom{m}{k}\\
    &\hspace{7cm}\times q_{k}\Big(1,1+\frac{ms}{\bar{t}}-k\Big)q_{m-k}\Big(1+\frac{ms}{\bar{t}}-k,1\Big)^2\d s\\
    &\ll 1+\int_0^{\bar{t}}\rho^{2\bar{t}-s}\sum_{k=0}^{\min\{m,1+ms/\bar{t}\}} \frac{s^ke^{-s}}{((b-1)d)^k}\Big(\frac{(\bar{t}-s)^{m-k}e^{-(\bar{t}-s)}}{((b-1)d)^{m-k}}\Big)^2\binom{m}{k}\d s.
\end{align*}
It remains to bound the integral. 
 Using \eqref{eq:bar t relation} and Stirling's formula, we get
\begin{align}
   \begin{split}
        &\int_0^{\bar{t}}\rho^{2\bar{t}-s}\sum_{k=0}^{\min\{m,1+ms/\bar{t}\}} \frac{s^ke^{-s}}{((b-1)d)^k}\Big(\frac{(\bar{t}-s)^{m-k}e^{-(\bar{t}-s)}}{((b-1)d)^{m-k}}\Big)^2\binom{m}{k}\d s\\
    &=\int_0^{\bar{t}}\Big(\frac{\rho}{e}\Big)^{2\bar{t}-s}\Big(\frac{\bar{t}-s}{(b-1)d}\Big)^{2m}\sum_{k=0}^{\min\{m,1+ms/\bar{t}\}}\Big(\frac{s(b-1)d}{(\bar{t}-s)^2}\Big)^k\binom{m}{k}\d s\\
     &\ll\int_0^{\bar{t}}m^{2-s/\bar{t}}\Big(\frac{\bar{t}}{(b-1)d}\Big)^{-2m+ms/\bar{t}}\Big(\frac{\bar{t}-s}{(b-1)d}\Big)^{2m}m^m\sqrt{m}\sum_{k=0}^{\min\{m,1+ms/\bar{t}\}}\Big(\frac{s(b-1)d}{(\bar{t}-s)^2}\Big)^k(m-k)^{-(m-k)}k^{-k}\d s.
   \end{split}\label{eq:logd2}
\end{align}
To deal with the sum over $k$, we define for $x\in[0,\min\{m,1+ms/\bar{t}\}]$
$$\psi(x)=\psi_s(x):=\Big(\frac{s(b-1)d}{(\bar{t}-s)^2}\Big)^x(m-x)^{-(m-x)}x^{-x}.$$
It is then straightforward to compute that on the range $s\leq \bar{t}/3$ and if $m\geq 2$,
\begin{align*}
    \frac{\d}{\d x}\log\psi(x)&=\log\Big(\frac{s(b-1)d}{(\bar{t}-s)^2}\Big)-\log x+\log(m-x)\geq \log\Big(\frac{d}{(\bar{t}-s)}-\frac{d\bar{t}}{(\bar{t}-s)^2m}\Big)\geq \log\Big(\frac{d}{6\bar{t}}\Big).
\end{align*}
By Lemma \ref{lemma:key}(ii), we have $\frac{\d}{\d x}\log\psi(x)\geq 2$ for $d$ large enough. If $m\geq 2$ and $s>\bar{t}/3$, we have that for each $k\geq 0$,  
$$\frac{(\frac{s(b-1)d}{(\bar{t}-s)^2})^{k+1}\binom{m}{k+1}}{(\frac{s(b-1)d}{(\bar{t}-s)^2})^k\binom{m}{k}}=\frac{s(b-1)d(m-k)}{(\bar{t}-s)^2(k+1)}\geq \frac{(b-1)d}{2m\bar{t}}\geq \frac{\log d}{K(\rho)}$$
for some constant $K(\rho)>0$ depending only on $b,\rho$, where we have used Lemma \ref{lemma:key}(i) in the last step. 
Therefore, the contribution to the sum over $k$ is always dominated by the term $k=\min\{m,1+ms/\bar{t}\}$ for $m\geq 2$ and $d$ large enough. The same claim is obviously true for $m=1$ because the only terms are $k=0,1$, and from \eqref{eq:logd2} we see that the term $k=0$ contributes at most
$$\int_0^{\bar{t}}\Big(\frac{\rho}{e}\Big)^{2\bar{t}-s}\Big(\frac{\bar{t}-s}{(b-1)d}\Big)^{2m}\d s=\int_0^{\bar{t}}\Big(\frac{\rho}{e}\Big)^{-s}\Big(\frac{\bar{t}-s}{\bar{t}}\Big)^{2}\d s=O(1),$$
where we have used $m=1$ and \eqref{eq:bar t relation}.

To this end, we split the range of the integral into $s\in[0,\bar{t}-2\bar{t}/m]$ and $s\in [\bar{t}-2\bar{t}/m,\bar{t}]$. We have $k\leq 1+ms/\bar{t}$ in the former case and apply the bound $k\leq m$ in the latter case. If $m=1$, we always apply the bound $k\leq 1$.


\textbf{Case I: $s\in [\bar{t}-2\bar{t}/m,\bar{t}]$}. In this case, inserting $k=m$ into \eqref{eq:logd2} yields
\begin{align*}
    \int_{\bar{t}-2\bar{t}/m}^{\bar{t}}\rho^{2\bar{t}-s}\sum_{k=0}^{m} \frac{s^ke^{-s}}{((b-1)d)^k}\Big(\frac{(\bar{t}-s)^{m-k}e^{-(\bar{t}-s)}}{((b-1)d)^{m-k}}\Big)^2\binom{m}{k}\d s    &\ll \int_{\bar{t}-2\bar{t}/m}^{\bar{t}}\rho^{2\bar{t}-s} \frac{s^me^{-s}}{((b-1)d)^m}e^{-2(\bar{t}-s)}\d s\\
    &\asymp \int_{\bar{t}-2\bar{t}/m}^{\bar{t}}m^{2-s/\bar{t}}\Big(\frac{\bar{t}}{(b-1)d}\Big)^{-2m+sm/\bar{t}}\Big(\frac{s}{(b-1)d}\Big)^m\d s\\
    &\ll \frac{md^2}{\bar{t}}.
\end{align*}

\textbf{Case II: $s\in[0,\bar{t}-2\bar{t}/m]$ and $k\in[ms/\bar{t}+1]$}. The contribution is thus given by (inserting $k=ms/\bar{t}+1$)
\begin{align*}
    &\leq\int_0^{\bar{t}-2\bar{t}/m}m^{2-s/\bar{t}}\Big(\frac{\bar{t}}{(b-1)d}\Big)^{-2m+ms/\bar{t}}\Big(\frac{\bar{t}-s}{(b-1)d}\Big)^{2m}\Big(\frac{s(b-1)d}{(\bar{t}-s)^2}\Big)^{ms/\bar{t}+1}\binom{m}{ms/\bar{t}+1}\d s\\
     &\ll \frac{md\bar{t}}{(2\bar{t}/m)^2}\int_0^{\bar{t}-2\bar{t}/m}m^{2-s/\bar{t}}\Big(\frac{\bar{t}}{(b-1)d}\Big)^{-2m+ms/\bar{t}}\Big(\frac{\bar{t}-s}{(b-1)d}\Big)^{2m}\Big(\frac{s(b-1)d}{(\bar{t}-s)^2}\Big)^{ms/\bar{t}}\binom{m}{ms/\bar{t}}\d s\\
     &\ll d^3\int_0^{\bar{t}-2\bar{t}/m}\Big(\frac{\bar{t}}{(b-1)d}\Big)^{-2m+ms/\bar{t}}\Big(\frac{\bar{t}-s}{(b-1)d}\Big)^{2m}\Big(\frac{s(b-1)d}{(\bar{t}-s)^2}\Big)^{ms/\bar{t}}\Big(\frac{\bar{t}}{s}\Big)^{ms/\bar{t}}\Big(\frac{\bar{t}}{\bar{t}-s}\Big)^{m-ms/\bar{t}}\d s\\
     &\ll d^3\int_0^{\bar{t}-2\bar{t}/m}\Big(\frac{\bar{t}}{\bar{t}-s}\Big)^{-m+ms/\bar{t}}\d s\\
     &\ll d^4.
\end{align*}
Combining all the above estimates, Lemma \ref{lemma:key}(i), and our assumption $m\ll \sqrt{d}/\log d$ yields $\E[M_\z(\bar{t})^2]\ll d^4$.
\end{proof}

Recall that for a set $S\subseteq[d]$, $N_S(t)$ denotes the number of particles in $S$ at time $t$ (with the BRW starting at $m$).

\begin{lemma}\label{lemma:bootstrap rho>e}
    Let $\ee>0$ and consider the BRW starting from a fixed state $\bx_m\in\X_{d,m}$ where $m\leq d/L_1$. Then there exist $M=M(\ee)>0$, $T>0$, and $c=c(\ee)>0$, all independent of $m$, such that 
    $$\p\bigg(\forall t>T,~N_{\{m,m+1,\dots,m+M\}}(t)>c\Big(\frac{\rho}{e}\Big)^t\bigg)>1-\ee.$$
\end{lemma}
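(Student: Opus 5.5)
The plan is to find a positive martingale that counts particles with a weight vanishing outside a neighbourhood of $m$, and then invoke a positive-probability almost-sure lower bound for that martingale together with a bootstrap over many independent subtrees, in the spirit of Lemma \ref{lemma:MT}.

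\textbf{The comparison process.} Project onto the Hamming distance. From a state $j\in\{m,\dots,m+M\}$ the projected walk jumps at rate $1$. It moves to $j+1$ with probability $(d-j)/d\ge 1-(m+M)/d$, which is close to $1$ when $L_1$ is large. Call a particle \emph{good} if its projected position has stayed in $\{m,\dots,m+M\}$ since some reference time. A good particle leaves the set only by jumping up from $m+M$, or down from any state with rate at most $(m+M)/d$. Compare with the process in which a particle is killed at rate $1$ whenever it sits at $m+M$, and at rate $\le (m+M)/d$ elsewhere.

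\textbf{Killing rate and growth exponent.} Consider the sub-Markov generator of the projected walk killed on leaving $\{m,\dots,m+M\}$. Its principal eigenvalue $-\kappa_M$ satisfies $\kappa_M\to 1$ as $M\to\infty$ and $L_1\to\infty$, uniformly in $m\le d/L_1$. This holds because the walk is essentially a rate-$1$ drift to the right. A single step already costs rate $1$ per unit time, and the finite window gives only an $O(1/M)$ correction. So the killed mean growth is $\rho^t e^{-\kappa_M t}$.

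\textbf{Handling the exponent mismatch.} We need $(\rho/e)^t$ up to the constant $c$, and this forces care. We have $\kappa_M\ge 1$, since escaping to the right is forced at rate $1$ once the particle reaches $m+M$. Hence $\rho e^{-\kappa_M}$ can be slightly below $\rho/e$, and a fixed window cannot match the exponent exactly.

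\textbf{A moving window.} The fix I would try first is to note that the statement's window $\{m,\dots,m+M\}$ is fixed, but particles drift right at speed $\approx 1$. So $(\rho/e)^t$ counts particles whose projected walk made very few moves. Stated directly, the expected number of particles that have made at most $M$ mutations by time $t$ is $\rho^t\,\p(\mathrm{Poisson}(t)\le M)\asymp \rho^t e^{-t}t^M/M!$, which exceeds $(\rho/e)^t$. Even more simply, particles that never mutated give mean growth $\rho^t e^{-t}=(\rho/e)^t$. These form a Yule-type branching process with branching rate $\log\rho$ and death rate $1$, whose supercritical Malthusian parameter is $\log\rho-1>0$ since $\rho>e$. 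Particles that mutated at most $M$ times stay in $\{m-M,\dots,m+M\}$. To stay at or above $m$, I would restart from a particle first reaching $m+M/2$ (within time $t_0$, by \eqref{eq:use3}), and count descendants with at most $M/2$ mutations since then.

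\textbf{Almost-sure growth via Kesten--Stigum.} The non-mutated descendants form a continuous-time Galton--Watson process with rate $\log\rho+1$ and offspring $0$ or $2$. Its normalized population $Z_t(\rho/e)^{-t}$ converges a.s. to $W$, with $W>0$ on survival and survival probability $1-1/\log\rho>0$. Then $\inf_{t>T}Z_t(\rho/e)^{-t}>c$ with probability at least the survival probability minus $\delta$.

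\textbf{Boosting to $1-\ee$.} Use Lemma \ref{lemma:MT} to get $C$ particles in $\{m+M/2-M',\dots\}$ at a fixed time. Apply the non-mutating subprocess independently to each, so all $C$ fail with probability at most $(1/\log\rho+\delta)^C<\ee/2$.

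\textbf{Main obstacle.} The main obstacle is obtaining uniform-in-$t$ a.s. lower bounds while keeping the window and the constants independent of $m$ and $d$. Restricting to non-mutating lineages handles this cleanly.
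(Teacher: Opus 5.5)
Your final argument is correct and is exactly the paper's proof. You restrict to descendants that never mutate, which stay at the parent's vertex and form a birth--death process with birth rate $\log\rho$ and death rate $1$, so with probability bounded away from zero they exceed $c_0(\rho/e)^t$ for all $t$; you then boost this via Lemma~\ref{lemma:MT} using $C$ independent subtrees. You can drop the principal-eigenvalue and moving-window detour, and note that its claim $\kappa_M\ge 1$ is backwards: a particle that never jumps already survives with probability $e^{-t}$, so $\kappa_M\le 1$ and no exponent mismatch arises.
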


\begin{proof}
Using a standard coupling argument, given that there exists a particle at the state $m+j$ at some stopping time $\tau_j$, the number of particles at $m+j$ at time $\tau_j+t$ is bounded from below by the number of particles in a birth-death process with birth rate $\log\rho$ and death rate $1$ (by killing all particles that transit to neighbor states). With a uniformly positive probability (say $>\delta_{12}$), this number is bounded from below by $c_0(\rho/e)^t$ for all $t$ for some $c_0>0$ (see Theorems 21.1 and 22.1 of \citep{Harris1963Branching}). 

Now start from a particle at the state $m$. By Lemma \ref{lemma:MT} applied with $C=\log(\ee/2)/\log(1-\delta_{12})$,  there exist $M,T>0$ such that 
$$\p(N_{\{m,m+1,\dots,m+M\}}(T)\leq C)<\frac{\ee}{2}.$$
Conditioning on time $T$ on the event $N_{\{m,m+1,\dots,m+M\}}(T)> C$, we have that the probability that, for some particle at $\{m,m+1,\dots,m+M\}$ at time $T$, the number of its descendants killed upon exiting its initial state is at least $c_0(\rho/e)^{t-T}$ at all times $t\geq T$, is at least $1-(1-\delta_{12})^C>1-\ee/2$. Thus, the desired claim follows with $c=c_0(\rho/e)^{-T}$.
\end{proof}

\begin{proof}[Proof of Theorem \ref{thm:main2}, upper bound] 
The proof follows a similar bootstrapping idea as the proof of the upper bound of Theorem \ref{thm:main}. Let $\ee>0$ be arbitrary, $M,c$ be the constants in Lemma \ref{lemma:bootstrap rho>e}. Recall the definition of $\bar{t}$ from \eqref{eq:bar t def}. By \eqref{eq:M}, Paley--Zygmund's inequality, and Propositions \ref{prop:Mt first moment} and \ref{prop:Mt second moment}, we have uniformly for $j\in[M]$,
$$\p(\tau_{d,m+j}\geq \bar{t}_{d,m+j})\leq \p(N_\z^{(d,m+j)}(\bar{t}_{d,m+j})=0)\leq \p(M_\z^{(d,m+j)}(\bar{t}_{d,m+j})=0)\leq 1-\frac{1}{L_{11}d^4}$$
for some $L_{11}>0$. Let $C_8>0$ be large enough such that for $d$ large enough,
$$\Big(1-\frac{1}{L_{11}d^4}\Big)^{c(\rho/e)^{C_8\log d}}<\ee.$$
It then follows from Lemma \ref{lemma:bootstrap rho>e} that by conditioning on time $C_8\log d$,
\begin{align*}
    &\p(\tau_{d,m}\geq \bar{t}_{d,m+M}+C_8\log d)\\
    &\leq \max_{j\in[M]}\p(\tau_{d,m+j}\geq \bar{t}_{d,m+j})^{c(\rho/e)^{C_8\log d}}+\p\bigg(N_{\{m,m+1,\dots,m+M\}}(C_8\log d)\leq c\Big(\frac{\rho}{e}\Big)^{C_8\log d}\bigg)\\
    &\leq 2\ee.
\end{align*}
Letting $\ee\to 0$ and applying Lemma \ref{lemma:key}(iii) and \eqref{eq:to use} shows that $$\tau_{d,m}\leq \bar{t}_{d,m+M}+O_\p(\log d)\leq \bar{t}+O_\p(\log d),$$ as desired.
\end{proof}

\section{Proof for the case \texorpdfstring{$\log\rho=o(1)$}{}}\label{sec:case2}

\subsection{Lower bound}

The proof follows a route similar to Section \ref{eq:lb1}. We first establish the following estimate on the first moment.

\begin{lemma}\label{prop:first moment 2}
Suppose that $\rho=\rho(d)$ satisfies that $\log\rho(d)$ is regularly varying of index $\gamma\in[-1,0)$ and $\log\rho(d)\geq L_2/d$ for some large enough constant $L_2>0$. Let $t=t_{d,m}$ be the unique positive solution to \eqref{eq:1}.    For any $\ee>0$, there exists $C>0$ such that
\begin{align}\E\Big[\tilde{N}_\z\Big(t-\frac{-\log\log\rho}{\log\rho}-\frac{C}{\log\rho}\Big)\Big]=\int_0^{t-\frac{-\log\log\rho}{\log\rho}-\frac{C}{\log\rho}} \rho^s2^{-d}(1+e^{-2s/d})^{d-m}(1-e^{-2s/d})^m\d s\leq\ee.
    \label{eq:int2}
\end{align}
\end{lemma}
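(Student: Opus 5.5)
The plan mirrors the proof of Lemma~\ref{prop:first moment 1}, but with the exponential growth rate $\log\rho$ now tending to zero. Write $\lambda:=\log\rho$ and $F(s):=\rho^s2^{-d}(1+e^{-2s/d})^{d-m}(1-e^{-2s/d})^m$, and set $t^\ast:=t-\frac{-\log\lambda}{\lambda}-\frac{C}{\lambda}$. The heuristic is as follows. Near $s\approx t$ the logarithmic derivative of $F$ is
\[
(\log F)'(s)=\lambda-\frac{2e^{-2s/d}}{1+e^{-2s/d}}+O\Big(\frac{m}{d}\cdot\frac{e^{-2s/d}}{1-e^{-2s/d}}\Big).
\]
By Proposition~\ref{lemma:asymp 2}, $t\approx d\log2/\lambda$, so $e^{-2t/d}$ is superpolynomially small; in particular it is $o(\lambda)$ by the Potter bound argument in that proof. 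Hence $(\log F)'\approx\lambda$, i.e.\ $F(s)\approx \rho^{s-t}F(t)=e^{-\lambda(t-s)}$ on a long window below $t$. Integrating up to $t^\ast$ gives roughly $\lambda^{-1}e^{-\lambda(t-t^\ast)}=\lambda^{-1}\cdot\lambda e^{-C}=e^{-C}$, which is at most $\ee$ for large $C$.

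\textbf{Step 1 (window near $t$).} On $s\in[(1-\delta)t,t^\ast]$, bound $F(s)=F(t)\,F(s)/F(t)$ with $F(t)=1$. Use Lemma~\ref{lemma:taylor}(ii) (with $b=2$) to control the ratio by
\[
\rho^{s-t}\exp\big(d\,e^{-2t/d}(e^{2(t-s)/d}-1)\big).
\]
On this window $e^{2(t-s)/d}\le e^{2\delta t/d}$. Choosing $\delta$ small, for instance $\delta t/d\le (\log2)/(4\lambda)$, the correction exponent is at most $\frac{2(t-s)}{d}\cdot d\,e^{-2t/d}e^{2\delta t/d}\le (t-s)\cdot o(\lambda)$, because $e^{-2t/d+2\delta t/d}\le 2^{-3/(2\lambda)}=o(\lambda)$. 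So $F(s)\le e^{-\lambda(t-s)/2}$, and integrating gives
\[
\int_{(1-\delta)t}^{t^\ast}F\le \frac{2}{\lambda}e^{-\lambda(t-t^\ast)/2}=2\lambda^{-1/2}e^{-C/2}.
\]
This is too weak by a factor $\lambda^{-1/2}$. To fix it, I will keep the full factor by replacing the $1/2$ with $1-o(1)$, which the $o(\lambda)$ bound allows. The result is
\[
\int_{(1-\delta)t}^{t^\ast}F\le(1+o(1))\,\lambda^{-1}e^{-(1-o(1))(\lambda(t-t^\ast))}.
\]
Since $\lambda(t-t^\ast)=-\log\lambda+C$, this equals $\lambda^{-o(1)}e^{-C}$. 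I then need the $o(1)$ in the exponent, multiplied by $|\log\lambda|$, to be $o(1)$. This holds because the error is $e^{-c/\lambda}/\lambda\cdot|\log\lambda|\to0$.

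\textbf{Step 2 (bulk, $[\delta d,(1-\delta)t]$).} Rescale $s=tu$ as in \eqref{eq:4} and apply Lemma~\ref{lemma:e0}(i), now with $c=2t/d\to\infty$. The integrand is $\big(2^u(1+e^{-c})^{-u}(1+e^{-cu})\big)^d e^{\ee d}$. Since $c\to\infty$, the convexity gap no longer gives a fixed margin below $2^d$, so I will argue directly. The quantity $d\log(1+e^{-cu})\le d e^{-c\delta'}$ is negligible for $u\ge\delta'$, leaving $\rho^{s-t}\cdot e^{o(1)}$ with $t-s\ge\delta t$. The contribution is then $\le t\rho^{-\delta t}=t e^{-\delta d\log 2}$, which is exponentially small. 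The $m$-dependent factors are handled as in Lemma~\ref{prop:first moment 1} using $m\le d/L_3$.

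\textbf{Step 3 (early times, $[0,\delta d]$).} Lemma~\ref{lemma:taylor}(i) bounds $F(s)\le\rho^s e^{-(1-\ee)s(d-m)/d}(s/d)^m$. Since $\rho\to1$ this is $\ll d^{-1}\int s e^{-s/2}\d s=O(d^{-1})$.

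The main obstacle is Step~1: obtaining the sharp $\lambda^{-1}$ prefactor and not a lossy power of $\lambda$. This requires showing that the drift correction $e^{-2s/d}$ is $o(\lambda/|\log\lambda|)$ uniformly on the window. I will use the regular variation and Potter bound exactly as in Proposition~\ref{lemma:asymp 2}, together with the fact that the window $[t-\delta t,t]$ keeps $2s/d\ge(1-\delta)2\log2/\lambda$.
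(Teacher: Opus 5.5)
Your three-range decomposition (early times, a bulk range, and a window of length $\asymp d/\log\rho$ below $t$) is the same as the paper's, and Steps~1 and~3 are correct. In Step~1, you keep the $-1$ via $e^x-1\le xe^x$, which turns the correction into a loss $\eta\lambda(t-s)$ in the decay rate with $\eta=O(e^{-c/\lambda}/\lambda)$, so $\lambda^{-\eta}\to1$. This is slightly cleaner than the paper's Step~III, which drops the $-1$ and then needs Potter's bound to get $de^{-1/\log\rho}=O(1)$.

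The gap is in Step~2. Your direct bound $d\log(1+e^{-cu})\le de^{-c\delta'}=o(d)$ needs $cu\to\infty$. So it only covers $u\ge\delta'$ with $\delta'$ fixed, i.e.\ $s\ge\delta' t\asymp\delta' d/\lambda$. Step~3 stops at $s=\delta d$, so neither step handles $s\in[\delta d,\delta' t]$, where $cu=2s/d$ runs from $2\delta$ up to order $\delta'/\lambda$. Near the left end of this range $e^{-2s/d}\approx e^{-2\delta}$ is not small, and $d\log(1+e^{-cu})$ is of order $d$. Your claimed bound $F(s)\le\rho^{s-t}e^{o(1)}$ is false there; the smallness of $F$ comes only from the competition $\log(1+e^{-2\delta})<\log 2$.

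Your remark that the convexity gap gives no fixed margin as $c\to\infty$ is also mistaken, and convexity is exactly what closes the hole. Since $F(t)=1$ and $s\le t$,
$$F(s)\le\rho^{s-t}(1+e^{-2s/d})^d=\exp\{d[\lambda v+\log(1+e^{-2v})]-\lambda t\},$$
with $v=s/d$ and $\lambda t=d\log 2+o(1)$ by Proposition~\ref{lemma:asymp 2}. The map $v\mapsto\lambda v+\log(1+e^{-2v})$ is convex. At $v=\delta$ and at $v=(1-\delta)t/d$ it is at most $\log 2-\kappa$ for some $\kappa=\kappa(\delta)>0$ and all large $d$. Hence $F\le e^{-\kappa d+o(1)}$ on all of $[\delta d,(1-\delta)t]$, and the length $t=O(d^2)$ of the range is harmless. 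Equivalently, the function in Lemma~\ref{lemma:e0}(i) is convex, and at the true left endpoint $u_0=\delta d/t$ (where $cu_0=2\delta$) it tends to $1+e^{-2\delta}<2$.

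This is the paper's Step~II, which uses $g(v)=v\log\rho+e^{-2v}$ on $[L_6d,\,t-\delta_8d/\log\rho]$ after treating $[0,L_6d]$ with Lemma~\ref{lemma:taylor}(iii). Note that this cruder $g$ (from $\log(1+x)\le x$) would fail if started at $\delta d$ with $\delta$ small, since $e^{e^{-2\delta}}>2$. So either keep the exact logarithm, or move your Step~3 cutoff up to $L_6d$.
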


\begin{proof}
The first equality is \eqref{eq:tilde N}. Let $\bar{t}=t-\frac{-\log\log\rho}{\log\rho}-\frac{C}{\log\rho}$. Recall \eqref{eq:1}.
We let $L_6,\delta_8>0$ be constants to be determined and split the integral in \eqref{eq:int2} into the ranges: $s\in[0,L_6d],~s\in[L_6d,t-\delta_8 d/\log\rho],$ and $s\in[t-\delta_8 d/\log\rho,\bar{t}]$. 

\textbf{Step I: bounding the integral over $s\in[0,L_6d]$}. We have for some $\delta_9>0$, by Lemma \ref{lemma:taylor}(iii),
\begin{align*}
    \int_0^{L_6 d} \rho^s2^{-d}(1+e^{-2s/d})^{d-m}(1-e^{-2s/d})^m\d s
    &=\int_0^{L_6 d} \rho^s\Big(\frac{1+e^{-2s/d}}{2}\Big)^{d-m}\Big(\frac{1-e^{-2s/d}}{2}\Big)^m\d s\\
    &\leq \int_0^{L_6 d} \rho^se^{-\delta_9\frac{s(d-m)}{d}}\Big(\frac{s}{d}\Big)^m\d s.
\end{align*}
We split the integral into two parts: over $s\in[0,d]$ and over $s\in[d,L_6d]$. 
\begin{itemize}
    \item First, 
\begin{align*}
    \int_0^{d} \rho^se^{-\delta_9\frac{s(d-m)}{d}}\Big(\frac{s}{d}\Big)^m\d s&\leq \frac{1}{d}\int_0^{L_6 d} s\rho^se^{-\delta_9\frac{s(d-m)}{d}}\d s\ll \frac{1}{d}=o(1).
\end{align*}
    \item Second, 
    \begin{align*}\int_d^{L_6 d} \rho^se^{-\delta_9\frac{s(d-m)}{d}}\Big(\frac{s}{d}\Big)^m\d s &\ll (L_6e^{\delta_9 L_6})^m \int_d^{L_6d}(\rho e^{-\delta_9})^s\d s\\
    &\ll (L_6e^{\delta_9 L_6})^me^{-\delta_9 d/2}= o(1),
\end{align*}
where we have used $m\leq d/L_3$ for some large $L_3>0$. 
\end{itemize}
Altogether, we arrive at
$$ \int_0^{L_6 d} \rho^s2^{-d}(1+e^{-2s/d})^{d-m}(1-e^{-2s/d})^m\d s=o(1).$$

\textbf{Step II: bounding the integral over $s\in[L_6d,t-\delta_8 d/\log\rho]$}. Define the function $g(v)=v\log\rho+e^{-2v}$ for $v\in[L_6,t/d-\delta_8/\log\rho]$. The function $g$ is convex and hence may attain maximum at the endpoints $v=L_6$ or $v=t/d-\delta_8/\log\rho$. For $L_6$ large enough (independent of $\rho$),
$e^{g(L_6)}=e^{L_6\log\rho}e^{e^{-2L_6}}<3/2$, since $\rho(d)\to 1$. On the other hand, for $\delta_8$ small enough, $$e^{g(t/d-\delta_8/\log\rho)}=\rho^{t/d-\delta_8/\log\rho}e^{e^{-2(t/d-\delta_8/\log\rho)}}\leq 2e^{-\delta_8/2},$$
where we have used Proposition \ref{lemma:asymp 2}. These altogether show that 
\begin{align}
    e^{g(v)}\leq 2e^{-\delta_8/2},\quad v\in [L_6,t/d-\delta_8/\log\rho].\label{eq:gv}
\end{align}
We therefore have by \eqref{eq:1}, a change of variable, and Lemma \ref{lemma:taylor}(ii),
\begin{align}
    \begin{split}
        \int_{L_6d}^{t-\delta_8 d/\log\rho} \rho^s2^{-d}(1+e^{-2s/d})^{d-m}(1-e^{-2s/d})^m\d s&=\int_{L_6d}^{t-\delta_8 d/\log\rho} \rho^{s-t}\Big(\frac{1+e^{-2s/d}}{1+e^{-2t/d}}\Big)^{d-m}\Big(\frac{1-e^{-2s/d}}{1-e^{-2t/d}}\Big)^m\d s\\
    &\leq \int_{L_6d}^{t-\delta_8 d/\log\rho} \rho^{s-t}\Big(\frac{1+e^{-2s/d}}{1+e^{-2t/d}}\Big)^{d}\d s\\
    &\ll d\int_{L_6}^{t/d-\delta_8/\log\rho} \rho^{vd-t}\exp\Big(\frac{de^{-2t/d}(e^{2t/d-2v}-1)}{1+e^{-2t/d}}\Big)\d v\\
    &\leq d\int_{L_6}^{t/d-\delta_8/\log\rho} \rho^{vd-t}\exp(de^{-2v})\,\d v\\
   &\ll d(2e^{-\delta_8/4})^{-d}\int_{L_6}^{t/d-\delta_8/\log\rho} e^{dg(v)}\d v.
    \end{split}
    \label{eq:e}
\end{align}
By \eqref{eq:gv} and Proposition \ref{lemma:asymp 2}, we have
\begin{align}
    d(2e^{-\delta_8/4})^{-d}\int_{L_6}^{t/d-\delta_8/\log\rho} e^{dg(v)}\d v&\leq d(2e^{-\delta_8/4})^{-d}\Big(\frac{t}{d}\Big)(2e^{-\delta_8/2})^d\ll \frac{d}{\log\rho}\,e^{-\delta_8d/4}.\label{eq:e'}
\end{align}
Using our assumption $\log\rho(d)\geq L_2/d$ and combining \eqref{eq:e} and \eqref{eq:e'}, we conclude that
$$\int_{L_6d}^{t-\delta_8 d/\log\rho} \rho^s2^{-d}(1+e^{-2s/d})^{d-m}(1-e^{-2s/d})^m\d s=o(1).$$

\textbf{Step III: bounding the integral over $s\in[t-\delta_8 d/\log\rho,\bar{t}]$}. 
We proceed in a similar way to \eqref{eq:e} and use Lemma \ref{lemma:taylor}(ii) to obtain
\begin{align*}
    \int_{t-\delta_8 d/\log\rho}^{\bar{t}} \rho^s2^{-d}(1+e^{-2s/d})^{d-m}(1-e^{-2s/d})^m\d s
    &\leq  \int_{t-\delta_8 d/\log\rho}^{\bar{t}} \rho^{s-t}\Big(\frac{1+e^{-2s/d}}{1+e^{-2t/d}}\Big)^{d}\d s\\
    &\leq \int_{t-\delta_8 d/\log\rho}^{\bar{t}} \rho^{s-t}e^{de^{-2t/d}(e^{2(t-s)/d}-1)}\d s.
\end{align*}
Using a change of variable $u=(t-s)\log\rho$, this becomes
\begin{align}
    \int_{t-\delta_8 d/\log\rho}^{\bar{t}} \rho^{s-t}e^{de^{-2t/d}(e^{2(t-s)/d}-1)}\d s&=\frac{1}{\log\rho}\int_{-\log\log\rho+C}^{\delta_8 d}e^{-u+de^{-\frac{2t}{d}}(e^{\frac{2u}{d\log\rho}}-1)}\d u.\label{eq:tt}
\end{align}
Note that for $u\in[-\log\log\rho+C,\delta_8d]$, we have by Proposition \ref{lemma:asymp 2},
$$-u+de^{-\frac{2t}{d}}(e^{\frac{2u}{d\log\rho}}-1)\leq -u+de^{-\frac{2t}{d}+\frac{2u}{d\log\rho}}\leq -u+de^{-\frac{1}{\log\rho}}\leq -u+O(1),$$
where $\delta_8$ is chosen small enough in the second step and we have used the regularly varying property in the third step along with Potter's bound. Together, we have
\begin{align*}
    \int_{-\log\log\rho+C}^{\delta_8 d}e^{-u+de^{-\frac{2t}{d}}(e^{\frac{2u}{d\log\rho}}-1)}\d u&\leq \int_{-\log\log\rho+C}^{\delta_8 d}e^{-u+O(1)}\d u\ll (\log \rho)e^{-C},
\end{align*}where the implicit constant in $\ll$ does not depend on $C$. 
Hence, by \eqref{eq:tt}, we arrive at 
$$\int_{t-\delta_8 d/\log\rho}^{\bar{t}} \rho^{s-t}e^{de^{-2t/d}(e^{2(t-s)/d}-1)}\d s\ll e^{-C}.$$

Since the constants in $\ll$ do not depend on $C$ in the above analysis, combining the above three cases yields \eqref{eq:int2} by choosing $C$ large enough.
\end{proof}

\begin{proof}[Proof of Theorem \ref{thm:main rho->1}, lower bound]
The proof follows in the same way as the proof of the lower bound of Theorem \ref{thm:main}, using the first moment method and applying Lemma \ref{prop:first moment 2} instead of Lemma \ref{prop:first moment 1}. We omit the details.
\end{proof}

\subsection{Upper bound}

Let 
\begin{align}
    t':=t_{d,m}-\frac{-\log\log\rho}{\log\rho}\label{eq:t' def}
\end{align}
be the claimed asymptotics of the first passage time $\tau_{d,m}$.  
To prove the upper bound of $\tau_{d,m}$, we apply a weighted second moment approach similarly to Proposition 8 of \citep{blanchet2024tight}.\footnote{Note that directly computing $N_\z(t')$ using the second moment method (as in Section \ref{sec:2nd}) does not work, since $\E[N_\z(t')]=o(1)$.} 
 The plan of our proof is to condition on time $t'$, so that each particle at time $t'$ carries a probability that some of its descendants reach the state $0$ in the next $1/\log\rho$ period of time. We then apply a weighted second moment method to count the weighted number of particles at time $t'$, where the weight of $v\in V_{t'}$ refers to the first passage probability of the descendants of $v$ to state $0$ until time $t'+1/\log\rho$. The following lemma establishes an asymptotic lower bound for these weights. Recall that $\X_{d,m}=\{\bx\in\X:d_{\mathrm{H}}(\bx,\z)=m\}$.

\begin{lemma}\label{lemma:probs}
  Let $\rho(d)\to 1^+$ and $\log\rho(d)\geq 3/d$.  It holds uniformly in $k\in[1/\log\rho(d)]$ that 
  \begin{align}
      \p\Big(\tau_{d,k}\leq \frac{3}{\log\rho(d)}\Big)\gg \binom{d}{k}^{-1}.\label{eq:tau??}
  \end{align}
\end{lemma}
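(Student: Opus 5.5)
The plan is to drop the branching entirely and work with a single lineage. The first particle's trajectory, continued along one chosen descendant at every branching event, is a continuous-time simple random walk $X$ on $\{0,1\}^d$ with unit rate started at $\bx_k$. Hence $\p(\tau_{d,k}\le T)\ge \p(X \text{ hits } \z \text{ by time } T)$ with $T:=3/\log\rho(d)$. For the walk I would use the occupation-time argument: let $\tilde L:=\int_0^{T}\bone_{\{X_s=\z\}}\d s$ and let $\sigma$ be the hitting time of $\z$. By the strong Markov property at $\sigma$,
$$\E_{\bx_k}[\tilde L]\le \p_{\bx_k}(\sigma\le T)\,\E_{\z}\Big[\int_0^{T}\bone_{\{X_s=\z\}}\d s\Big],$$
so it suffices to show that $\E_{\bx_k}[\tilde L]\gg\binom dk^{-1}$ and $\E_\z\big[\int_0^T\bone_{\{X_s=\z\}}\d s\big]=O(1)$, uniformly in $k\in[1/\log\rho]$. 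Working with a single walk avoids the issue that, after the hitting time, the occupation time would also collect contributions from other particles alive in the BRW.

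\textbf{Denominator.} By Lemma \ref{thm:bingham} with $b=2$, $\E_\z[\int_0^T\bone\,\d s]=\int_0^T\big(\tfrac{1+e^{-2s/d}}{2}\big)^d\d s$. The assumption $\log\rho\ge 3/d$ gives $T\le d$. On $[0,d]$, Lemma \ref{lemma:taylor}(iii) gives $\big(\tfrac{1+e^{-2s/d}}{2}\big)^d\le e^{-\delta s}$, so the integral is $O(1)$.

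\textbf{Numerator.} This is the main step. The key observation is that, by the product form in \eqref{eq:qij} and symmetry, $\binom dk q_k(s)=\p_\z(d_{\mathrm H}(X_s,\z)=k)=\p(\mathrm{Bin}(d,p_s)=k)$ with $p_s=(1-e^{-2s/d})/2$. Thus $\E_{\bx_k}[\tilde L]=\binom dk^{-1}\int_0^T\p(\mathrm{Bin}(d,p_s)=k)\,\d s$, and I need $\int_0^T\p(\mathrm{Bin}(d,p_s)=k)\,\d s\gg 1$. Let $s_k$ solve $dp_{s_k}=k$, i.e.\ $s_k=-\tfrac d2\log(1-2k/d)$. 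Since $k\le 1/\log\rho\le d/3$, we get $s_k\le \tfrac32\log 3\cdot k<3k\le T$. Also $s_k\ge k$, and the slope $\tfrac{\d}{\d s}(dp_s)=e^{-2s/d}$ lies in $[1/3,1]$ for $s\le s_k$. So on an interval $s\in[s_k-c\sqrt k,s_k]\subseteq[0,T]$, the mean $dp_s$ lies within $O(\sqrt k)$ of $k$ while the variance $dp_s(1-p_s)\asymp k$. By the local CLT for the binomial (or Stirling directly; for small $k$ a direct check suffices), $\p(\mathrm{Bin}(d,p_s)=k)\gg k^{-1/2}$ on this interval. Integrating over an interval of length $\asymp\sqrt k$ gives $\gg1$. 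Small $k$ (say $k\le k_0$) is handled by the same computation on an interval of length $\asymp1$.

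I expect the main obstacle to be this uniform local-limit lower bound across the whole range $1\le k\le d/3$ (with $p_s$ up to $1/3$). I would handle it with the explicit ratio of consecutive binomial probabilities rather than a general local CLT, to keep all constants uniform in $d$ and $k$. Combining the numerator and denominator bounds gives $\p(\tau_{d,k}\le 3/\log\rho)\ge\p_{\bx_k}(\sigma\le T)\gg\binom dk^{-1}$, which is \eqref{eq:tau??}.
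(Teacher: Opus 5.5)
Your proof is correct, but it takes a different route from the paper's. Both arguments begin by discarding branching and following a single rate-one walk. The paper then projects onto the Ehrenfest chain (distance from the start) and uses hypercube symmetry: the first vertex hit on the sphere of radius $k$ is uniform on that sphere, even conditionally on the projected chain, so it is the target with probability exactly $\binom dk^{-1}$. It therefore only needs two facts. First, the projected chain reaches level $k$ within $3k$ jumps with probability $\gg 1$, by coupling with a walk whose up-probability exceeds $2/3$ (all states visited lie below $d/3$). Second, $\Gamma(3k,1)<3/\log\rho$ with probability $\gg 1$. You instead use the Green's-function ratio $\p_{\bx_k}(\sigma\le T)\ge \E_{\bx_k}[\tilde L]\big/\E_\z\big[\int_0^T\bone_{\{X_s=\z\}}\d s\big]$ together with the explicit kernel of Lemma~\ref{thm:bingham}. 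In your version the factor $\binom dk^{-1}$ comes from the identity $\binom dk q_k(s)=\p(\mathrm{Bin}(d,p_s)=k)$, which is the same symmetry in analytic form. The remaining work is a uniform local-limit lower bound for the binomial on your window near $s_k$. That bound does hold: Stirling combined with $D(q\|p)\le (q-p)^2/(p(1-p))$ gives $d\,D(k/d\,\|\,p_s)=O(1)$ there. The paper's argument is shorter and avoids any local CLT. Yours mirrors the occupation-time first-moment method the paper uses for its lower bounds, and it adapts to any setting with an explicit heat kernel, without needing the stabilizer of the start to act transitively on spheres. The case $k=0$, which is included in $[1/\log\rho]$, is trivial in both approaches.
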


\begin{proof}
The case $k=0$ is trivial, so we assume $k\geq 1$. Observe that the projected process of a simple random walk on the binary hypercube, onto the number of ones, is given by the Ehrenfest chain, which is a Markov chain on $[d]$ with transition probabilities $p_{i,i+1}=(d-i)/d$ and $p_{i,i-1}=i/d$.  
Let us identify the initial starting point of the BRW as state $0$ in the Ehrenfest chain. The target, which is of Hamming distance $k$ from the starting point, is located in $\X_{d,k}$. Consider the first time that the Ehrenfest chain reaches the state $k$ (we may assume that there is no branching since this only makes the left-hand side of \eqref{eq:tau??} smaller). Since $\#\X_{d,k}=\binom{d}{k}$, by the symmetry of the hypercube, the first visited vertex is the target with probability $\binom{d}{k}^{-1}$. Therefore, it remains to show that the Ehrenfest chain reaches the state $k$ in time $3k$ with probability $\gg 1$. Note that for $\ell\leq 1/\log\rho(d)\leq d/3$, the transition probability from state $\ell$ to $\ell+1$ is $p_{\ell,\ell+1}=(d-\ell)/d>2/3$. Using a standard coupling argument, the chain reaches the target in the first $3k$ transition events with probability $\gg 1$. Since the total transition time is given by $T_{3k}\lawis \Gamma(3k,1)$, we have uniformly in $k\in[1/\log\rho(d)]$ that $\p(T_{3k}<3/\log\rho(d))\gg 1$. Combining the above and using the independence between transition times and directions completes the proof.    
\end{proof}

\begin{lemma}\label{lemma:enumeration}
Fix $\bx_0\in \X_{d,m}$, $\ell_1,\ell_2,\ell_3\in[d]$, and $k,k'\in[d]$. The number of tuples $(\bx_1,\bx_2,\bx_3)\in \X\times\X_{d,k}\times\X_{d,k'}$ such that $d_{\mathrm{H}}(\bx_0,\bx_1)=\ell_1$, $d_{\mathrm{H}}(\bx_1,\bx_2)=\ell_2$, and $d_{\mathrm{H}}(\bx_1,\bx_3)=\ell_3$, is given by
\begin{align}
    q_{\ell_1,\ell_2,\ell_3,m}:=\sum_{\ell=0}^{d}\binom{m}{\frac{m+\ell_1-\ell}{2}}\binom{d-m}{\frac{\ell+\ell_1-m}{2}}\binom{\ell}{\frac{\ell+\ell_2-k}{2}}\binom{d-\ell}{\frac{\ell_2+k-\ell}{2}} \binom{\ell}{\frac{\ell+\ell_3-k'}{2}}\binom{d-\ell}{\frac{\ell_3+k'-\ell}{2}} .\label{eq:q?}
\end{align}
Here, the effective range of the sum in \eqref{eq:q?} is a subset of $[\min\{\ell_2+k,\ell_3+k'\}]$. 
\end{lemma}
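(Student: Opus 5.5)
The count is exact and elementary. We work in the binary setting $b=2$ of this section, so each vertex is a $0/1$ vector and its Hamming distance to $\z$ is its number of ones. The plan is to condition on the weight $\ell:=d_{\mathrm H}(\bx_1,\z)$ of the middle vertex $\bx_1$ and count in three stages: first the choices of $\bx_1$, then $\bx_2$ and $\bx_3$ given $\bx_1$. The key structural fact is that, once $\bx_1$ is fixed, the conditions on $\bx_2$ and on $\bx_3$ involve only $\bx_1$ and not each other. Hence for each fixed $\bx_1$ the number of admissible pairs $(\bx_2,\bx_3)$ is the product of the two separate counts.

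\emph{Step 1 (choosing $\bx_1$).} Without loss of generality $\bx_0$ has ones exactly in its first $m$ coordinates. Any $\bx_1$ with $d_{\mathrm H}(\bx_0,\bx_1)=\ell_1$ is obtained by turning off $a$ of the $m$ ones of $\bx_0$ and turning on $c$ of its $d-m$ zeros. The constraints are $a+c=\ell_1$ and $m-a+c=\ell$, which force
\[
a=\tfrac{m+\ell_1-\ell}{2},\qquad c=\tfrac{\ell+\ell_1-m}{2}.
\]
So the number of such $\bx_1$ of weight $\ell$ is $\binom{m}{(m+\ell_1-\ell)/2}\binom{d-m}{(\ell+\ell_1-m)/2}$. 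We use the convention that a binomial coefficient vanishes when its lower index is not an integer in the admissible range; this automatically encodes the parity and triangle constraints.

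\emph{Step 2 (choosing $\bx_2$ and $\bx_3$).} Fix such an $\bx_1$ of weight $\ell$. A vertex $\bx_2\in\X_{d,k}$ with $d_{\mathrm H}(\bx_1,\bx_2)=\ell_2$ is obtained by turning off $a'$ of the $\ell$ ones of $\bx_1$ and turning on $c'$ of its $d-\ell$ zeros. The constraints are $a'+c'=\ell_2$ and $\ell-a'+c'=k$, so
\[
a'=\tfrac{\ell+\ell_2-k}{2},\qquad c'=\tfrac{\ell_2+k-\ell}{2},
\]
and there are $\binom{\ell}{(\ell+\ell_2-k)/2}\binom{d-\ell}{(\ell_2+k-\ell)/2}$ choices. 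The identical argument with $(\ell_3,k')$ in place of $(\ell_2,k)$ counts the choices of $\bx_3$. This count depends on $\bx_1$ only through $\ell$.

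\emph{Step 3 (assembling).} Multiply the three counts and sum over $\ell\in[d]$; this gives \eqref{eq:q?}. For the effective range, the lower index $c'=(\ell_2+k-\ell)/2$ must be nonnegative, so $\ell\le\ell_2+k$. Likewise the $\bx_3$ factor forces $\ell\le \ell_3+k'$, which gives the stated range $[\min\{\ell_2+k,\ell_3+k'\}]$.

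There is no real obstacle here. The only point needing care is the bookkeeping of the binomial-coefficient convention: when $\ell$ has the wrong parity or violates a triangle inequality, the corresponding term must vanish rather than be miscounted, and the convention stated above handles this.
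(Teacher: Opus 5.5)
Your proposal is correct and follows essentially the same route as the paper: condition on the weight $\ell$ of $\bx_1$, count $\bx_1$ by choosing which ones of $\bx_0$ to turn off and which zeros to turn on, count $\bx_2$ and $\bx_3$ independently given $\bx_1$ in the same way, then multiply and sum over $\ell$. Your justification of the effective range, via nonnegativity of $(\ell_2+k-\ell)/2$ and $(\ell_3+k'-\ell)/2$, is a small step that the paper leaves implicit.
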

\begin{proof}
    Let $\ell\in[d]$ be such that $\bx_1\in\X_{d,\ell}$. For $j\in\{1,\dots,d\}$ and $\bx\in\X$ we let $(\bx)_j$ denote the $j$-th entry of $\bx$, which is either $0$ or $1$. It follows that the number of indices $j\in\{1,\dots,d\}$ such that $(\bx_0)_j=1$ and $(\bx_1)_j=0$ is $(m+\ell_1-\ell)/2$ and the number of indices $j\in\{1,\dots,d\}$ such that $(\bx_0)_j=0$ and $(\bx_1)_j=1$ is $(\ell+\ell_1-m)/2$.
    Therefore, there are $\binom{m}{({m+\ell_1-\ell})/{2}}\binom{d-m}{({\ell+\ell_1-m})/{2}}$ many choices of $\bx_1$. 
  Given any such $\bx_1$, we may apply the same analysis which gives that there are $\binom{\ell}{({\ell+\ell_2-k})/{2}}\binom{d-\ell}{({\ell_2+k-\ell})/{2}}$ choices of $\bx_2$ and $\binom{\ell}{({\ell+\ell_3-k'})/{2}}\binom{d-\ell}{({\ell_3+k'-\ell})/{2}}$ choices of $\bx_3$. Multiplying the three quantities and summing over $\ell\in[d]$ yields \eqref{eq:q?}.
\end{proof}

\begin{proof}[Proof of Theorem \ref{thm:main rho->1}, upper bound]
\sloppy We write $\rho=\rho(d)$ and recall \eqref{eq:t' def}. Define a collection of independent $\{0,1\}$-valued random variables $\{\delta_{v,k}\}_{v\in V_{t'},1\leq k\leq 1/\log\rho}$ independent of everything else, such that 
\begin{align}
    \p(\delta_{v,k}=1)=\frac{1}{L_7}\binom{d}{k}^{-1},\label{eq:delta def}
\end{align}
where $L_7$ is the implicit constant in \eqref{eq:tau??}. We slightly abuse notation and let $\eta_v$ denote the location of particle $v$ in the corresponding Ehrenfest chain on $\{0,\dots,d\}$ (instead of the location on the hypercube). By Lemma \ref{lemma:probs} and conditioning on time $t'$, it is clear that 
$$\p\Big(\tau_{d,m}>t'+\frac{3}{\log\rho}\Big)\leq \p\bigg(\sum_{v\in V_{t'}}\sum_{1\leq k\leq 1/\log\rho}\bone_{\{\eta_v=k\}}\,\delta_{v,k}=0\bigg).$$
We then apply the second moment method to the weighted sum
\begin{align}
    \zeta_{t'}:=\sum_{v\in V_{t'}}\sum_{1\leq k\leq 1/\log\rho}\bone_{\{\eta_v=k\}}\delta_{v,k}.\label{eq:zetat}
\end{align}
Using Lemma \ref{thm:bingham}, we may compute the first moment. By independence and \eqref{eq:delta def}, we have
\begin{align}
   \begin{split}
        \E[\zeta_{t'}]&=\frac{1}{L_7}\sum_{1\leq k\leq 1/\log\rho}\binom{d}{k}^{-1}\E\bigg[\sum_{v\in V_{t'}}\bone_{\{\eta_v=k\}}\bigg]\\
    &=\frac{1}{L_7}\rho^{t'}\sum_{1\leq k\leq 1/\log\rho}\sum_{\ell=0}^k\binom{m}{\ell}\binom{d-m}{k-\ell}2^{-d}(1+e^{-2t'/d})^{d-(m+k-2\ell)}(1-e^{-2t'/d})^{m+k-2\ell}\binom{d}{k}^{-1}.
   \end{split}\label{eq:i}
\end{align}
Since $t'/d\gg 1/\log\rho$ (from Proposition \ref{lemma:asymp 2}), we have 
$$\frac{1+e^{-2t'/d}}{1-e^{-2t'/d}}\leq e^{L_8e^{-2t'/d}}\leq e^{L_8e^{-1/(L_8\log\rho)}}$$
for some $L_8>0$. Using $0\leq\ell\leq k\leq 1/\log\rho$ and $1/\log\rho\to\infty$ as $d\to\infty$, we have
$$(1+e^{-2t'/d})^{-k+2\ell}(1-e^{-2t'/d})^{k-2\ell}\geq e^{-L_8e^{-1/(L_8\log\rho)}(2/\log\rho)}\gg 1.$$
Inserting into \eqref{eq:i} we obtain
\begin{align*}
     \E[\zeta_{t'}]&\gg \rho^{t'}\sum_{1\leq k\leq 1/\log\rho}\sum_{\ell=0}^k\binom{m}{\ell}\binom{d-m}{k-\ell}2^{-d}(1+e^{-2t'/d})^{d-m}(1-e^{-2t'/d})^{m}\binom{d}{k}^{-1}\\
     &=\rho^{t'}\sum_{1\leq k\leq 1/\log\rho}2^{-d}(1+e^{-2t'/d})^{d-m}(1-e^{-2t'/d})^{m}.
\end{align*}
 Since $t_{d,m}$ solves \eqref{eq:1}, we have by Lemma \ref{lemma:taylor}(ii),
\begin{align}
    \begin{split}
        \rho^{t'}={\rho^{t_{d,m}}}({\log\rho})&=({\log\rho})2^d(1+e^{-2t_{d,m}/d})^{m-d}(1-e^{-2t_{d,m}/d})^{-m}\\
    &\asymp ({\log\rho})2^d(1+e^{-2t'/d})^{m-d}(1-e^{-2t'/d})^{-m}.
    \end{split}
\label{eq:rhot'}
\end{align}
Altogether, we conclude that
\begin{align}
    \E[\zeta_{t'}]&\gg \sum_{1\leq k\leq 1/\log\rho}\log\rho\asymp 1.\label{eq:zeta 1}
\end{align}

Next, we consider the second moment $\E[\zeta_{t'}^2]$. For $v,w\in V_t$ and $s\in[0,t]$, we write $v\sim_sw$ if $v$ and $w$ share a common ancestor at time $t-s$ but not later. By the definition \eqref{eq:zetat}, the many-to-two formula (see Appendix II of \citep{sawyer1976branching}), \eqref{eq:delta def}, and \eqref{eq:zeta 1}, we have
\begin{align}
   \E[\zeta_{t'}^2]&= \E\bigg[\bigg(\sum_{v\in V_{t'}}\sum_{1\leq k\leq 1/\log\rho}\bone_{\{\eta_v=k\}}\delta_{v,k}\bigg)^2\bigg]\nonumber\\
   &=\E[\zeta_{t'}]+2(\log\rho)\int_0^{t'}\rho^{t'+s}\sum_{1\leq k\leq 1/\log\rho}\sum_{1\leq k'\leq 1/\log\rho}\E[\delta_{v,k}\delta_{w,k'}]\p(v\sim_sw,\eta_v=k,\eta_w=k')\,\d s\nonumber\\
   &\ll1+(\log\rho)\sum_{1\leq k\leq 1/\log\rho}\sum_{1\leq k'\leq 1/\log\rho}\binom{d}{k}^{-1}\binom{d}{k'}^{-1}\int_0^{t'}\rho^{t'+s}\p(v\sim_sw,\eta_v=k,\eta_w=k')\,\d s.\label{eq:sum 2}
\end{align}
We apply Lemma \ref{lemma:enumeration} to write
\begin{align*}
    \p(v\sim_sw,\eta_v=k,\eta_w=k')=\sum_{\ell_1,\ell_2,\ell_3}p_{\ell_1,\ell_2,\ell_3}(s)\,q_{\ell_1,\ell_2,\ell_3,m},
\end{align*}
where by Lemma \ref{thm:bingham},  
\begin{align*}
    p_{\ell_1,\ell_2,\ell_3}(s)&:=q_{\ell_1}(t'-s)\,q_{\ell_2}(s)\,q_{\ell_3}(s)\\
    &=2^{-3d}(1+e^{-2(t'-s)/d})^{d-\ell_1}(1-e^{-2(t'-s)/d})^{\ell_1}(1+e^{-2s/d})^{d-\ell_2}(1-e^{-2s/d})^{\ell_2}\\
    &\hspace{7cm}\times(1+e^{-2s/d})^{d-\ell_3}(1-e^{-2s/d})^{\ell_3}.
\end{align*}
Let $\delta_{10}>0$ be a small constant to be determined, and we decompose the integral in \eqref{eq:sum 2} into three parts: over $s\in[0,\delta_{10} d]$, $s\in[\delta_{10} d,(d\log d)/2]$, and $s\in[(d\log d)/2,t']$.

\textbf{Step I: bounding the integral over $s\in[0,\delta_{10} d]$}. We proceed by fixing $(k,k')$ and performing integration in $s$ first.  Note first that by our regular variation assumption and Potter's bound, for any $\ee>0$, $\rho\leq e^{\ee/\log d}$ for $d$ large enough, which implies that for $s\leq (d\log d)/2$,
\begin{align}
    (1+e^{-2(t'-s)/d})^{d-\ell_1}(1-e^{-2(t'-s)/d})^{\ell_1}\leq (1+e^{-\log d})^d\ll 1.\label{eq:<<1}
\end{align}
Note that $q_{\ell_1,\ell_2,\ell_3,m}$ does not depend on $s$, so we first bound using \eqref{eq:<<1} and Lemma \ref{lemma:taylor}(i) that
\begin{align*}
    \int_0^{\delta_{10} d} \rho^sp_{\ell_1,\ell_2,\ell_3}(s)\,\d s&\ll 2^{-3d}\int_0^{\delta_{10} d}\rho^s(1+e^{-2s/d})^{d-\ell_2}(1-e^{-2s/d})^{\ell_2}(1+e^{-2s/d})^{d-\ell_3}(1-e^{-2s/d})^{\ell_3}\d s\\
    &\leq 2^{-d}\int_0^{\delta_{10} d}\rho^se^{-\frac{9}{10}s(\frac{d-\ell_2}{d}+\frac{d-\ell_3}{d})}\Big(\frac{s}{d}\Big)^{\ell_2+\ell_3}\d s.
\end{align*}
 If $\ell_2+\ell_3>d/10$, then if we pick $\delta_{10}$ small enough, we have $(s/d)^{\ell_2+\ell_3}\leq \delta_{10}^{d/10}\ll 100^{-d}$. In this case, similarly to \eqref{eq:100^{-d}},
\begin{align*}
    \int_0^{\delta_{10} d}\rho^{t'+s}\p(v\sim_sw,\eta_v=k,\eta_w=k')\,\d s&=\rho^{t'}\sum_{\ell_1,\ell_2,\ell_3}q_{\ell_1,\ell_2,\ell_3,m}\int_0^{\delta_{10} d}\rho^sp_{\ell_1,\ell_2,\ell_3}(s)\,\d s\\
    &\ll \rho^{t'}100^{-d}\sum_{\ell_1,\ell_2,\ell_3}q_{\ell_1,\ell_2,\ell_3,m}\leq \rho^{t'}100^{-d}\,2^d\binom{d}{k}\binom{d}{k'}\ll 2^{-d},
\end{align*}
where we have used \eqref{eq:rhot'}. 
The contribution to \eqref{eq:sum 2} is thus $O(1)$. 

Otherwise, we may assume that $\ell_2+\ell_3\leq d/10$, in which case 
\begin{align*}
    2^{-d}\int_0^{\delta_{10} d}\rho^se^{-\frac{9}{10}s(\frac{d-\ell_2}{d}+\frac{d-\ell_3}{d})}\Big(\frac{s}{d}\Big)^{\ell_2+\ell_3}\d s&\ll 2^{-d} d^{-(\ell_2+\ell_3)}  \int_0^{\delta_{10} d}e^{-s}s^{\ell_2+\ell_3}\d s\leq 2^{-d}\binom{d}{\ell_2+\ell_3}^{-1}  .
\end{align*}
We then have for all $k,k'\in[1/\log\rho]$,
\begin{align}
    &\int_0^{\delta_{10} d}\rho^{t'+s}\p(v\sim_sw,\eta_v=k,\eta_w=k')\,\d s\nonumber\\
    &=\rho^{t'}\sum_{\ell_1,\ell_2,\ell_3}q_{\ell_1,\ell_2,\ell_3,m}\int_0^{\delta_{10} d}\rho^sp_{\ell_1,\ell_2,\ell_3}(s)\,\d s\nonumber\\
    &\ll \rho^{t'}2^{-d} \sum_{\ell_1,\ell_2,\ell_3}\binom{d}{\ell_2+\ell_3}^{-1} q_{\ell_1,\ell_2,\ell_3,m}\nonumber\\
    &\ll (\log\rho)\sum_{\ell_1,\ell_2,\ell_3}\binom{d}{\ell_2+\ell_3}^{-1}  \sum_{\ell=0}^{d}\binom{m}{\frac{m+\ell_1-\ell}{2}}\binom{d-m}{\frac{\ell+\ell_1-m}{2}}\binom{\ell}{\frac{\ell+\ell_2-k}{2}}\binom{d-\ell}{\frac{\ell_2+k-\ell}{2}} \binom{\ell}{\frac{\ell+\ell_3-k'}{2}}\binom{d-\ell}{\frac{\ell_3+k'-\ell}{2}}\nonumber\\
    &=(\log\rho)\sum_{\ell_2,\ell_3}\binom{d}{\ell_2+\ell_3}^{-1}  \sum_{\ell=0}^{d}\binom{d}{\ell}\binom{\ell}{\frac{\ell+\ell_2-k}{2}}\binom{d-\ell}{\frac{\ell_2+k-\ell}{2}} \binom{\ell}{\frac{\ell+\ell_3-k'}{2}}\binom{d-\ell}{\frac{\ell_3+k'-\ell}{2}}.\label{eq:+}
\end{align}
Here, we have used the binomial identity
  \begin{align}
      \sum_{j\in\bZ}\binom{A}{r+j}\binom{B}{s+j}=\sum_{j\in\bZ}\binom{A}{r+j}\binom{B}{B-s-j}=\binom{A+B}{B+r-s}\label{eq:binomial}
  \end{align}
  for $A,B\in \N$ and $r,s\in\bZ$.

Next, we show that the contribution from $\ell<\ell_2+\ell_3-2\log d$ is negligible in \eqref{eq:+}. In this case, since $\ell_2+\ell_3\leq d/10$,
\begin{align*}
    &\hspace{0.5cm}\sum_{\ell_2,\ell_3}\binom{d}{\ell_2+\ell_3}^{-1}  \sum_{\ell=0}^{\ell_2+\ell_3-2\log d}\binom{d}{\ell}\binom{\ell}{\frac{\ell+\ell_2-k}{2}}\binom{d-\ell}{\frac{\ell_2+k-\ell}{2}} \binom{\ell}{\frac{\ell+\ell_3-k'}{2}}\binom{d-\ell}{\frac{\ell_3+k'-\ell}{2}}\\
    &\leq\sum_{\ell=0}^{d/10}\sum_{\substack{\ell_2,\ell_3\\ \ell+2\log d\leq\ell_2+\ell_3\leq d/10}}\binom{d}{\ell_2+\ell_3}^{-1}  \binom{d}{\ell}\binom{\ell}{\frac{\ell+\ell_2-k}{2}}\binom{d-\ell}{\frac{\ell_2+k-\ell}{2}} \binom{\ell}{\frac{\ell+\ell_3-k'}{2}}\binom{d-\ell}{\frac{\ell_3+k'-\ell}{2}}\\
    &\leq\binom{d}{k}\binom{d}{k'}\sum_{\ell=0}^{d/10}\binom{d}{\lfloor \ell+2\log d\rfloor}^{-1}\binom{d}{\ell}\\
    &\ll  \binom{d}{k}\binom{d}{k'}d^{-2}.
\end{align*}Inserting into \eqref{eq:sum 2} yields a contribution of $d^{-1}/\log\rho=O(1)$.

It remains to consider $\ell\geq \ell_2+\ell_3-2\log d$. Since $\ell\leq \min\{\ell_2+k,\ell_3+k'\}$ (see Lemma \ref{lemma:enumeration}), to make the sum over $\ell$ non-empty, we need $\ell_3\leq k+2\log d$ and $\ell_2\leq k'+2\log d$. In particular, since $\log\rho\geq L_2/d$, we have $\ell_2,\ell_3\in[0,d/(L_2-1)]$ for $d$ large enough. We thus obtain\footnote{In this display, the sum is always restricted to quantities where the arguments in the factorial are non-negative integers.}
\begin{align*}
    &\hspace{0.5cm}\sum_{\ell_2,\ell_3\in[0,d/(L_2-1)]}\binom{d}{\ell_2+\ell_3}^{-1}\sum_{\ell=0}^{d}\binom{d}{\ell}\binom{\ell}{\frac{\ell+\ell_2-k}{2}}\binom{d-\ell}{\frac{\ell_2+k-\ell}{2}} \binom{\ell}{\frac{\ell+\ell_3-k'}{2}}\binom{d-\ell}{\frac{\ell_3+k'-\ell}{2}}\\
    &\leq  \sum_{\ell_2,\ell_3\in[0,d/(L_2-1)]}\binom{d}{\ell_2+\ell_3}^{-1}\sum_{\ell=0}^{d}4^\ell\frac{d^\ell}{\ell!}\frac{d^{\frac{\ell_2+k-\ell}{2}}}{(\frac{\ell_2+k-\ell}{2})!}\frac{d^{\frac{\ell_3+k'-\ell}{2}}}{(\frac{\ell_3+k'-\ell}{2})!}\\
    &=\sum_{\ell_2,\ell_3\in[0,d/(L_2-1)]}d^{\frac{\ell_2+\ell_3+k+k'}{2}}\binom{d}{\ell_2+\ell_3}^{-1}\sum_{\ell=0}^{d}\frac{4^\ell}{\ell!(\frac{\ell_2+k-\ell}{2})!(\frac{\ell_3+k'-\ell}{2})!}\\
    &\leq \sum_{\ell_2,\ell_3\in[0,d/(L_2-1)]}\frac{(6d)^{\frac{\ell_2+\ell_3+k+k'}{2}}}{(\frac{\ell_2+\ell_3+k+k'}{2})!}\binom{d}{\ell_2+\ell_3}^{-1}\\
    &\leq \sum_{j=0}^{2d/(L_2-1)}\frac{(j+1)(6d)^{\frac{j+k+k'}{2}}}{\Gamma(\frac{j+k+k'}{2}+1)}\binom{d}{j}^{-1}.
\end{align*}
Denote the summand by
$$a_j=\frac{(j+1)(6d)^{\frac{j+k+k'}{2}}}{\Gamma(\frac{j+k+k'}{2}+1)}\binom{d}{j}^{-1}.$$
Then for $d$ large enough and $j\geq 1$,
$$\frac{a_{j+2}}{a_j}\leq \frac{7d(j+3)^2}{\frac{j+k+k'}{2}(d-j-2)^2}\leq \frac{500dj}{(d-j)^2}\leq \frac{1500}{L_2(1-3/L_2)^2}.$$
Therefore, for $L_2$ chosen large enough, the terms with $j=0,1$ dominate, and hence
\begin{align}
    \sum_{j=0}^{2d/(L_2-1)}\frac{(j+1)(6d)^{\frac{j+k+k'}{2}}}{\Gamma(\frac{j+k+k'}{2}+1)}\binom{d}{j}^{-1}\ll \frac{(6d)^{\frac{k+k'}{2}}}{\Gamma(\frac{k+k'}{2}+1)}+\frac{(6d)^{\frac{k+k'+1}{2}}}{d\Gamma(\frac{1+k+k'}{2}+1)}\ll \frac{(6d)^{\frac{k+k'}{2}}}{\Gamma(\frac{k+k'}{2}+1)}.\label{eq:!decay}
\end{align}
We conclude that
\begin{align*}
    \sum_{\ell_2,\ell_3}\binom{d}{\ell_2+\ell_3}^{-1}\sum_{\ell=0}^{d}\binom{d}{\ell}\binom{\ell}{\frac{\ell+\ell_2-k}{2}}\binom{d-\ell}{\frac{\ell_2+k-\ell}{2}} \binom{\ell}{\frac{\ell+\ell_3-k'}{2}}\binom{d-\ell}{\frac{\ell_3+k'-\ell}{2}}\ll \frac{(6d)^{\frac{k+k'}{2}}}{\Gamma(\frac{k+k'}{2}+1)}.
\end{align*}
It is then easy to check using Stirling's formula that the total contribution to \eqref{eq:sum 2} is at most
$$\sum_{1\leq k\leq 1/\log\rho}\sum_{1\leq k'\leq 1/\log\rho}\binom{d}{k}^{-1}\binom{d}{k'}^{-1}\frac{(6d)^{\frac{k+k'}{2}}}{\Gamma(\frac{k+k'}{2}+1)}\leq \bigg(\sum_{1\leq k\leq d/L_2}\frac{(6k)^{k/2}}{d^{k/2}}\bigg)^2\ll 1,$$
where we have used $\log\rho\geq L_2/d$ with $L_2$ chosen large enough.

\textbf{Step II: bounding the integral over $s\in[\delta_{10} d,(d\log d)/2]$}. Observe from \eqref{eq:binomial} that
\begin{align}
    \sum_{\ell_2}\binom{\ell}{\frac{\ell+\ell_2-k}{2}}\binom{d-\ell}{\frac{\ell_2+k-\ell}{2}}= \binom{d}{k} \quad\text{ and }\quad \sum_{\ell_3}\binom{\ell}{\frac{\ell+\ell_3-k'}{2}}\binom{d-\ell}{\frac{\ell_3+k'-\ell}{2}}=\binom{d}{k'}.\label{eq:binomial 2}
\end{align}
To deal with the contribution from the second term of \eqref{eq:sum 2}, we first consider the part of the sum where $\ell_2+\ell_3\leq (1-\ee_5')d$ for some $\ee_5'>0$ to be determined, and integrate in $s$ first. In this case, 
by using $\rho\leq e^{\ee/\log d}$, for $d$ large enough,
\begin{align}
    \begin{split}
        \int_{\delta_{10} d}^{(d\log d)/2}\rho^s(1+e^{-2s/d})^{2d-\ell_2-\ell_3}(1-e^{-2s/d})^{\ell_2+\ell_3}\d s&\ll e^{\ee_5 d}\sup_{u\geq \delta_{10}}(1+e^{-2u})^{2d-\ell_2-\ell_3}(1-e^{-2u})^{\ell_2+\ell_3}\\
    &\leq e^{\ee_5 d}(1+e^{-2\delta_{11}})^{2d}\Big(\frac{1-e^{-2\delta_{11}}}{1+e^{-2\delta_{11}}}\Big)^{\ell_2+\ell_3},
    \end{split}\label{eq:l2l3}
\end{align}
where $\ee_5>0$ is a small constant to be determined and $\delta_{11}>0$ is some constant independent of $\ell_2,\ell_3$, whose existence is justified by $\ell_2+\ell_3\leq (1-\ee_5')d$. It follows from \eqref{eq:rhot'} and \eqref{eq:<<1} that the contribution to \eqref{eq:sum 2} is of order at most
\begin{align*}
    & (\log\rho)^2\sum_{1\leq k\leq 1/\log\rho}\sum_{1\leq k'\leq 1/\log\rho}\binom{d}{k}^{-1}\binom{d}{k'}^{-1}\sum_{\substack{\ell_2,\ell_3\\ \ell_2+\ell_3\leq (1-\ee_5')d}}\sum_{\ell=0}^{d}\binom{\ell}{\frac{\ell+\ell_2-k}{2}}\binom{d-\ell}{\frac{\ell_2+k-\ell}{2}} \binom{\ell}{\frac{\ell+\ell_3-k'}{2}}\binom{d-\ell}{\frac{\ell_3+k'-\ell}{2}}\\
    &\hspace{2cm} \times\int_{\delta_{10} d}^{(d\log d)/2}\rho^{s}2^{-2d}(1+e^{-2s/d})^{2d-\ell_2-\ell_3}(1-e^{-2s/d})^{\ell_2+\ell_3}\,\d s \sum_{\ell_1}\binom{m}{\frac{m+\ell_1-\ell}{2}}\binom{d-m}{\frac{\ell+\ell_1-m}{2}}\\
    &\ll (\log\rho)^2\sum_{1\leq k\leq 1/\log\rho}\sum_{1\leq k'\leq 1/\log\rho}\binom{d}{k}^{-1}\binom{d}{k'}^{-1}\sum_{\ell_2,\ell_3}\sum_{\ell=0}^{d}\binom{\ell}{\frac{\ell+\ell_2-k}{2}}\binom{d-\ell}{\frac{\ell_2+k-\ell}{2}} \binom{\ell}{\frac{\ell+\ell_3-k'}{2}}\binom{d-\ell}{\frac{\ell_3+k'-\ell}{2}}\\
    &\hspace{2cm} \times\binom{d}{\ell}e^{\ee_5 d}\Big(\frac{1+e^{-2\delta_{11}}}{2}\Big)^{2d}\Big(\frac{1-e^{-2\delta_{11}}}{1+e^{-2\delta_{11}}}\Big)^{\ell_2+\ell_3}\\
    &=(\log\rho)^2e^{\ee_5 d}\Big(\frac{1+e^{-2\delta_{11}}}{2}\Big)^{2d}\sum_{1\leq k\leq 1/\log\rho}\sum_{1\leq k'\leq 1/\log\rho}\binom{d}{k}^{-1}\binom{d}{k'}^{-1}\sum_{\ell=0}^{d}\binom{d}{\ell}\\
    &\hspace{2cm} \times\sum_{\ell_2}\binom{\ell}{\frac{\ell+\ell_2-k}{2}}\binom{d-\ell}{\frac{\ell_2+k-\ell}{2}} \Big(\frac{1-e^{-2\delta_{11}}}{1+e^{-2\delta_{11}}}\Big)^{\ell_2}\sum_{\ell_3}\binom{\ell}{\frac{\ell+\ell_3-k'}{2}}\binom{d-\ell}{\frac{\ell_3+k'-\ell}{2}}\Big(\frac{1-e^{-2\delta_{11}}}{1+e^{-2\delta_{11}}}\Big)^{\ell_3},
\end{align*}
where in the second step, we used that $\rho\leq e^{\ee/\log d}$ for $d$ large enough.
Next, we study the two sums over $\ell_2,\ell_3$ with $k,\ell$ fixed. There are two cases:
\begin{itemize}
    \item if $k\geq \ell$, we have $\ell\leq 1/\log\rho\leq d/L_2$, thus
$$\sum_{\ell_2}\binom{\ell}{\frac{\ell+\ell_2-k}{2}}\binom{d-\ell}{\frac{\ell_2+k-\ell}{2}} \Big(\frac{1-e^{-2\delta_{11}}}{1+e^{-2\delta_{11}}}\Big)^{\ell_2}\leq 2^{d/L_2}\sum_{\ell_2}\binom{d-\ell}{\frac{\ell_2+k-\ell}{2}} \Big(\frac{1-e^{-2\delta_{11}}}{1+e^{-2\delta_{11}}}\Big)^{\ell_2}.$$
Using a similar argument leading to \eqref{eq:!decay}, we see that the summand is maximized at $\ell_2=k+\ell$ or $\ell_2=k+\ell-1$. Therefore,
$$\sum_{\ell_2}\binom{\ell}{\frac{\ell+\ell_2-k}{2}}\binom{d-\ell}{\frac{\ell_2+k-\ell}{2}} \Big(\frac{1-e^{-2\delta_{11}}}{1+e^{-2\delta_{11}}}\Big)^{\ell_2}\leq 2^{d/L_2}d\binom{d}{k}\Big(\frac{1-e^{-2\delta_{11}}}{1+e^{-2\delta_{11}}}\Big)^{k+\ell}.$$
\item  if $k\leq \ell$, we have $\ell_2\geq \ell-k$. Since $\log\rho\geq L_2/d$ with $L_2$ chosen large enough, we obtain
\begin{align*}
    \sum_{\ell_2}\binom{\ell}{\frac{\ell+\ell_2-k}{2}}\binom{d-\ell}{\frac{\ell_2+k-\ell}{2}} \Big(\frac{1-e^{-2\delta_{11}}}{1+e^{-2\delta_{11}}}\Big)^{\ell_2}&\leq \binom{d}{k}\Big(\frac{1-e^{-2\delta_{11}}}{1+e^{-2\delta_{11}}}\Big)^{\ell-k}\ll e^{\ee_5 d}\binom{d}{k}\Big(\frac{1-e^{-2\delta_{11}}}{1+e^{-2\delta_{11}}}\Big)^{\ell}.
\end{align*}
\end{itemize}
In both cases, we have that 
$$\sum_{\ell_2}\binom{\ell}{\frac{\ell+\ell_2-k}{2}}\binom{d-\ell}{\frac{\ell_2+k-\ell}{2}} \Big(\frac{1-e^{-2\delta_{11}}}{1+e^{-2\delta_{11}}}\Big)^{\ell_2}\ll e^{\ee_5 d}\binom{d}{k}\Big(\frac{1-e^{-2\delta_{11}}}{1+e^{-2\delta_{11}}}\Big)^{\ell}.$$
A similar inequality holds for the sum over $\ell_3$.
Therefore,
\begin{align*}
    &\hspace{0.5cm}(\log\rho)^2e^{\ee_5 d}\Big(\frac{1+e^{-2\delta_{11}}}{2}\Big)^{2d}\sum_{1\leq k\leq 1/\log\rho}\sum_{1\leq k'\leq 1/\log\rho}\binom{d}{k}^{-1}\binom{d}{k'}^{-1}\sum_{\ell=0}^{d}\binom{d}{\ell}\\
    &\hspace{2cm} \sum_{\ell_2}\binom{\ell}{\frac{\ell+\ell_2-k}{2}}\binom{d-\ell}{\frac{\ell_2+k-\ell}{2}} \Big(\frac{1-e^{-2\delta_{11}}}{1+e^{-2\delta_{11}}}\Big)^{\ell_2}\sum_{\ell_3}\binom{\ell}{\frac{\ell+\ell_3-k'}{2}}\binom{d-\ell}{\frac{\ell_3+k'-\ell}{2}}\Big(\frac{1-e^{-2\delta_{11}}}{1+e^{-2\delta_{11}}}\Big)^{\ell_3}\\
    &\ll (\log\rho)^2e^{3\ee_5 d}\Big(\frac{1+e^{-2\delta_{11}}}{2}\Big)^{2d}\sum_{1\leq k\leq 1/\log\rho}\sum_{1\leq k'\leq 1/\log\rho}\sum_{\ell=0}^{d}\binom{d}{\ell}\Big(\frac{1-e^{-2\delta_{11}}}{1+e^{-2\delta_{11}}}\Big)^{2\ell}\\
    &\ll e^{3\ee_5 d}\Big(\frac{1+e^{-2\delta_{11}}}{2}\Big)^{2d}\Big(1+\Big(\frac{1-e^{-2\delta_{11}}}{1+e^{-2\delta_{11}}}\Big)^2\Big)^d.
\end{align*}
By Lemma \ref{lemma:e3}, we have, by choosing $\ee_5$ small enough, the above is $O(1)$.

Next, we consider the case $\ell_2+\ell_3>(1-\ee_5')d$. In this case, 
\begin{align*}
    (1+e^{-2s/d})^{2d-\ell_2-\ell_3}(1-e^{-2s/d})^{\ell_2+\ell_3}&\leq (1+e^{-2s/d})^{2d-2\ell_2-2\ell_3}\\
    &\leq  (1+e^{-2s/d})^{2\ee_5'd}\leq 2^{2\ee_5'd},
\end{align*}
so with $\ee_5'$ chosen small enough, we replace \eqref{eq:l2l3} by 
$$\int_{\delta_{10} d}^{(d\log d)/2}\rho^s(1+e^{-2s/d})^{2d-\ell_2-\ell_3}(1-e^{-2s/d})^{\ell_2+\ell_3}\d s\ll e^{\ee_5 d}.$$
Along the same lines, one can show that the contribution to \eqref{eq:sum 2} is of order at most
\begin{align*}
    & \hspace{0.5cm}(\log\rho)^2\sum_{1\leq k\leq 1/\log\rho}\sum_{1\leq k'\leq 1/\log\rho}\binom{d}{k}^{-1}\binom{d}{k'}^{-1}\sum_{\substack{\ell_2,\ell_3\\ \ell_2+\ell_3> (1-\ee_5')d}}\sum_{\ell=0}^{d}\binom{\ell}{\frac{\ell+\ell_2-k}{2}}\binom{d-\ell}{\frac{\ell_2+k-\ell}{2}} \binom{\ell}{\frac{\ell+\ell_3-k'}{2}}\binom{d-\ell}{\frac{\ell_3+k'-\ell}{2}}\\
    &\hspace{2cm} \times\int_{\delta_{10} d}^{(d\log d)/2}\rho^{s}2^{-2d}(1+e^{-2s/d})^{2d-\ell_2-\ell_3}(1-e^{-2s/d})^{\ell_2+\ell_3}\,\d s \sum_{\ell_1}\binom{m}{\frac{m+\ell_1-\ell}{2}}\binom{d-m}{\frac{\ell+\ell_1-m}{2}}\\
    &\ll(\log\rho)^2e^{\ee_5 d}2^{-2d}\sum_{1\leq k\leq 1/\log\rho}\sum_{1\leq k'\leq 1/\log\rho}\binom{d}{k}^{-1}\binom{d}{k'}^{-1}\\
    &\hspace{4cm}\sum_{\ell=0}^{d}\binom{d}{\ell}\sum_{\ell_2}\binom{\ell}{\frac{\ell+\ell_2-k}{2}}\binom{d-\ell}{\frac{\ell_2+k-\ell}{2}} \sum_{\ell_3}\binom{\ell}{\frac{\ell+\ell_3-k'}{2}}\binom{d-\ell}{\frac{\ell_3+k'-\ell}{2}}\\
    &\ll e^{\ee_5 d}2^{-d}\ll 1,
\end{align*}
as desired.

\textbf{Step III: bounding the integral over $s\in[(d\log d)/2,t']$}. 
In contrast to the other two cases, we organize the sums before integrating in $s$. Observe that by \eqref{eq:binomial} and \eqref{eq:binomial 2},
\begin{align*}
    \sum_{\ell_2,\ell_3}q_{\ell_1,\ell_2,\ell_3,m}&=\sum_{\ell=0}^{d}\binom{m}{\frac{m+\ell_1-\ell}{2}}\binom{d-m}{\frac{\ell+\ell_1-m}{2}}\sum_{\ell_2}\binom{\ell}{\frac{\ell+\ell_2-k}{2}}\binom{d-\ell}{\frac{\ell_2+k-\ell}{2}} \sum_{\ell_3}\binom{\ell}{\frac{\ell+\ell_3-k'}{2}}\binom{d-\ell}{\frac{\ell_3+k'-\ell}{2}}\\
    &\leq \binom{d}{k}\binom{d}{k'}\sum_{\ell=0}^{d}\binom{m}{\frac{m+\ell_1-\ell}{2}}\binom{d-m}{\frac{\ell+\ell_1-m}{2}}\leq \binom{d}{k}\binom{d}{k'}\binom{d}{\ell_1}.
\end{align*}
Moreover, since $s\geq (d\log d)/2$, we have
\begin{align*}
    p_{\ell_1,\ell_2,\ell_3}(s)&=2^{-3d}(1+e^{-2(t'-s)/d})^{d-\ell_1}(1-e^{-2(t'-s)/d})^{\ell_1}\\
    &\hspace{1cm}\times(1+e^{-2s/d})^{d-\ell_2}(1-e^{-2s/d})^{\ell_2}(1+e^{-2s/d})^{d-\ell_3}(1-e^{-2s/d})^{\ell_3}\\
    &\leq 2^{-3d}(1+e^{-2(t'-s)/d})^{d-\ell_1}(1-e^{-2(t'-s)/d})^{\ell_1} e^{2de^{-2s/d}}\\
    &\ll 2^{-3d}(1+e^{-2(t'-s)/d})^{d-\ell_1}(1-e^{-2(t'-s)/d})^{\ell_1}.
\end{align*}
It follows that the second term in \eqref{eq:sum 2} is bounded by
\begin{align*}
    &(\log\rho)\sum_{1\leq k\leq 1/\log\rho}\sum_{1\leq k'\leq 1/\log\rho}\binom{d}{k}^{-1}\binom{d}{k'}^{-1}\int_{(d\log d)/2}^{t'}\rho^{t'+s}\p(v\sim_sw,\eta_v=k,\eta_w=k')\,\d s\\
    &\ll (\log\rho)\sum_{1\leq k\leq 1/\log\rho}\sum_{1\leq k'\leq 1/\log\rho}\binom{d}{k}^{-1}\binom{d}{k'}^{-1}\int_{(d\log d)/2}^{t'}\rho^{t'+s}\\
    &\hspace{6cm}\sum_{\ell_1}2^{-3d}(1+e^{-2(t'-s)/d})^{d-\ell_1}(1-e^{-2(t'-s)/d})^{\ell_1}\binom{d}{k}\binom{d}{k'}\binom{d}{\ell_1}\,\d s\\
    &\ll \int_{(d\log d)/2}^{t'}\rho^{s}\sum_{\ell_1}2^{-2d}(1+e^{-2(t'-s)/d})^{d-\ell_1}(1-e^{-2(t'-s)/d})^{\ell_1}\binom{d}{\ell_1}\,\d s\\
    &\leq (\log\rho)\int_0^\infty \rho^{-s}(1+e^{-2s/d})^{d}2^{-d}\sum_{\ell_1}\Big(\frac{1-e^{-2s/d}}{1+e^{-2s/d}}\Big)^{\ell_1}\binom{d}{\ell_1}\,\d s\\
    &=(\log\rho)\int_0^\infty \rho^{-s}\d s\ll 1,
\end{align*}
where we have repeatedly used \eqref{eq:rhot'}, \eqref{eq:<<1}, and applied a change of variable in the third step.

\textbf{Step IV: conclusion.} Combining the above three steps yields $\E[\zeta_{t'}^2]\ll 1$, where the implicit constant does not depend on $m$ as long as $m\leq d/L_3$ with $L_3$ large enough. Together with \eqref{eq:zeta 1} and a bootstrapping argument similar to Lemma \ref{lemma:MT} (which we omit for brevity), the desired upper bound for $\tau_{d,m}$ follows. 
\end{proof}

\section{Proofs of results on cover times}\label{sec:cover times}

In this section, we analyze the cover times of branching random walks on the hypercube, leveraging results on the first passage times.

\begin{lemma}\label{lemma:uniform LB}
 Suppose that $b\in\N_2$ and $\rho\in(1,e)$ does not depend on $d$.   There exists a constant $L_9>0$ such that uniformly for any $\bx,\by\in \X_d^{(b)}$, 
    $$\p(\tau_{d,d_{\mathrm{H}}(\bx,\by)}>L_9d)<\frac{1}{b}.$$
\end{lemma}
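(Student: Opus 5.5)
The plan is to reduce the case of a general distance $k:=d_{\mathrm H}(\bx,\by)\in[0,d]$ to the range $m\le d/L_1$ already handled by Theorem \ref{thm:main}. This reduction goes through the underlying random walk: the walk moves from any position to a point at distance about $d/L_1$ from the target in time $O(d)$, and this happens with probability bounded below uniformly in $d$. Since $1/b<1$ is a fixed constant, we can afford to waste some probability along the way.

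\textbf{Step 1 (far targets).} First we would show that for $k\le d/L_1$, Theorem \ref{thm:main} together with Proposition \ref{thm:const rho expansion}(iii) gives $t_{d,k}\le x_0d+rk+O(k^2/d)\le C_1d$. Hence there is $L'$ with $\p(\tau_{d,k}>L'd)\le \ee$ for all large $d$, uniformly in such $k$.

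\textbf{Step 2 (near targets).} For $k>d/L_1$, we would follow a single lineage started at $\bx$ and ignore branching, which only helps by monotonicity: $\tau$ for the BRW is at most the hitting time of any fixed lineage. The lineage's Hamming distance to $\by$ is a birth--death chain (a generalized Ehrenfest chain). From state $j$ it moves down at rate $j/d$ and up at rate $(d-j)(b-1)/((b-1)d)=(d-j)/d$; more precisely, down is at rate $\frac{j}{d}\cdot\frac{1}{b-1}$ and it stays at distance $j$ otherwise. Its stationary distribution is $\mathrm{Bin}(d,(b-1)/b)$, concentrated near $(b-1)d/b$, which is far above $d/L_1$. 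Reaching distance $\lfloor d/L_1\rfloor$ from a high level is therefore a downward large deviation and is exponentially unlikely.

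\textbf{Fix for Step 2.} This is the main obstacle, and the remedy is to use branching. Run the BRW for time $T_0=Ad$ so that it has $e^{\Omega(d)}$ particles that are roughly spread out. Alternatively, apply Lemma \ref{thm:bingham} directly: by Theorem \ref{thm:main}'s first-moment/second-moment machinery, the expected number of particles at distance exactly $m_0:=\lfloor d/L_1\rfloor$ from $\by$ at time $Ad$ is $\rho^{Ad}\,\#\{\bz:d_{\mathrm H}(\bz,\by)=m_0\}\,q(\cdot)\ge \rho^{Ad}b^{-d}\binom{d}{m_0}(1-o(1))$, which is exponentially large for $A$ large. Simpler still, by time-reversal symmetry it suffices to have one particle at any point within distance $\le d/L_1$ of $\by$. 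A cleaner route is a union bound on the lower tail: condition on the event, of probability close to $1$ by the Kesten--Stigum theorem, that there are at least $c\rho^{Ad}$ particles at time $Ad$. Each particle at time $Ad$ has, by Lemma \ref{thm:bingham}, probability at least $b^{-d}\binom{d}{m_0}(b-1)^{m_0}\gg e^{-\theta d}$ of sitting within distance $m_0$ of $\by$, with $\theta<\log b$ and $\theta\to\log b$ as $L_1\to\infty$. To control correlations between particles, we would use a bootstrap as in Lemma \ref{lemma:MT}: first split into many ancestors at time $Ad/2$, then let their independent subtrees evolve. Each subtree independently contains a particle within distance $m_0$ with probability bounded below, which can be shown by Paley--Zygmund on the count $N_{B(\by,m_0)}$, whose second moment is handled exactly as in Proposition \ref{prop:second moment 1}, only more easily since the target set is a ball.

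\textbf{Step 3 (conclusion).} Once some particle is within distance $m\le d/L_1$ of $\by$ by time $Ad$, the strong Markov property and Step 1 give an additional time $\le L'd$ with probability $\ge 1-\ee$. Taking $L_9=A+L'$ and $\ee$ small enough that the total failure probability is below $1/b$ finishes the argument. The small-$d$ cases are absorbed by enlarging $L_9$, since for each fixed $d$ the hitting time is a.s.\ finite. I expect the second-moment/independence step in Step 2 to require the most care.
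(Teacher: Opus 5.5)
Your Steps 1 and 3 are fine. The problem is Step 2, which is the only place where targets at distance $k>d/L_1$ are actually handled, and it is not proved. It hinges on the claim that, uniformly over starting points $\mathbf{z}\in\X$, the number $N$ of particles in $B(\by,m_0)$ at time $Ad/2$ satisfies $\E[N^2]\le C\,\E[N]^2$. You assert this follows ``exactly as in Proposition~\ref{prop:second moment 1}, only more easily.'' It does not follow from that proof. Proposition~\ref{prop:second moment 1} is built around a single target vertex at the particular time $t_{d,m}$ at which \eqref{eq:1} normalizes the first moment to one. Its three steps rely on Lemmas~\ref{lemma:e0}, \ref{lemma:c} and \ref{lemma:md}, which are tailored to that configuration. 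In your setting the target is a ball, the first moment is exponentially large, and the starting point may be at distance $d$ from $\by$. You would have to show afresh that the many-to-two integral is not dominated by pairs whose common ancestor has already paid part of the large-deviation cost of approaching $B(\by,m_0)$ before branching. That is exactly where second moments typically blow up. The bound may well hold for $A$ large, but it is a new and delicate estimate. As written, your argument establishes the lemma only for $d_{\mathrm H}(\bx,\by)\le d/L_1$. (A minor point: at time $Ad$ the transition probabilities of Lemma~\ref{thm:bingham} equal $b^{-d}$ only up to factors $e^{\pm\epsilon(A)d}$, not $1-o(1)$. This does not affect the first-moment lower bound.)

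The missing idea is that you do not need a random mechanism to bring a particle near $\by$. Theorem~\ref{thm:main} (your Step 1) already covers hitting \emph{any} fixed vertex within distance $d/L_1$, so you can use deterministic waypoints.
\begin{enumerate}
\item Choose $\bx=\bx_0,\bx_1,\dots,\bx_{L_1}=\by$ along a geodesic with $d_{\mathrm H}(\bx_j,\bx_{j+1})\le d/L_1$.
\item Take $C_5$ with $t_{d,m}\le C_5d$ for $m\le d/L_1$. Take $C_6$ so that $\p(\tau_{d,m}\ge t_{d,m}+C_6)<1/(bL_1)$ uniformly in $1\le m\le d/L_1$.
\item Let $\sigma_j$ be the first time a descendant of the particle that reached $\bx_{j-1}$ at time $\sigma_{j-1}$ reaches $\bx_j$. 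By the strong Markov property, $\sigma_j-\sigma_{j-1}$ has the law of $\tau_{d,d_{\mathrm H}(\bx_{j-1},\bx_j)}$.
\item Since $\tau_{d,d_{\mathrm H}(\bx,\by)}\le\sigma_{L_1}$, a union bound over the $L_1$ stages gives $\p(\tau_{d,d_{\mathrm H}(\bx,\by)}>L_1(C_5+C_6)d)<1/b$.
\end{enumerate}
This is the paper's proof, with $L_9=L_1(C_5+C_6)$. It makes your entire Step 2, including the Kesten--Stigum/Paley--Zygmund bootstrap, unnecessary.
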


\begin{proof}
    Using Proposition \ref{lemma:asymp 0}, we first find a large constant $C_5>0$ such that for all $1\leq m\leq d/L_1$, $t_{d,m}\leq C_5d$. By Theorem \ref{thm:main}, there exists another constant $C_6>0$ such that uniformly in $1\leq m\leq d/L_1$, $\p(\tau_{d,m}\geq t_{d,m}+C_6)<1/(bL_1)$. 
Let $L_9=L_1(C_5+C_6)$. Let $\bx,\by\in\X_d^{(b)}$ be arbitrary. Then there exists a collection of vertices $\bx=\bx_0,\bx_1,\dots,\bx_{L_1}=\by$ in $\X_d^{(b)}$ such that for each $j\in[L_1-1]$, $d_{\mathrm{H}}(\bx_j,\bx_{j+1})\leq d/L_1$.\footnote{For simplicity, we assume that $L_1$ is an integer.} By the union bound and the strong Markov property of the branching random walk,
$$\p(\tau_{d,d_{\mathrm{H}}(\bx,\by)}>L_9d)\leq \sum_{j=0}^{L_1-1}\p(\tau_{d,d_{\mathrm{H}}(\bx_j,\bx_{j+1})}>C_5d+C_6)\leq \sum_{j=0}^{L_1-1}\frac{1}{bL_1}= \frac{1}{b},$$
as desired.
\end{proof}

\begin{proof}[Proof of Corollary \ref{coro:cover}]
The lower bound follows immediately from Theorem \ref{thm:main}, Proposition \ref{lemma:asymp 0}, and the fact that the cover time is lower bounded by first passage time to any fixed vertex. 
To prove an upper bound of $\tau_{\mathrm{cov}}(d)$, we fix $\ee\in(0,1/2)$. 
Without loss of generality, we assume that $d/\ee$ is always an integer. Recall that $V_t$ is the set of all particles at time $t$. Pick $L_{10}>0$ such that 
\begin{align}
    \p\Big(\#V_{L_{10}d}\leq \frac{db^d}{\ee}\Big)<\frac{\ee}{2}.\label{eq:un1}
\end{align}On the event $\{\#V_{L_{10}d}\geq db^d/\ee\}$, we condition on time $L_{10}d$ and label (part of) the existing particles $w_1,\dots,w_{db^d/\ee}$. Also, label $\X_d^{(b)}=\{u_1,\dots,u_{b^d}\}$. By Lemma \ref{lemma:uniform LB} and independence, for a fixed $j\in\{1,\dots,b^d\}$, the probability that $u_j$ is not reached by any descendant of the $d/\ee$ particles $\{w_{1+d(j-1)/\ee},\dots,w_{dj/\ee}\}$ before time $(L_{9}+L_{10})d$ is at most $b^{-d/\ee}$. Taking a union bound over $j\in\{1,\dots,b^d\}$ shows that for $d$ large enough,
\begin{align}
    \p\Big(\tau_{\mathrm{cov}}(d)>(L_{9}+L_{10})d;\,\#V_{L_{10}d}\geq \frac{db^d}{\ee}\Big)\leq b^db^{-d/\ee}<\frac{\ee}{2}.\label{eq:un2}
\end{align}
Applying a union bound to \eqref{eq:un1} and \eqref{eq:un2} completes the proof.
\end{proof}

\begin{proof}[Proof of Proposition \ref{prop:cover tight}]
    Consider $\ee\in(0,1)$ and $t>0$ such that $\p(\tau_{\mathrm{cov}}(d)>t)=\ee$. By the strong Markov property and conditioning on the first branching time, we have
\begin{align}
    \ee=\p(\tau_{\mathrm{cov}}(d)>t)\leq (\log\rho(d))\int_0^\infty e^{-(\log\rho(d))s}\p(\tau_{\mathrm{cov}}(d)>t-s)^2\d s.\label{eq:ee1}
\end{align}
Let $s_*=-\log(\ee-\ee^{4/3})/\log\rho(d)$. By \eqref{eq:ee1} and since $s\mapsto\p(\tau_{\mathrm{cov}}(d)>t-s)^2$ is non-decreasing, 
\begin{align*}
    \ee&\leq \p(\tau_{\mathrm{cov}}(d)>t-s_*)^2\int_0^{s_*} (\log\rho(d))e^{-(\log\rho(d))s}\d s+\int_{s_*}^\infty (\log\rho(d))e^{-(\log\rho(d))s}\d s\\
    &<\p(\tau_{\mathrm{cov}}(d)>t-s_*)^2+\ee-\ee^{4/3}.
\end{align*}
It follows that
\begin{align}
    \p\Big(\tau_{\mathrm{cov}}(d)>t+\frac{\log(\ee-\ee^{4/3})}{\log\rho(d)}\Big)\geq\ee^{2/3}.\label{eq:ee2}
\end{align}
Applying \eqref{eq:ee2} recursively, we see that there exists $C(\ee)>0$ such that $\p(\tau_{\mathrm{cov}}(d)>t-C(\ee)/\log\rho(d))>1-\ee$. This completes the proof of tightness.
\end{proof}

\end{document}